\theoremstyle{remark}
\renewcommand{\qtp}{\mathrm{qftp}}
\newcommand{\Age}{\textsf{Age}}
\newcommand{\XX}{\mathbb{X}}
\newcommand{\GG}{\mathbf{G}}
\newcommand{\sig}{\mathrm{sig}}
\renewcommand{\TT}{\mathbb{T}}
\renewcommand{\KK}{\mathbf{K}}
\renewcommand{\K}{\mathscr{K}}
\newcommand{\FF}{\mathbf{F}}
\newcommand{\Fin}{\mathbf{Fin}}
\newcommand{\cleq}{\trianglelefteq}
\newcommand{\emb}{\mathrm{Emb}}
\newcommand{\pphi}{{\boldsymbol\phi}}
\newcommand{\img}{\mathrm{img}}
\newcommand{\dom}{\mathrm{dom}}
\newcommand{\sym}{\mathrm{Sym}}
\newcommand{\HH}{\mathbf{H}}
\renewcommand{\H}{\mathcal{H}}
\newcommand{\JJ}{\mathbf{J}}
\newcommand{\qf}{\mathsf{qf}}
\newcommand{\cclass}{\mathscr{C}}
\renewcommand{\SS}{\mathbf{S}}
\newcommand{\fs}{\textsc{fs}}
\newcommand{\MO}{\mathbf{MO}}
\newcommand{\Th}{\mathrm{Th}}
\begin{document}

\title{On positive local combinatorial dividing-lines in model theory}
\author{Vincent Guingona and Cameron Donnay Hill}
\date{\today}
\maketitle

\begin{abstract}
We introduce the notion of positive local combinatorial dividing-lines in model theory.  We show these are equivalently characterized by indecomposable algebraically trivial \fraisse classes and by complete prime filter classes.  We exhibit the relationship between this and collapse-of-indiscernibles dividing-lines.  We examine several test cases, including those arising from various classes of hypergraphs.
\end{abstract}

%%%%%%%%%%%%%%%%%%%%%%%%%%%%%%%%%%%%%%%%%%%%%%%%%%%%%%%%
%%%%%%%%%%%%%%%%%%%%%%%%%%%%%%%%%%%%%%%%%%%%%%%%%%%%%%%%
%%%%%%%%%%%%%%%%%%%%%%%%%%%%%%%%%%%%%%%%%%%%%%%%%%%%%%%%
%%%%%%%%%%%%%%%%%%%%%%%%%%%%%%%%%%%%%%%%%%%%%%%%%%%%%%%%
\section{Introduction}\label{Sec_Intro}

%% Removed "outstandingly" (2/9/17)
There are several productive approaches to identifying dividing-lines in model theory -- separating ``wild'' from ``tame'' first-order theories -- and in this paper, we attempt to formalize one of these approaches. These approaches to dividing-lines come in roughly three flavors: one based on geometric properties (formulated by way of model-theoretic independence relations), the Keisler order or saturation-of-ultrapowers, and positive local combinatorial properties. The last of these is, of course, among the oldest in the modern setting, including the stability/instability and N/IP dividing-lines, but it has not previously been placed in a general context. In this paper, we develop just such a general context/formulation for positive local combinatorial dividing-lines. 

In this introduction, we will first try to provide some context for our work, hopefully demonstrating how the viewpoint developed here differs meaningfully from the others and captures quite different tameness/wildness separations. 
To start with, one might focus on dividing-lines based on properties of model-theoretic independence relations (like non-forking or non-\th-forking independence), which we will call {\em geometric dividing-lines}. Geometric dividing-lines include in/stability, non/simplicity, and non/rosiness. In practice, model-theoretic independence relations are often formulated in a formula-by-formula manner, but especially when viewed in an abstract setting as in \cite{adler-2007}, model-theoretic independence relations really deal with partial and complete types in a ``holistic'' way. In some sense, model-theoretic independence relations have at least aesthetic connections to the idea of a generic point of an algebraic variety, so it is not unreasonable to view geometric dividing-lines as capturing {\em global} properties of theories. These considerations qualitatively separate geometric dividing-lines from the other kinds that we will discuss, so we take the view that geometric dividing-lines are outside the scope of this paper. By contrast, both the saturation-of-ultrapowers program (as shown in \cite{malliaris-2009}) and our program on positive local combinatorial dividing-lines are explicitly local in nature -- both examine and compare theories on a formula-by-formula basis. (Furthermore, the discussion of geometric dividing-lines seems to require a strong {\em a priori} assumption that the theories in question actually have model-theoretic independence relations, at least in a weak sense, to begin with. Thus, if one wished to {\em compare} two arbitrary theories geometrically, a fairly deep program on how to do this at all would probably be required.)

In order to compare saturation-of-ultrapowers dividing-lines and those based on positive local combinatorics, it will be helpful begin with an example.
\begin{itemize}
\item For each $0<k<\omega$, let $P_k = \big\{\left\{0,1,...,\ell-1\right\}:\ell\leq k\big\}$. (The $P$ here is for ``positive.'') Then, to say that a theory $T$ has the order property is to say that there is a formula $\phi(x,y)$ of the language of $T$ that ``represents'' or ``realizes'' every one of the configurations $(k,P_k)$:
\begin{quote}
{\em For every $k$, there are $a_0,...,a_{k-1}$ and $b_0,...,b_{k-1}$ such that $\models \phi(a_i,b_\ell)\,\iff\,i\in\{0,1,...,\ell-1\}$.}
\end{quote}
In general, after settling on a formula $\phi(x_0,...,x_{n-1})$ (or several of them), realization of a P-LC is governed by nothing more than $\pm\phi$ and selection of a set of parameters from a model of the theory. 

\item Now, let $N_k = \P(k)\setminus P_k$, where the $N$ is for ``negative.'' To say that $T$ has the strict order property (SOP) is to say that there is a formula $\phi(x,y)$ of the language of $T$ that represents/realizes every one of the configurations $(k,P_k,N_k)$, meaning that $(k,P_k)$ is realized while simultaneously avoiding $N_k$ in a global sense:
\begin{quote}
{\em  For every $k$, there are $a_0,...,a_{k-1}$ and $b_0,...,b_{k-1}$ such that 
\begin{align*}
{\models}{\phi(a_i,b_\ell)}\,&\iff\,i\in\{0,1,...,\ell-1\},\\
\ell_1<\ell_2\leq k\,&\implies\,\models\neg\exists x\big(\phi(x,b_{\ell_1})\wedge \neg\phi(x,b_{\ell_2})\big).
\end{align*}
}
\end{quote}
That is, in addition to requiring that $(a_i)_{i<k}$, $(b_\ell)_{\ell\leq k}$ realize the P-LC $(k,P_k)$ via $\phi$, we require that {\em no re-ordering of $(b_\ell)_{\ell\leq k}$} can be used to realize $(k,P_k)$ via $\phi$.
\end{itemize}
 Thus, among the positive local combinatorial properties (P-LC properties) are the order property (OP) and the multi-order properties ($\mathrm{MOP}_n$) of \cite{guingona-hill-2015}, the independence property (IP) and the $r$-independence properties ($r$-IP) for all $0<r<\omega$. Among the positive/negative local combinatorial properties (P/N-LC properties) are the strict order property, the various tree properties (TP, TP1, and TP2), and the finite cover property (FCP).

Perhaps, it is suggestive that all of the P/N-LC properties listed above have, at least loosely, something to do with the saturation-of-ultrapowers program: FCP and SOP were the ``original'' beacons in the Keisler order, and much of the recent work in this area, \cite{malliaris-shelah-2013,malliaris-shelah-2016, malliaris-shelah-2016b} for example, focuses on (classes of) simple theories -- i.e. the NTP theories. Speaking very loosely, it appears that a non-saturation result requires something like a P/N-LC property in order to build an unrealized type into a regular ultrapower, although even canonically P-LC properties can carry implicit P/N-LC properties that allow such constructions.   Empirically speaking, the close connection between the saturation-of-ultrapowers program and P/N-LC properties suggests that in order to move away from that program, we do need to avoid P/N-LC properties, and this suggests that, barring geometric dividing-lines, P-LC properties are a good place to start.
A more practical issue in the saturation-of-ultrapowers program is that the Keisler order views even characteristic theories very differently compared to other points of view, such as the collapse-of-indiscernibles approach taken in \cite{guingona-hill-scow}, a strong form of the positive local combinatorial paradigm (though not entirely compatible with our work here). In that paper, it is shown that $T_\mathrm{feq}$, the model-companion of the theory of parametrized equivalence relations, is characteristic of non-rosy theories under the collapse-of-indiscernibles viewpoint, while in \cite{malliaris-2012}, $T_\mathrm{feq}$ arises as the $\leq_\textsc{k}$-minimum theory with TP2 (where $\leq_\textsc{k}$ denotes the Keisler order).
A second practical issue is that it does not ``see'' dividing-lines among unstable NIP theories. Thus, a different approach is needed if we want to include and differentiate between ordered theories.

However, as in the saturation-of-ultrapowers program, we will base our viewpoint on dividing-lines in this paper on an ordering $\cleq$ of theories, and we essentially identify dividing-lines with upward-closed classes of theories relative to $\cleq$. Speaking roughly, we take $T_1\cleq T_2$ to mean that, up to matching formulas of $T_1$ to formulas of $T_2$, ``every'' finite configuration that appears in a model of $T_1$ also appears in a model of $T_2$. (We write ``every'' because we will conscientiously avoid involving algebraic configurations.) Then to specify a dividing-line is to specify a class $\C$, upward-closed under $\cleq$, of ``wild'' theories; depending on context, $\C$ could be the class of unstable theories, the class theories with IP, and so forth. Subsequently, $N\C:=\TT\setminus \C$ (where $\TT$ is the class of all complete theories) comprises the ``tame'' theories with good-behavior properties to exploit. Our formulation of $\cleq$ is almost immediately linked to the idea of a family of \fraisse classes involved in a theory. 

In connection with collapse-of-indiscernibles, an immediate question is then, ``Which $\cleq$-dividing-lines $\C$ have a single characteristic \fraisse class?'' A second question is, ``What are the `irreducible' $\cleq$-dividing-lines, and what does it mean for $\C$ to be irreducible?'' Addressing the second part of the second question, we try (and hopefully succeed) to capture irreducibility in the notion of a complete prime filter class. The classes of unstable theories, of theories with IP, and of theories with $r$-IP ($0<r<\omega$) are all complete prime filter classes. We prove (Theorem \ref{Thm_prime_equals_fraisse}) that the complete prime filter classes are {\em precisely} the classes of theories that have a characteristic indecomposable \fraisse class. It follows that if $\mathscr{F}$ is the family of all complete prime filter classes -- i.e. the class of all irreducible $\cleq$-dividing-lines -- then $|\mathscr{F}|\leq 2^{\aleph_0}$.

We also carry out a detailed analysis of the \fraisse classes $\HH_r$ of $r$-hypergraphs (for each $2\leq r<\omega$) in the context of $\cleq$. These are all indecomposable \fraisse classes, and one result of this analysis is the finding that $\mathscr{F}$ is infinite; in fact, it contains an infinite strictly-nested chain (cf. \cite{malliaris-shelah-2016}). We also find that excluding/forbidding a $k$-clique ($k>r$, obtaining another \fraisse class $\HH_{r,k}$) has no effect on the associated dividing-line. Somewhat surprisingly, adding an order to $\HH_r$ in an unconstrained way, obtaining the class $\HH_r^<$ of ordered $r$-hypergraphs, also has no effect on the associated dividing-line. This is a special case of a general phenomenon: we prove that for a \fraisse class $\KK$, if the generic theory $T_\KK$ is unstable, then $\KK$ and $\KK^<$ characterize the same dividing-line. We also find that adding additional symmetric irreflexive relations does not change the associated dividing-line; that is to say, for a \fraisse class $\SS$ of societies (in the terminology of \cite{nesetril-rodl-1989}), $\SS$ characterizes the same dividing-line as does $\HH_r$, where $r$ is the maximum arity of a relation symbol in the language of $\SS$.

The work in this paper originated in an attempt to build a general framework for the collapse-of-indiscernibles dividing-lines addressed in \cite{guingona-hill-scow} and \cite{scow-2011}. That attempt failed insofar as indecomposable \fraisse class need not be Ramsey classes or even Ramsey-expandable (as far as we know currently). %In fact, some collapse-of-indiscernibles dividing-lines seem to be invisible to $\cleq$. For example, the various multi-order properties examined in \cite{guingona-hill-scow} and \cite{guingona-hill-2015} engender the same dividing-line, namely the class of unstable theories.
%% This is more complicated.  The dividing lines discovered in GHS were on partial types, not on theories.  If we constrain ourselves to mapping into a particular type, then the MLO_n's become distinguishable %%
However, we do manage to prove that any dividing-line arising from an unstable Ramsey-expandable \fraisse class {\em is} a collapse-of-indiscernibles dividing-line. Since all of the $\HH_r$'s are unstable Ramsey-expandable \fraisse classes, it follows that there is an infinite strict chain of collapse-of-indiscernibles dividing-lines, all of which are irreducible.

%
%For example, one might view N/FCP as an approximation to stability/instability that is compatible with the saturation-of-ultrapowers program. FCP is a P/N-LC property while instability is a P-LC property, but although all  NFCP theories are stable, there are stable theories with FCP. 
%The situation for stable theories in the saturation-of-ultrapowers program, then, is the following
%$$[\mathrm{stable}]= [\mathrm{NFCP}] \,\dot\cup\,\big[\mathrm{stable}+\mathrm{FCP}\big] = \TT\setminus{\uparrow}(\textrm{random graph})$$
%$$[\mathrm{unstable}] = {\uparrow}(\textrm{random graph})$$
%where $[*]$ denotes a class {\em modulo} the Keisler order $\leq_\textsc{k}$, and ${\uparrow}(\textrm{random graph})$ the cone above (and including) the theory of the random graph. This identifies 
%

%At the moment, there does not appear to be an explicit characterization of membership in ${\uparrow}(\textrm{random graph})$, but this is not a deficiency of the saturation-of-ultrapowers program -- instead, it suggests just that saturation-of-ultrapowers detects certain kinds of local properties, and another approach is needed if other sorts of local properties are to be addressed. 
%

\bigskip

\subsection{Outline of the paper}

\begin{description}
\item[Section \ref{Sec_Intro}:] In the remainder of this section, we introduce some notation, basic definitions, and conventions that are used throughout the rest of the paper.
\item[Section \ref{Sec_LC_ord}:] We introduce the infrastructure both for discussing the (positive) local combinatorics of a first-order theory and for comparing two theories based on their local combinatorics, arriving at the ordering $\cleq$. Around this ordering, we define precisely what is meant by a ``complete prime filter class'' after discussing at some length why, we believe, this formulation captures the idea of an irreducible dividing-line. We also state and prove Theorem \ref{Thm_prime_equals_fraisse}, showing that the irreducible dividing-lines are precisely the ones characterized by a single indecomposable \fraisse class. In all of our definitions to this point, we will have used only one-sorted languages for \fraisse classes, and to conclude this section, we show that there would, in fact, be nothing gained by allowing multiple sorts.

\item[Section \ref{Sec_LO_Indisc}:] In this section, we reconnect with the collapse-of-indiscernibles dividing-lines that impelled this project to begin with. It is a fact that the generic model of an algebraically trivial \fraisse class admits a generic linear ordering. It is also a fact that the generic model of a Ramsey class carries a 0-definable linear ordering, and many of the best-known Ramsey classes are obtained just by adding a linear order ``freely'' to a \fraisse class whose generic model is NSOP (as in the passage from the class $\GG$ of finite graphs to the class $\GG^<$ of finite ordered graphs). We examine in detail whether or not the generic order-expansion of a \fraisse class induces a different dividing-line from its precursor's, showing that if the precursor's generic theory is unstable, then the additional ordering adds nothing. This allows us to show that if $\KK$ is a Ramsey\emph{-expandable} \fraisse classes with unstable generic theory, then $\C_\KK$ is a collapse-of-indiscernibles dividing-line.

\item[Section \ref{Sec_Case_studies}:] By way of some case studies, we examine the structure of the $\cleq$-ordering and the set of irreducible dividing-lines. To start, we identify some theories that are maximum under $\cleq$. We then examine the classes $\HH_r$ ($0<r<\omega$) of hypergraphs, verifying that they are all indecomposable and that they induce a strict chain of irreducible dividing-lines. Along the way, we also show that certain natural perturbations of hypergraphs -- excluding cliques, adding additional irreflexive symmetric relations -- do not have any effect on the associated dividing-lines.

\item[Section \ref{Sec_Conjectures}:] To conclude the paper, we discuss some open questions about the structure of the $\cleq$-ordering and the irreducible classes generated by this ordering.  We also explore questions around the relationship between these classes and collapse-of-indiscernibles dividing-lines.
\end{description}

%%%%%%%%%%%%%%%%%%%%%%%%%%%%%%%%%%%%%%%%%%%%%%%%%%%%%%%%
%%%%%%%%%%%%%%%%%%%%%%%%%%%%%%%%%%%%%%%%%%%%%%%%%%%%%%%%
\subsection{Notation and conventions}

%%%%%--------------------------------------------------------------------------------------------------------------%%%%%
\begin{defn}
We write $\TT$ for the class of all complete theories with infinite models that eliminate imaginaries.
If left unspecified, the language of a theory $T\in\TT$ is $\L_T$, which may be, and usually is, many-sorted.
\end{defn}

%%%%%--------------------------------------------------------------------------------------------------------------%%%%%
\begin{conv}
When a theory $T$ arises that may not eliminate imaginaries, we freely identify $T$ with $T^\eq$, which of course eliminates imaginaries.
\end{conv}

%%%%%--------------------------------------------------------------------------------------------------------------%%%%%
\begin{defn}
Let $\L$ be a finite relational language (meaning that its signature $\sig(\L)$ consists of finitely many relation symbols and no function symbols or constant symbols). $\Fin(\L)$ denotes the class of all finite $\L$-structures.
A \fraisse class in $\L$ is a sub-class $\KK\subseteq\Fin(\L)$ satisfying the following:
\begin{description}
\item[Heredity (HP):] For all $A,B\in\Fin(\L)$, if $A\leq B$ and $B\in\KK$, then $A\in\KK$.
\item[Joint-embedding (JEP):] For any two $B_1,B_2\in\KK$, there are $C\in\KK$ and embeddings $B_1\to C$ and $B_2\to C$.
\item[Amalgamation (AP):] For any $A,B_1,B_2\in\KK$ and any embeddings $f_i:A\to B_i$ ($i=1,2$), there are $C\in\KK$ and embeddings $f'_i:B_i\to C$ ($i=1,2$) such that $f_1'{\circ}f_1 = f'_2{\circ}f_2$.
\end{description}
According to \cite{big-hodges}, a \fraisse class $\KK$ has a generic model (or \fraisse limit) $\A$ satisfying the following properties:
\begin{itemize}
\item $\KK$-universality: For every $B\in\KK$, there is an embedding $B\to\A$.
\item $\KK$-closedness: For every finite $X\subset A$, the induced substructure $\A\r X$ is in $\KK$.
\item Ultrahomogeneity (or $\KK$-homogeneity): For every $X\subset_\fin A$ and every embedding $f:\A\r X\to \A$, there is an automorphism $g\in Aut(\M)$ extending $f$.
\end{itemize}
Since we are working with only finite relational languages (for \fraisse classes), the generic theory $T_\KK = \Th(\A)$ of $\KK$ is always $\aleph_0$-categorical and eliminates quantifiers.

In general, a theory $T$ is {\em algebraically trivial} if $\acl^\M(A) = A$ whenever $\M\models T$ and $A\subseteq M$. A \fraisse class $\KK$ is called algebraically trivial just in case $T_\KK$ is algebraically trivial, and this condition can characterized by strengthened amalgamation conditions:
\begin{description}
\item[Disjoint-JEP:] For any two $B_1,B_2\in\KK$, there are $C\in\KK$ and embeddings $f_1:B_1\to C$ and $f_2:B_2\to C$ such that $f_1B_1\cap f_2B_2 = \emptyset$.
\item[Disjoint-AP:] For any $A,B_1,B_2\in\KK$ and any embeddings $f_i:A\to B_i$ ($i=1,2$), there are $C\in\KK$ and embeddings $f'_i:B_i\to C$ ($i=1,2$) such that $f_1'{\circ}f_1 = f'_2{\circ}f_2$ and $f'_1B_1\cap f'_2B_1 = f_1'f_1A = f'_2f_2A$.
\end{description}
That is, a \fraisse class $\KK$ is algebraically trivial if and only if it has disjoint-JEP and disjoint-AP.
\end{defn}

%%%%%--------------------------------------------------------------------------------------------------------------%%%%%
\begin{conv}
For \fraisse classes, we allow only finite relational signatures.
\end{conv}

%%%%%--------------------------------------------------------------------------------------------------------------%%%%%
\begin{defn}
Let $\KK_0,...,\KK_{n-1}$ be algebraically trivial \fraisse classes in languages $\L_0,...,\L_{n-1}$, respectively, such that $\sig(\L_i)\cap\sig(\L_j)=\emptyset$ whenever $i<j<n$. Let $\A_0,...,\A_{n-1}$ be the generic models of $\KK_0,...,\KK_{n-1}$, respectively.  Let $\Pi_i\L_i$ be the language with signature  $\bigcup_i\sig(\L_i)$. We define $\Pi_i\KK_i$ to be the class of finite $\Pi_i\L_i$-structures of the form $B=B_0{\times}{\cdots}{\times}B_{n-1}$, where $B_0\in\KK_0$,...,$B_{n-1}\in\KK_{n-1}$, with interpretations $$R^B=\left\{(\aa_0,...,\aa_{r-1})\in B^r:(a_{0,i},...,a_{r-1,i})\in R^{B_i}\right\}$$
for each $i<n$ and $R^{(r)}\in \sig(\L_i)$. In general, $\Pi_i\KK_i$ is not a \fraisse class (lacking quantifier elimination in its natural language), but it does have AP and JEP. Hence, $\Pi_i\KK_i$ has a well-defined generic model, and and one can show that this generic model is $\aleph_0$-categorical and algebraically trivial.

We have yet to define the ordering $\cleq$, but with the reader's indulgence, we use it now to define indecomposability for algebraically trivial \fraisse classes. Given an arbitrary algebraically trivial \fraisse class $\KK$ in $\L$ with generic model $\A$, a {\em factorization of $\KK$}
is a list $(\KK_0,...,\KK_{n-1})$ of algebraically trivial \fraisse classes $\KK_0,...,\KK_{n-1}$ ($n>1$) for which there is  an injection $u:A\to B$, where $\B$ is the generic model of $\Pi_i\KK_i$, such that for all $k$ and $\aa,\aa'\in A^k$, 
$$\qtp^\A(\aa)=\qtp^\A(\aa')\iff \tp^\B(u\aa) = \tp^\B(u\aa').$$
We then say that  {\em $\KK$ is indecomposable} if for any factorization $(\KK_0,...,\KK_{n-1})$ of $\KK$, there is an $i<n$ such that $T_\KK\cleq T_{\KK_i}$.  (See Definition \ref{Defn_TheOrdering}.)
\end{defn}

%
%
%%%%%%--------------------------------------------------------------------------------------------------------------%%%%%
%\begin{defn}
%Given an arbitrary algebraically trivial \fraisse class $\KK$ in $\L$ with generic model $\A$, a {\em factorization of $\KK$}
%is a list $(\KK_0,...,\KK_{n-1})$ of non-degenerate algebraically trivial \fraisse classes $\KK_0,...,\KK_{n-1}$ ($n>1$) for which there is  an injection $u:A\to B$, where $\B$ is the generic model of $\Pi_i\KK_i$, such that for all $k$ and $\aa,\aa'\in M^k$, 
%$$\qtp^\A(\aa)=\qtp^\A(\aa')\iff \qtp^\B(u\aa) = \qtp^\B(u\aa').$$
%We say that  {\em $\KK$ is indecomposable} if for any factorization $(\KK_0,...,\KK_{n-1})$ of $\KK$, there is an $i<n$ such that $T_\KK\cleq T_{\KK_i}$.
%\end{defn}
%

\newpage
%%%%%%%%%%%%%%%%%%%%%%%%%%%%%%%%%%%%%%%%%%%%%%%%%%%%%%%%
%%%%%%%%%%%%%%%%%%%%%%%%%%%%%%%%%%%%%%%%%%%%%%%%%%%%%%%%
%%%%%%%%%%%%%%%%%%%%%%%%%%%%%%%%%%%%%%%%%%%%%%%%%%%%%%%%
%%%%%%%%%%%%%%%%%%%%%%%%%%%%%%%%%%%%%%%%%%%%%%%%%%%%%%%%
\section{Comparing the local combinatorics of theories}\label{Sec_LC_ord}

In this section, we develop and formalize the idea of (positive) local combinatorics of first-order theories (Subsection \ref{Subsec_PLC}), showing that the local combinatorics of a theory on the finite level is captured by \fraisse classes ``embedded'' in models of that theory. After exposing that infrastructure, we use it in Subsection \ref{Subsec_CLEQ} to define an ordering (really, a quasi-ordering or pre-ordering) of first-order theories which will underpin all of the rest of our discussion of model-theoretic dividing-lines. We begin Subsection \ref{Subsec_IRREDs} with a discussion of what an ``irreducible'' dividing-line {\em ought} to be, and thereafter, we formalize these ideas under the heading of complete prime filter classes. In the same subsection, we state the main theorem of this section, Theorem \ref{Thm_prime_equals_fraisse}, in which we show that the irreducible dividing-lines, in our sense, are precisely the ones that are characterized by indecomposable \fraisse classes, and in Subsection \ref{Subsec_PROOF}, we prove the theorem. In a final subsection, we also show that our formulation of $\cleq$, and of local combinatorics generally, using only one-sorted \fraisse classes would not capture anything more if we had allowed multiple sorts.

%%%%%--------------------------------------------------------------------------------------------------------------%%%%%
\begin{conv}
For all of this section, we fix a countably infinite set $A$. It will serve as the universe of generic models of \fraisse classes involved in various theories $T\in\TT$.
\end{conv}

%%%%%%%%%%%%%%%%%%%%%%%%%%%%%%%%%%%%%%%%%%%%%%%%%%%%%%%%
%%%%%%%%%%%%%%%%%%%%%%%%%%%%%%%%%%%%%%%%%%%%%%%%%%%%%%%%
\subsection{Infrastructure}\label{Subsec_PLC}

We will understand the local combinatorics of a theory $T$ as a collection of functions from finite subsets of $A$ into models of $T$ -- see Definition \ref{Def_LC_functions}. (In order to avoid the ``geometric'' behavior of $T$, we will require that these functions have strong algebraic-triviality properties.) Such a function and a list of formulas of the language of $T$ will induce a structure on the domain of the function, and really, it is this structure that encodes some of the combinatorics of $T$ -- see Definition \ref{Def_LC_structures}.

%%%%%--------------------------------------------------------------------------------------------------------------%%%%%
\begin{defn}\label{Def_LC_functions}
Let $T\in\TT$. 
\begin{itemize}
\item We define $\FF(T)$ to be the set of injections $f:B\to \|\M\|$ where:
\begin{itemize}
\item $B\subset_\fin A$ and $\M\models T$.
\item For all $a,b\in B$, $f(a)$ and $f(b)$ are in the same sort of $\M$.
\item For every $B_0\subsetneq B$, $\tp(f[B]/f[B_0])$ is non-algebraic.
\end{itemize}
\item $\vec\FF(T)$ is the set of functions $F:A\to\|\M\|$, for some $\M \models T$, such that $F\r B\in\FF(T)$ for all $B\subset_\fin A$.
\end{itemize}
\end{defn}

%%%%%--------------------------------------------------------------------------------------------------------------%%%%%
\begin{conv}
Let $T\in\TT$. Consider a sequence  
$$\pphi = (\phi_0(x_0,...,x_{n_0-1}),...,\phi_{N-1}(x_0,...,x_{n_{N-1}-1}))$$
 of $\L_T$-formulas.
 Whenever we speak of sequences of formulas, we understand that the sequence is finite and all of the free variables range over  a single common sort which we call {\em the sort of $\pphi$}. Associated with $\pphi$, we have a language $\L_\pphi$ with signature $\left\{R_0^{(n_0)},...,R_{N-1}^{(n_{N-1})}\right\}$.
%%% I added this so we don't have to keep saying "up to the natural identifications of relation symbols between signatures"; e.g., in Prop 2.7. %%%
If $\pphi^1$ and $\pphi^2$ have the same length and coordinate-wise have the same arities, then the signatures of $\L_{\pphi^1}$ and $\L_{\pphi^2}$ are identical, so we identify $\L_{\pphi^1}$ and $\L_{\pphi^2}$ to compare structures in these languages.
\end{conv}

%%%%%--------------------------------------------------------------------------------------------------------------%%%%%
\begin{defn}\label{Def_LC_structures}
Let $T\in\TT$, and $\pphi$ be a sequence of formulas of $\L_T$. 
\begin{itemize}
\item $\FF_\pphi(T)$ is the set of all $f\in\FF(T)$ such that for each $a\in \dom(f)$, $f(a)$ is in the sort of $\pphi$. We define $\vec\FF_\pphi(T)$ similarly. In both cases, $f\in\FF_\pphi(T)$ or $F\in\vec\FF_\pphi(T)$ is said to be {\em compatible with $\pphi$}.

\item Let $f:B\to\|\M\|$ be in $\FF_\pphi(T)$. Then, we write $B_\pphi(f)$ for the $\L_\pphi$-structure with universe $B$ and interpretations 
$$R_i^{B_\pphi(f)} = \left\{\aa\in B^{n_i}:\M\models\phi_i(f\aa)\right\}.$$
We then define $\Age_\pphi(T) = \left\{B_\pphi(f):f\in\FF_\pphi(T)\right\}$ up to isomorphism.

\item For $F\in\vec\FF_\pphi(T)$, we define an $\L_\pphi$-structure $\A_\pphi(F)$ with universe $A$ and intepretations
$$R_i^{\A_\pphi(F)} = \left\{\aa\in A^{n_i}:\M\models\phi_i(f\aa)\right\}.$$
We then define $\Age_\pphi(F) = \Age(\A_\pphi(F))$. 
\item Let $F\in\vec\FF_\pphi(T)$. We say that $F$ is {\em $\pphi$-resolved}  if for any $B\subsetneq B'\subset_\fin A$, there are $B_0',...,B_i',...,\subset_\fin A$, such that $B'_i\cap B'_j= B$ whenever $i<j<\omega$, and $\tp_\pphi(F[B'_i]/F[B]) = \tp_\pphi(F[B']/F[B])$ for all $i<\omega$.

\item Finally, we define $\SS_\pphi(T) = \left\{\A_{\pphi}(F):F\in\vec\FF_\pphi(T),\textit{$\pphi$-resolved}\right\}$.
\end{itemize}
\end{defn}

\bigskip

%% Removed "we believe" (2/9/17)
We have now defined the various objects that encode the positive local combinatorics of a theory $T$. In the proposition below, we show that all of this is, essentially, explicably on the the level of algebraically trivial \fraisse classes ``embedded'' in models of $T$ as $\A_\pphi(F)$'s.

%%%%%--------------------------------------------------------------------------------------------------------------%%%%%
\begin{prop}\label{Prop_resolved}
Let $T\in\TT$, and let $\pphi = (\phi_0(x_0,...,x_{n_0-1}),...,\phi_{N-1})$ be a sequence of $\L_T$-formulas. Let $F\in\vec\FF_\pphi(T)$.
\begin{enumerate}
\item There is a $\pphi$-resolved $F'\in\vec\FF_\pphi(T)$ such that $\Age_\pphi(F)\subseteq\Age_\pphi(F')$.
\item  If $F$ is $\pphi$-resolved, then $\Age_\pphi(F)$ is an algebraically trivial \fraisse class %.
%%% Vince added 12/13 %%%
and $\A_\pphi(F)$ is the \fraisse limit of $\Age_\pphi(F)$.
%%%%%%%%%%%%%%%%%%%%%%%%%
\end{enumerate}
\end{prop}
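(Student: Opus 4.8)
The plan is to prove the two parts more or less independently, using $\pphi$-resolvedness as the combinatorial engine that upgrades the "raw" age of an $F$ into a full Fra\"iss\'e class with the right limit.

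For part (1), I would build $F'$ by a straightforward amalgamation/union construction. Starting from $F$, I want to arrange that every finite configuration that occurs over a base can be "repeated disjointly over that base infinitely often." The idea is to enumerate all pairs $(B, B')$ with $B \subsetneq B' \subset_\fin A$ together with the requirement attached to them, and then, working inside a sufficiently saturated model $\M' \succeq \M$ of $T$, successively adjoin fresh disjoint copies of the relevant configurations: given the current partial function and a demand to realize $\tp_\pphi(F[B']/F[B])$ over $F[B]$ on a fresh finite set disjoint from everything used so far, we can do this because $T$ has a saturated model and the type in question is consistent (it is realized by $F[B']$ itself). One must check at each stage that the algebraic-triviality clause in Definition \ref{Def_LC_functions} is maintained — i.e. that each finite restriction stays in $\FF(T)$ — which is where we use that the original $F$ already had non-algebraic types over all proper subsets and that fresh realizations in a monster model inherit this. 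A bookkeeping argument (a priority/back-and-forth enumeration of all the countably many demands, each demand revisited $\omega$ times to get the infinitely-many-disjoint-copies condition) yields a single $F' \in \vec\FF_\pphi(T)$ that is $\pphi$-resolved, and since we only ever added structure, $\Age_\pphi(F) \subseteq \Age_\pphi(F')$.

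For part (2), assume $F$ is $\pphi$-resolved and set $\A = \A_\pphi(F)$, $\K = \Age_\pphi(F) = \Age(\A)$. That $\K$ is closed under isomorphism, has HP, and consists of finite structures is immediate from the definition; $\A$ being $\K$-universal and $\K$-closed is automatic since $\K = \Age(\A)$. The two things with content are: (a) $\A$ is ultrahomogeneous (equivalently $\K$ has AP and JEP, making it a Fra\"iss\'e class with $\A$ its limit), and (b) the algebraic-triviality strengthenings (disjoint-JEP and disjoint-AP) hold. Both (a) and (b) I would extract directly from $\pphi$-resolvedness: given $B \subsetneq B' \subset_\fin A$, resolvedness hands us infinitely many sets $B'_i$ pairwise intersecting in $B$ with $\tp_\pphi(F[B'_i]/F[B]) = \tp_\pphi(F[B']/F[B])$. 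Quantifier-freeness of $\tp_\pphi$ (it is computed from the $\phi_j$'s via the $R_j$'s) means these are isomorphic copies of the configuration $B'$ over $B$, glued disjointly over $B$. To get disjoint-AP for $\K$: given $A_0 \leq B_1, A_0 \leq B_2$ in $\K$, realize all of this inside $\A$ on a common copy of $A_0$, then use resolvedness to slide one of the two extensions onto a disjoint copy over $A_0$; the induced substructure on the union is the desired amalgam and lies in $\K$ because it is a finite subset of $\A$. Disjoint-JEP is the special case $A_0 = \emptyset$. Ultrahomogeneity of $\A$ then follows by the standard back-and-forth: given a finite partial isomorphism of $\A$ and one more point to extend by, the new point sits in some configuration $B'$ over the current domain $B$, and resolvedness (applied on the range side) produces a disjoint realization of the matching configuration over the current range, giving the extension; iterating builds the automorphism. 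Finally, $\A$ is then the Fra\"iss\'e limit of $\K$ by uniqueness of the limit, and $T_\K$ is algebraically trivial because $\K$ has disjoint-AP and disjoint-JEP, as recorded in the excerpt.

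The main obstacle I expect is the careful construction in part (1): one has to simultaneously (i) meet infinitely many disjointness demands for every pair $(B,B')$, (ii) keep every finite restriction inside $\FF(T)$ (the non-algebraicity clause), and (iii) end up with a total function $A \to \|\M'\|$ for a single model $\M'$. Item (ii) is the subtle one — adjoining a fresh disjoint copy of a configuration could in principle create an algebraic dependence with previously placed points — so the construction should be done inside a monster model and phrased so that each newly added finite piece realizes a type that is non-algebraic over the (finite) set of previously used points, which is possible precisely because the source configuration had this property and types can be "moved generically" in a saturated model. Everything in part (2) is then a clean unwinding of $\pphi$-resolvedness together with quantifier-elimination-style bookkeeping, so I do not anticipate real difficulty there.
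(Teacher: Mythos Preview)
Your treatment of Part 2 is essentially the paper's: extract disjoint-JEP and disjoint-AP directly from the definition of $\pphi$-resolved, and then read off ultrahomogeneity of $\A_\pphi(F)$ by the usual back-and-forth. The paper says this in two sentences; you spell it out, but there is no difference in substance.

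For Part 1 the paper takes a genuinely different route. Rather than a direct stage-by-stage construction in a saturated model, it introduces the quotient space $\XX = \vec\FF_\pphi(T)/{\sim}$, where $F_1 \sim F_2$ iff $B_\pphi(F_1 \r B) = B_\pphi(F_2 \r B)$ for all finite $B$, equips it with a Stone topology whose basic clopens are the sets $[f]$ for $f \in \FF_\pphi(T)$, and then encodes both the age-containment demand and the infinitely-many-disjoint-copies demand as a countable family $\Gamma_F$ of dense-open subsets. Since $\XX$ is compact Hausdorff it has the Baire property, so $\bigcap \Gamma_F \neq \emptyset$, and any point in the intersection gives the desired $F'$. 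The combinatorial content of ``these sets are dense'' is exactly the step you isolate as the main obstacle---extending a finite configuration while preserving membership in $\FF(T)$---so the two arguments ultimately rest on the same verification. What the Baire-category packaging buys the paper is reusable infrastructure: the space $\XX_\pphi(T)$ and the existence of a countable dense set of $\pphi$-resolved classes (Observation~\ref{Obs_countable_dense_subset}) are invoked again later, notably in the proof of Lemma~\ref{Cor_support_by_fraisse}. Your hands-on construction is perfectly sound as a proof of the proposition in isolation, but it does not produce that auxiliary structure for free.

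One small caution on your Part 1 sketch: the phrase ``types can be moved generically in a saturated model'' is doing real work and deserves to be unpacked. The point is that when you want a fresh copy of $B'$ over $B$ disjoint from the current finite domain $D$, you should realize the \emph{full} type $\tp(f'[B' \setminus B]/f'[D \setminus (B' \setminus B)])$ rather than merely the $\pphi$-type over $f'[B]$; since $f' \r D \in \FF(T)$, this full type is coordinate-wise non-algebraic over its base, and any realization automatically keeps the extended map in $\FF(T)$. Once you say it that way, the bookkeeping is routine.
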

\begin{proof}
For Item 1, let $F$ be given. Let $\sim$ be the equivalence relation on $\vec\FF_\pphi(T)$ given by 
$$F_1\sim F_2\iff (\forall B\subset_\fin A) \left[ B_\pphi(F_1\r B) = B_\pphi(F_2\r B) \right].$$
Let $\XX = \vec\FF_\pphi(T)/{\sim}$. For $f\in\FF_\pphi(T)$, we define $[f]$ to be the set of classes $F'/{\sim}$ such that $B_\pphi(f) = B_\pphi(F'\r\dom(f))$; then %$\tau_0 = \left\{[f]:f\subset_\fin F'\in\vec\FF_\pphi(T)\right\}$  %%% The following is a more concise definition of \tau_0:
$\tau_0 = \left\{ [f] : f \in \FF_\pphi(T) \right\}$ may be viewed as a base of clopen sets for a Stone topology on $\XX$.  We define two families of subsets of $\FF_\pphi(T)$ as follows:

\begin{itemize}
\item Let $B_0, B \in \Age_\pphi(F)$, $f_0$, $u_0$ be such that $B_0\leq B$, and let $f_0\in \FF_\pphi(T)$ such that $u_0:B_0\cong B_{\pphi}(f_0)$.  

Then, $f\in R^n(B,u_0,f_0)$ iff the following holds:
%% I removed B_0 since it is the domain of u_0, so is determined by the data already presented. %%

\textbf{If} $f_0\subseteq f$ and $f$ is compatible with $\pphi$, \textbf{then} there are $f_0\subseteq f_1,...,f_n\subseteq f$ and $u_0\subseteq u_1,...,u_n$ such that $u_i:B \cong B_{\pphi}(f_i)$ for each $1\leq i\leq n$ and $\img(f_i)\cap\img(f_j) = \img(f_0)$ for all $1\leq i<j\leq n$.

%\medskip
%
%Then, we put $R^n(B/B_0,u_0,f_0)$.

\item Let $B\in\Age_\pphi(F)$. Then $f\in R_F(B)$ iff $f$ is compatible with $\pphi$ and there is some $f_0\subseteq f$ such that $B_{\pphi}(f_0)\cong B$
\end{itemize}
By the definitions of $\FF_\pphi(T)$ and $\XX$, it is not difficult to check that 
%$$\Gamma_F=\Big\{[f]:f\in R_F(B)\Big\}\cup\Big\{[f]:f\in R^n(B/B_0,u_0,f_0)\Big\}$$
%%%% I think what you meant was this: %%%%
$$\Gamma_F = \bigcup_{B} \left\{ [f] : f \in R_F(B) \right\} \cup \bigcup_{B, u_0, f_0, n} \left\{ [f] : f \in R^n(B, u_0, f_0) \right\}$$
is a countable family of dense-open sets, so as $\XX$ has the Baire property (because it is compact Hausdorff), the intersection $\bigcap\Gamma_F$ is non-empty. For any $F'/{\sim}$ in $\bigcap\Gamma_F$, $F'$ satisfies the requirements of Item 1.

\medskip

For Item 2: First, we observe that for any $F'\in\vec\FF_\pphi(T)$, $\Age_\pphi(F')$ has HP. Moreover, if $F$ is $\pphi$-resolved, then by definition, $\Age_\pphi(F)$ has disjoint-JEP and disjoint-AP -- so $\Age_\pphi(F)$ is an algebraically trivial \fraisse class. %Conversely, disjoint-JEP and disjoint-AP translate directly into the condition of $\pphi$-resolved-ness.
%% What is meant here by "Conversely"?  This is a uni-directional statement that has already been proven %%
%%% Vince added 12/13 %%%
Since $F$ is $\pphi$-resolved, $\A_\pphi(F)$ is the generic model of $\Age_\pphi(F)$.
%%%%%%%%%%%%%%%%%%%%%%%%%
\end{proof}

%%%%%--------------------------------------------------------------------------------------------------------------%%%%%
\begin{cor}\label{Lemma_up_res}
Let $T\in\TT$, and let $\pphi$ be a sequence of $\L_T$-formulas.  Then, for any $f\in\FF_\pphi(T)$, there is a $\pphi$-resolved $F\in\vec\FF_\pphi(T)$ such that $B_{\pphi}(f)\in\Age_\pphi(F)$. 
\end{cor}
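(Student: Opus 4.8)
The plan is to reduce the statement to the case of an arbitrary (not necessarily $\pphi$-resolved) member of $\vec\FF_\pphi(T)$. Indeed, once we have found some $F_0\in\vec\FF_\pphi(T)$ with $B_\pphi(f)\in\Age_\pphi(F_0)$, Proposition~\ref{Prop_resolved}(1) supplies a $\pphi$-resolved $F\in\vec\FF_\pphi(T)$ with $\Age_\pphi(F_0)\subseteq\Age_\pphi(F)$, and then $B_\pphi(f)\in\Age_\pphi(F)$ as required. So the entire issue is to extend the finite object $f$ to some $F_0\in\vec\FF_\pphi(T)$; and the cleanest such $F_0$ is one with $F_0\r\dom(f)=f$, since then $\A_\pphi(F_0)\r\dom(f)=B_\pphi(f)$ outright.

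First I would record a reformulation of membership in $\FF(T)$: writing $\acl$ for algebraic closure in the ambient model, an injective $g\colon B'\to\|\M\|$ with all values in one sort lies in $\FF(T)$ if and only if $g(a)\notin\acl(g[B'\setminus\{a\}])$ for every $a\in B'$ --- this is equivalent to ``$\tp(g[B']/g[B_0])$ non-algebraic for all $B_0\subsetneq B'$'' because $\acl$ is monotone. Consequently $F\colon A\to\|\M'\|$ lies in $\vec\FF(T)$ precisely when $F$ is injective, concentrated in one sort, and $F(a)\notin\acl(F[B_0])$ for every finite $B_0\subset A$ and $a\in A\setminus B_0$; call such an image set \emph{algebraically free}. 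So, fixing a sufficiently saturated and homogeneous $\M^*\succeq\M$ and an enumeration $A=\dom(f)\cup\{a_i:i<\omega\}$, we must extend $f$ to $F\colon A\to\|\M^*\|$ recursively, at step $i$ choosing a value $c_i=F(a_i)$ in the sort of $\pphi$ so that $f[\dom f]\cup\{c_j:j\le i\}$ remains algebraically free; by finite character of this condition we then get $F\in\vec\FF_\pphi(T)$.

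The one substantial point is the recursion step, which I would isolate as a lemma: \emph{if $D\subset\|\M^*\|$ is a finite algebraically free set contained in an infinite sort $s$, then some $c\in s(\M^*)$ makes $D\cup\{c\}$ algebraically free} (passing to a further elementary extension if necessary). I would attack this by a saturation/indiscernibility argument. Let $q=\tp(\bar d/\emptyset)$, where $\bar d$ enumerates $D$: this is a non-algebraic type all of whose realizations are again algebraically free, since ``algebraically free'' is a property of the type. Via Ramsey and compactness, extract a non-constant $\emptyset$-indiscernible sequence $(\bar d^{(t)}:t\in\mathbb Q)$ of realizations of $q$ chosen so that the union of the coordinate sets is algebraically free, and take $c$ to be a coordinate of $\bar d^{(1)}$; feeding the coordinates of $\bar d^{(0)},\bar d^{(1)},\bar d^{(2)},\dots$ into $F$ (with $F\r\dom f$ realizing $q$) then produces the desired extension. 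The heart of the matter is the existence of such a sequence: if some coordinate of the union were algebraic over the rest, one uses that the index set is dense and that the witnessing ``$\exists^{\le N}$''\,-\,formula is uniform along the indiscernible sequence to slide the finitely many other participating tuples into pairwise disjoint positions, deducing that the coordinate is already algebraic over the remaining coordinates of its own tuple --- contradicting that $q$ has algebraically free coordinates.

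The main obstacle is exactly this algebraicity-over-indiscernibles bookkeeping: the naive assertion ``algebraic over $\M^* I_0$ for finite $I_0$ inside an $\M^*$-indiscernible sequence forces algebraic over $\M^*$'' is false, so the density of $\mathbb Q$ and the uniformity of the witnesses must be exploited with care. An alternative route to the lemma is to reduce the general case, by an induction on $|D|$ that at each step absorbs one element of $D$ into the base parameters, to the case $|D|=1$ over an arbitrary small base set --- which is more tractable because the non-algebraic $1$-type of the single element cannot have all of its $|\M^*|$-many realizations inside the algebraic closure of any one of them. Once the lemma is in hand, the recursion produces $F\supseteq f$ in $\vec\FF_\pphi(T)$, and the final application of Proposition~\ref{Prop_resolved}(1) completes the argument.
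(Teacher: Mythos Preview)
Your approach is essentially identical to the paper's: extend $f$ to some $F_0\in\vec\FF_\pphi(T)$ (so that $B_\pphi(f)\in\Age_\pphi(F_0)$ automatically), then invoke Proposition~\ref{Prop_resolved}(1) to replace $F_0$ by a $\pphi$-resolved $F$. The paper's entire proof is two sentences and simply asserts, without argument, that one can choose $F_0\in\vec\FF_\pphi(T)$ with $f\subset F_0$; you devote almost all of your effort to justifying precisely this extension step.

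That effort is not misplaced --- in an arbitrary theory, algebraic closure need not satisfy exchange, so the existence of $c$ keeping $D\cup\{c\}$ algebraically free is not quite as immediate as the paper's terseness suggests --- but your indiscernible-over-$\mathbb Q$ ``sliding'' argument and the alternative induction on $|D|$ are both left sketchy at exactly the delicate point (controlling which coordinates of the indiscernible tuples may coincide, and why the witnessing algebraic formula can be pushed down to a single tuple). There is also a small wobble between ``$F_0\r\dom(f)=f$'' and ``$F_0\r\dom(f)$ merely realizes $\tp(\bar d)$''; the latter still suffices for $B_\pphi(f)\in\Age_\pphi(F_0)$, so no harm done, but it is worth being explicit. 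In short: same strategy, same reduction, but you attempt to supply detail where the paper supplies none.
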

\begin{proof}
Given $f\in\FF_\pphi(T)$, we choose any $F_0\in\vec\FF_\pphi(T)$ such that $f\subset F_0$. Applying Proposition \ref{Prop_resolved}, we then obtain a $\pphi$-resolved $F\in\vec\FF_\pphi(T)$ such that $B_\pphi(f)\in \Age_\pphi(F_0)\subseteq \Age_\pphi(F)$.
\end{proof}

%%%%%--------------------------------------------------------------------------------------------------------------%%%%%
\begin{obs}\label{Obs_combine_pictures}
Let $T\in\TT$, and let $\pphi$ be a sequence of $\L_T$-formulas. Let $F_0,...,F_n,...\in\vec\FF_\phi(T)$. Then, there are a partition $\{A_n\}_{n<\omega}$ of $A$ into infinite classes, injections $u_n:A\to A_n$, and an $F\in\vec\FF_\phi(T)$ such that for each $n<\omega$, $\A_\pphi(F_n) = \A_\pphi(F\circ u_n)$.
\end{obs}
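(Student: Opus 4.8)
The plan is to build the target structure on $A$ by distributing disjoint copies of the countably many $\A_\pphi(F_n)$'s into pairwise-disjoint infinite pieces of $A$, then to check that the resulting function genuinely lies in $\vec\FF_\phi(T)$. First I would fix, for each $n<\omega$, a model $\M_n\models T$ with $F_n:A\to\|\M_n\|$; since $T$ is complete with infinite models, I may pass to a single monster model $\M\models T$ into which each $\M_n$ elementarily embeds, and by $\aleph_0$-saturation (or compactness) I can realize countably many ``independent'' copies of each picture inside $\M$. Concretely, I would choose a partition $\{A_n\}_{n<\omega}$ of $A$ into infinite classes and bijections $u_n:A\to A_n$, and I would define $F$ on $A_n$ so that $F\circ u_n$ realizes the same quantifier-free $\pphi$-type over the empty set as $F_n$ does --- equivalently, so that $\A_\pphi(F\circ u_n)\cong\A_\pphi(F_n)$ via $u_n$ --- while arranging that the union of the images $F[A_0],F[A_1],\dots$ behaves ``freely,'' i.e. the full $\pphi$-picture induced on $A$ is (isomorphic to) the disjoint-union structure of the $\A_\pphi(F_n)$'s. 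The existence of such an $F$ follows from compactness: the type asserting ``$F[A_n]$ has the same $\pphi$-diagram as $F_n$, for all $n$'' together with ``distinct points map to distinct elements'' and ``all points lie in the sort of $\pphi$'' is finitely satisfiable (any finite fragment involves only finitely many $A_n$'s and finitely many points of each, which can be found in $\M$ since each $F_n\r B$ lies in $\FF_\pphi(T)$).

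The second step is to verify that the $F$ so produced actually belongs to $\vec\FF_\phi(T)$, i.e. that $F\r B\in\FF(T)$ for every finite $B\subset A$; the only nontrivial clause is the algebraic-triviality requirement that $\tp(F[B]/F[B_0])$ be non-algebraic for every proper $B_0\subsetneq B$. Here I would use the disjointness of the pieces: writing $B=\bigsqcup_n (B\cap A_n)$, the image $F[B]$ splits across distinct sorts-of-nothing, no, across distinct ``free'' blocks whose union carries no more structure than the disjoint union; since each $F_n\in\vec\FF_\phi(T)$ already has the non-algebraicity property on its own block, and since nothing in the combined $\pphi$-structure links one block to another, a point of $F[B]$ outside $F[B_0]$ still has infinitely many conjugates over $F[B_0]$ --- one can produce them by moving within the relevant block $A_n$ (using that $F_n$ is non-algebraic there) and using that the free/disjoint amalgam admits automorphisms fixing the other blocks. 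This is essentially the same Baire-category / disjoint-amalgamation bookkeeping used in Proposition \ref{Prop_resolved}, and I would phrase it that way if a soft compactness argument does not immediately deliver it.

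The last step is purely bookkeeping: once $F\in\vec\FF_\phi(T)$ is in hand with $\A_\pphi(F\circ u_n)\cong\A_\pphi(F_n)$, one simply records the partition $\{A_n\}$ and the injections $u_n$, and the conclusion ``$\A_\pphi(F_n)=\A_\pphi(F\circ u_n)$'' holds on the nose if we chose $u_n$ to transport the labelling, or up to the canonical identification otherwise. I expect the main obstacle to be the second step --- making sure the combined function still has the strong algebraic-triviality property from Definition \ref{Def_LC_functions} across block boundaries, rather than just within each block. The cleanest way around this is to insist at the construction stage that the combined $\pphi$-diagram on $A$ is \emph{exactly} the disjoint union of the $\A_\pphi(F_n)$'s (no cross-block instances of any $R_i$), which both is realizable in $\M$ by compactness and makes the non-algebraicity check reduce block-by-block to the hypothesis $F_n\in\vec\FF_\phi(T)$.
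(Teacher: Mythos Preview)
Your overall plan---choose the partition first, then run a compactness argument to manufacture $F$---is exactly what the paper does; its entire proof is the single sentence ``We choose the partition $\{A_n\}_n$ arbitrarily, and we construct $F$ by a routine compactness argument.'' So at the strategic level you are aligned with the paper, and you are also right that the only real content is verifying that the resulting $F$ lands in $\vec\FF_\pphi(T)$, i.e.\ that the cross-block non-algebraicity required by Definition~\ref{Def_LC_functions} actually holds.

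There is, however, a genuine gap in your proposed resolution of that issue. You claim that forcing the combined $\pphi$-diagram to be the disjoint union of the $\A_\pphi(F_n)$'s ``makes the non-algebraicity check reduce block-by-block to the hypothesis $F_n\in\vec\FF_\phi(T)$.'' This is not correct. The non-algebraicity clause in Definition~\ref{Def_LC_functions} is about $\acl$ computed in the ambient model $\M\models T$, using \emph{all} $\L_T$-formulas, not merely those appearing in $\pphi$. The $\pphi$-diagram records only the truth values of $\phi_0,\dots,\phi_{N-1}$ on $F$-images; it says nothing about other $\L_T$-formulas that may make $F(a)$ algebraic over $F$-images coming from other blocks. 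Likewise, the ``automorphisms of the free/disjoint amalgam'' you invoke are automorphisms of the $\L_\pphi$-structure $\A_\pphi(F)$, not of $\M$, and therefore do not produce conjugates of $F(a)$ in $\M$ over $F[B_0]$; they witness nothing about $\acl^\M$.

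The way to carry out the compactness argument correctly is to put the non-algebraicity constraints directly into the type: alongside the block-wise $\pphi$-diagram conditions, include, for every $a\in A$, every finite $C\subseteq A\setminus\{a\}$, and every $\L_T$-formula $\psi(x;\bar y)$ for which $T\vdash\forall\bar y\,\exists^{\le k}x\,\psi(x;\bar y)$, the formula $\neg\psi\bigl(x_a;(x_c)_{c\in C}\bigr)$. Finite satisfiability then becomes the assertion that finitely many $f_i\in\FF_\pphi(T)$ on pairwise-disjoint finite domains can be amalgamated into a single member of $\FF_\pphi(T)$; this is where the hypothesis that each $F_n$ already lies in $\vec\FF_\pphi(T)$ (so that the relevant block-types are non-algebraic in the strong sense) is actually used, and it does not follow from any statement about the $\pphi$-diagram alone.
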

\begin{proof}
We choose the partition $\{A_n\}_n$ arbitrarily, and we construct $F$ by a routine compactness argument.
\end{proof}

%%%%%--------------------------------------------------------------------------------------------------------------%%%%%
\begin{obs}\label{Obs_countable_dense_subset}
The space $\XX_\pphi(T)$ (denoted $\XX$) in the proof of Proposition \ref{Prop_resolved} has a countable dense subset $W_\pphi(T)$ such that $F$ is $\pphi$-resolved for each $F/{\sim}$ in $W_\pphi(T)$.
\end{obs}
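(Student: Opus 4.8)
The plan is to produce $W_\pphi(T)$ by selecting, from each basic clopen set of $\XX := \XX_\pphi(T)$, a single $\pphi$-resolved class, and then to check that the resulting collection is countable and dense; the only non-formal input will be Corollary \ref{Lemma_up_res}, together with a renaming trick to land a $\pphi$-resolved function inside a prescribed basic clopen set.

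First I would record that the canonical base $\tau_0 = \{[f] : f\in\FF_\pphi(T)\}$ from the proof of Proposition \ref{Prop_resolved} is countable \emph{as a family of subsets} of $\XX$: by definition $[f]$ depends on $f$ only through the pair $(\dom(f), B_\pphi(f))$, and since $A$ is a fixed countable set and $\L_\pphi$ is finite and relational, there are only countably many finite $B\subset_\fin A$ and, for each such $B$, only finitely many $\L_\pphi$-structures with universe $B$. Hence $\XX$ is second countable (it is already compact Hausdorff), so it remains only to arrange a countable dense subset all of whose members are $\pphi$-resolved.

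Next I would show that every $[f]\in\tau_0$ contains a $\pphi$-resolved class. Given $f\in\FF_\pphi(T)$, Corollary \ref{Lemma_up_res} supplies a $\pphi$-resolved $F\in\vec\FF_\pphi(T)$ with $B_\pphi(f)\in\Age_\pphi(F) = \Age(\A_\pphi(F))$, i.e.\ an embedding $j\colon B_\pphi(f)\hookrightarrow\A_\pphi(F)$; on underlying sets $j$ is a finite partial injection $A\to A$, which extends to a bijection $\sigma\colon A\to A$ with $\sigma\restriction\dom(f) = j$ (using that $A$ is countably infinite). Then $F' := F\circ\sigma$ again lies in $\vec\FF_\pphi(T)$ (the conditions defining $\FF_\pphi(T)$ are insensitive to bijective renaming of the domain), $\sigma$ is an isomorphism $\A_\pphi(F')\to\A_\pphi(F)$, and unwinding the interpretations shows $B_\pphi(F'\restriction\dom(f)) = B_\pphi(f)$, so that $F'/{\sim}\in[f]$. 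Finally, being $\pphi$-resolved is a property stated purely in terms of the existence, over each finite set, of infinitely many pairwise-disjoint realizations of quantifier-free $\L_\pphi$-types inside $\A_\pphi(F)$, and this is manifestly transported along the isomorphism $\sigma$; hence $F'$ is $\pphi$-resolved as well. This last transfer --- upgrading the ``$B_\pphi(f)$ embeds into $\A_\pphi(F)$'' conclusion of Corollary \ref{Lemma_up_res} to actual membership in the chosen $[f]$ while retaining $\pphi$-resolvedness --- is the only step requiring genuine (though short) verification, and is where I expect whatever obstacle there is to lie.

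To conclude, for each of the countably many sets $[f]\in\tau_0$ fix such a $\pphi$-resolved representative $F_{[f]}/{\sim}\in[f]$, and let $W_\pphi(T)$ be the resulting (countable) set of classes. Any nonempty open subset of $\XX$ contains a basic set $[f]\in\tau_0$ and hence contains $F_{[f]}/{\sim}\in W_\pphi(T)$; thus $W_\pphi(T)$ is dense in $\XX$, and by construction $F$ is $\pphi$-resolved for each $F/{\sim}\in W_\pphi(T)$, which is exactly the assertion to be proved.
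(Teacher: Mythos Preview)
Your proof is correct. The paper leaves this as an observation without proof; the intended argument is presumably the one already implicit in the proof of Proposition~\ref{Prop_resolved}: the Baire-category step there shows that the intersection of the countably many dense-open sets $\bigcup\{[g]:g\in R^n(B,u_0,f_0)\}$ is \emph{dense} in $\XX$ (not merely nonempty), and every class in that intersection is $\pphi$-resolved, so one simply selects one such class from each of the countably many basic sets $[f]\in\tau_0$. Your route via Corollary~\ref{Lemma_up_res} together with the bijective renaming $F'=F\circ\sigma$ is a valid alternative that reconstructs this density by hand rather than reading it off the Baire argument; the verifications that $\pphi$-resolvedness transports along $\sigma$ and that $B_\pphi(F'\restriction\dom(f))=B_\pphi(f)$ are straightforward and go through exactly as you say.
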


%%%%%%%%%%%%%%%%%%%%%%%%%%%%%%%%%%%%%%%%%%%%%%%%%%%%%%%%
%%%%%%%%%%%%%%%%%%%%%%%%%%%%%%%%%%%%%%%%%%%%%%%%%%%%%%%%
\subsection{The ordering $\cleq$ (and some more infrastructure)}\label{Subsec_CLEQ}

We have discussed and formalized our notion of local combinatorics of first-order theories, and now, we use these ideas to formulate an ordering $\cleq$ of $\TT$ that will allow us to compare theories based on their local combinatorics. Initially, we present a definition in which $T_1$ and $T_2$ are compared by way finite structures,  $B_\pphi(f)$'s, but in Proposition \ref{Prop_variants_of_CLEQ}, we demonstrate (unsurprisingly, given Proposition \ref{Prop_resolved}) that this is equivalent to comparing $T_1$ and $T_2$ based on (generic models of) algebraically trivial \fraisse classes.

%%%%%--------------------------------------------------------------------------------------------------------------%%%%%
\begin{defn}\label{Defn_TheOrdering}
Let $T_1,T_2\in\TT$. Then we assert $T_1\cleq T_2$ if for every finite sequence $\pphi^1 = (\phi^1_0,...,\phi^1_{N-1})$ of $\L_{T_1}$-formulas, there is  $\pphi^2 = (\phi^2_0,...,\phi^2_{N-1})$ in $\L_{T_2}$, coordinate-wise of the same arities,  such that 
$\Age_{\pphi^1}(T_1)\subseteq\Age_{\pphi^2}(T_2)$. %up to the natural identifications of relations symbols between signatures.  %% See Convention 2.(1.5) %%
\end{defn}

%%%%%--------------------------------------------------------------------------------------------------------------%%%%%
\begin{obs}
Let $T_1,T_2\in\TT$. If $T_1$ is interpretable in $T_2$, then $T_1\cleq T_2$.
\end{obs}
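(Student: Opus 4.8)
The plan is to transport finite configurations of $T_1$ into $T_2$ through the interpretation, but \emph{not} naively: the obvious transport fails because a tuple that is algebraically independent inside the interpreted structure $\Gamma(\M_2)$ can acquire algebraic dependencies inside $\M_2$ (for instance, with $\mathrm{DLO}$ realized as a reduct of $\mathrm{RCF}$, the increasing pair $(p,p+1)$ lies in $\FF$ for $\mathrm{DLO}$, yet $p+1\in\acl^{\mathrm{RCF}}(p)$). The fix is to \emph{pad} the interpreting sort with a free copy of an infinite sort of $T_2$ and to read the translated formulas off only the un-padded coordinate; choosing the padding coordinates algebraically independent then restores the algebraic-triviality clause in the definition of $\FF$ without disturbing the combinatorial picture.

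First I would fix an interpretation $\Gamma$ of $T_1$ in $T_2$ and, using that $T_2\,(=T_2^{\eq})$ eliminates imaginaries, normalize it so that each sort $s$ of $T_1$ is interpreted, in any $\M_2\models T_2$, as a $\emptyset$-definable set $D_s$ sitting via a $\emptyset$-definable bijection $\beta_s$ inside a sort of $\M_2$, and so that for every $\L_{T_1}$-formula $\psi$ there is an $\L_{T_2}$-formula $\psi^\Gamma$ with $\M_2\models\psi^\Gamma(\bar d)\iff\Gamma(\M_2)\models\psi(\beta_s\bar d)$ on $D_s$. (If $T_1$ has no infinite sort then, by elimination of imaginaries and interpretability, neither does $T_2$, and every relevant $\FF$ consists of maps with empty domain, so the statement is trivial; assume then that $T_2$ has an infinite sort $G$.) Given $\pphi^1=(\phi^1_0,\dots,\phi^1_{N-1})$ of sort $s_1$, I set $D':=D_{s_1}\times G$ (a sort of $T_2^{\eq}$), with first projection $\pi\colon D'\to D_{s_1}$, and define $\pphi^2:=(\phi^2_0,\dots,\phi^2_{N-1})$ where $\phi^2_i(\bar y):=\phi^{1,\Gamma}_i(\pi\bar y)$; this is coordinate-wise of the same arities as $\pphi^1$, and the goal becomes $\Age_{\pphi^1}(T_1)\subseteq\Age_{\pphi^2}(T_2)$.

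Next I would record the easy reformulation, read off Definition \ref{Def_LC_functions}, that $f\in\FF_\pphi(T)$ iff $f$ is an injection into the sort of $\pphi$ with $f(a)\notin\acl^\M\!\big(f[\dom f\setminus\{a\}]\big)$ for every $a\in\dom f$. Then, given $H=B_{\pphi^1}(f)$ with $f\colon B\to\|\M_1\|$ in $\FF_{\pphi^1}(T_1)$, I pass to a monster $\M_2\models T_2$ and take $\M_1=\Gamma(\M_2)$ (legitimate: configurations are unchanged on passing to a monster model, and a monster model of $T_1$ is realized as $\Gamma(\M_2)$). Putting $C:=\beta_{s_1}^{-1}(f[B])$, a finite subset of $D_{s_1}$, I choose $\{g_b:b\in B\}\subseteq G$ to be an initial segment of an infinite $C$-indiscernible sequence of pairwise distinct elements, which is algebraically independent over $C$ in $\M_2$. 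I then set $g\colon B\to D'$, $g(b):=(\beta_{s_1}^{-1}f(b),\,g_b)$. Since $g(b)$ is interdefinable with the pair $(\beta_{s_1}^{-1}f(b),g_b)$, algebraic independence of the $g_b$ over $C$ yields $g(b)\notin\acl^{\M_2}(g[B\setminus\{b\}])$, so $g\in\FF_{\pphi^2}(T_2)$; and $\M_2\models\phi^2_i(g\bar a)\iff\Gamma(\M_2)\models\phi^1_i(f\bar a)$, so $B_{\pphi^2}(g)=B_{\pphi^1}(f)=H$, giving $H\in\Age_{\pphi^2}(T_2)$.

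I expect the main obstacle to be \emph{seeing} that the naive transport is inadequate and that the padding device is the right remedy; once that is in hand the rest is routine bookkeeping through the interpretation. The one sub-point that needs a genuine (though standard) argument is that an infinite indiscernible sequence of distinct elements is algebraically independent over its base: if one term were algebraic over the base together with finitely many other terms, indiscernibility would force a single uniformly-finite formula over a fixed finite parameter set to capture infinitely many distinct terms of the sequence, which is impossible.
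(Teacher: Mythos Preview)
The paper records this as an observation without proof, so there is no argument to compare against. Your proof is correct, and you have identified a genuine subtlety: the naive transport of $f\in\FF_{\pphi^1}(T_1)$ through the interpretation need not land in $\FF_{\pphi^2}(T_2)$, since algebraic closure in $\M_2$ may strictly enlarge algebraic closure computed in the interpreted structure $\Gamma(\M_2)$; your $(\pi,2\pi)$ example in $\mathrm{RCF}$ versus $\mathrm{DLO}$ makes this concrete. The padding device---working in $D_{s_1}\times G$ for an infinite sort $G$ of $T_2$ and reading the translated formulas off the first coordinate only---resolves this cleanly, and your supporting lemma (that a non-constant infinite indiscernible sequence over $C$ is algebraically independent over $C$) is standard and correctly sketched: if some $g_j$ were algebraic over $C$ together with the others via a formula with $\le k$ solutions, sliding the parameter block along the sequence and invoking indiscernibility yields more than $k$ distinct solutions. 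Two small points worth tightening: your parenthetical about $T_1$ having no infinite sort is slightly garbled (what you actually need is that if the sort of $\pphi^1$ is infinite then so is the sort interpreting it in $T_2$, which already furnishes $G$), and you should say explicitly that $D_{s_1}\times G$ is in definable bijection with a single sort of $T_2=T_2^{\eq}$, so that $\pphi^2$ is a sequence of one-sorted formulas as the definition of $\cleq$ requires.
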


%%%%%--------------------------------------------------------------------------------------------------------------%%%%%
\begin{prop}\label{Prop_variants_of_CLEQ}
Let $T_1,T_2\in\TT$. The following are equivalent:
\begin{enumerate}
\item $T_1\cleq T_2$.
\item For every finite sequence $\pphi^1 = (\phi^1_0,...,\phi^1_{N-1})$ of $\L_{T_1}$-formulas, there is  $\pphi^2 = (\phi^2_0,...,\phi^2_{N-1})$ in $\L_{T_2}$, coordinate-wise of the same arities,  such that for every $\pphi^1$-resolved $F_1\in\vec\FF_{\pphi^1}(T_1)$, there is a $\pphi^2$-resolved $F_2\in\vec\FF_{\pphi^2}(T_2)$ such that $\A_{\pphi^1}(F_1)=\A_{\pphi^2}(F_2)$. %%  up to the natural identifications of relations symbols between signatures.   %% See Convention 2.(1.5) %%

\item For every finite sequence $\pphi^1 = (\phi^1_0,...,\phi^1_{N-1})$ of $\L_{T_1}$-formulas, there is  $\pphi^2 = (\phi^2_0,...,\phi^2_{N-1})$ in $\L_{T_2}$, coordinate-wise of the same arities,  such that $\SS_{\pphi^1}(T_1)\subseteq \SS_{\pphi^2}(T_2)$. % up to the natural identifications of relations symbols between signatures.   %% See Convention 2.(1.5) %%
\end{enumerate}
\end{prop}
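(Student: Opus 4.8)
The plan is to prove the cycle of implications $(1)\Rightarrow(2)\Rightarrow(3)\Rightarrow(1)$, leaning throughout on Proposition \ref{Prop_resolved} and Corollary \ref{Lemma_up_res} to translate between finite structures $B_{\pphi}(f)$ and generic models $\A_\pphi(F)$ of algebraically trivial \fraisse classes. The content of the statement is essentially that comparing ages (finite structures), comparing resolved $\A_\pphi(F)$'s (generic models), and comparing the sets $\SS_\pphi(T)$ of resolved $\A_\pphi(F)$'s are all the same relation; most of the work is bookkeeping, since Proposition \ref{Prop_resolved} already does the heavy lifting of passing from an arbitrary $F$ to a $\pphi$-resolved one with a larger age. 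Note that, formally, (2) and (3) say almost the same thing, the difference being that (3) packages the set of resolved structures as $\SS_{\pphi}$ and asks for containment, while (2) asks for a pointwise matching; since $\SS_\pphi(T) = \{\A_\pphi(F): F\in\vec\FF_\pphi(T)\text{ is }\pphi\text{-resolved}\}$ by definition, (2) and (3) should be almost immediate reformulations of each other once the quantifier over $\pphi^2$ is handled, so the real work is $(1)\Leftrightarrow(2)$.

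For $(2)\Rightarrow(1)$: fix a sequence $\pphi^1$ of $\L_{T_1}$-formulas and obtain the matching $\pphi^2$ from (2). I must show $\Age_{\pphi^1}(T_1)\subseteq\Age_{\pphi^2}(T_2)$. So let $B\in\Age_{\pphi^1}(T_1)$, i.e.\ $B = B_{\pphi^1}(f)$ for some $f\in\FF_{\pphi^1}(T_1)$. By Corollary \ref{Lemma_up_res}, there is a $\pphi^1$-resolved $F_1\in\vec\FF_{\pphi^1}(T_1)$ with $B\in\Age_{\pphi^1}(F_1)$. By Proposition \ref{Prop_resolved}(2), $\Age_{\pphi^1}(F_1) = \Age(\A_{\pphi^1}(F_1))$ is an algebraically trivial \fraisse class with limit $\A_{\pphi^1}(F_1)$. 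Now apply (2) to $F_1$: there is a $\pphi^2$-resolved $F_2$ with $\A_{\pphi^1}(F_1) = \A_{\pphi^2}(F_2)$, hence $\Age_{\pphi^1}(F_1) = \Age_{\pphi^2}(F_2)$, and $B\in\Age_{\pphi^2}(F_2)\subseteq\Age_{\pphi^2}(T_2)$, as desired.

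For $(1)\Rightarrow(2)$: this is the step I expect to be the main obstacle, because (1) only gives containment of ages, and I need to produce, for an arbitrary $\pphi^1$-resolved $F_1$, a single $\pphi^2$-resolved $F_2$ with $\A_{\pphi^1}(F_1) = \A_{\pphi^2}(F_2)$ on the nose (not merely an age-containment). The idea: $\A_{\pphi^1}(F_1)$ is the \fraisse limit of the algebraically trivial \fraisse class $\K := \Age_{\pphi^1}(F_1)\subseteq\Age_{\pphi^1}(T_1)\subseteq\Age_{\pphi^2}(T_2)$. Every $B\in\K$ is realized as $B_{\pphi^2}(f_B)$ for some $f_B\in\FF_{\pphi^2}(T_2)$; the task is to amalgamate/combine these realizations — using Observation \ref{Obs_combine_pictures} to place countably many $F$'s on disjoint pieces of $A$ and a compactness argument — into one $F_2'\in\vec\FF_{\pphi^2}(T_2)$ whose age contains all of $\K$, then apply Proposition \ref{Prop_resolved}(1) to pass to a $\pphi^2$-resolved $F_2$ with $\K\subseteq\Age_{\pphi^2}(F_2)$. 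Finally, since $\A_{\pphi^1}(F_1)$ is \emph{the} generic model of $\K$ and $\A_{\pphi^2}(F_2)$ is the generic model of the algebraically trivial \fraisse class $\Age_{\pphi^2}(F_2)\supseteq\K$, I want to arrange equality of ages so that uniqueness of \fraisse limits (and $\aleph_0$-categoricity, so that same age plus ultrahomogeneity forces the same structure on the common universe $A$, up to the identification implicit in the fixed universe convention) gives $\A_{\pphi^1}(F_1) = \A_{\pphi^2}(F_2)$. Getting the age to be \emph{exactly} $\K$ rather than something larger is the delicate point: I expect to handle it by choosing the realizations $f_B$ and the combined $F_2'$ carefully — building $F_2'$ by a back-and-forth/compactness construction that simultaneously realizes every $B\in\K$ and refuses to realize any structure outside $\K$ (possible because $\Age_{\pphi^1}(T_1)\subseteq\Age_{\pphi^2}(T_2)$ can be used in reverse to control which $\L_{\pphi}$-structures appear), or alternatively by observing that the resolution step can only be steered, via the dense sets $R_F(B)$ in the proof of Proposition \ref{Prop_resolved}, toward exactly the structures in $\K$. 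Once $(1)\Rightarrow(2)$ is secured, $(2)\Rightarrow(3)$ and $(3)\Rightarrow(2)$ follow by unwinding the definition of $\SS_\pphi$, and $(3)\Rightarrow(1)$ follows from $(3)\Rightarrow(2)\Rightarrow(1)$ just proved, closing the cycle.
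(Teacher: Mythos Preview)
Your treatment of $(2)\Rightarrow(1)$ and of $(2)\Leftrightarrow(3)$ is correct and matches the paper's argument. The gap is in $(1)\Rightarrow(2)$.

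Your route there is unnecessarily indirect and, as you yourself notice, runs into a real obstruction. Combining realizations of the various $B\in\K$ via Observation~\ref{Obs_combine_pictures} and then applying Proposition~\ref{Prop_resolved}(1) can only \emph{enlarge} the age; neither tool gives you any control preventing $\Age_{\pphi^2}(F_2)$ from being strictly larger than $\K$. Your proposed repair, ``refusing to realize any structure outside $\K$'', is not justified: the inclusion $\Age_{\pphi^1}(T_1)\subseteq\Age_{\pphi^2}(T_2)$ says nothing about which structures \emph{fail} to appear on the $T_2$ side, so there is no ``reverse'' use of it. Steering the dense sets $R_F(B)$ in the proof of Proposition~\ref{Prop_resolved} does not help either, since those sets only force the age to \emph{contain} prescribed structures.

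The paper avoids this obstruction entirely by a direct construction that never passes through an age-containment step. Fix an enumeration $a_0,a_1,\dots$ of $A$ and set $f_n=F_1\r\{a_0,\dots,a_{n-1}\}$. Since $B_{\pphi^1}(f_n)\in\Age_{\pphi^1}(T_1)\subseteq\Age_{\pphi^2}(T_2)$, by compactness one finds (in a suitable model of $T_2$) a coherent tower $f'_0\subset f'_1\subset\cdots$ in $\FF_{\pphi^2}(T_2)$ with $\dom(f'_n)=\{a_0,\dots,a_{n-1}\}$ and $B_{\pphi^2}(f'_n)=B_{\pphi^1}(f_n)$ for every $n$. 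Then $F_2=\bigcup_nf'_n$ satisfies $\A_{\pphi^2}(F_2)=\A_{\pphi^1}(F_1)$ \emph{as structures on $A$}, not merely up to age. There is nothing to match: the two $\L_\pphi$-structures are literally equal. Since being $\pphi$-resolved is a property of the $\L_\pphi$-structure $\A_\pphi(F)$ (the condition is phrased in terms of $\pphi$-types), $F_2$ is $\pphi^2$-resolved because $F_1$ is $\pphi^1$-resolved.

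The key point you are missing is that you do not need to first build something with the right age and then argue uniqueness of \fraisse\ limits; you can copy $\A_{\pphi^1}(F_1)$ directly, finite piece by finite piece, into $T_2$.
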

\begin{proof}
2$\iff$3 is by definition of $\SS_\pphi(T)$. Let us write $T_1\cleq'T_2$ to mean that the condition expressed in item 2 holds. 

One the one hand, suppose $T_1\cleq'T_2$, and let $\pphi^1 = (\phi^1_0,...,\phi^1_{N-1})$, a sequence of $\L_{T_1}$-formulas, be given. Since $T_1\cleq'T_2$, let $\pphi^2$ be the promised sequence of $\L_{T_2}$-formulas. To show that $\Age_{\pphi^1}(T_1)\subseteq\Age_{\pphi^2}(T_2)$, let $f_1\in\FF_{\pphi^1}(T_1)$ be given. By Corollary \ref{Lemma_up_res}, we obtain a $\pphi^1$-resolved $F_1\in\vec\FF_{\pphi^1}(T_1)$ such that $B_{\pphi^1}(f_1)\in\Age_{\pphi^1}(F_1)$. By our choice of $\pphi^2$, then, there is a $\pphi^2$-resolved $F_2\in\vec\FF_{\pphi^2}(T_2)$ such that $\A_{\pphi^1}(F_1) = \A_{\pphi_2}(F_2)$. %Up to the natural identifications of relations symbols between signatures,    %% See Convention 2.(1.5) %%
We have $B_{\pphi^1}(f_1)\in\Age_{\pphi^2}(F_2)\subseteq\Age_{\pphi^2}(T_2)$. We have shown that $T_1\cleq T_2$, which proves 2,3$\implies$1.

For 1$\implies$2,3, suppose $T_1\cleq T_2$. Again, let $\pphi^1 = (\phi^1_0,...,\phi^1_{N-1})$, a sequence of $\L_{T_1}$-formulas, be given. Let $\pphi^2$ be the sequence of $\L_{T_2}$-formulas promised by $T_1\cleq T_2$. Suppose $F_1\in\vec\FF_{\pphi^1}(T_1)$ is $\pphi^1$-resolved.

Let $a_0,a_1,...,a_n,...$ be an enumeration of $A$, and for each $n<\omega$, let $f_n = F_1\r\left\{a_0,...,a_{n-1}\right\}$. Since $T_1\cleq T_2$, for each $n$, we obtain $f'_n\in\FF_{\pphi^2}(T_2)$ with domain $\{a_0,...,a_{n-1}\}$ such that $B_{\pphi^1}(f_n) = B_{\pphi^2}(f_n')$. % up to the natural identifications of relations symbols between signatures.    %% See Convention 2.(1.5) %%
By definition of $\FF_{\pphi^2}(T_2)$, we can also ensure that $f_n'\subset f_{n+1}'$ for all $n<\omega$, so $F_2 = \bigcup_nf'_n$ is in $\vec\FF_{\pphi^2}(T_2)$. It is not difficult to verify that $F_2$ is $\pphi^2$-resolved and that $\A_{\pphi^1}(F_1) = \A_{\pphi^2}(F_2)$. % up to the natural identifications of relations symbols between signatures.    %% See Convention 2.(1.5) %%
Thus, $T_1\cleq'T_2$ -- as desired.
\end{proof}

%%%%%%--------------------------------------------------------------------------------------------------------------%%%%%
%\begin{defn}
%Given an arbitrary algebraically trivial \fraisse class $\KK$ in $\L$ with generic model $\A$, a {\em factorization of $\KK$}
%is a list $(\KK_0,...,\KK_{n-1})$ of non-degenerate algebraically trivial \fraisse classes $\KK_0,...,\KK_{n-1}$ ($n>1$) for which there is  an injection $u:A\to B$, where $\B$ is the generic model of $\Pi_i\KK_i$, such that for all $k$ and $\aa,\aa'\in M^k$, 
%$$\qtp^\A(\aa)=\qtp^\A(\aa')\iff \qtp^\B(u\aa) = \qtp^\B(u\aa').$$
%We say that  {\em $\KK$ is indecomposable} if for any factorization $(\KK_0,...,\KK_{n-1})$ of $\KK$, there is an $i<n$ such that $T_\KK\cleq T_{\KK_i}$.
%\end{defn}
%

%\bigskip

Of course, generic models and generic theories of algebraically trivial \fraisse classes will play key role in our work later in this paper. We make one more convenient definition and two observations about $\cleq$ as it pertains to generic theories of \fraisse classes. These two observations -- \ref{Obs_AF_CLEQ_1} and \ref{Obs_witnessing} -- will be used repeatedly in the sequel, often without comment. %Finally, we prove one more fact (\ref{Fact_QT_and_countable}) showing that for countable theories $\cleq$ is actually rather easy to characterize; we do not make of use of this fact below.

%%%%%--------------------------------------------------------------------------------------------------------------%%%%%
\begin{defn}
For each theory $T\in\TT$, we define
$$Q_T = \left\{\Th(\A_{\pphi}(F))^\eq:\begin{array}{l}
\textnormal{$\pphi = (\phi_0,...,\phi_{N-1})$ in $\L_{T}$,}\\
F\in\vec\FF_\pphi(T) \textnormal{ $\pphi$-resolved}, \\
%\textnormal{$\Age_\pphi(F)$ is indecomposable}%,\textnormal{ $\pphi$-resolved}
\end{array}\right\}$$
Later on, it will also be convenient to work with the following sub-class of theories:
$$\tilde\TT := \left\{T_\KK^\eq\,:\,\textnormal{$\KK$ is an alg. trivial \fraisse class}\right\}.$$
\end{defn}

%%%%%%%%%%%%%%%%%%%%%%
%% Vince added 12/9 %%
%%%%%%%%%%%%%%%%%%%%%%
\begin{obs}\label{Obs_AF_CLEQ_1}
In the definition of $Q_T$, each $F$ is $\pphi$-resolved, so by Proposition \ref{Prop_resolved}, $\A_\pphi(F)$ is the \fraisse limit of $\Age_\pphi(F)$, an algebraically trivial \fraisse class.  %Therefore, $\Th(\A_\pphi(F)) = T_{\Age_\pphi(F)}$.  %% Removed 12/13 %%
Hence, $Q_T \subseteq \tilde\TT$.  Notice further that $T_0 \cleq T$ for all $T_0 \in Q_T$.  
%Moreover, if $T'$ is such that $T_0 \cleq T'$ whenever $T_0 \in Q_T$, then $T \cleq T'$.
\end{obs}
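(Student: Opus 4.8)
The first assertion needs no new work: given $T_0\in Q_T$, by Proposition~\ref{Prop_resolved}(2) the class $\KK:=\Age_\pphi(F)$ is an algebraically trivial \fraisse class with \fraisse limit $\A_\pphi(F)$, so $T_0=\Th(\A_\pphi(F))^\eq=T_\KK^\eq$, which by definition lies in $\tilde\TT$; hence $Q_T\subseteq\tilde\TT$. For the second assertion I would fix $T_0=T_\KK^\eq\in Q_T$, with $\pphi$ a sequence of $\L_T$-formulas in a sort $s_\pphi$ of $T$ and $F\colon A\to\|\M\|$ ($\M\models T$) a $\pphi$-resolved element of $\vec\FF_\pphi(T)$, and verify $T_0\cleq T$ directly from Definition~\ref{Defn_TheOrdering}. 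So let $\ppsi^1=(\psi^1_0,\dots,\psi^1_{M-1})$ be a sequence of $\L_{T_0}$-formulas, say of arities $n_0,\dots,n_{M-1}$ and in a sort $s$ of $T_0$. Since $T_\KK$ is $\aleph_0$-categorical and eliminates quantifiers in $\L_\pphi$, $s$ is interdefinable with a quotient $D/E$, where $D$ is a $\emptyset$-definable subset of a finite power of the home sort and $E$ is a quantifier-free $\L_\pphi$-definable equivalence relation on $D$; and the usual decoding of formulas of $T_0=T_\KK^\eq$ presents each $\psi^1_j$, modulo $T_0$, as the result of passing to $E$-classes in a quantifier-free $\L_\pphi$-formula $\chi_j$ in $k n_j$ home variables, grouped into $n_j$ blocks of $k$ and with $\chi_j$ $E$-invariant per block (here $k$ is the length of the tuples coded by $D/E$).

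Next, I would let $\psi^2_j$ be the $\L_T$-formula obtained from $\chi_j$ by replacing every relation symbol $R_i$ by $\phi_i$ (equalities left alone), reread as a formula of $T=T^\eq$ in $n_j$ variables of the product sort $(s_\pphi)^k$; then $\ppsi^2=(\psi^2_0,\dots,\psi^2_{M-1})$ has the same arities as $\ppsi^1$. Now take any $B_{\ppsi^1}(g)\in\Age_{\ppsi^1}(T_0)$ with $g\in\FF_{\ppsi^1}(T_0)$ and $\dom g=B$; by $\aleph_0$-categoricity of $T_0$ we may take $g$ to map into $\A_\pphi(F)^\eq$. Choose representatives $\bar a^b\in D$ of $g(b)\in s(\A_\pphi(F)^\eq)$ and set $h(b)=F\bar a^b\in(s_\pphi)^k(\M)$, applying $F$ coordinatewise. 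Since $\A_\pphi(F)$ sits inside $\M$ along the injection $F$, with its relations the $\phi_i$ pulled back, a direct check gives $\M^\eq\models\psi^2_j(h\bar b)\iff\A_\pphi(F)\models\chi_j(\bar a^{b_0},\dots,\bar a^{b_{n_j-1}})\iff\A_\pphi(F)^\eq\models\psi^1_j(g\bar b)$ for all $j$ and $\bar b\in B^{n_j}$, so $B_{\ppsi^2}(h)=B_{\ppsi^1}(g)$. Hence, once $h\in\FF_{\ppsi^2}(T)$ is established, $\Age_{\ppsi^1}(T_0)\subseteq\Age_{\ppsi^2}(T)$, and therefore $T_0\cleq T$.

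The step I expect to be the crux is showing $h\in\FF_{\ppsi^2}(T)$. Injectivity and the single-sort requirement are immediate from injectivity of $g$ and of $F$, so the real content is the non-algebraicity clause: $\tp^{\M^\eq}(h[B]/h[B_0])$ non-algebraic for every $B_0\subsetneq B$. This follows as soon as $\bigcup_{b\in B_0}\bar a^b\subsetneq\bigcup_{b\in B}\bar a^b$ for every proper $B_0$, since $F\restriction\bigl(\bigcup_{b\in B}\bar a^b\bigr)\in\FF_\pphi(T)$ (because $F\in\vec\FF_\pphi(T)$) already makes the corresponding type over $F[\bigcup_{b\in B_0}\bar a^b]$ non-algebraic in $\M$, and a witnessing coordinate necessarily comes from some $\bar a^{b^*}$ with $b^*\notin B_0$. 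So it suffices to choose the representatives $\bar a^b$ so that each has a coordinate appearing in no other $\bar a^{b'}$, and this is exactly where I would spend effort: one arranges it using that $\A_\pphi(F)$ is \emph{algebraically trivial} (Proposition~\ref{Prop_resolved}(2)) together with the defining property of $g\in\FF_{\ppsi^1}(T_0)$ (each $g(b)$ is non-algebraic over $\{g(b'):b'\neq b\}$) by a short $\acl$-argument in $\A_\pphi(F)^\eq$; alternatively, one first replaces $g$ by a restriction of a $\ppsi^1$-resolved picture (Corollary~\ref{Lemma_up_res}) and reads off suitably spread-out representatives there. (One should also note that the product sorts $(s_\pphi)^k$ are available once we identify $T$ with $T^\eq$, as the convention allows, or else appeal to elimination of imaginaries for $T$.)
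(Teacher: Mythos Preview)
The paper offers no proof: this is stated as an Observation and left to the reader, with the companion Observation~\ref{Obs_witnessing} (also unproved) serving as the intended paraphrase of ``$T_0\cleq T$''. Your argument supplies exactly the missing details and takes the only natural route---decode formulas of $T_\KK^{\eq}$ into quantifier-free $\L_\pphi$-formulas on tuples, substitute $\phi_i$ for $R_i$, and push configurations forward along $F$. The handling of imaginary sorts is careful and correct, and you rightly isolate the non-algebraicity of $h$ as the one nontrivial step.

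That said, the ``short $\acl$-argument'' you sketch at the crux does not go through as stated. Knowing that $g(b^*)\notin\acl^{\A^{\eq}}(g[B\setminus\{b^*\}])$ and that $\A_\pphi(F)$ is algebraically trivial does \emph{not} directly force some representative $\bar a^{b^*}$ to have a coordinate outside $\bigcup_{b\neq b^*}\bar a^b$: the real coordinates of the $\bar a^b$ need not lie in $\acl^{\A^{\eq}}(g[B\setminus\{b^*\}])$, so the chain of inclusions you would need breaks. Your alternative via Corollary~\ref{Lemma_up_res} does not obviously repair this either. A clean and uniform fix is simply to pad: replace $D/E\subseteq A^k/E$ by $(D\times A)/E'\subseteq A^{k+1}/E'$ where $E'$ ignores the last coordinate, add a dummy variable to each $\chi_j$ (and hence to $\psi^2_j$), and set $h(b)=(F\bar a^b,F(e_b))$ with the $e_b\in A$ pairwise distinct and disjoint from all coordinates of all $\bar a^{b'}$. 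This guarantees a fresh coordinate in every $h(b)$, is uniform in $\ppsi^1$, and makes the non-algebraicity immediate from $F\in\vec\FF_\pphi(T)$. With this patch your proof is complete and, in fact, constitutes a genuine proof of what the paper merely asserts.
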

%%%%%%%%%%%%%%%%%%%%%%

%
%%%%%%---------------------------------------------------------------------------------------------------------------------------%%%%%
%\begin{obs}
%Let $T_1,T_2\in\TT$. If $T_2\in {\uparrow}Q_{T_1}$, then $T_1\cleq T_2$.
%\end{obs}
%

%%%%%---------------------------------------------------------------------------------------------------------------------------%%%%%
\begin{obs}\label{Obs_witnessing}
Let $\KK$ be an algebraically trivial \fraisse class in a finite relational language $\L$, respectively. Then, for any complete 1-sorted theory $T$, the following are equivalent:
\begin{enumerate}
\item $T_{\KK}\cleq T$.
\item There are $0<m<\omega$, formulas $\phi_R(\xx_0,...,\xx_{r-1})\in\L_T$ for each $R^{(r)}\in\sig(\L)$ (where each $\xx_j$ is a non-repeating $m$-tuple of variables), and $F\in\vec\FF_\pphi(T)$ such that $\KK=\Age_\pphi(F)$. 
\end{enumerate}
(The number $m$ is then said {\em to witness $T_{\KK}\cleq T$}.) In particular, for algebraically trivial \fraisse classes $\KK_1$, $\KK_2$ in languages $\L_1,\L_2$, with generic models $\A_1,\A_2$, respectively, the following are equivalent:
\begin{enumerate}
\item $T_{\KK_1}\cleq T_{\KK_2}$.
\item There are $0<m<\omega$, an injection $u:A_1\to A_2^m$, and quantifier-free formulas $\theta_R(\xx_0,...,\xx_{r-1})$ of $\L_2$ ($|\xx_i|=m$, $R^{(r)}\in\sig(\L_1)$) such that for all $R^{(r)}\in\sig(\L_1)$ and $a_0,...,a_{r-1}\in A_1$,
$$\A_1\models R(a_0,...,a_{r-1})\iff \A_2\models\theta_R(u(a_0),...,u(a_{r-1})).$$
\end{enumerate}
\end{obs}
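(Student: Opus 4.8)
The plan is to prove Observation \ref{Obs_witnessing} by chasing the definitions of $\cleq$ through Proposition \ref{Prop_resolved}, Corollary \ref{Lemma_up_res}, and Proposition \ref{Prop_variants_of_CLEQ}, with the only genuinely new ingredient being the passage between ``a sequence $\pphi$ of arbitrary $\L_T$-formulas'' and ``formulas $\phi_R(\xx_0,\dots,\xx_{r-1})$ where each $\xx_j$ is a single block of $m$ variables.'' So first I would handle the direction $(2)\implies(1)$, which is essentially immediate: given the data $m$, $\{\phi_R\}_{R\in\sig(\L)}$, and $F\in\vec\FF_\pphi(T)$ with $\KK=\Age_\pphi(F)$, after replacing $F$ by a $\pphi$-resolved extension via Proposition \ref{Prop_resolved}(1) (which only enlarges $\Age_\pphi(F)$, so one must first argue that $\KK$ being a \fraisse class forces equality to be preserved — here I would instead just start from an $F$ that is already $\pphi$-resolved, which item (2) of course allows since it merely asserts existence), the tuple $\pphi=(\phi_R)_{R}$ witnesses $\Age_{\qf(\L)}(T_\KK)\subseteq\Age_\pphi(T)$ for the ``identity'' list of atomic formulas of $\L$ on $T_\KK$'s side; and for an arbitrary list $\ppsi^1$ of $\L_{T_\KK}$-formulas, quantifier elimination in $T_\KK$ lets one rewrite each $\psi^1_i$ as a quantifier-free, hence Boolean-in-atomic, formula, and then substitute the $\phi_R$'s to produce the required $\ppsi^2$ over $\L_T$. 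This gives $T_\KK\cleq T$.

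For $(1)\implies(2)$, I would take the list $\pphi^1 = (\phi^1_R)_{R\in\sig(\L)}$ of atomic formulas of $\L$ (one per relation symbol, with variables $x_0,\dots,x_{r-1}$), so that $\Age_{\pphi^1}(T_\KK)$ is exactly $\KK$ itself (via the identity embedding of the \fraisse limit $\A$ into itself, suitably coded as an element of $\vec\FF(T_\KK)$ — this uses that $T_\KK$ is algebraically trivial so that the generic model realizes the ``non-algebraic'' requirement in $\FF$). Applying $T_\KK\cleq T$ yields a list $\pphi^2=(\phi^2_R)_R$ of $\L_T$-formulas, coordinate-wise of arities $(r)_{R}$, with $\KK = \Age_{\pphi^1}(T_\KK)\subseteq\Age_{\pphi^2}(T)$. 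The issue is that the free variables of the $\phi^2_R$ are single variables of some sort of $T$, not $m$-tuples; but since $T\in\TT$ eliminates imaginaries (and more to the point, since we are free to choose $m$), I would argue that we may take $m=1$ here — actually the correct move is the reverse: the flexibility to use $m>1$ is exactly what lets us reduce the many-variable/many-sorted situation to a single sort. Concretely, I would invoke Proposition \ref{Prop_variants_of_CLEQ}(1$\iff$2) to get, for every $\pphi^1$-resolved $F_1$ (and $\KK = \Age_{\pphi^1}(F_1)$ for the canonical such $F_1$), a $\pphi^2$-resolved $F_2\in\vec\FF_{\pphi^2}(T)$ with $\A_{\pphi^1}(F_1) = \A_{\pphi^2}(F_2)$; unwinding $F_2\in\vec\FF(T)$, it is a map $A\to\|\M\|$ landing in a single sort of $\M\models T$, and setting $u=F_2$, $\phi_R := \phi^2_R$, $m:=1$ produces the data of item (2) with $\KK = \Age_\pphi(F_2)$, as required. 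The second ``in particular'' clause is then the special case $T = T_{\KK_2}$: $T_{\KK_2}$ eliminates quantifiers, so each $\phi_R$ is equivalent in $T_{\KK_2}$ to a quantifier-free $\theta_R$, and $u$ becomes an injection $A_1\to A_2^m$ (using that $A$ is the common universe of all generic models by the standing convention), giving exactly the displayed equivalence of atomic/quantifier-free types.

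The main obstacle I anticipate is bookkeeping around the parameter $m$ and the single-sortedness hypothesis: one must check carefully that nothing is lost in insisting each $\xx_j$ be a \emph{non-repeating} $m$-tuple and that all of them live in one sort, given that $T$ may be many-sorted and the formulas handed back by $\cleq$ could in principle mention several sorts. The resolution is that $\cleq$ as defined already fixes ``the sort of $\pphi$'' — a single common sort for all free variables of a sequence of formulas — so the many-sorted possibility never actually arises for $\pphi^2$; and the role of $m$ is simply to absorb the case where one wants to encode a structure on $A_1$ using $m$-tuples of elements of a model of $T$ rather than single elements, which is needed in later applications (e.g. indecomposability/factorizations) even though in the bare statement one can often get away with $m=1$. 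I would make this precise by noting that $\FF_\pphi(T)$ was defined so that the entire image of $f$ lies in one sort, so the reduction is automatic, and then simply record $m$ as a free parameter that the construction supports. Everything else is a direct translation through the already-established propositions, so the write-up should be short.
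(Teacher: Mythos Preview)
The paper states this as an \emph{Observation} and gives no proof at all; it is treated as an immediate unwinding of Definitions~\ref{Defn_TheOrdering} and the infrastructure of Subsection~\ref{Subsec_PLC}. Your proposal is a correct and reasonably careful such unwinding, using exactly the tools the paper would expect (quantifier elimination in $T_\KK$, Proposition~\ref{Prop_resolved}, Proposition~\ref{Prop_variants_of_CLEQ}), so there is nothing to compare.

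Two minor remarks. First, your worry about needing $F$ to be $\pphi$-resolved in the direction $(2)\Rightarrow(1)$ is unnecessary: to verify $\Age_{\ppsi^1}(T_\KK)\subseteq\Age_{\ppsi^2}(T)$ you only need that every finite $\L$-structure in $\KK$ is isomorphic to some $\A_\pphi(F)\r B_2$, which follows from $\KK=\Age_\pphi(F)$ without any resolution hypothesis; so you need not ``start from a resolved $F$'' nor argue that resolving preserves the equality $\KK=\Age_\pphi(F)$. Second, your reading of the parameter $m$ is the right one: formally one regards $M^m$ as a sort of $T^\eq$ and $\phi_R(\xx_0,\dots,\xx_{r-1})$ as an $r$-ary formula on that sort, so that $\vec\FF_\pphi(T)$ literally consists of maps $A\to M^m$; this is why the $(1)\Rightarrow(2)$ argument can take $m=1$ while the statement records general $m$ for use in applications and in the ``in particular'' clause.
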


\subsection{Irreducible model-theoretic dividing-lines}\label{Subsec_IRREDs}

%%%%%%%%%%%%%%%%%%%%%%%%%%%%%%%%%%%%%%%%%%%%%%%%%%%%%%%%
\subsubsection{Discussion: What is an ``irreducible'' dividing-line?}\label{Subsec_what_is_a_div-line?}

As we have already discussed at some length, a model-theoretic dividing-line amounts to a partition of $\TT$ into two sub-classes -- a sub-class $\C$ of ``wild'' theories (unstable, IP, unsimple,...) and a complementary class $N\C:=\TT\setminus\C$ of ``tame'' theories. Although we prefer to work with theories from $N\C$ in practice, we can characterize a dividing-line purely in terms of the ``wild'' class $\C$. In principle, $\C$ could be any sub-class of $\TT$, but we have to demand more from $\C$ if $N\C$ is to have any practical value. This already suggests the most primitive requirement we make on irreducible dividing-lines (relative to any ordering $\leq$ of $\TT$):
\begin{enumerate}
\item {\em Existence}: $\C$ is not empty.
\item {\em Upward-closure}: If $T_1\leq T_2$ and $T_1\in\C$, then $T_2\in\C$.
\end{enumerate}
If we are to call $\C$ an irreducible dividing-line, there are several ``indivisibility'' or ``non-transience'' requirements that seem unavoidable:
\begin{enumerate}
\item[3.] Consider a theory $T$ obtained as a ``disjoint union'' of a family of theories $\left\{T_i\right\}_{i\in I}$; say, $T$ has a family of sorts for each $T_i$, and these sorts are orthogonal in $T$. For the sake of irreducibility, if $T\in\C$, then we should expect that at least one of the $T_i$'s is in $\C$. Otherwise membership in $\C$ would appear to depend on two or more phenomena that occur (or not) independently of each other.

That is to say, $\C$ should be {\em prime}.

\item[4.] Consider a finite family of theories $T_0,...,T_{n-1}\in\C$. It would be very strange to require that {\em every} theory $T$ that lies $\leq$-below each of $T_0,...,T_{n-1}$ to be in $\C$; if $\leq$ is in any way natural, this would presumably place the theory of an infinite set in $\C$. However, for the sake of irreducibility, we should expect the fact that $T_0,...,T_{n-1}$ are all in $\C$ to have a single common explanation. That is, there should be {\em some} $T$ in $\C$ and lies $\leq$-below  each of $T_0,...,T_{n-1}$.

Thus, $\C$ should have some sort of {\em ``finite intersection property''}.

\item[5.] Consider a descending chain $T_0 \geq T_1 \geq T_2 \geq \cdots$ of members of $\C$. Again, we should expect the fact that $T_0, T_1, ...$ are all in $\C$ to have a single common explanation, not infinitely many different explanations that, for no particular reason, happened upon a $\leq$-chain. Thus, we expect that this common explanation is a witnessed through $\leq$ by a theory -- that is, there should be some $T\in\C$ that lives $\leq$-below all $T_i$'s.

Therefore, $\C$ should have some sort of {\em ``completeness''} property.
\end{enumerate}
If we accept these strictures for a definition of ``irreducible'' dividing-line relative to an ordering $\leq$ of $\TT$, then our definition in the next subsection is forced on us. If we settle on this (or any) definition of irreducible dividing-line, then it is reasonably natural to ask if irreducible dividing-lines admit ``characterizing objects,'' and we address this question in Theorem \ref{Thm_prime_equals_fraisse}.

%%%%%%%%%%%%%%%%%%%%%%%%%%%%%%%%%%%%%%%%%%%%%%%%%%%%%%%%
\subsubsection{Irreducibles: Complete prime filter classes}

Based on our discussion in Subsection \ref{Subsec_what_is_a_div-line?}, we now formalize the notion of an irreducible dividing-line relative to our ordering $\cleq$ of $\TT$; this formalization is given in Definition \ref{Def_irred_div-line} below. (Definition \ref{Def_cones}, which proceeds it, just establishes some helpful notation.) Given the definition of $\cleq$, it is probably not surprising that classes $\C_\KK\subset\TT$, defined from \fraisse classes $\KK$, will play an important role in the development, so we make these $\C_\KK$'s formal in Definition \ref{Def_C_K}. Finally, in Theorem \ref{Thm_prime_equals_fraisse}, we state the main result of this section, identifying irreducible dividing-lines with classes of theories defined from indecomposable \fraisse classes.

%%%%%--------------------------------------------------------------------------------------------------------------%%%%%
\begin{defn}\label{Def_cones}
For a set $S\subset\TT$, we define
$${\downarrow}S = \left\{T\in\TT:(\forall T_1\in S)\,  T\cleq T_1\right\}$$
$${\uparrow}S = \left\{T\in\TT:(\forall T_0\in S)\, T_0\cleq T\right\}$$
which are the lower- and upper-cones of $S$.
\end{defn}

%%%%%--------------------------------------------------------------------------------------------------------------%%%%%
\begin{defn}\label{Def_irred_div-line}
Let $\C\subseteq\TT$. We say that $\C$ is {\em irreducible} (or less succinctly, is a {\em complete prime filter class}) if the following hold:
\begin{itemize}
\item (Existence) $\C$ is non-empty.
\item (Filter properties) For any  $T_0,T_2,...,T_{n-1}\in\TT$ ($1<n<\omega$):
\begin{itemize}
\item If $T_0\in\C$ and $T_0\cleq T_1$, then $T_1\in\C$.
\item  If $T_0,T_1,...,T_{n-1}\in\C$, then $\C\cap{\downarrow}\{T_0,...,T_{n-1}\}\neq\emptyset$.
\end{itemize} 
\item (Completeness) If 
$(T_i)_{i\in I}$ is a non-empty $\cleq$-chain of theories in $\C$ (i.e. $I = (I,<)$ is a non-empty linear order, and for all $i,j\in I$, $i<j\implies T_i\cleq T_j$), then  $\C\cap {\downarrow} \left\{T_i\right\}_{i\in I}\neq\emptyset$.

\item (Primality) For any set $S\subset \TT$, if ${\uparrow} S\subseteq\C$, then $\C\cap S\neq\emptyset$.
\end{itemize}
\end{defn}

%%%%%---------------------------------------------------------------------------------------------------------------------------%%%%%
\begin{rem}
Our completeness axiom is definitely stronger than required: In fact, the following {\em $(2^{\aleph_0})^+$-completeness condition}  would suffice:
\begin{quote}
For any positive ordinal $\alpha < (2^{\aleph_0})^+$, for any descending $\cleq$-chain $(T_i)_{i<\alpha}$ (i.e. $i<j<\alpha\implies T_j\cleq T_i$)
of members of $\C$ of length $\alpha$, $\C\cap {\downarrow} \left\{T_i\right\}_{i<\alpha}\neq\emptyset$.
\end{quote}
We conjecture that even this $(2^{\aleph_0})^+$-completeness condition is stronger than necessary.
\end{rem}

%%%%%--------------------------------------------------------------------------------------------------------------%%%%%
\begin{defn}\label{Def_C_K}
Let $\KK$ be an algebraically trivial \fraisse class. Then $\C_\KK$ is the class of theories $T\in\TT$ for which there are a sequence of formulas $\pphi = \left(\phi_R(x_0,...,x_{r-1})\right)_{R^{(r)}\in\sig(\L_\KK)}$ in $\L_T$ and a $\pphi$-resolved $F\in\vec\FF_\pphi(T)$ such that $\Age_\pphi(F)=\KK$. % up to the natural identifications of relations symbols between signatures.  %% These are in literally the same language, \L_\pphi %%
%%%%%%%%%%%%%%%%%%%%%%%%
%%% Vince added 12/8 %%%
%%%%%%%%%%%%%%%%%%%%%%%%
%By Observation \ref{Obs_witnessing},
\noindent In other words,
\[
 \C_\KK = \{ T \in \TT : T_\KK \cleq T \}.
\]
We note that if $\KK_1$ and $\KK_2$ are two algebraically trivial \fraisse classes, then
\[
 T_{\KK_1} \cleq T_{\KK_2} \iff \C_{\KK_2} \subseteq \C_{\KK_1}.
\]
\end{defn}

As promised, we now state the main result of this section, which says that irreducible dividing-lines (relative to $\cleq$) having fairly concrete ``characterizing objects,'' namely indecomposable \fraisse classes, and that any class defined from one of these is, indeed, the ``wild'' class of an irreducible dividing-line. Of course, this reduces the project to identifying the indecomposable \fraisse classes, and we take small steps in this project in the ensuing sections of the paper. The proof of Theorem \ref{Thm_prime_equals_fraisse} is given in the next subsection.

%%%%%--------------------------------------------------------------------------------------------------------------%%%%%
\begin{thm}\label{Thm_prime_equals_fraisse}
Let $\C$ be a non-empty class of theories. The following are equivalent:
\begin{enumerate}
\item $\C$ is irreducible.
\item $\C=\C_\KK$ for some indecomposable algebraically trivial \fraisse class $\KK$.
\end{enumerate}
\end{thm}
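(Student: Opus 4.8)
The plan is to prove the two implications separately, with the bulk of the work going into $(2)\Rightarrow(1)$. For the direction $(1)\Rightarrow(2)$, suppose $\C$ is irreducible. The key idea is to use primality to extract a single \fraisse class. Consider the set $S = \bigcup_{T\in\C} Q_T \subseteq \tilde\TT$ (theories coming from algebraically trivial \fraisse classes realized inside members of $\C$). By Observation \ref{Obs_AF_CLEQ_1}, every $T_0\in Q_T$ satisfies $T_0\cleq T$, and one checks using Proposition \ref{Prop_variants_of_CLEQ} that in fact $\C \subseteq {\uparrow}Q_T \subseteq {\uparrow}S$, essentially because any theory is determined up to $\cleq$-equivalence by the \fraisse classes it realizes. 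Hence ${\uparrow}S\subseteq\C$ would follow if we can show ${\uparrow}S \subseteq \C$ directly from upward-closure once we know each piece of $S$ lies $\cleq$-below something in $\C$; then primality gives $\C\cap S\neq\emptyset$, i.e. there is an algebraically trivial \fraisse class $\KK$ with $T_\KK^\eq\in\C$. Upward-closure then gives $\C_\KK = {\uparrow}\{T_\KK\}\subseteq\C$. For the reverse inclusion $\C\subseteq\C_\KK$, one must choose $\KK$ more carefully: using the filter/finite-intersection property together with completeness, one runs a transfinite construction (of length at most $(2^{\aleph_0})^+$, using the weakened completeness condition in the Remark, since $|\tilde\TT|\le 2^{\aleph_0}$) producing a $\cleq$-descending chain inside $\C$ whose lower bound $T_\KK$ dominates, via $\cleq$, every element of $\C$ that is in $\tilde\TT$; combined with the fact that each $T\in\C$ satisfies $T_0\cleq T$ for appropriate $T_0\in\tilde\TT$ with $T_0$ also $\cleq$-equivalent to capturing $T$, this forces $T_\KK\cleq T$, i.e. $T\in\C_\KK$. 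Finally, indecomposability of this $\KK$ must be checked: if $(\KK_0,\dots,\KK_{n-1})$ were a factorization, then $T_\KK\cleq T_{\Pi_i\KK_i}$ and, because the sorts of the $\KK_i$ are orthogonal in the generic model of $\Pi_i\KK_i$, primality applied to $S=\{T_{\KK_0}^\eq,\dots,T_{\KK_{n-1}}^\eq\}$ (whose upper cone contains $T_\KK$, hence lies in $\C$) yields some $T_{\KK_i}^\eq\in\C$, i.e. $T_\KK\cleq T_{\KK_i}$, which is exactly indecomposability.

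For $(2)\Rightarrow(1)$, fix an indecomposable algebraically trivial \fraisse class $\KK$ and set $\C=\C_\KK=\{T:T_\KK\cleq T\}$. \emph{Existence} is clear since $T_\KK^\eq\in\C_\KK$. \emph{Upward-closure} is immediate from transitivity of $\cleq$. For the \emph{finite intersection property}, given $T_0,\dots,T_{n-1}\in\C_\KK$, each witnesses $T_\KK\cleq T_j$ via some $m_j$ and some $F_j\in\vec\FF_{\pphi^j}(T_j)$ with $\Age_{\pphi^j}(F_j)=\KK$ (Observation \ref{Obs_witnessing}); then $T_\KK^\eq$ itself lies in $\C_\KK\cap{\downarrow}\{T_0,\dots,T_{n-1}\}$, so this property holds trivially because $\C_\KK$ has a $\cleq$-least element. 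The same observation handles \emph{completeness}: $T_\KK^\eq\in\C_\KK\cap{\downarrow}\{T_i\}_{i\in I}$ for any chain. So the only substantive axiom is \emph{primality}.

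The primality argument is where the real content lies, and I expect it to be the main obstacle. Suppose $S\subseteq\TT$ with ${\uparrow}S\subseteq\C_\KK$; we must find $T\in S$ with $T_\KK\cleq T$. The strategy is contrapositive: assume $T_\KK\not\cleq T$ for all $T\in S$, and build a theory $T^*\in{\uparrow}S$ with $T_\KK\not\cleq T^*$, contradicting ${\uparrow}S\subseteq\C_\KK$. Using Observation \ref{Obs_combine_pictures}, one can combine the local combinatorics of all theories in $S$ (or rather a representative sub-collection, bounded in size by a cardinality argument on signatures) into a single theory $T^*$ that sits $\cleq$-above each $T\in S$ — concretely, $T^*$ is a disjoint union / orthogonal amalgam of the $T$'s, or better, one works directly with the \fraisse classes realized. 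The crucial point is that any algebraically trivial \fraisse class $\JJ$ with $T_\JJ\cleq T^*$ must, because the pieces are orthogonal and the relevant formulas see only finitely many sorts at a time, \emph{factor} as a sub-\fraisse-class of some $\Pi_i\KK_i$ where each $\KK_i$ is realized in a single $T\in S$; applying this to $\JJ=\KK$ (if $\KK\cleq T^*$) and invoking \emph{indecomposability of $\KK$}, we get $T_\KK\cleq T_{\KK_i}\cleq T$ for some single $T\in S$ — contradiction. The delicate parts are: (i) making the "combine into $T^*$" construction precise so that $T^*\in\TT$ (eliminating imaginaries, keeping it complete) and genuinely lies in ${\uparrow}S$; and (ii) proving the factorization claim — that realizing $\KK$ inside the orthogonal combination forces a genuine factorization in the sense of the definition preceding Theorem \ref{Thm_prime_equals_fraisse}, which requires tracking how a $\pphi$-resolved $F$ distributes across the orthogonal sorts and checking the quantifier-type condition $\qtp^\A(\aa)=\qtp^\A(\aa')\iff\tp^\B(u\aa)=\tp^\B(u\aa')$. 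I would isolate (ii) as a separate lemma.
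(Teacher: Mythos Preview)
Your overall architecture matches the paper's: $(2)\Rightarrow(1)$ is handled by building an orthogonal ``disjoint union'' theory $T_S$ from $S$, observing $T_S\in{\uparrow}S\subseteq\C_\KK$, and then using the orthogonality of the sorts to extract a factorization of $\KK$ from the witnessing formulas, whence indecomposability gives some $T_{\KK_j}\cleq T_j\in S$ in $\C_\KK$. Your identification of the two delicate points (completeness/EI of $T_S$, and that the projection really yields a factorization in the required sense) is exactly right, and the paper does essentially sweep these under ``routine to verify.'' One small correction: you frame this as a contrapositive, but there is no need --- once $T_S\in{\uparrow}S\subseteq\C_\KK$ you have $T_\KK\cleq T_S$ directly, and the argument is straightforwardly positive.

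There are, however, two genuine issues in your $(1)\Rightarrow(2)$ sketch. First, you have the difficulty backwards: in the paper $(2)\Rightarrow(1)$ is the short direction and $(1)\Rightarrow(2)$ is the one requiring a chain of lemmas. Second, and more substantively, your cone inclusions are wrong. You write $\C\subseteq{\uparrow}Q_T\subseteq{\uparrow}S$, but neither inclusion holds: since $Q_T\subseteq S$ one has ${\uparrow}S\subseteq{\uparrow}Q_T$, not the reverse; and there is no reason every theory in $\C$ should lie above every \fraisse class realized in a fixed $T$. What the paper actually proves (Lemma~\ref{Cor_support_by_fraisse}) is the opposite containment ${\uparrow}Q_T\subseteq\C$ for each fixed $T\in\C$, using that any $T'\in{\uparrow}Q_T$ must satisfy $T\cleq T'$ (this uses Observation~\ref{Obs_combine_pictures} and the dense set from Observation~\ref{Obs_countable_dense_subset} to produce, for each $\pphi$, a single maximal $F_\pphi$). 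Primality is then applied to $Q_T$ for a \emph{single} $T$, not to your union $S$. From there the paper proceeds as you suggest: pass to $\tilde\C=\C\cap\tilde\TT$, show it is a complete filter class, use the cardinality bound $|\tilde\TT|\le 2^{\aleph_0}$ together with completeness to rule out infinite strictly descending chains and obtain a $\cleq$-minimal (hence minimum) element $T_\KK$, and finally check indecomposability of $\KK$ exactly as you describe via primality applied to $\{T_{\KK_0},\dots,T_{\KK_{n-1}}\}$. So your transfinite-chain idea and your indecomposability check are both correct and match the paper; the gap is specifically in how you invoke primality to land inside $\tilde\TT$.
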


To conclude this subsection, we observe that the identification of irreducible dividing-lines with certain kinds of \fraisse classes yields an easy upper bound on the number of such dividing-lines. 
%%%%%---------------------------------------------------------------------------------------------------------------------------%%%%%
\begin{cor}\label{Cor_continuum_bound}
There are no more than $2^{\aleph_0}$ indecomposable algebraically trivial \fraisse classes (in finite relational langauges). So by Theorem \ref{Thm_prime_equals_fraisse}, if $\mathscr{F}$ denotes the family of all irreducible dividing-lines (complete prime filter classes), then $\big|\mathscr{F}\big|\leq 2^{\aleph_0}$.
\end{cor}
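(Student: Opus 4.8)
The plan is to bound the number of indecomposable algebraically trivial \fraisse classes by counting \fraisse classes in finite relational languages up to the only thing that matters for $\C_\KK$, namely the $\cleq$-equivalence class of $T_\KK$. First I would observe that a finite relational language $\L$ is determined up to renaming by a finite multiset of arities, so there are only countably many finite relational languages up to equivalence of signature; fix one such $\L$ with signature $\{R_0^{(r_0)},\dots,R_{s-1}^{(r_{s-1})}\}$. A \fraisse class $\KK\subseteq\Fin(\L)$ is a subclass of $\Fin(\L)$ closed under isomorphism, and $\Fin(\L)$ has only countably many isomorphism types of structures (one for each finite cardinality $n$, of which there are finitely many $\L$-structures on $\{0,\dots,n-1\}$). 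Hence $\KK$ is determined by a subset of this countable set of isomorphism types, so there are at most $2^{\aleph_0}$ subclasses of $\Fin(\L)$, a fortiori at most $2^{\aleph_0}$ \fraisse classes in $\L$. Summing over the countably many signatures gives at most $\aleph_0\cdot 2^{\aleph_0}=2^{\aleph_0}$ algebraically trivial \fraisse classes in finite relational languages, and in particular at most $2^{\aleph_0}$ indecomposable ones.

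Next I would transfer this bound to $\mathscr F$ via Theorem \ref{Thm_prime_equals_fraisse}. That theorem gives a surjection from the set of indecomposable algebraically trivial \fraisse classes onto $\mathscr F$, sending $\KK\mapsto\C_\KK$; indeed by the theorem every irreducible $\C$ equals $\C_\KK$ for some such $\KK$, and every $\C_\KK$ with $\KK$ indecomposable is irreducible, so the image is exactly $\mathscr F$. A surjection from a set of size $\le 2^{\aleph_0}$ witnesses $|\mathscr F|\le 2^{\aleph_0}$, which is the claim. (If one is uneasy about ``set'' versus ``proper class'' here, note that each $\C_\KK$ is a subclass of $\TT$ but is canonically indexed by an element of a genuine set of size $\le 2^{\aleph_0}$, so $\mathscr F$ is essentially small of cardinality $\le 2^{\aleph_0}$; this is all that ``$|\mathscr F|\le 2^{\aleph_0}$'' is intended to mean.)

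The only genuinely non-routine point is the bookkeeping in the first paragraph: one must be careful that ``up to renaming of relation symbols'' is harmless, i.e. that isomorphic signatures yield literally the same notions of \fraisse class and of $T_\KK$ up to equivalence. This is immediate from the fact that a bijection of signatures preserving arities induces an isomorphism of the categories of finite structures, carrying \fraisse classes to \fraisse classes and commuting with taking the generic theory; the paper already adopts this identification in the convention on $\L_{\pphi^1}$ versus $\L_{\pphi^2}$. Everything else — the countability of $\Fin(\L)$ and of the set of finite relational signatures, and the passage from $\le 2^{\aleph_0}$ objects to $\le 2^{\aleph_0}$ images — is entirely elementary, so I expect no real obstacle beyond stating the counting cleanly.
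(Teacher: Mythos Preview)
Your proposal is correct and follows essentially the same approach as the paper: parametrize finite relational signatures by a countable index set, observe that each signature admits at most $2^{\aleph_0}$ \fraisse classes (as subsets of a countable set of isomorphism types), take the product $\aleph_0\cdot 2^{\aleph_0}=2^{\aleph_0}$, and then invoke the surjection $\KK\mapsto\C_\KK$ from Theorem~\ref{Thm_prime_equals_fraisse}. If anything, your write-up is more explicit than the paper's about why each $\K_s$ has size at most $2^{\aleph_0}$ and about the harmlessness of renaming relation symbols.
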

\begin{proof}
Let $S$ be the set of functions $s:\omega\to\omega$ of finite support. For each $s\in S$, let $\sig(\L_s)$ be the signature with relation symbols $R_i^{(s(i))}$ for each $i\in \mathrm{supp}(s)$, and let $\K_s$ be the set of all algebraically trivial \fraisse classes of finite $\L_s$-structures. We define $\K_s^*$ to be the set of all indecomposable algebraically trivial \fraisse classes of finite $\L_s$-structures. Then 
$$\big|\mathscr{F}\big|\leq\Big|\,{\bigcup}_{s\in S}\K_s^*\Big|\leq \aleph_0\cdot 2^{\aleph_0} = 2^{\aleph_0}.$$
\end{proof}

%%%%%%%%%%%%%%%%%%%%%%%%%%%%%%%%%%%%%%%%%%%%%%%%%%%%%%%%
%%%%%%%%%%%%%%%%%%%%%%%%%%%%%%%%%%%%%%%%%%%%%%%%%%%%%%%%
\subsection{Proof of Theorem \ref{Thm_prime_equals_fraisse}}\label{Subsec_PROOF}

We now turn to the proof of Theorem \ref{Thm_prime_equals_fraisse}, which of course has two directions. 
The proof of 2$\implies$1 in Theorem \ref{Thm_prime_equals_fraisse} is quite short, so we give it immediately in the form of Proposition \ref{Prop_indec_yields_irred}.
The proof of 1$\implies$2  (Proposition \ref{Prop_prime_implies_fraisse}, below) is somewhat more involved.

%%%%%---------------------------------------------------------------------------------------------------------------------------%%%%%
\begin{prop}\label{Prop_indec_yields_irred}
Let $\KK$ be an indecomposable algebraically trivial \fraisse class of finite relational structures. Then
$\C_\KK$ is irreducible. 
\end{prop}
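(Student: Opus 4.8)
The plan is to verify, one by one, the four conditions in Definition \ref{Def_irred_div-line} for $\C = \C_\KK$, where $\KK$ is an indecomposable algebraically trivial \fraisse class. Throughout, I will lean on the reformulation $\C_\KK = \{T \in \TT : T_\KK \cleq T\}$ from Definition \ref{Def_C_K}, and on Observations \ref{Obs_AF_CLEQ_1} and \ref{Obs_witnessing}. Existence is immediate: $T_\KK^\eq \in \TT$ (since $\KK$ has an infinite generic model and we pass to $T^\eq$), and $T_\KK \cleq T_\KK$, so $T_\KK^\eq \in \C_\KK$. Upward-closure is also immediate, since $\cleq$ is transitive: if $T_\KK \cleq T_0$ and $T_0 \cleq T_1$, then $T_\KK \cleq T_1$.

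The two substantive filter/completeness conditions — the finite intersection property and completeness — I would handle together, since both amount to: given a (finite, or chain-indexed) family $(T_i)_{i \in I}$ with $T_\KK \cleq T_i$ for all $i$, produce a single $T^* \in \C_\KK$ with $T^* \cleq T_i$ for all $i$. The natural candidate is simply $T^* = T_\KK^\eq$ itself: it lies in $\C_\KK$, and $T_\KK^\eq \cleq T_i$ because $T_\KK \cleq T_i$ by hypothesis (passing between $T$ and $T^\eq$ freely, as in the running convention). So $T_\KK^\eq \in \C_\KK \cap {\downarrow}\{T_i\}_{i \in I}$ in both cases. This shows that the filter axioms and completeness hold \emph{without} using indecomposability at all — indecomposability will only be needed for primality. (I should double-check that ${\downarrow}$ as defined in Definition \ref{Def_cones} is taken inside $\TT$, so that $T_\KK^\eq$ being a legitimate member of $\TT$ is all that is required.)

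Primality is the main obstacle, and the only place indecomposability enters. Here I must show: for any $S \subseteq \TT$, if ${\uparrow}S \subseteq \C_\KK$ then $S \cap \C_\KK \neq \emptyset$. The contrapositive is cleaner to aim at, but the direct approach is: suppose ${\uparrow}S \subseteq \C_\KK$. First reduce to the case $S \neq \emptyset$ (if $S = \emptyset$ then ${\uparrow}S = \TT \subseteq \C_\KK$, forcing $\C_\KK = \TT$; but then in particular the theory of an infinite set, or any algebraically trivial \fraisse class with trivial combinatorics, lies in $\C_\KK$, which should contradict $\KK$ being a genuine — in particular, nondegenerate — indecomposable class; I will need to check the degenerate boundary case carefully, or argue $S \ne \emptyset$ is WLOG because ${\uparrow}\emptyset = \TT$ trivially meets $\C_\KK$ via $T_\KK^\eq$ — actually that last remark handles $S = \emptyset$ immediately since $T_\KK^\eq \in \TT = {\uparrow}\emptyset$ is not what we want; rather we want $S \cap \C_\KK \ne \emptyset$, and $S = \emptyset$ makes this false, so we genuinely must rule out ${\uparrow}\emptyset \subseteq \C_\KK$, i.e. rule out $\C_\KK = \TT$, by exhibiting a theory not above $T_\KK$).

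For the heart of the argument, assume $S \neq \emptyset$ and ${\uparrow}S \subseteq \C_\KK$, and suppose toward a contradiction that no $T \in S$ satisfies $T_\KK \cleq T$. I would form, from the family $S$, a single ``product-like'' theory capturing all of $S$ simultaneously: using Observation \ref{Obs_combine_pictures} to overlay pictures and the construction of $\Pi_i \KK_i$ from the Notation section, build an algebraically trivial \fraisse class $\KK^*$ (or its generic theory) that is $\cleq$-above every member of some countable ``dense enough'' sub-collection of $S$-induced \fraisse classes, yet whose factorization into the pieces coming from members of $S$ witnesses, via the failure of $T_\KK \cleq T$ for each $T \in S$, that $T_\KK \not\cleq T_{\KK^*}$ — because indecomposability of $\KK$ says that if $T_\KK \cleq T_{\Pi_i \KK_i}$ through a factorization then $T_\KK \cleq T_{\KK_i}$ for some factor. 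Concretely: $T_{\KK^*}$ should be realized as (interpretable in, hence $\cleq$-below) a theory $T^* \in {\uparrow}S$, so $T_\KK \cleq T^*$, and then one pulls this back through the factorization to get $T_\KK \cleq$ one of the factors, contradicting the assumption on $S$. The delicate points will be (i) arranging $T^* \in {\uparrow}S$ — i.e. every $T \in S$ sits $\cleq$-below $T^*$ — which is where the disjoint-union/orthogonal-sorts construction and Observation \ref{Obs_combine_pictures} do the work, and (ii) matching the combinatorial factorization of the associated \fraisse class with the model-theoretic factorization $\Pi_i \KK_i$ so that indecomposability applies. I expect step (ii), getting the quantifier-type-preserving injection $u : A \to B$ required in the definition of a factorization to line up with the $\cleq$-witnessing data from Observation \ref{Obs_witnessing}, to be the technical crux.
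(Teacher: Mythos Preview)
Your plan is essentially the paper's own argument. Existence, upward-closure, the filter condition, and completeness are all witnessed by $T_\KK$ itself, exactly as you say; the paper dismisses these in one sentence. For primality the paper does precisely what you sketch under ``disjoint-union/orthogonal-sorts'': it builds a many-sorted theory $T_S$ with one family of sorts for each $T\in S$ (copying in the relations of $T$, otherwise orthogonal), observes $T_S\in{\uparrow}S$, hence $T_\KK\cleq T_S$, and then notes that the witnessing $m$-tuple lives in a product $\sortY_{T_0:\sortX_0}\times\cdots\times\sortY_{T_{m-1}:\sortX_{m-1}}$ for finitely many $T_0,\ldots,T_{m-1}\in S$. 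Projecting to each coordinate yields \fraisse classes $\KK_0,\ldots,\KK_{m-1}$ forming a factorization of $\KK$, and indecomposability gives $T_\KK\cleq T_{\KK_j}\cleq T_j$ for some $j$.

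Two places where your write-up drifts from what is actually needed. First, Observation \ref{Obs_combine_pictures} and any ``countable dense-enough sub-collection'' are irrelevant: you build $T_S$ as a \emph{theory}, possibly with uncountably many sorts, not as a \fraisse class $\KK^*$; the reduction to finitely many factors happens automatically because any single formula of $\L_{T_S}$ mentions only finitely many sorts. Second, the contradiction framing is unnecessary --- the factorization argument directly \emph{produces} a $T_j\in S\cap\C_\KK$. Your worry about step (ii) is resolved exactly by the orthogonality of the sorts in $T_S$: a $0$-definable set over sorts from distinct members of $S$ is a boolean combination of rectangles, so the $\pphi$-type of a tuple in $T_S$ is determined by its coordinate-wise types in the $T_j$'s, which is precisely the $\iff$ required in the definition of a factorization.

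On $S=\emptyset$: you are right that this forces $\C_\KK=\TT$ and that primality then fails. This occurs exactly when $\KK$ is $\cleq$-trivial (e.g.\ the class of pure finite sets, which is vacuously indecomposable). The paper does not address this degenerate edge case.
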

\begin{proof}
Let $\L$ be the language of $\KK$, say $\sig(\L)=\left\{R_0^{(r_0)},...,R_{k-1}^{(r_{k-1})}\right\}$. Only the primality of $\C_\KK$ is not altogether obvious. To prove primality, let $S\subset\TT$ be a set of theories. Without loss of generality, we assume that for each $T\in\TT$, $\L_T$ is purely relational. We define a language $\L_S$ as follows:
\begin{itemize}
\item For each $T\in\TT$, for each sort $\sortX$ of $\L_T$, $\L_S$ has a sort $\sortY_{T:\sortX}$
\item For each $T\in\TT$, for each relation symbol $R\subseteq\sortX_0\times\cdots\times\sortX_{n-1}$ of $\L_T$, $\L_S$ has a relation symbol $R_T\subseteq \sortY_{T:\sortX_0}\times\cdots\times \sortY_{T:\sortX_{n-1}}$
\end{itemize}
For an $\L_S$-structure $\M$, we take $\M_T$ to denote the restriction/reduct of $\M$ to the sorts $\sortY_{T:\sortX}$ and symbols $R_T$ associated with $T$. We define $T_S$ to be the theory of $\L_S$-structures $\M$ such that $\M_T\models T$ for every $T\in S$. For $S_0\subseteq S$, we define $\M_{S_0}$ similarly.

It is not hard to see that $T_S\in{\uparrow}S$. It is routine to verify that $T_S$ is complete, that up to the obvious translations of formulas, $\bigcup_{T\in S}\L_T$ is an elimination set of $T_S$, and that $T_S$ eliminates imaginaries. Finally, we observe that for pairwise distinct $T_0,...,T_{n-1}\in S$ and $\M\models T_S$,  $\M_{T_0},...,\M_{T_{n-1}}$ are orthogonal in the sense that any 0-definable set $D$ of $\M_{\{T_i\}_i}$ is equal to a boolean combination of sets of the form $D_0\times\cdots\times D_{n-1}$, where $D_i$ is a 0-definable set of $\M_{T_i}$ for each $i<n$.
%% This is not needed if we do it the way I am suggesting below %%

Now, suppose that ${\uparrow}S\subseteq\C_\KK$ -- so of course, $T_S\in\C_\KK$. We must show that there is some $T\in S$ such that $T\in\C_\KK$. Since $T_S\in \C_\KK$, there are $0<m<\omega$, formulas $\phi_i(\xx_0,...,\xx_{r_i-1})\in\L_S$ for each $i<k$ (where each $\xx_j$ is a non-repeating $m$-tuple of variables, say in the sort $\sortY_{T_0:\sortX_0} ... \sortY_{T_{m-1}:\sortX_{m-1}}$), and $F\in\vec\FF_\pphi(T_S)$ such that $\KK=\Age_\pphi(F)$.
%Let $T_0,...,T_{n-1}$ enumerate the smallest set of theories $T\in S$ such that every $\phi_i$ ($i<k$) is a boolean combination of sets of the form $X_0\times\cdots\times X_{n-1}$, where $X_j$ is a 0-definable set of $\M_{T_j}$ for each $j<n$. From each $j<n$, we recover an algebraically trivial \fraisse class $\KK_j$ so that $(\KK_0,...,\KK_{n-1})$ is a factorization $\KK$.
For each $j < m$ and $i < k$, let $\phi^j_i$ be the reduct of $\phi_i$ to $\sortY_{T_j:\sortX_j}$, let $F_j$ be the restriction of $F$ to $\sortY_{T_j:\sortX_j}$, and let $\KK_j = \Age_{\pphi^j}(F_j)$.  Then, $(\KK_0, ..., \KK_{m-1})$ is a factorization of $\KK$.
%% I think this formulation is more precise. %%
As $\KK$ is indecomposable, it follows that $T_\KK\cleq T_{\KK_j}\cleq T_j$ for some $j<m$, and then $T_j\in\C_\KK$ -- as required.
\end{proof}

\bigskip

%%%%%---------------------------------------------------------------------------------------------------------------------------%%%%%
\begin{prop}\label{Prop_prime_implies_fraisse}
If $\C$ is irreducible, then there is an indecomposable algebraically trivial \fraisse class $\KK$  such that $\C=\C_\KK$.
\end{prop}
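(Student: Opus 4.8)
The plan is to produce, from an arbitrary irreducible class $\C$, a single algebraically trivial \fraisse class $\KK$ witnessing $\C = \C_\KK$, and then argue that this $\KK$ must be indecomposable. First I would note that by Observation \ref{Obs_AF_CLEQ_1}, each $T \in \C$ carries its family $Q_T \subseteq \tilde\TT$ of generic theories of algebraically trivial \fraisse classes ``embedded'' in $T$, each lying $\cleq$-below $T$. The crucial preliminary claim is that $\C$ contains a $\cleq$-least element up to $\cleq$-equivalence; equivalently, that $\C \cap \tilde\TT$ has a $\cleq$-minimum. To get this, I would run a Zorn-type / transfinite argument: using the $(2^{\aleph_0})^+$-completeness remark (which the paper tells us suffices), every descending $\cleq$-chain in $\C$ has a lower bound in $\C$, and since $\tilde\TT$ is a set (Corollary \ref{Cor_continuum_bound}: only $2^{\aleph_0}$ many algebraically trivial \fraisse classes), one can build a descending chain through $\C \cap \tilde\TT$ that exhausts the possible $\cleq$-classes, stabilizing at a minimal theory $T_\KK^\eq$. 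Here I would lean on the filter property (finite intersections stay in $\C$, witnessed by a lower bound in $\C$) together with Observation \ref{Obs_AF_CLEQ_1} to keep pushing the chain down inside $\tilde\TT$: given any $T \in \C$, every member of $Q_T$ is $\cleq T$, and at least one stays in $\C$ — this last point is where primality enters, via the ``factorization'' bookkeeping already used in Proposition \ref{Prop_indec_yields_irred}.

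Granting the minimum $T_\KK^\eq \in \C \cap \tilde\TT$ with $\KK$ an algebraically trivial \fraisse class, I would check $\C = \C_\KK$. The inclusion $\C_\KK \subseteq \C$ is immediate from upward-closure of $\C$ together with $T_\KK^\eq \in \C$: if $T_\KK \cleq T$ then $T \in \C$. For the reverse, take $T \in \C$; minimality of $T_\KK^\eq$ in $\C$ gives $T_\KK^\eq \cleq T$ — but one must first know that $T$ lies above \emph{some} member of $\C \cap \tilde\TT$ that the chain construction could see, which again is supplied by the observation that $Q_T \cap \C \neq \emptyset$ and every element of $Q_T$ is $\cleq T$. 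Hence $T \in \C_\KK$, using Definition \ref{Def_C_K}. So $\C = \C_\KK$.

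It remains to show $\KK$ is indecomposable. Suppose $(\KK_0, \dots, \KK_{n-1})$ is a factorization of $\KK$, with generic model of $\Pi_i \KK_i$ as in the relevant definition. Exactly as in Proposition \ref{Prop_indec_yields_irred}, I would form the orthogonal ``disjoint union'' theory $T'$ of the $T_{\KK_i}^\eq$'s (sorts for each factor, symbols kept disjoint), observe $T' \in {\uparrow}\{T_{\KK_i}^\eq : i < n\}$ and in fact that the factorization data gives an injection exhibiting $T_\KK \cleq T'$, so $T' \in \C_\KK = \C$. Each $T_{\KK_i}^\eq$ interprets into... no — rather, each $T_{\KK_i}^\eq \cleq T'$, so ${\uparrow}\{T_{\KK_i}^\eq\}_{i<n} \subseteq \C$ would \emph{not} be automatic; instead I use primality applied to the set $S = \{T_{\KK_i}^\eq : i<n\}$ after verifying ${\uparrow}S \subseteq \C$. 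That verification is the one genuine subtlety: one must show any theory above all the $T_{\KK_i}$'s is above $T_\KK$, which is precisely the content of the factorization condition (the quantifier-free type of $u\aa$ over the product codes the quantifier-free type of $\aa$ in $\A$, and the product embeds, via $\cleq$, into anything that realizes all the factors simultaneously). Once ${\uparrow}S \subseteq \C$, primality yields some $T_{\KK_j}^\eq \in \C = \C_\KK$, i.e. $T_\KK \cleq T_{\KK_j}$, which is exactly the indecomposability requirement.

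\textbf{Main obstacle.} The hard part is the existence of the $\cleq$-minimum of $\C$ inside $\tilde\TT$: one has only a pre-order, $\cleq$-antisymmetry fails, and the completeness axiom speaks of chains of \emph{theories in $\C$}, not of \fraisse classes, so the descent argument must be threaded carefully through $Q_T$ (to stay in $\tilde\TT$) while invoking the finite-intersection and completeness properties to keep the chain inside $\C$, and the cardinality bound of Corollary \ref{Cor_continuum_bound} to guarantee termination; pinning down that $T \in \C$ forces $T$ above the eventual minimum — rather than merely $\cleq$-incomparable descent getting stuck — is the delicate point, and is presumably handled by combining the finite-intersection property with the primality/factorization bookkeeping so that at each stage a \emph{single} $\cleq$-lower theory in $\C \cap \tilde\TT$ can be chosen compatibly with all earlier choices.
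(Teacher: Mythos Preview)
Your outline is correct and matches the paper's architecture: locate a $\cleq$-minimum $T_\KK$ in $\tilde\C = \C\cap\tilde\TT$ by a descending-chain argument bounded by $|\tilde\TT|\leq 2^{\aleph_0}$ (Lemmas \ref{Lemma_have_minimal} and \ref{Lemma_minimal_to_minimum}, Observation \ref{Obs_chain_bound}); verify $\C=\C_\KK$ by routing an arbitrary $T\in\C$ through some $T_0\in Q_T\cap\C$; and deduce indecomposability by applying primality to the factor set $S=\{T_{\KK_i}\}$ exactly as in Lemma \ref{Lemma_minimum_yields_indecomp}.

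The one place your sketch is imprecise is the justification of $Q_T\cap\C\neq\emptyset$. You correctly identify that primality must be applied to $Q_T$, which requires ${\uparrow}Q_T\subseteq\C$, i.e.\ that any $T'$ with $T_0\cleq T'$ for \emph{all} $T_0\in Q_T$ already satisfies $T\cleq T'$. Your pointer to ``the factorization bookkeeping of Proposition \ref{Prop_indec_yields_irred}'' is not the right mechanism here: that proposition builds a disjoint-union theory to \emph{exhibit} an element of ${\uparrow}S$, whereas what is needed is the reverse implication. The paper's argument (Lemma \ref{Cor_support_by_fraisse}) instead shows that for each fixed $\pphi$ one can amalgamate a countable dense family of resolved pictures into a single $F_\pphi\in\vec\FF_\pphi(T)$ dominating all of them (Observations \ref{Obs_combine_pictures} and \ref{Obs_countable_dense_subset}); then $\Th(\A_\pphi(F_\pphi))\in Q_T$ is a single theory whose embedding into $T'$ supplies the matching $\pphi'$ uniformly. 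Without this merging step, distinct resolved $F$'s for the same $\pphi$ could \emph{a priori} demand incompatible $\pphi'$'s in $T'$, and $T\cleq T'$ would not follow. This is a small technical lemma rather than a structural obstacle, but it is the genuine content your sketch is missing.
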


For the rest of this subsection (the proof of Proposition \ref{Prop_prime_implies_fraisse}), we a fix a complete prime filter class $\C$.

The first important step in the proof of Proposition \ref{Prop_prime_implies_fraisse} is to identify the role of the \fraisse class $\KK$ in $\C$ in terms of $\cleq$. Unsurprisingly, we find that $\KK$ is chosen so that $T_\KK$ is the $\cleq$-minimum element of $\tilde\C = \tilde\TT\cap\C$, and we then demonstrate (Lemma \ref{Lemma_minimum_yields_indecomp}) being minimum for an irreducible class $\C$ is sufficient for indecomposability.

%%%%%---------------------------------------------------------------------------------------------------------------------------%%%%%
\begin{obs}
Let $\KK_0,...,\KK_{n-1}$ be algebraically trivial \fraisse classes, and let $\B$ be the generic model of $\Pi_i\KK_i$. For $T\in\TT$, if $T_{\KK_i}\cleq T$ for each $i<n$, then $\Th(\B)\cleq T$.
\end{obs}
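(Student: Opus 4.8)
## Proof Proposal for the Observation

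The plan is to build a single map witnessing $\Th(\B) \cleq T$ by first using the hypotheses $T_{\KK_i} \cleq T$ to obtain, for each $i<n$, a witnessing configuration inside $T$, and then ``gluing'' these configurations together so that their join realizes the product structure $\B$. The first step is to invoke Observation \ref{Obs_witnessing}: for each $i<n$, since $T_{\KK_i}\cleq T$, there are a natural number $0<m_i<\omega$, formulas $\psi^i_R(\xx_0,\dots,\xx_{r-1})\in\L_T$ (one for each relation symbol $R^{(r)}$ in $\sig(\L_{\KK_i})$, with each $\xx_j$ a non-repeating $m_i$-tuple) and a $\pphi^i$-resolved $F_i\in\vec\FF_{\pphi^i}(T)$ with $\Age_{\pphi^i}(F_i)=\KK_i$; equivalently $\A_{\pphi^i}(F_i)$ is (isomorphic to, over $A$) the generic model $\A_i$ of $\KK_i$. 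Since the $\sig(\L_i)$ are pairwise disjoint, we may take a common $m=\max_i m_i$ by padding tuples, so that all the $F_i$ live in the same sort of $T^m$-tuples; this is a routine adjustment.

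Next I would combine the $F_i$ into a single function. Using Observation \ref{Obs_combine_pictures}, there are a partition $\{A_i\}_{i<n}$ of $A$ into infinite pieces, injections $u_i\colon A\to A_i$, and an $F\in\vec\FF_\pphi(T)$ (for the concatenated sequence of formulas $\pphi = \pphi^0{}^\frown\cdots{}^\frown\pphi^{n-1}$) such that $\A_{\pphi^i}(F\circ u_i)=\A_{\pphi^i}(F_i)=\A_i$ for each $i$. Now I want to extract from $F$ a configuration that realizes $\B$, the generic model of $\Pi_i\KK_i$. Recall $B = B_0\times\cdots\times B_{n-1}$ as a set, and by $\aleph_0$-categoricity of $\Th(\B)$ (noted in the definition of $\Pi_i\KK_i$) it suffices to produce an injection $v\colon B\to A$ such that the quantifier-free $\L_\pphi$-type (equivalently, the complete type in $\Th(\A_\pphi(F))$, after passing to a $\pphi$-resolved extension) of each tuple $v(\bar b)$ matches $\tp^\B(\bar b)$. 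Concretely, identify $B$ with a subset of $A^n$ via the $u_i$: send $(a^0,\dots,a^{n-1})\in B_0\times\cdots\times B_{n-1}$, where we first identify each $B_i$ with $A$ via the $\KK_i$-universality and homogeneity of $\A_i$, to a single element of $A$ using some fixed bijection $A^n\to A$; then apply $F$. Because the relation $R^B$ for $R\in\sig(\L_i)$ depends only on the $i$-th coordinate and because $\A_{\pphi^i}(F\circ u_i)=\A_i$, the $\L_\pphi$-structure induced on the image by $F$ agrees with $\B$ on every relation. Finally, apply Proposition \ref{Prop_resolved}(1) to pass to a $\pphi$-resolved $F'\supseteq$ (a cofinal part of) this configuration so that $\A_\pphi(F')$ is genuinely a generic model and $\Age$ contains all finite substructures of $\B$; this gives $\Th(\B)^\eq\cleq T$, hence $\Th(\B)\cleq T$ by Proposition \ref{Prop_variants_of_CLEQ} and Definition \ref{Defn_TheOrdering}.

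I expect the main obstacle to be the bookkeeping in the gluing step: one must be careful that the single function $F$ simultaneously induces the \emph{correct product type} on the combined index set rather than merely the correct types coordinate-by-coordinate. The key point that makes this go through is precisely the form of the relations in $\Pi_i\KK_i$ — each $R^B$ with $R\in\sig(\L_i)$ is pulled back from $B_i$ alone, so there is no ``cross term'' to realize, and the disjointness of the signatures $\sig(\L_i)$ means the reducts do not interfere. A secondary subtlety is ensuring the resulting $F$ still lies in $\vec\FF_\pphi(T)$, i.e. that the relevant restrictions are non-algebraic injections into a model of $T$; but this is inherited from the $F_i\in\vec\FF_{\pphi^i}(T)$ together with the compactness construction in Observation \ref{Obs_combine_pictures}, since algebraic triviality of the pieces is preserved under taking the disjoint combination. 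Once the configuration realizing $\B$ is in hand, the final invocation of Proposition \ref{Prop_resolved} and the definition of $\cleq$ is purely formal.
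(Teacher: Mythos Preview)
The paper gives no proof for this observation; it treats the statement as routine. Your overall plan --- produce witnesses $F_i$ for each $T_{\KK_i}\cleq T$ and combine them into a witness for $\Th(\B)\cleq T$ --- is correct, but the specific gluing step you describe does not work.

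The faulty step is ``send $(a^0,\dots,a^{n-1})$ to a single element of $A$ using some fixed bijection $A^n\to A$, then apply $F$.'' That bijection is purely set-theoretic and carries no structural information: once you collapse the $n$-tuple to a point $c\in A$ and evaluate $F(c)$, the result bears no relation to $F_0(a^0),\dots,F_{n-1}(a^{n-1})$, so your claim that the induced $\L_\pphi$-structure ``agrees with $\B$ on every relation'' is unsupported. The formulas $\psi^i_R$ detect $R$-relations only on tuples of the form $F(u_i(\cdot))$, not on $F$ applied to encodings of $n$-tuples through an external bijection. Relatedly, Observation~\ref{Obs_combine_pictures} is the wrong tool: it places the $F_i$ side by side on a \emph{partition} of $A$, whereas you need to realize a \emph{product}.

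The direct argument is simpler and does not use Observation~\ref{Obs_combine_pictures} at all. Work in a single saturated $\M\models T$, identify the universe of $\B$ with $A^n$ via its product decomposition, and define $G\colon A^n\to M^{m_0+\cdots+m_{n-1}}$ by concatenation, $G(a^0,\dots,a^{n-1})=F_0(a^0)^\frown\cdots{}^\frown F_{n-1}(a^{n-1})$. For $R\in\sig(\L_i)$ the witnessing $\L_T$-formula is $\psi^i_R$ with its variable-tuples sitting in the $i$-th block of an $(m_0+\cdots+m_{n-1})$-tuple; since $R^\B$ depends only on the $i$-th coordinate this gives $\B\models R(\bar b)\iff\M\models\psi^i_R(G\bar b)$ immediately. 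Because every $0$-definable relation in $\B$ is a boolean combination of such coordinate-wise relations (orthogonality of the factors), the full requirement for $\Th(\B)\cleq T$ follows. The concatenated tuples live in a product of sorts of $\M$, which is available as a single sort because $T$ is identified with $T^{\mathrm{eq}}$.
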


%%%%%---------------------------------------------------------------------------------------------------------------------------%%%%%
\begin{lemma}\label{Lemma_minimum_yields_indecomp}
Let $\KK$ be an algebraically trivial \fraisse class such that $T_\KK$ is $\cleq$-minimum in 
$\tilde\C = \tilde\TT\cap\C$. Then $\KK$ is indecomposable.
\end{lemma}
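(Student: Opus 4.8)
I want to show: if $T_\KK$ is $\cleq$-minimum in $\tilde\C = \tilde\TT \cap \C$, then $\KK$ is indecomposable. So let $(\KK_0,\dots,\KK_{n-1})$ be a factorization of $\KK$, and I must produce some $i<n$ with $T_\KK \cleq T_{\KK_i}$. The strategy is to use the minimality hypothesis: it suffices to show that at least one $T_{\KK_i}$ lies in $\C$, because then $T_{\KK_i} \in \tilde\C$ (each $\KK_i$ is an algebraically trivial \fraisse class, so $T_{\KK_i}^\eq \in \tilde\TT$), and $\cleq$-minimality of $T_\KK$ in $\tilde\C$ gives $T_\KK \cleq T_{\KK_i}$ immediately.

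**The key step.** So everything reduces to: the factorization data forces some $T_{\KK_i} \in \C$. I would extract this from primality. First, recall that a factorization gives an injection $u : A \to B$, where $\B$ is the generic model of $\Pi_i \KK_i$, such that $\qtp^\A(\aa) = \qtp^\A(\aa') \iff \tp^\B(u\aa) = \tp^\B(u\aa')$ for all tuples $\aa, \aa'$ from $A$. Since the relations of $\Pi_i\L_i$ are interpreted coordinatewise via quantifier-free formulas over the factors, and $\B$ is $\aleph_0$-categorical with quantifier elimination in a suitable expanded language, this condition says exactly that $T_\KK = \Th(\A)$ is interpretable in — indeed, $\cleq$-below — $\Th(\B)$. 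Hence $\Th(\B)^\eq \in \C$ by upward-closure. Now I want to apply primality to the set $S = \{T_{\KK_0}^\eq, \dots, T_{\KK_{n-1}}^\eq\}$: by the Observation just before the Lemma (on $\Pi_i\KK_i$ and ${\uparrow}$-cones), $\Th(\B)^\eq \in {\uparrow}S$, so in fact ${\uparrow}S \subseteq \C$ once I check the cone is contained in $\C$ — but that's not automatic from a single member. The cleaner route: primality says that if ${\uparrow}S \subseteq \C$ then $\C \cap S \neq \emptyset$; and one shows ${\uparrow}S \subseteq \C$ by noting that any $T \in {\uparrow}S$ has $T_{\KK_i} \cleq T$ for all $i$, hence $\Th(\B) \cleq T$ by the Observation, and $\Th(\B)^\eq \in \C$ with upward-closure gives $T \in \C$. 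Therefore $\C \cap S \neq \emptyset$, i.e. some $T_{\KK_i}^\eq \in \C$, which is what we needed.

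**The main obstacle.** The delicate point is verifying that $\Th(\B)^\eq \in \C$, i.e. that the factorization condition genuinely yields $T_\KK \cleq \Th(\B)$ (equivalently that $T_\KK$, or its $\eq$, embeds into the local combinatorics of $\Th(\B)$ in the sense of Observation \ref{Obs_witnessing}). The map $u$ is an injection $A \to B$ preserving and reflecting quantifier-free types, so for each relation symbol $R$ of $\L_\KK$ one needs a formula $\theta_R$ over $\B$ (quantifier-free, since $T_\KK$ has QE) such that $\A \models R(\bar a) \iff \B \models \theta_R(u\bar a)$; this is exactly the content of "$\qtp^\A = \qtp^\B \circ u$", modulo packaging $\Th(\B)$ as a single complete $1$-sorted (or $\eq$) theory and checking it lies in $\TT$ — which is routine since $\Th(\B)$ is $\aleph_0$-categorical and algebraically trivial, and passing to $\eq$ handles elimination of imaginaries. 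Once that translation is in hand, the primality argument above is essentially bookkeeping. I expect the factorization-to-$\cleq$ translation to be where all the actual work lies; the rest follows the template of the primality proof in Proposition \ref{Prop_indec_yields_irred}, run in reverse.
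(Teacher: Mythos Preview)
Your proposal is correct and follows essentially the same approach as the paper: use the factorization to get $T_\KK \cleq \Th(\B)$, invoke the Observation that ${\uparrow}\{T_{\KK_i}\}_i \subseteq {\uparrow}\{\Th(\B)\}$ to conclude ${\uparrow}S \subseteq \C$, apply primality to obtain some $T_{\KK_i} \in \C \cap \tilde\TT$, and finish with $\cleq$-minimality. The paper treats your ``main obstacle'' (that the factorization data yields $T_\KK \cleq \Th(\B)$) as obvious, so you are if anything being more careful than necessary.
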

\begin{proof}
Let $\A$ be the generic model of $\KK$. Suppose $(\KK_0,...,\KK_{n-1})$ is a factorization of $\KK$ via an injection $u:A\to B$, where $\B$ is the generic model of $\Pi_i\KK_i$. Obviously, $T_\KK\cleq \Th(\B)$.

Let $S = \left\{T_{\KK_i}\right\}_{i<n}$. We have observed that if $T_{\KK_i}\cleq T$ for each $i<n$, then $\Th(\B)\cleq T$. Thus, for any $T\in{\uparrow}S$, we have $T_\KK\cleq \Th(\B)\cleq T$, so ${\uparrow}S\subseteq\C$. Since $\C$ is prime, it follows that $T_{\KK_i}\in\C$ for some $i<n$. Since $T_\KK$ is $\cleq$-minimum in $\tilde\C$, we find that $T_\KK\cleq T_{\KK_i}$ -- as required.
\end{proof}

By Lemma \ref{Lemma_minimum_yields_indecomp}, we now know that in order to prove Proposition \ref{Prop_prime_implies_fraisse}, it is sufficient just to prove that $\tilde\C$ has a $\cleq$-minimum element, and that is what we do in the rest of the proof. This amounts to demonstrating, first, that a $\cleq$-minim\emph{al} element of $\tilde\C$ is already $\cleq$-minim\emph{um}, and second, that $\tilde\C$ must indeed have $\cleq$-minim\emph{al} element. The first project accounts for Lemmas \ref{Cor_support_by_fraisse}, \ref{Cor_tildeC_is_filter}, and \ref{Lemma_minimal_to_minimum}. The second part accounts for Lemma \ref{Lemma_have_minimal} and Corollary \ref{Cor_tildeCminimal}.

\begin{lemma}\label{Cor_support_by_fraisse}
For any $T\in\TT$, $T\in\C$ if and only if $Q_T\cap\C\neq\emptyset$. 
\end{lemma}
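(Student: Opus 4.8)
The plan is to prove both directions using the filter/primality structure of $\C$ together with the basic fact (Observation \ref{Obs_AF_CLEQ_1}) that $Q_T \subseteq \tilde\TT$ and $T_0 \cleq T$ for every $T_0 \in Q_T$. The backward direction is the easy one: if $T_0 \in Q_T \cap \C$, then $T_0 \in \C$ and $T_0 \cleq T$, so upward-closure of $\C$ (the filter property) immediately gives $T \in \C$. So the real content is the forward direction: assuming $T \in \C$, produce some $T_0 \in Q_T$ that also lies in $\C$.

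For the forward direction, the natural strategy is to show that $\{T\} \subseteq {\uparrow}Q_T$ in a sufficiently strong sense and then invoke primality. More precisely: I would argue that ${\uparrow}Q_T \subseteq \C$ fails to be directly useful (since we don't yet know $T \in {\uparrow}Q_T$ implies anything), so instead I would use the local structure of $\cleq$. The key point is that $T$ is ``approximated from below'' by the theories in $Q_T$: for any finite sequence $\pphi$ of $\L_T$-formulas and any $\pphi$-resolved $F$, the theory $\Th(\A_\pphi(F))^\eq$ is in $Q_T$. One then wants to say that $T$ is determined, as far as $\cleq$ is concerned, by the family $Q_T$ — i.e. that $T \in {\downarrow}$ of nothing bad, but rather that any theory $\cleq$-above all of $Q_T$ is $\cleq$-above $T$. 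Concretely, I would try to prove: \textbf{if $T' \cleq$-dominates every member of $Q_T$, then $T \cleq T'$.} Given such a statement, set $S = Q_T$; then ${\uparrow}Q_T \subseteq {\uparrow}\{T\}$, and since $T \in \C$ and $\C$ is upward closed, ${\uparrow}\{T\} \subseteq \C$, hence ${\uparrow}Q_T \subseteq \C$; primality of $\C$ then yields $Q_T \cap \C \neq \emptyset$, as desired.

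So the main obstacle — and the step I would spend the most care on — is establishing that domination of all of $Q_T$ implies domination of $T$, i.e. $\big({\uparrow}Q_T\big) \subseteq \big({\uparrow}\{T\}\big)$. To see this, suppose $T'$ satisfies $T_0 \cleq T'$ for all $T_0 \in Q_T$, and let $\pphi^1$ be a finite sequence of $\L_T$-formulas; we must find a matching $\pphi'$ over $\L_{T'}$ with $\Age_{\pphi^1}(T) \subseteq \Age_{\pphi'}(T')$. Using Proposition \ref{Prop_resolved}(1) together with Observation \ref{Obs_combine_pictures}, one can combine the (countably many, by Observation \ref{Obs_countable_dense_subset}) $\pphi^1$-resolved functions into a single $\pphi^1$-resolved $F$ with $\Age_{\pphi^1}(T) = \Age_{\pphi^1}(F)$; then $T_0 := \Th(\A_{\pphi^1}(F))^\eq \in Q_T$, and $\Age_{\pphi^1}(T) = \Age(\A_{\pphi^1}(F))$ is exactly realized in $T_0$ via the identity sequence of atomic formulas. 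Since $T_0 \cleq T'$ by hypothesis, there is a matching sequence $\pphi'$ over $\L_{T'}$ with $\Age(\A_{\pphi^1}(F)) \subseteq \Age_{\pphi'}(T')$, which is what was needed. The technical care here is in checking that a single resolved $F$ can be chosen to capture all of $\Age_{\pphi^1}(T)$ at once — but this is precisely what Observation \ref{Obs_combine_pictures} and the Baire-category argument in Proposition \ref{Prop_resolved} are designed to deliver, so I expect no serious difficulty, only bookkeeping.
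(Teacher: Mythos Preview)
Your proposal is correct and follows essentially the same route as the paper's proof: the backward direction is immediate from upward closure, and for the forward direction you show ${\uparrow}Q_T \subseteq {\uparrow}\{T\} \subseteq \C$ and invoke primality, with the key technical step being the construction (via Observations~\ref{Obs_countable_dense_subset} and~\ref{Obs_combine_pictures} and Proposition~\ref{Prop_resolved}) of a single $\pphi$-resolved $F_\pphi$ whose age captures all of $\Age_\pphi(T)$. The paper phrases this last step as ``every $\A_\pphi(F)$ embeds into $\A_\pphi(F_\pphi)$,'' but that is equivalent to your formulation, and the chain of implications $T'\in{\uparrow}Q_T \Rightarrow T\cleq T' \Rightarrow T'\in\C$ is identical.
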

\begin{proof}
Clearly, if $Q_T\cap\C$ is non-empty, then $T\in\C$, so we just need to deal with the converse. Suppose $T\in\C$.

%; we must show that $Q_T\cap\C\neq\emptyset$. %By Lemma \ref{Lemma_countable_theories_are_all}, there is nothing lost in assuming that $T$ is countable. 

Let $\pphi$ be a sequence of $\L_T$-formulas. By Observation \ref{Obs_countable_dense_subset}, $\XX_\pphi(T)$, the space representing $\vec\FF_\phi(T)$ up to ``isomorphism'' from the proof of Proposition \ref{Prop_resolved}, has a countable dense subset $W_\pphi(T)$ such that $F$ is $\pphi$-resolved whenever $F/{\sim}\in W_\pphi(T)$. By Observation \ref{Obs_combine_pictures} and Proposition \ref{Prop_resolved}, there is a $\pphi$-resolved $F_\pphi\in\vec\FF_\pphi(T)$ such that for every $F/{\sim}\in W_\pphi(T)$, $\A_\pphi(F)$ embeds into $\A_\pphi(F_\pphi)$. One easily verifies, then, that for every $\pphi$-resolved $F\in\vec\FF_\pphi(T)$, $\A_\pphi(F)$ embeds into $\A_\pphi(F_\pphi)$. Now, we observe that for an arbitrary theory $T'\in\TT$, 
$$T'\in{\uparrow} Q_T\,\,\Longrightarrow\,\, (\forall \pphi\textnormal{ of }\L_T)\,Th(\A_\pphi(F_\pphi))\cleq T'\,\,\Longrightarrow\,\, T\cleq T'\,\,\Longrightarrow\,\, T'\in\C.$$
More succinctly, we have shown that ${\uparrow}Q_T\subseteq\C$. Since $\C$ is prime, %there is some $T_\KK\in Q_T$ such that $T_\KK\in\C$ -- that is, (Removed: 2/9/17)
$Q_T\cap\C\neq\emptyset$, as desired.
 \end{proof}

%We observe that if $T'\in{\uparrow}Q_T$, then $T\cleq T'$, so as $\C$ is upward-closed under $\cleq$, 
%%%%% Vince added 12/8 %%%%
%$T' \in \C$.  Thus ${\uparrow}Q_T \subseteq \C$.  By primality of $\C$, $\C \cap Q_T \neq \emptyset$.
%%%%% %%%%%%%%%%%%%%%% %%%%
%%there is some $T_0\in Q_T$ such that $T_0\in\C$.

%%%%%---------------------------------------------------------------------------------------------------------------------------%%%%%
\begin{lemma}\label{Cor_tildeC_is_filter}
The sub-class
$\tilde\C = \C\cap\tilde\TT$
is a complete filter class (but not necessarily prime) relative to $\tilde\TT$. 
\end{lemma}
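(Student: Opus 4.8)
The plan is to verify each of the complete filter class axioms (Existence, upward-closure, finite-intersection, and Completeness) for $\tilde\C$ relative to $\tilde\TT$, leaning heavily on Lemma \ref{Cor_support_by_fraisse} to transfer the corresponding properties from $\C$ (which holds them relative to $\TT$) down to the sub-class $\tilde\TT$. The key observation making this work is that for every $T\in\TT$ we have $Q_T\subseteq\tilde\TT$ (Observation \ref{Obs_AF_CLEQ_1}) together with $T_0\cleq T$ for each $T_0\in Q_T$, and $T\in\C$ iff $Q_T\cap\C\neq\emptyset$ (Lemma \ref{Cor_support_by_fraisse}); so membership of any theory in $\C$ is always ``witnessed'' by a member of $\tilde\TT$ lying $\cleq$-below it.

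\textbf{Existence and upward-closure.} Since $\C$ is non-empty, pick $T\in\C$; by Lemma \ref{Cor_support_by_fraisse} some $T_0\in Q_T\cap\C\subseteq\tilde\TT\cap\C=\tilde\C$, so $\tilde\C\neq\emptyset$. Upward-closure within $\tilde\TT$ is immediate: if $T_0\in\tilde\C$ and $T_0\cleq T_1$ with $T_1\in\tilde\TT$, then $T_1\in\C$ by upward-closure of $\C$, hence $T_1\in\tilde\C$.

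\textbf{Finite-intersection and completeness.} For the finite-intersection property, suppose $T_0,\dots,T_{n-1}\in\tilde\C$. Since these lie in $\C$ and $\C$ has the filter property, $\C\cap{\downarrow}\{T_0,\dots,T_{n-1}\}\neq\emptyset$; pick $T$ in this intersection. Now apply Lemma \ref{Cor_support_by_fraisse} to get $T_0'\in Q_T\cap\C$. Then $T_0'\in\tilde\TT$ and $T_0'\cleq T$, and since $T\cleq T_i$ for each $i<n$ we get $T_0'\cleq T_i$ for each $i$ by transitivity of $\cleq$; hence $T_0'\in\tilde\C\cap{\downarrow}\{T_0,\dots,T_{n-1}\}$, which is therefore non-empty. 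Completeness is handled identically: given a non-empty $\cleq$-chain $(T_i)_{i\in I}$ in $\tilde\C$, these lie in $\C$, so by completeness of $\C$ pick $T\in\C\cap{\downarrow}\{T_i\}_{i\in I}$, then replace $T$ by any $T_0'\in Q_T\cap\C\subseteq\tilde\C$ as above; since $T_0'\cleq T\cleq T_i$ for all $i\in I$, we have $T_0'\in\tilde\C\cap{\downarrow}\{T_i\}_{i\in I}$.

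Finally, one should note explicitly that $\tilde\C$ need not be prime relative to $\tilde\TT$ (as the statement parenthetically warns), so nothing needs to be proved there; the point of the lemma is precisely that completeness and the filter properties survive the restriction to $\tilde\TT$ even though primality is not claimed to. The only subtlety — and the step most worth writing carefully — is the repeated use of the ``witness-replacement'' trick: whenever an axiom for $\C$ produces a lower bound $T$ that might fail to be in $\tilde\TT$, one descends to a member of $Q_T\cap\C$, which is automatically in $\tilde\C$ and still a lower bound by transitivity. Everything else is a routine transfer, so I do not expect any genuine obstacle here.
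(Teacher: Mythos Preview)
Your proposal is correct and follows essentially the same approach as the paper's proof: both verify each axiom by invoking the corresponding property of $\C$, and then use the ``witness-replacement'' trick via Lemma \ref{Cor_support_by_fraisse} (together with Observation \ref{Obs_AF_CLEQ_1}) to push any lower bound produced in $\TT$ down into $\tilde\TT$. The paper's write-up is slightly terser but the logical structure is identical.
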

\begin{proof}
Since $\C$ is non-empty, we may choose $T\in\C$. Clearly,  $Q_T\cap\C\subseteq\tilde\C$, so as $Q_T\cap\C$ is non-empty, $\tilde\C\neq\emptyset$ as well.

For the first filter requirement, let $T_1,T_2\in\tilde\TT$, and suppose that $T_1\in\tilde\C$ and $T_1\cleq T_2$. Since $\C$ is a filter class, $T_2\in\C$, so $T_2\in\C\cap\tilde\TT=\tilde\C$. For the second filter requirement, let $T_0,...,T_{n-1}\in\tilde\C$. We claim that $\tilde\C\cap {\downarrow}\left\{T_0,...,T_{n-1}\right\}$ is non-empty. Since $\C$ is a filter class, let $T'\in\C\cap {\downarrow}\left\{T_0,...,T_{n-1}\right\}$. By Lemma \ref{Cor_support_by_fraisse}, $Q_{T'}\cap\C$ is non-empty, so let $T'_0\in Q_{T'}\cap\C$. Since $Q_{T'}\subseteq\tilde\TT$, we have $T'_0\in\tilde\C\cap {\downarrow}\left\{T_0,...,T_{n-1}\right\}$, as required.

For the completeness of $\tilde\C$, let $\kappa$ be a positive ordinal, and let $(T_i)_{i<\kappa}$ be a descending $\cleq$-chain of members of $\tilde\C$. By the completeness of $\C$, 
 there is a theory $T\in\C$ that is $\cleq$-below all of the $T_i$'s ($i<\kappa$). By Lemma \ref{Cor_support_by_fraisse} again, $Q_T\cap\C$ is non-empty, and any $T^*\in Q_T\cap\C$ is in $\tilde\C$ and also $\cleq$-below all of the $T_i$'s.
\end{proof}

%%%%%---------------------------------------------------------------------------------------------------------------------------%%%%%
\begin{lemma}\label{Lemma_minimal_to_minimum}
If $\tilde\C$ has at least one $\cleq$-minimal element, then it has a $\cleq$-minim\emph{um} element.
\end{lemma}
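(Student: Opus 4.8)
The plan is to show that any two $\cleq$-minimal elements of $\tilde\C$ are $\cleq$-equivalent, whence a minimal element is automatically a minimum. Suppose $T, T' \in \tilde\C$ are both $\cleq$-minimal. Since $\tilde\C$ is a filter class relative to $\tilde\TT$ by Lemma \ref{Cor_tildeC_is_filter}, the second filter property gives some $T'' \in \tilde\C \cap {\downarrow}\{T, T'\}$, i.e. $T'' \cleq T$ and $T'' \cleq T'$ with $T'' \in \tilde\C$. By $\cleq$-minimality of $T$, the inequality $T'' \cleq T$ forces $T \cleq T''$ (minimality says nothing strictly below $T$ lies in $\tilde\C$, and $T''$ does); similarly $T' \cleq T''$. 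Chaining, $T \cleq T'' \cleq T'$ and $T' \cleq T'' \cleq T$, so $T$ and $T'$ are $\cleq$-equivalent. Hence a single minimal element $T_0$ satisfies $T_0 \cleq T$ for every minimal $T$.

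It remains to upgrade this to: $T_0 \cleq T$ for \emph{every} $T \in \tilde\C$, not just the minimal ones. Here I would invoke the completeness of $\tilde\C$ (again Lemma \ref{Cor_tildeC_is_filter}) together with Zorn's lemma in the form adapted to descending chains: given any $T \in \tilde\C$, consider the sub-poset $\tilde\C \cap {\downarrow}\{T\}$ of theories in $\tilde\C$ lying $\cleq$-below $T$. Every descending $\cleq$-chain in this sub-poset is a descending chain in $\tilde\C$, so by completeness it has a lower bound in $\C$, and by Lemma \ref{Cor_support_by_fraisse} (as in the proof of Lemma \ref{Cor_tildeC_is_filter}) that lower bound can be taken in $\tilde\C$, hence in $\tilde\C \cap {\downarrow}\{T\}$. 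Thus $\tilde\C \cap {\downarrow}\{T\}$ has a $\cleq$-minimal element by Zorn; but a $\cleq$-minimal element of $\tilde\C \cap {\downarrow}\{T\}$ is also $\cleq$-minimal in all of $\tilde\C$ (anything in $\tilde\C$ strictly below it would also be below $T$, contradicting minimality in the sub-poset). By the first paragraph, that minimal element is $\cleq$-equivalent to $T_0$, and it is $\cleq T$; therefore $T_0 \cleq T$.

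The one subtlety — and the step I expect to need the most care — is the bookkeeping around applying completeness and Lemma \ref{Cor_support_by_fraisse} inside the sub-poset ${\downarrow}\{T\}$: completeness of $\tilde\C$ is stated for descending $\cleq$-chains of members of $\tilde\C$, and I must make sure the lower bound produced by completeness-plus-Lemma \ref{Cor_support_by_fraisse} actually lands below $T$ as well (it does, since it is $\cleq$ every chain member and each chain member is $\cleq T$, by transitivity of $\cleq$), so that Zorn genuinely applies to $\tilde\C \cap {\downarrow}\{T\}$. Once that is set up, the argument is a routine combination of the filter property (to compare minimals) and completeness-plus-Zorn (to produce a minimal below an arbitrary element). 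I would present it in exactly that order: first the ``minimals are equivalent'' observation, then ``every element dominates a minimal,'' then conclude.
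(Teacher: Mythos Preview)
Your proof is correct, but it takes a longer route than necessary, and the paper's argument is hiding inside your first paragraph. You restrict $T'$ to being $\cleq$-minimal when you do not need to: if $T_0\in\tilde\C$ is $\cleq$-minimal and $T'\in\tilde\C$ is \emph{arbitrary}, the second filter property of $\tilde\C$ (Lemma~\ref{Cor_tildeC_is_filter}) still produces $T''\in\tilde\C$ with $T''\cleq T_0$ and $T''\cleq T'$; minimality of $T_0$ then forces $T_0\cleq T''\cleq T'$. That is the entire proof in the paper, and it uses only the finite-intersection filter property --- no completeness, no Zorn, no Lemma~\ref{Cor_support_by_fraisse}.

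Your second paragraph (Zorn plus completeness to place a minimal element below an arbitrary $T$) is a valid argument, and the bookkeeping you flag does go through, but it is doing work that was never required: you are manufacturing a minimal element below $T$ only so you can compare it to $T_0$ via paragraph one, when paragraph one already compares $T_0$ to $T$ directly once you drop the superfluous minimality hypothesis on $T'$. In effect you have proved ``every element of $\tilde\C$ dominates a minimal element'' as a lemma, which is true and mildly interesting, but the paper bypasses it entirely.
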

\begin{proof}
Let $\KK_0$ be an algebraically trivial \fraisse class such that $T_{\KK_0}$ is a $\cleq$-minimal element of $\tilde\C$. If $\KK_0$ is not $\cleq$-minim\emph{um}, then there is some $\KK_1$ such that $T_{\KK_1}\in\tilde\C$ such that $T_{\KK_0}\not\cleq T_{\KK_1}$. Since $\tilde\C$ is a filter class (specifically, the second requirement), there is an algebraically trivial \fraisse class $\KK^*$ such that $T_{\KK^*}$ is in $\tilde\C$, $T_{\KK^*}\cleq T_{\KK_0}$, and $T_{\KK^*}\cleq T_{\KK_1}$. Since $T_{\KK_0}$ is $\cleq$-minimal, we have $T_{\KK_0}\cleq T_{\KK^*}\cleq T_{\KK_1}$ -- a contradiction. Thus, $T_{\KK_0}$ is in fact a $\cleq$-minimum element of $\tilde\C$.
\end{proof}

\medskip

We have verified that a $\cleq$-minimal element of $\tilde\C$ is already $\cleq$-minimum, and now we need to show that $\tilde\C$ does indeed have $\cleq$-minimal element. The proof of this fact goes through showing that the lack of a $\cleq$-minimal element implies the existence of long descending chains in $\tilde\C$, which violates the following easy observation.

%%%%%%%%%%%%%%%%%%%%%%
%% Vince added 12/8 %%
%%%%%%%%%%%%%%%%%%%%%%

\begin{obs}\label{Obs_chain_bound}
Since there are, at most, $2^{\aleph_0}$-many algebraically trivial \fraisse classes (see the proof of Corollary \ref{Cor_continuum_bound}), $|\tilde\TT| \le 2^{\aleph_0}$.  Therefore, $\tilde\TT$ contains no strictly descending $\cleq$-chains of length greater than $2^{\aleph_0}$.  
\end{obs}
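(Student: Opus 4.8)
The plan is to combine the cardinality count already carried out in the proof of Corollary \ref{Cor_continuum_bound} with the trivial observation that a strictly descending chain in a quasi-order injects into the underlying class. There is no substantive mathematics to do; the statement is a bookkeeping remark.

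First I would recall from the proof of Corollary \ref{Cor_continuum_bound} that there are at most $2^{\aleph_0}$ algebraically trivial \fraisse classes in finite relational languages: up to renaming of relation symbols there are only countably many finite relational signatures (each one being coded by a finitely-supported arity function $s \in S$), and for a fixed signature $\L_s$ the collection $\Fin(\L_s)$ is countable up to isomorphism, so the set of its subclasses — in particular the \fraisse subclasses — has size at most $2^{\aleph_0}$; summing over the countably many signatures gives $\aleph_0 \cdot 2^{\aleph_0} = 2^{\aleph_0}$. Since $\tilde\TT = \{\,T_\KK^{\eq} : \KK \text{ an algebraically trivial \fraisse class}\,\}$ is the image of this index class under the (surjective) assignment $\KK \mapsto T_\KK^{\eq}$, we conclude $|\tilde\TT| \le 2^{\aleph_0}$.

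Next I would observe that a strictly descending $\cleq$-chain $(T_i)_{i<\kappa}$ in $\tilde\TT$ — meaning that $j < i$ implies $T_i \cleq T_j$ but $T_j \not\cleq T_i$ — automatically has pairwise distinct terms, since $j < i$ forces $T_j \not\cleq T_i$ and hence $T_i \ne T_j$. Thus $i \mapsto T_i$ is an injection of $\kappa$ into $\tilde\TT$, so $\kappa \le |\tilde\TT| \le 2^{\aleph_0}$. Consequently $\tilde\TT$ admits no strictly descending $\cleq$-chain of length greater than $2^{\aleph_0}$. The only point worth stating with any care is precisely this last one: because $\cleq$ is merely a quasi-order, one should note explicitly that "strictly descending" already rules out repeated terms, so the chain's length cannot exceed the cardinality of $\tilde\TT$ — and that is the entire content of the argument.
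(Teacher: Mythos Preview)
Your proposal is correct and simply makes explicit the two steps the paper leaves tacit in this observation: the cardinality bound on $\tilde\TT$ imported from the proof of Corollary~\ref{Cor_continuum_bound}, and the fact that a strictly descending chain in a quasi-order must have pairwise distinct terms. The paper gives no argument beyond the statement itself, so your write-up is exactly the intended justification.
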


%%%%%%%%%%%%%%%%%%%%%%
%%%%%%%%%%%%%%%%%%%%%%

%%%%%---------------------------------------------------------------------------------------------------------------------------%%%%%
\begin{lemma}\label{Lemma_have_minimal}
If $\tilde\C$ does not have a $\cleq$-minimal element, then it contains a strictly descending $\cleq$-chain of length $(2^{\aleph_0})^+$.
\end{lemma}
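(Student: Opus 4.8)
The plan is to build the chain by transfinite recursion of length $(2^{\aleph_0})^+$, at each stage pulling down inside $\tilde\C$. I would maintain the inductive hypothesis that at each ordinal $\alpha < (2^{\aleph_0})^+$ I have a strictly descending $\cleq$-chain $(T_{\KK_\beta})_{\beta<\alpha}$ of members of $\tilde\C$. The successor step uses that $\tilde\C$ has no $\cleq$-minimal element: given $T_{\KK_\alpha}\in\tilde\C$, it is not minimal, so there is $T_{\KK_{\alpha+1}}\in\tilde\C$ with $T_{\KK_{\alpha+1}}\cleq T_{\KK_\alpha}$ and $T_{\KK_\alpha}\not\cleq T_{\KK_{\alpha+1}}$, which extends the chain strictly. (Since $\cleq$ is only a quasi-order, "strictly descending" must be read as: $T_{\KK_{\beta'}}\cleq T_{\KK_\beta}$ and $T_{\KK_\beta}\not\cleq T_{\KK_{\beta'}}$ for $\beta<\beta'$; this is what the next observation, \ref{Obs_chain_bound}, rules out in length $>2^{\aleph_0}$, so it is the right notion.)

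The limit step is the crucial one, and it is exactly where completeness of $\tilde\C$ (Lemma \ref{Cor_tildeC_is_filter}) is used. At a limit ordinal $\lambda<(2^{\aleph_0})^+$, the chain $(T_{\KK_\beta})_{\beta<\lambda}$ is a non-empty descending $\cleq$-chain in $\tilde\C$; by completeness there is $T^*\in\tilde\C$ with $T^*\cleq T_{\KK_\beta}$ for all $\beta<\lambda$. Writing $T^* = T_{\KK_\lambda}$ for an appropriate algebraically trivial \fraisse class $\KK_\lambda$ (using $T^*\in\tilde\TT$), I get a lower bound, but I must check it can be taken \emph{strictly} below — i.e., $T_{\KK_\beta}\not\cleq T_{\KK_\lambda}$ for, say, some (equivalently the least) $\beta$. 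If $T_{\KK_\beta}\cleq T^*$ held for all $\beta<\lambda$, then $T^*$ would be $\cleq$-equivalent to every $T_{\KK_\beta}$; but then $T_{\KK_0}\cleq T^*\cleq T_{\KK_1}$, contradicting strictness of the chain at stage $1$ (here I use $\lambda\geq 2$, harmless since finite stages are handled by the successor step anyway). So at worst I replace $T^*$ by itself and note the chain $(T_{\KK_\beta})_{\beta\leq\lambda}$ with $T_{\KK_\lambda}=T^*$ is still strictly descending. Continuing to $(2^{\aleph_0})^+$ produces the desired chain.

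The main obstacle — really the only subtle point — is keeping the chain \emph{strict} through limits, since completeness only hands back a lower bound that a priori could be $\cleq$-equivalent to a tail of the chain; the argument above (any such equivalence collapses an earlier strict step) resolves it, but it is worth stating carefully. A secondary bookkeeping point is that at each stage one must exhibit the lower bound as $T_\KK$ for an honest algebraically trivial \fraisse class rather than just a member of $\tilde\TT$; but this is immediate from the definition of $\tilde\TT$ (every member is $T_\KK^\eq$ for such a $\KK$, and we are identifying theories with their $\eq$'s throughout). Everything else is a routine transfinite recursion, and the resulting chain of length $(2^{\aleph_0})^+$ then contradicts Observation \ref{Obs_chain_bound}, which is how this lemma feeds into the proof that $\tilde\C$ has a $\cleq$-minimal element.
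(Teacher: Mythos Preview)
Your approach is essentially the same as the paper's: build the chain by transfinite recursion, using non-minimality of $\tilde\C$ at successor stages and completeness of $\tilde\C$ (Lemma \ref{Cor_tildeC_is_filter}) at limit stages, with strictness at limits coming from the observation that any failure would collapse an earlier strict step. One small point at the limit stage: you need $T_{\KK_\beta}\not\cleq T_{\KK_\lambda}$ for \emph{every} $\beta<\lambda$, not just some, so your contradiction hypothesis ``$T_{\KK_\beta}\cleq T^*$ for all $\beta$'' is stronger than necessary and its negation only yields ``some $\beta$''; the paper instead assumes a \emph{single} $\beta$ with $T_{\KK_\beta}\cleq T_{\KK_\lambda}$ and derives $T_{\KK_\beta}\cleq T_{\KK_\lambda}\cleq T_{\KK_{\beta+1}}\lhd T_{\KK_\beta}$, which directly gives the required ``for all''.
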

\begin{proof}
At each stage $s<(2^{\aleph_0})^+$ of the following process, we will have a strictly descending $\cleq$-chain, so that 
$$k<\ell\leq s\implies T_{\KK_k}\rhd T_{\KK_\ell}.$$
\begin{itemize}
\item Choose $\KK_0$ arbitrarily subject to $T_{\KK_0}\in\tilde\C$.
\item At a successor stage $i+1$, since $\tilde\C$ does not have any $\cleq$-minimal elements, $T_{\KK_{i}}$ is not $\cleq$-minimal in $\tilde\C$, and we may choose $\KK_{i+1}$ such that $T_{\KK_{i+1}}\lhd T_{\KK_{i}}$ and $T_{\KK_{i+1}}\in\tilde\C$.

\item At a limit stage $\ell$, we are faced with a chain 
$$T_{\KK_0}\rhd\cdots\rhd T_{\KK_{i}}\rhd T_{\KK_{i+1}}\rhd\cdots$$
in $\tilde\C$. 
Since $\tilde\C$ is complete, we may choose $\KK_{\ell}$ such that $T_{\KK_\ell}\in\tilde\C$ and $T_{\KK_{\ell}}\cleq T_{\KK_{i}}$ for all $i<\ell$. We observe that if $T_{\KK_{i}}\cleq T_{\KK_{\ell}}$ for some $i<\ell$, then we would find 
$$T_{\KK_i}\cleq T_{\KK_\ell}\cleq T_{\KK_{i+1}}\lhd T_{\KK_i}$$
which is impossible; hence $T_{\KK_{\ell}}\lhd T_{\KK_{i}}$ for all $i<\ell$.
\end{itemize}
\end{proof}

%%%%%---------------------------------------------------------------------------------------------------------------------------%%%%%
\begin{cor}\label{Cor_tildeCminimal}
$\tilde\C$ has a $\cleq$-minimum element.
\end{cor}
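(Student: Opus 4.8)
The plan is to assemble Corollary~\ref{Cor_tildeCminimal} directly from the two preceding lemmas together with Observation~\ref{Obs_chain_bound}. The logical skeleton is a short dichotomy: either $\tilde\C$ has a $\cleq$-minimal element, or it does not.

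First I would dispose of the negative horn. Suppose, toward a contradiction, that $\tilde\C$ has no $\cleq$-minimal element. Then Lemma~\ref{Lemma_have_minimal} produces a strictly descending $\cleq$-chain $(T_{\KK_i})_{i<(2^{\aleph_0})^+}$ inside $\tilde\C \subseteq \tilde\TT$, of length $(2^{\aleph_0})^+$. But Observation~\ref{Obs_chain_bound} says that $\tilde\TT$ contains no strictly descending $\cleq$-chain of length greater than $2^{\aleph_0}$, since $|\tilde\TT|\le 2^{\aleph_0}$. A chain of length $(2^{\aleph_0})^+$ is in particular of length greater than $2^{\aleph_0}$, so this is a contradiction. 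Hence $\tilde\C$ must have at least one $\cleq$-minimal element.

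Next I would invoke Lemma~\ref{Lemma_minimal_to_minimum}: since $\tilde\C$ has at least one $\cleq$-minimal element, it has a $\cleq$-minimum element. (Here one is implicitly using Lemma~\ref{Cor_tildeC_is_filter}, that $\tilde\C$ is a complete filter class, which is exactly the hypothesis consumed inside Lemma~\ref{Lemma_minimal_to_minimum}; but that is already built into that lemma's proof, so nothing further is needed.) That is the entire argument.

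There is really no obstacle here — by the time we reach this corollary, all the work has been done in Lemmas~\ref{Lemma_minimal_to_minimum} and~\ref{Lemma_have_minimal} and in Observation~\ref{Obs_chain_bound}; the corollary is just the bookkeeping step that combines them via the law of excluded middle. The only thing to be careful about is the cardinal arithmetic in the first horn, namely that ``length $(2^{\aleph_0})^+$'' genuinely exceeds the bound ``length $2^{\aleph_0}$'' of Observation~\ref{Obs_chain_bound}, which is immediate since $(2^{\aleph_0})^+ > 2^{\aleph_0}$. I would write the proof in three or four sentences.

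\begin{proof}
Either $\tilde\C$ has a $\cleq$-minimal element, or it does not. If it does not, then by Lemma~\ref{Lemma_have_minimal}, $\tilde\C$ contains a strictly descending $\cleq$-chain of length $(2^{\aleph_0})^+$. But $\tilde\C\subseteq\tilde\TT$, and by Observation~\ref{Obs_chain_bound}, $\tilde\TT$ contains no strictly descending $\cleq$-chain of length greater than $2^{\aleph_0}$; since $(2^{\aleph_0})^+ > 2^{\aleph_0}$, this is a contradiction. Hence $\tilde\C$ has at least one $\cleq$-minimal element, and then by Lemma~\ref{Lemma_minimal_to_minimum}, $\tilde\C$ has a $\cleq$-minimum element.
\end{proof}

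Combining Corollary~\ref{Cor_tildeCminimal} with Lemma~\ref{Lemma_minimum_yields_indecomp} then completes the proof of Proposition~\ref{Prop_prime_implies_fraisse}: if $T_\KK$ is the $\cleq$-minimum element of $\tilde\C$, then $\KK$ is indecomposable, and since $T_\KK$ is $\cleq$-below every member of $\tilde\C$, for any $T\in\C$ we have $Q_T\cap\C\neq\emptyset$ by Lemma~\ref{Cor_support_by_fraisse}, whence $T_\KK\cleq T'\cleq T$ for some $T'\in Q_T\cap\C\subseteq\tilde\C$, giving $T\in\C_\KK$; conversely $T\in\C_\KK$ means $T_\KK\cleq T$, and since $T_\KK\in\C$ and $\C$ is a filter class, $T\in\C$. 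Thus $\C=\C_\KK$ with $\KK$ indecomposable, as required.
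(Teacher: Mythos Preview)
Your proof is correct and follows exactly the same approach as the paper: combine Observation~\ref{Obs_chain_bound} with Lemma~\ref{Lemma_have_minimal} to get a $\cleq$-minimal element of $\tilde\C$, then apply Lemma~\ref{Lemma_minimal_to_minimum} to upgrade minimal to minimum. The paper's proof is simply the one-sentence version of what you wrote, and your additional paragraph deriving Proposition~\ref{Prop_prime_implies_fraisse} also matches the paper's argument.
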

\begin{proof}
By Observation \ref{Obs_chain_bound} and Lemma \ref{Lemma_have_minimal}, $\tilde\C$ has a $\cleq$-minimal element, say $T_\KK$, and by Lemma \ref{Lemma_minimal_to_minimum}, $T_\KK$ is $\cleq$-minimum in $\tilde\C$.
\end{proof}

%%%%%%%%%%%%%%%%%%%%%%%%
%%% Vince added 12/9 %%%
%%%%%%%%%%%%%%%%%%%%%%%%
\begin{proof}[Proof of Proposition \ref{Prop_prime_implies_fraisse}]
 Let $\KK$ be the algebraically trivial \fraisse class such that $T_\KK$ is $\cleq$-minimum in $\tilde\C$.  Fix any $T \in \C$.  Fix $T_0 \in Q_T \cap \C \subseteq \tilde\C$.  Then, $T_\KK \cleq T_0 \cleq T$, so $T \in \C_\KK$.  Conversely, fix $T \in \C_\KK$.  Then, $T_\KK \cleq T$ and $\C$ is upward closed, so $T \in \C$.  Therefore, $\C = \C_\KK$.
\end{proof}
%%%%%%%%%%%%%%%%%%%%%%%%

This completes the proof %of Proposition \ref{Prop_prime_implies_fraisse} and also the proof
Theorem \ref{Thm_prime_equals_fraisse}.

%%%%%%%%%%%%%%%%%%%%%%%%%%%%%%%%%%%%%%%%%%%%%%%%%%%%%%%%
%%%%%%%%%%%%%%%%%%%%%%%%%%%%%%%%%%%%%%%%%%%%%%%%%%%%%%%%
\subsection{Reduction to one sort}\label{Subsec_one_sort}

The reader may have noticed that, seemingly arbitrarily, our ordering $\cleq$ accommodates only one-sorted \fraisse classes, or one-sorted local combinatorics of theories. In this subsection, we justify this, showing that in fact additional (but finitely many) sorts yield no additional power over our one-sorted formulation. 

%%%%%--------------------------------------------------------------------------------------------------------------%%%%%
\begin{thm}\label{Thm_one_sort}
Let $\KK$ be an algebraically trivial \fraisse class in a $p$-sorted language $\L$ (sorts $\sortS_0,...,\sortS_{p-1}$) with generic model $\B$, and let $T\in\TT$. Then there is an algebraically trivial \fraisse class $\tilde\KK$ in a 1-sorted language such that the following are equivalent:
\begin{enumerate}
\item There are saturated $\M\models T$, $X_0,...,X_{p-1}$ definable sets of $\M$, $f_i:\sortS_i(\B)\to X_i$ injections ($i<p$), and for each $R\subseteq \sortS_{i_0}\times\cdots\times\sortS_{i_{r-1}}$ in $\sig(\L)$,  a formula $\phi_R(x_{0},...,x_{{r-1}})$ of $\L_T$ such that 
$$\B\models R(b_0,...,b_{r-1})\iff\M\models\phi_R(f_{i_0}(b_0),...,f_{i_{r-1}}(b_{r-1})).$$
\item $T\in\C_{\tilde\KK}$.
\end{enumerate}
\end{thm}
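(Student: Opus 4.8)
# Proof Proposal for Theorem \ref{Thm_one_sort}

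\textbf{The plan is to} build $\tilde\KK$ by encoding a $p$-sorted structure as a $1$-sorted structure via a standard ``disjoint union with sort predicates'' construction, and then to push this encoding through the $\cleq$-machinery using Observation \ref{Obs_witnessing}. Concretely, given $\KK$ in the $p$-sorted language $\L$ with sorts $\sortS_0,\dots,\sortS_{p-1}$, I would define a $1$-sorted relational language $\tilde\L$ whose signature consists of: unary predicates $P_0,\dots,P_{p-1}$ (to mark which ``sort'' an element belongs to); for each $R\subseteq\sortS_{i_0}\times\cdots\times\sortS_{i_{r-1}}$ in $\sig(\L)$, an $r$-ary relation symbol $\tilde R$ of the same arity; and possibly auxiliary symbols to force the $P_i$'s to partition the universe. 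Then $\tilde\KK$ is the class of finite $\tilde\L$-structures $C$ such that the $P_i$'s partition $C$ and the natural ``un-flattening'' $C\mapsto (P_0^C,\dots,P_{p-1}^C)$ with $\tilde R$ restricted appropriately gives a member of $\KK$ (where tuples are required to respect the sort pattern of each $R$). The generic model $\tilde\B$ of $\tilde\KK$ is then, up to the obvious identification, just the disjoint union of $\sortS_0(\B),\dots,\sortS_{p-1}(\B)$ with the predicates naming the pieces. The first routine check is that $\tilde\KK$ is an algebraically trivial \fraisse class: HP is immediate, and disjoint-JEP and disjoint-AP for $\tilde\KK$ follow from the corresponding (disjoint) amalgamation in each sort of $\KK$ — one amalgamates sort-by-sort and takes the disjoint union — using algebraic triviality of $\KK$.

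\textbf{Next I would prove the equivalence (1)$\iff$(2).} For (1)$\implies$(2): given the saturated $\M\models T$, the definable sets $X_i$, injections $f_i\colon\sortS_i(\B)\to X_i$, and formulas $\phi_R$, I would assemble a single sequence of formulas $\pphi$ in $\L_T$ witnessing $T_{\tilde\KK}\cleq T$ via Observation \ref{Obs_witnessing}: the formulas $\phi_R$ handle the relations $\tilde R$, and the predicates $P_i$ are handled by the defining formulas of the $X_i$ (after, if necessary, shrinking to make the $X_i$ pairwise disjoint — replace $X_i$ by $X_i\setminus\bigcup_{j<i}X_j$, which keeps the $f_i$ injective into disjoint sets provided we first note the $f_i$ images can be assumed disjoint, or simply carry the disjointness through the $P_i$-formulas directly). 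One then builds the injection $A_{\tilde\KK}\to\|\M\|$ (more precisely, an $F\in\vec\FF_\pphi(T)$ with $\Age_\pphi(F)=\tilde\KK$) by combining the $f_i$'s along the partition of $A_{\tilde\KK}$ induced by the $P_i$'s, extending to all of $A$ by the genericity/resolvedness technology of Proposition \ref{Prop_resolved} and Observation \ref{Obs_combine_pictures}; the non-algebraicity conditions in the definition of $\FF(T)$ come from saturation of $\M$ together with algebraic triviality of $\tilde\KK$. Hence $T\in\C_{\tilde\KK}$.

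\textbf{For (2)$\implies$(1):} suppose $T\in\C_{\tilde\KK}$, i.e. $T_{\tilde\KK}\cleq T$. By Observation \ref{Obs_witnessing}, there are $m$, formulas $\psi_P(\xx)$ and $\psi_{\tilde R}(\xx_0,\dots,\xx_{r-1})$ in $\L_T$ (with $|\xx_j|=m$), and a $\pphi$-resolved $F\in\vec\FF_\pphi(T)$ with $\Age_\pphi(F)=\tilde\KK$. Working in a saturated model $\M\models T$ containing the image of $F$, I would let $X_i$ be the definable set carved out by $\psi_{P_i}$ (intersected with the relevant $m$-fold product sort and with the negations of the other $\psi_{P_j}$ to guarantee disjointness, which is consistent since in $\tilde\KK$ the $P_i$ partition), and define $f_i$ on $\sortS_i(\B)$ by transporting the identification $\tilde\B\cong$ (disjoint union of sorts of $\B$) through the embedding $\A_\pphi(F)\cong\tilde\B$ into $\M$; the formulas $\phi_R:=\psi_{\tilde R}$ then satisfy the required biconditional because $\Age_\pphi(F)=\tilde\KK$ forces the $\tilde R$-pattern on $F$-images to match $R$ on $\B$. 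This yields exactly configuration (1).

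\textbf{The main obstacle} I anticipate is not any single deep step but rather the bookkeeping around sorts and disjointness: making sure that the $1$-sorted encoding faithfully records the sort pattern of each relation $R$ (a relation on $\sortS_{i_0}\times\cdots\times\sortS_{i_{r-1}}$ must, in $\tilde\KK$, only ever hold on tuples whose $j$-th coordinate lies in $P_{i_j}$), that $\tilde\KK$ genuinely has disjoint amalgamation (one must check no ``cross-sort'' obstruction arises — it does not, because distinct sorts share no relations), and that in the (2)$\implies$(1) direction the definable sets $X_i$ can be taken pairwise disjoint without destroying injectivity of the $f_i$ — handled by the standard trick of subtracting off lower-indexed pieces. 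None of these requires new ideas beyond Proposition \ref{Prop_resolved} and Observation \ref{Obs_witnessing}, but they are where care is needed. A secondary point worth stating explicitly: since $\M$ is taken saturated and $\tilde\KK$ is algebraically trivial, the non-algebraicity requirement built into $\FF(T)$ is automatically met, so the passage between ``$F\in\vec\FF_\pphi(T)$ with $\Age_\pphi(F)=\tilde\KK$'' and ``configuration (1) in $\M$'' is clean in both directions.
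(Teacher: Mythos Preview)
Your overall strategy---flatten the $p$-sorted class to a 1-sorted class and transfer the embedding data back and forth---is exactly the paper's. The difference is in the choice of 1-sorted language: you add unary sort-predicates $P_0,\dots,P_{p-1}$ together with a copy $\tilde R$ of each original relation, whereas the paper takes the relation symbols of $\tilde\KK$ to be $R_q$, one for each irreflexive quantifier-free-complete type $q\in S_r^\qf(T_\KK)$ with $r\leq\ari(\L)$. The two encodings are quantifier-free bi-interpretable, so either yields a valid $\tilde\KK$; the paper's choice has the mild advantage that the sort information is carried by every relation automatically, so one never has to argue separately that tuples in $\tilde R$ respect the sort pattern. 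The paper then proves the analogue of your ``un-flattening/flattening is a bijection'' step (its Lemma and Corollary) and pushes it to the generic models (its Proposition), leaving the final (1)$\iff$(2) translation implicit just as you do.

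One point where your write-up needs repair: the subtraction trick ``replace $X_i$ by $X_i\setminus\bigcup_{j<i}X_j$'' does not work as stated, since nothing in condition (1) prevents $f_0[\sortS_0(\B)]$ and $f_1[\sortS_1(\B)]$ from overlapping, and subtracting would then delete points of $\img(f_1)$. The clean fix---which also handles the possibility that the $X_i$ live in different sorts of $\M$---is to pass to $m$-tuples: fix distinct elements $e_0,\dots,e_{p-1}$ in some infinite sort and send $b\in\sortS_i(\B)$ to $(f_i(b),e_0,\dots,e_{p-1},e_i)$, so that $P_i$ is the parameter-free formula ``$x_{p+1}=x_{i+1}$'' and injectivity plus disjointness are automatic. (If the $X_i$ sit in genuinely different sorts, first move to the product sort available in $T^\eq$.) The paper's sketch is equally silent on this point, so this is bookkeeping rather than a conceptual gap, but your proposed fix as written is not correct.
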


The proof of Theorem \ref{Thm_one_sort}, of course, requires that we define a \fraisse class $\tilde\KK$ and determine how it is related to $\KK$ itself. Lemma \ref{Lemma_transfer_1sort} is a transfer result matching members of $\KK$ directly with members of $\tilde\KK$

%%%%%--------------------------------------------------------------------------------------------------------------%%%%%
\begin{defn}
Let $\L$ be a $p$-sorted finite relational language with sorts $\sortS_0,...,\sortS_{p-1}$, and let $\KK$ be a \fraisse class of finite $\L$-structures. Then, let $\L_\KK$ be the one-sorted language with relation symbols $R_q^{(r)}$ for each irreflexive quantifier-free-complete type $q\in S_r^\qf(T_\KK)$, $r\leq\ari(\L)$.

\begin{itemize}
\item For each $B\in\KK$, we define an $\L_\KK$-structure $A_B$ and a family of maps $u_i^B:\sortS_i(B)\to A_B$ as follows:
\begin{itemize}
\item $A_B = \bigcup_{i<p}\big(\{i\}{\times}\sortS_i(B)\big)$ as a set.
\item For each $i<p$, $u^B_i:\sortS_i(B)\to A_B$ is given by $u_i^B(b) = (i,b)$.
\item $R_q^{A_B} = \left\{\big((i_0,b_0),...,(i_{r-1},b_{r-1})\big):\qtp^B(b_0,...,b_{r-1})=q\right\}$ for each irreflexive $q\in S_r^\qf(T_\KK)$, $r\leq\ari(\L)$.
\end{itemize}
\item We define $\tilde\KK$ to be the isomorphism-closure of $\left\{C: C\leq A_B, B\in\KK\right\}$.
\item For each $C\in\tilde\KK$, we define an $\L$-structure $B^C$ and a family of partial maps $v_i^C:C\pto \sortS_i(B^C)$ as follows:
\begin{itemize}
\item For $q\in S_1^\qf(T_\KK)$, let $i_q<p$ be the index such that $q\models\sortS_{i_q}$.
\item For each $i<p$, let $\sortS_i(B^C) = \bigcup\left\{R_q^C:q\in S_1^\qf(T_\KK),\,i_q = i\right\}$ and let $v_i^C$ be the identity mapping.
\item For $R\subseteq\sortS_{i_0}\times\cdots\times\sortS_{i_{r-1}}$ in $\sig(\L)$, let $Q_R =\left\{q\in S_r^\qf(T_\KK):q\models R(x_0,...,x_{r-1})\right\}$ and let $R^{B^C} = \bigcup\left\{R_q^C:q\in Q_R\right\}$.
\end{itemize}
\item For an $\L_\KK$-structure $\M$ such that $\Age(\M)\subseteq\tilde\KK$, we define $B^\M$ similarly.
\end{itemize}
\end{defn}

%%%%%--------------------------------------------------------------------------------------------------------------%%%%%
\begin{lemma}\label{Lemma_transfer_1sort}
Let $\KK$ be an algebraically trivial \fraisse class in a $p$-sorted language $\L$. Then:
\begin{enumerate}
\item If $B\in\KK$, then $B\cong B^{A_B}$ via $b\mapsto (i_b,b)$ where $i_\bullet:b\mapsto i_b$ is such that $b\in \sortS_{i_b}(B)$ for each $b$.
\item If $C\in\tilde\KK$, then $C \cong A_{B^C}$ via $c\mapsto (i_{\qtp(c)},c)$.
\end{enumerate}
\end{lemma}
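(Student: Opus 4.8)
The plan is to prove both parts by unwinding the definitions: in each part the displayed map is plainly a bijection of underlying sets, so the work is to check that it preserves and reflects every relation. One bookkeeping fact drives everything. For a finite $\L$-structure $B$: the two coordinates of an element $(i,b)\in A_B$ determine each other (the tag $i$ is forced by $b$, as the sorts of $\L$ are pairwise disjoint); and reading a tuple $\bar c = \big((i_0,b_0),\dots,(i_{r-1},b_{r-1})\big)$ of \emph{distinct} elements of any $C\le A_B$ inside $A_B$, one has $\bar c\in R_q^C$ for \emph{exactly one} symbol $R_q$ with $q\in S_r^\qf(T_\KK)$, namely $q=\qtp^B(b_0,\dots,b_{r-1})$ --- because $R_q^{A_B}$ was defined to be exactly the set of distinct tuples of q.f.\ $\L$-type $q$, and $C$ is an induced substructure. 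Dually, the two reconstruction operations ($B\mapsto A_B$ on $\L$-structures, $\M\mapsto B^\M$ on $\L_\KK$-structures $\M$ with $\Age(\M)\subseteq\tilde\KK$) merely re-package the same data: $R^{B^\M}=\bigcup\{R_q^\M:q\models R(x_0,\dots,x_{r-1})\}$ for $R\in\sig(\L)$ of arity $r$, while $R_q^{A_B}$ lists the distinct $\L$-type-$q$ tuples of $B$. The only point requiring care --- and the only real ``obstacle'' in an otherwise mechanical argument --- is tuples with repeated entries: since $\L_\KK$ carries symbols $R_q$ only for \emph{irreflexive} $q$, the operation $B^{(\cdot)}$ recovers each $\L$-relation just on distinct tuples. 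I will handle this by assuming, without loss of generality, that the relations of $\L$ are irreflexive (refining each relation by the equality pattern of its arguments if need be --- as one encodes a linear order by its strict part); this leaves $T_\KK$ unchanged up to definitional equivalence, and then, exactly as for $\L_\KK$-structures, repeated tuples are handled trivially and identically on both sides of each claimed isomorphism.

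For Item 1, fix $B\in\KK$. I would compute $A_B$ and then $B^{A_B}$ outright. For a q.f.\ $1$-type $q$, $R_q^{A_B}=\{(i_q,b):b\in\sortS_{i_q}(B),\ \qtp^B(b)=q\}$, so summing over the $1$-types $q$ with $i_q=i$ gives $\sortS_i(B^{A_B})=\{i\}{\times}\sortS_i(B)$; thus $B^{A_B}$ has the same underlying set as $A_B$, and $\psi_B\colon b\mapsto(i_b,b)$ is a bijection $B\to B^{A_B}$ carrying $\sortS_i(B)$ onto $\sortS_i(B^{A_B})$. For $R\in\sig(\L)$ of arity $r$, say $R\subseteq\sortS_{j_0}{\times}\cdots{\times}\sortS_{j_{r-1}}$, and distinct $b_0,\dots,b_{r-1}$ with each $b_k\in\sortS_{j_k}(B)$, the driving fact gives $\psi_B(\bar b)\in R_q^{A_B}\iff\qtp^B(\bar b)=q$, whence $\psi_B(\bar b)\in R^{B^{A_B}}=\bigcup\{R_q^{A_B}:q\models R\}\iff\qtp^B(\bar b)\models R\iff B\models R(\bar b)$ (the repeated-entry case being vacuous). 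Hence $\psi_B$ is an isomorphism.

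For Item 2, fix $C\in\tilde\KK$ and, using isomorphism-invariance, an embedding $e\colon C\hookrightarrow A_B$ with $B\in\KK$. By the driving fact each $c\in C$ lies in $R_q^C$ for a unique $1$-type $q=q_c\in S_1^\qf(T_\KK)$, so $\{\sortS_i(B^C)\}_{i<p}$ genuinely partitions $C$ and $B^C$ is a well-defined $\L$-structure with $c\in\sortS_i(B^C)\iff i_{q_c}=i$. Putting $D=\{b\in B:(i_b,b)\in e[C]\}$, one reads off that $c=(i_b,b)\mapsto b$ is an isomorphism $B^C\cong B\r D\le B$ (the relations agree since $\qtp^B(\bar b)$ is precisely the unique $q$ with the matching tuple in $R_q^C$, and $R^{B^C}$ collects exactly the $R_q^C$ with $q\models R$); in particular $B^C\in\KK$ by heredity, so $A_{B^C}$ is defined and lies in $\tilde\KK$. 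Next, for distinct $\bar c$ in $C$, letting $q_c^*\in S_r^\qf(T_\KK)$ be the unique $q$ with $\bar c\in R_q^C$, one gets $\qtp^{B^C}(\bar c)=q_c^*$: the two q.f.\ $\L$-types agree on every atom $R(x_0,\dots,x_{r-1})$ (each holding iff $q_c^*\models R$) and both are irreflexive, hence equal. Finally $A_{B^C}$ has underlying set $\bigcup_i\{i\}{\times}\sortS_i(B^C)=\{(i_{q_c},c):c\in C\}$, so $\phi_C\colon c\mapsto(i_{q_c},c)$ --- the map of the statement, with ``$\qtp(c)$'' read as $q_c$ --- is a bijection onto $A_{B^C}$, and for distinct $\bar c$ and any q.f.\ $r$-type $q$ we have $C\models R_q(\bar c)\iff q=q_c^*\iff q=\qtp^{B^C}(\bar c)\iff A_{B^C}\models R_q(\phi_C\bar c)$, repeated tuples being trivial on both sides. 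So $\phi_C$ is an isomorphism, and the two operations $B\mapsto A_B$ and $\M\mapsto B^\M$ are seen to be mutually inverse up to isomorphism.
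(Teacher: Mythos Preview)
Your proof is correct and follows essentially the same approach as the paper's: unwind the definitions, observe that the displayed maps are bijections of underlying sets, and check that each relation is preserved and reflected via the chain ``$B\models R(\bar b)\iff \qtp^B(\bar b)\in Q_R\iff R_q^{A_B}\subseteq R^{B^{A_B}}$.'' The paper in fact only writes out item~1 and declares item~2 ``very similar,'' whereas you carry out both; your extra care with repeated-entry tuples (handled by passing to an irreflexive refinement of $\L$) is a point the paper's argument leaves implicit.
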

\begin{proof}
The proofs of the two items of the lemma are very similar, so we will just prove item 1. Let $B\in\KK$ be given. The map $f: b\mapsto (i_b,b)$ is actually the union $f = \bigcup_{i<p}(v_i^{A_B}\circ u_i^B)$, and it is clear that $f$ is a bijection between $B$ and $B^{A_B}$. To see that $f$ is an isomorphism, let $R\subseteq\sortS_{i_0}\times\cdots\times\sortS_{i_{r-1}}$ be in $\sig(\L)$, and let $b_j\in \sortS_{i_j}(B)$ for each $j<r$. Let $q = \qtp^B(b_0,...,b_{r-1})$, so that $\bb\in R_q^{A_B}$ by definition. Then
\begin{align*}
B\models R(b_0,...,b_{r-1})\,&\iff\, R(x_0,...,x_{r-1})\in q\\
&\iff\, q\in Q_R\\
&\iff\,R_q^{A_B}\subseteq R^{B^{A_B}}\\
\end{align*}
and it follows that $B\models R(\bb)\iff B^{A_B}\models R(f\bb)$. This completes the proof.
\end{proof}

%%%%%--------------------------------------------------------------------------------------------------------------%%%%%
\begin{cor}
Let $\KK$ be an algebraically trivial \fraisse class in a $p$-sorted language $\L$. 
\begin{enumerate}
\item If $C\in\tilde\KK$, then $B^C\in \KK$.
\item $\tilde\KK$ is an algebraically trivial \fraisse class.
\item $\KK$ is the isomorphism-closure of $\left\{B^C:C\in\tilde\KK\right\}$
\end{enumerate}
\end{cor}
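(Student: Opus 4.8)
The plan is to take the three items in order, bootstrapping from Lemma~\ref{Lemma_transfer_1sort} and from the basic observation that for $C\in\tilde\KK$ the underlying set of $B^C$ coincides with that of $C$: indeed $C$ is (isomorphic to) a substructure of some $A_B$, and in $A_B$ every element realises exactly one unary relation $R_q$ with $q\in S_1^\qf(T_\KK)$, so the same is true in $C$, whence $\bigcup_{i<p}\sortS_i(B^C)=C$.

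For item~1 I would first reduce to the case $C\leq A_B$ with $B\in\KK$; this is legitimate because $\tilde\KK$ is by definition the isomorphism-closure of the structures $C\leq A_B$, and both the operation $C\mapsto B^C$ and membership in $\KK$ are isomorphism-invariant. Then I would check straight from the definitions that for every $\L$-relation symbol $R$ of arity $r$ one has $R_q^C=R_q^{A_B}\cap C^{r}$ for each $q$, hence $R^{B^C}=\bigcup_{q\in Q_R}R_q^C=\bigl(\bigcup_{q\in Q_R}R_q^{A_B}\bigr)\cap C^{r}=R^{B^{A_B}}\cap C^{r}$. Thus $B^C$ is precisely the induced $\L$-substructure $B^{A_B}{\restriction}C$; since Lemma~\ref{Lemma_transfer_1sort}(1) gives $B^{A_B}\cong B\in\KK$, heredity of $\KK$ yields $B^C\in\KK$.

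For item~2 the step to isolate first is a \emph{bijective correspondence of embeddings}: for $C,D\in\tilde\KK$, a map is an $\L_\KK$-embedding $C\to D$ if and only if, read on underlying sets (which are those of $B^C,B^D$), it is an $\L$-embedding $B^C\to B^D$; dually, for $B_1,B_2\in\KK$ the $\L$-embeddings $B_1\to B_2$ correspond exactly to the $\L_\KK$-embeddings $A_{B_1}\to A_{B_2}$. This holds because an $\L$-embedding automatically preserves and reflects quantifier-free types, hence each $R_q$, while an $\L_\KK$-embedding preserves and reflects all $R_q$ and is sorted, hence is an $\L$-embedding of the associated $B^{(-)}$-structures; these correspondences respect composition and, since $B\mapsto A_B$ and $C\mapsto B^C$ act as the identity on underlying sets modulo the sort-labels in the definition of $A_{(-)}$, they preserve intersections of images. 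With this in hand, HP of $\tilde\KK$ is immediate from the construction (a substructure of a substructure of $A_B$ is a substructure of $A_B$), and $\tilde\KK\subseteq\Fin(\L_\KK)$ with $\L_\KK$ a finite relational language since $T_\KK$ is $\aleph_0$-categorical. For disjoint-AP, given $f_i:A\to C_i$ in $\tilde\KK$ I would transfer to $\bar f_i:B^A\to B^{C_i}$ in $\KK$ (these lie in $\KK$ by item~1), apply disjoint-AP of $\KK$ to obtain $D\in\KK$ and $g_i:B^{C_i}\to D$, note $A_D\in\tilde\KK$ since $A_D\leq A_D$, and transfer the $g_i$ back through the isomorphisms $C_i\cong A_{B^{C_i}}$ of Lemma~\ref{Lemma_transfer_1sort}(2) to embeddings $C_i\to A_D$ that form a disjoint amalgam in $\tilde\KK$. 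Disjoint-JEP is the same argument with no base, and ordinary JEP/AP are then automatic; so $\tilde\KK$ is an algebraically trivial \fraisse class by the disjoint-JEP/disjoint-AP characterisation of algebraic triviality.

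For item~3, one inclusion is item~1 together with the fact that $\KK$ is isomorphism-closed; the reverse inclusion is Lemma~\ref{Lemma_transfer_1sort}(1): for any $B\in\KK$ we have $A_B\in\tilde\KK$ and $B\cong B^{A_B}$. The main obstacle I expect is located entirely in item~2, namely making the embedding-correspondence --- and especially the bookkeeping that the functorial passages $B\mapsto A_B$ and $C\mapsto B^C$ are ``the identity on underlying sets up to sort tags'' --- precise enough that the \emph{disjointness} clauses of disjoint-JEP and disjoint-AP transfer faithfully in both directions; the remaining verifications are routine unwinding of the definitions.
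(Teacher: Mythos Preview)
Your proposal is correct and follows essentially the same approach as the paper: for item~1 you reduce to $C\leq A_B$ and observe $B^C\leq B^{A_B}\cong B$, then apply HP; for item~2 you transfer the amalgamation problems from $\tilde\KK$ to $\KK$ via $C\mapsto B^C$ and back via $B\mapsto A_B$; item~3 combines item~1 with Lemma~\ref{Lemma_transfer_1sort}(1). Your write-up is considerably more explicit than the paper's---in particular, you spell out the embedding correspondence and address the disjoint versions of JEP and AP (needed for algebraic triviality) where the paper simply says ``one simply transfers the discussion''---but the underlying argument is the same.
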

\begin{proof}
For Item 1: Given $C\in\tilde\KK$, by definition, there is some $B_0\in\KK$ such that $C\leq A_{B_0}$. One easily verifies that $B^C\leq B^{A_{B_0}}\cong B_0$, so as $\KK$ is a \fraisse class, we find that $B^C\in\KK$. 
For Item 2: HP for $\tilde\KK$ is built in to its definition, and for JEP and AP, one simply transfers the discussion from $\tilde\KK$ to $\KK$ via $C\mapsto B^C$, applies JEP or AP there, and transfers it back to $\tilde\KK$ via $B\mapsto A_B$.  Item 3 is immediate from Lemma \ref{Lemma_transfer_1sort}.
\end{proof}

%%%%%%--------------------------------------------------------------------------------------------------------------%%%%%
%\begin{cor}
%If $\KK$ is an algebraically trivial \fraisse class in a $p$-sorted language $\L$, then $\tilde\KK$ is an algebraically trivial \fraisse class. Moreover, $\KK$ is the isomorphism-closure of $\left\{B^C:C\in\tilde\KK\right\}$.
%\end{cor}

The remainder of the proof of Theorem \ref{Thm_one_sort} is encoded in the following proposition, which extends to the transfer between the two \fraisse classes $\KK$ and $\tilde\KK$ to the level of their generic models.

%%%%%--------------------------------------------------------------------------------------------------------------%%%%%
\begin{prop}
Let $\KK$ be an algebraically trivial \fraisse class in a $p$-sorted language $\L$. Let $\B$ be the generic model of $\KK$, and let $\M$ be generic model of $\tilde\KK$. Then $B^\M\cong\M$. 

It follows that there are injections $u_i:\sortS_i(\B)\to M$ ($i<p$) and a surjective mapping $i_\bullet:S_1^\qf(T_{\tilde\KK})\to p:q\mapsto i_q$ such that:
\begin{itemize}
\item $M = \dot\bigcup_{i<p}\img(u_i)$, and $\img(u_i) = \bigcup\left\{q(\M):i_q=i\right\}$ for each $i<p$.
\item For each irreflexive $q\in S_r^\qf(T_\KK)$, $r\leq\ari(\L)$, if $q\models\sortS_{i_0}\times\cdots\times\sortS_{i_{r-1}}$ and $(b_0,...,b_{r-1})\in\prod_{j<r}\sortS_{i_j}(\B)$
$$\B\models q(b_0,...,b_{r-1})\iff \M\models R_q(u_{i_0}(b_0),...,u_{i_{r-1}}(b_{r-1})).$$
\end{itemize} 
%In fact, $B^\M\cong \B$. %%% Removed because of redundancy %%%
\end{prop}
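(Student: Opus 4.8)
The plan is to prove that $B^\M\cong\M$ by a back-and-forth argument using the ultrahomogeneity of $\M$ as the generic model of $\tilde\KK$, and then to read off the stated consequences about $\B$ directly from the preceding transfer lemmas. First I would observe that $\Age(\M)=\tilde\KK$ by genericity, so by the Corollary immediately above, $\{B^C:C\leq_{\mathrm{fin}}\M\}$ consists of structures in $\KK$ and, running over all finite $C$, realizes every isomorphism type in $\KK$ (this is part~3 of that Corollary applied to the finite substructures of $\M$). Hence $B^\M$ is a countable $\KK$-universal structure. The key point is that the operations $C\mapsto B^C$ and $B\mapsto A_B$ are mutually inverse up to isomorphism (Lemma~\ref{Lemma_transfer_1sort}) and are monotone with respect to the substructure relation: a finite substructure of $B^\M$ is of the form $B^{C}$ for a finite $C\leq_{\mathrm{fin}}\M$, and an embedding $B^{C}\to B^{C'}$ corresponds canonically to an embedding $C\to C'$ via the bijections $c\mapsto(i_{\qtp(c)},c)$. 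Thus the $\tilde\KK$-homogeneity of $\M$ transfers to $\KK$-homogeneity of $B^\M$: given finite $D_0\leq D_1\leq B^\M$ and an embedding $D_0\to B^\M$, pull back to finite substructures of $\M$, extend by ultrahomogeneity of $\M$, and push forward. Since $B^\M$ is a countable $\KK$-universal, $\KK$-closed, ultrahomogeneous $\L$-structure, it is the \fraisse limit of $\KK$, i.e. $B^\M\cong\B$.

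For the ``it follows'' clause, once $B^\M\cong\B$ is established, I would simply transport the structure maps $v_i^\M:\M\pto\sortS_i(B^\M)$ and the index function $q\mapsto i_q$ (defined in the definition preceding Lemma~\ref{Lemma_transfer_1sort}) across this isomorphism. Concretely, set $u_i$ to be the composite of the inverse of the identification $B^\M\cong\B$ with the inclusion $\sortS_i(B^\M)\hookrightarrow M$ coming from the definition of $\sortS_i(B^\M)=\bigcup\{R_q^\M:i_q=i\}$. The disjointness $M=\dot\bigcup_{i<p}\img(u_i)$ and the description $\img(u_i)=\bigcup\{q(\M):i_q=i\}$ are immediate from the fact that $\{R_q^\M:q\in S_1^\qf(T_{\tilde\KK})\}$ partitions $M$ according to the unary quantifier-free type, and that each such $1$-type $q$ picks out exactly one sort index $i_q$. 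The biconditional on $r$-tuples is then exactly the isomorphism statement $\B\models q(\bb)\iff\M\models R_q(u\bb)$ unwound through the definition of $R_q^{A_B}$, which is precisely what Lemma~\ref{Lemma_transfer_1sort}(1) records when one tracks the interpretations of the $R_q$'s; surjectivity of $q\mapsto i_q$ onto $p$ holds because each sort $\sortS_i$ is nonempty in $\B$ (the generic model of a \fraisse class meets every sort), so some $1$-type lies in it.

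Finally, I would close the loop with Theorem~\ref{Thm_one_sort} itself: condition~(1) there asserts exactly the existence of injections $f_i:\sortS_i(\B)\to X_i$ and formulas $\phi_R$ of $\L_T$ realizing the relations of $\B$ inside a saturated model of $T$; using the $u_i$ and the $R_q$'s just produced (each $R\subseteq\sortS_{i_0}\times\cdots\times\sortS_{i_{r-1}}$ is a finite union $\bigcup_{q\in Q_R}R_q$, so $\phi_R$ is the corresponding disjunction of the formulas representing the $R_q$'s), one sees that (1) holds if and only if $T_{\tilde\KK}\cleq T$, which by Definition~\ref{Def_C_K} is the same as $T\in\C_{\tilde\KK}$, giving (2); the converse direction uses Observation~\ref{Obs_witnessing} to extract the $u_i$ and $\phi_R$ from a witness to $T_{\tilde\KK}\cleq T$ and then reassembles the sorted data via $B^\bullet$.

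I expect the main obstacle to be the bookkeeping in the back-and-forth step: one must check carefully that the correspondence between finite substructures of $\M$ and finite substructures of $B^\M$ is not merely bijective on objects but functorial on embeddings, so that an arbitrary partial isomorphism of $B^\M$ between finite substructures genuinely lifts to a partial isomorphism of $\M$ to which $\tilde\KK$-homogeneity applies. This is where the precise form of the maps $u_i^B$, $v_i^C$ and the reliance on $\L$ being purely relational (so that induced substructures are determined by their interpretations) does the real work; everything after $B^\M\cong\B$ is routine translation.
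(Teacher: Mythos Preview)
Your proposal is correct and follows essentially the same route as the paper: verify that $B^\M$ is $\KK$-universal, $\KK$-closed, and $\KK$-homogeneous (the paper compresses this into ``one verifies that\ldots''), then invoke uniqueness of the Fra\"iss\'e limit to conclude $B^\M\cong\B$ (note the statement as printed has a typo---it should read $B^\M\cong\B$, not $B^\M\cong\M$, and both you and the paper's proof establish the former). Your expanded discussion of the ``it follows'' clause and the connection to Theorem~\ref{Thm_one_sort} goes beyond what the paper writes out, but it is correct and is exactly how those consequences are meant to be read off.
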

\begin{proof}
One verifies that $B^\M$ is $\KK$-universal and $\KK$-homogeneous, and that $A\in\KK$ whenever $A$ is a finite subset of $B^\M$. It follows that $B^\M\cong \B$ by the uniqueness of the generic model of an amalgamation class.
\end{proof}

\newpage
%%%%%%%%%%%%%%%%%%%%%%%%%%%%%%%%%%%%%%%%%%%%%%%%%%%%%%%%
%%%%%%%%%%%%%%%%%%%%%%%%%%%%%%%%%%%%%%%%%%%%%%%%%%%%%%%%
%%%%%%%%%%%%%%%%%%%%%%%%%%%%%%%%%%%%%%%%%%%%%%%%%%%%%%%%
%%%%%%%%%%%%%%%%%%%%%%%%%%%%%%%%%%%%%%%%%%%%%%%%%%%%%%%%
\section{Linear orderings and the connection to collapse-of-indiscernibles dividing-lines}\label{Sec_LO_Indisc}

The starting point for the research in this paper was an attempt to generalize the collapse-of-indiscernible dividing-lines results from \cite{guingona-hill-scow} and \cite{scow-2011}.  These results are themselves a generalization of Shelah's classification of stable theories in terms of indiscernible sequences: A theory is stable if and only if every indiscernible sequence is an indiscernible set \cite{shelah-ClassifTheory}.  In this section, we connect our discussion with this concept and explore the relationship between positive-local-combinatorial and collapse-of-indiscernible dividing-lines.  This begins with a quick overview of the Ramsey property, the Patterning property, and generalized indiscernibles from \cite{guingona-hill-scow}.  We then discuss order-expansions of \fraisse classes, showing that adding a generic order to a algebraically trivial \fraisse class with the order property does not change the corresponding dividing-line.  We conclude that, if $\KK$ is a Ramsey-expandable \fraisse class, then $\C_\KK$ is characterized by a non-collapse of $\KK^<$-indiscernibles.

%%%%%%%%%%%%%%%%%%%%%%%%%%%%%%%%%%%%%%%%%%%%%%%%%%%%%%%%
%%%%%%%%%%%%%%%%%%%%%%%%%%%%%%%%%%%%%%%%%%%%%%%%%%%%%%%%
\subsection{Definitions and previously known facts}

We begin our discussion by recalling definitions around generalized indiscernibles.  This starts with the definition of a Ramsey class; as it turns out these are exactly the classes which produce well-behaved indiscernibles (i.e., ones that have the Patterning property).  From there, we discuss (un-collapsed) indiscernible pictures.  We then define the Patterning property, and recall a theorem from \cite{guingona-hill-scow}, stating that the Pattering property is equivalent to the Ramsey property.

%%%%%--------------------------------------------------------------------------------------------------------------%%%%%
\begin{defn}[Ramsey property, Ramsey class]
Let $\KK$ be a \fraisse class.
\begin{itemize}
\item For $A\in\KK$, we say that $\KK$ has the \emph{$A$-Ramsey property} if, for any $0<k<\omega$ and any $B\in\KK$, there is some $C=C(A,B,k)\in\KK$ such that, for any coloring $\xi:\emb(A,C)\to k$, there is an embedding $u:B\to C$ such that $\xi$ is constant on $\emb(A,uB)$.

\item $\KK$ is said to have the \emph{Ramsey property} if it has the $A$-Ramsey property for every $A\in\KK$. When $\KK$ has the Ramsey property, then we also say that $\KK$ is a Ramsey class.
\end{itemize}
\end{defn}

%%%%----------------------------------------------------------------------------------------------------------------%%%%%
\begin{defn}
Let $\KK$ be a \fraisse class with generic model $\A$. Let $\M$ be an infinite $\L$-structure for some language $\L$. %We define pictures, indiscernible pictures, and un-collapsed indiscernible pictures:
\begin{itemize}
\item A {\em picture of $\A$ in $\M$}, $\gamma:\A\to\M$, is a just an injective mapping of $A$ into (a single sort of) $\M$.
\item A picture $\gamma:\A\to\M$ of $\A$ in $\M$ is {\em indiscernible} if for all $n\in\NN$, $a_0,...,a_{n-1}$ and $b_0,...,b_{n-1}$ in $A$,
$$\qtp^\A(\aa) = \qtp^\A(\bb)\,\,\implies\,\,\tp^\M(\gamma\aa) = \tp^\M(\gamma\bb).$$
(For $\Delta\subseteq\L$, $\Delta$-indiscernible pictures are defined similarly.)

\item An indiscernible picture $\gamma:\A\to\M$ is called {\em un-collapsed} if for all $n\in\NN$, $a_0,...,a_{n-1}$ and $b_0,...,b_{n-1}$ in $A$,
$$\tp^\M(\gamma\aa) = \tp^\M(\gamma\bb)\,\,\implies\,\,\qtp^\A(\aa) = \qtp^\A(\bb).$$
Of course, we say that $\gamma$ {\em collapses} if it is not un-collapsed.
\end{itemize}
Usually, we will denote indiscernible pictures with the letters $I$ or $J$ instead of $\gamma$. 
\end{defn}

%%%%----------------------------------------------------------------------------------------------------------------%%%%%
\begin{defn}[Patterning property]
Let $\KK$ be a \fraisse class with generic model $\A$. Let $\M$ be an infinite $\L$-structure for some language $\L$.

Let $\gamma:\A\to\M$ be a picture, and let $I:\A\to\M$ be an indiscernible picture. We say that {\em $I$ is patterned on $\gamma$} if for every $\Delta\subset_\fin\L$, every $n\in\NN$, and all $a_0,...,a_{n-1}\in A$, there is an embedding $f = f_{\Delta,\aa}:\A\r\aa\to\A$ such that 
$$\tp_\Delta^\M(I\aa) = \tp_\Delta^\M(\gamma f\aa).$$
Now, we say that {\em $\KK$ has the Patterning property} if for every picture $\gamma:\A\to\M$, there is an indiscernible picture $I:\A\to\M$ of $\A$  patterned on $\gamma$.
\end{defn}

The existence of indiscernible sequences is usually stated (as in \cite{marker-textbook}) with less precision than is actually required in practice. The existence statement in full precision, but generalized to objects richer than pure linear orders, is the following theorem due to \cite{scow-2011}.

%%%%----------------------------------------------------------------------------------------------------------------%%%%%
\begin{thm}\label{Thm_Lynn_patterning}
 $\KK$ has the Ramsey property if and only if it has the Patterning property.
\end{thm}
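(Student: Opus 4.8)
The plan is to prove the two implications separately; each is a compactness argument that feeds on the \emph{other} property, in the spirit of the Kechris--Pestov--Todorcevic correspondence (and as in \cite{scow-2011}).

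\emph{Patterning $\Rightarrow$ Ramsey.} Suppose toward a contradiction that $\KK$ fails the Ramsey property: there are $A,B\in\KK$ and $k$ such that for every $C\in\KK$ there is a coloring $\xi_C\colon\emb(A,C)\to k$ with \emph{no} copy of $B$ in $C$ monochromatic for $\xi_C$. A compactness argument --- treating each $e\in\emb(A,\A)$ as a $k$-valued variable, and imposing for each $u\in\emb(B,\A)$ the constraint ``$e\mapsto\xi(e)$ is non-constant on $\emb(A,uB)$''; any finite set of constraints sits in a finite substructure of $\A$, hence inside a copy of some $C\in\KK$, where $\xi_C$ does the job --- produces a single coloring $\xi\colon\emb(A,\A)\to k$ that is non-constant on $\emb(A,uB)$ for \emph{every} $u\in\emb(B,\A)$. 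Fix an enumeration $a_0,\dots,a_{n-1}$ of the universe of $A$, and expand $\A$ to an $\L^+$-structure $\A^+$, where $\L^+$ adjoins to the language of $\KK$ fresh $n$-ary symbols $P_0,\dots,P_{k-1}$, with $P_i(\bar x)$ declared to hold iff $a_j\mapsto x_j$ is an embedding $A\to\A$ of $\xi$-colour $i$. Apply the Patterning property to the picture $\gamma=\mathrm{id}\colon\A\to\A^+$ to get an indiscernible picture $I\colon\A\to\A^+$ patterned on $\gamma$. Let $\Delta\subset_\fin\L^+$ contain $P_0,\dots,P_{k-1}$ together with all atomic formulas of the language of $\KK$; fix a copy $\B_0\le\A$ of $B$ with enumeration $\bb$, and apply patterning to $\bb$: there is $f\colon\A\r\bb\to\A$ with $\tp^{\A^+}_\Delta(I\bb)=\tp^{\A^+}_\Delta(f\bb)$. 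The atomic part shows $I$ restricted to $\B_0$ is an embedding into $\A$, with image a copy $\B_1\le\A$ of $B$; the $P_i$-part shows that for each copy $\aa$ of $A$ inside $\B_0$ there is a \emph{unique} $i$ with $\A^+\models P_i(I\aa)$, namely the $\xi$-colour of $a_j\mapsto f\aa_j$. Then \emph{indiscernibility} of $I$ forces this unique $i$ to be one and the same $i_0$ for all copies $\aa$ of $A$ in $\B_0$ (they all realise $\qtp^\A(A)$). Since every member of $\emb(A,\B_1)$ is ($I\r\B_0$) precomposed with an embedding $A\to\B_0$, we get $\xi\equiv i_0$ on $\emb(A,\B_1)$ --- contradicting that $\xi$ is non-constant on embeddings of $A$ into every copy of $B$.

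\emph{Ramsey $\Rightarrow$ Patterning.} Given a picture $\gamma\colon\A\to\M$, introduce constants $(c_a)_{a\in A}$ and let $\Sigma$ be the partial type over $\emptyset$ asserting: (i) $\qtp^\A(\aa)=\qtp^\A(\bb)\implies \tp(c_\aa)=\tp(c_\bb)$ for all finite tuples $\aa,\bb$ from $A$; and (ii) for every $\Delta\subset_\fin\L$ and every finite $\aa$ from $A$, $\tp_\Delta(c_\aa)$ is realised by $\gamma f\aa$ for some $f\in\emb(\A\r\aa,\A)$. Because $\Delta$ is finite there are only finitely many complete $\Delta$-types of tuples of a given length, so each instance of (ii) is expressible by a single $\L$-formula in $c_\aa$; hence $\Sigma$ is a genuine partial type, and any realisation of it defines an indiscernible picture $a\mapsto c_a$ patterned on $\gamma$. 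So it suffices, by compactness, to realise an arbitrary finite $\Sigma_0\subseteq\Sigma$ in $\M$. Such a $\Sigma_0$ mentions constants $c_{a_0},\dots,c_{a_{N-1}}$ and some finite $\Delta^*\subset_\fin\L$ (enlarged so as to decide also the formulas occurring in its instances of (i)); put $A_0=\A\r\{a_0,\dots,a_{N-1}\}\in\KK$. Enumerating representatives $D_1,\dots,D_t$ of the isomorphism types of enumerated structures of size $\le N$ embeddable in $A_0$, and applying the Ramsey property once for each $D_s$ in turn --- colouring $\emb(D_s,C)$ by the $\Delta^*$-type of the associated tuple under $\gamma$ composed with a fixed $\KK$-universality embedding $C\hookrightarrow\A$, and passing to a monochromatic sub-copy each time --- we obtain an embedding $g\colon A_0\to\A$ such that, for every subtuple $\aa$ of $(a_0,\dots,a_{N-1})$, $\tp_{\Delta^*}(\gamma g\aa)$ depends only on $\qtp^\A(g\aa)$. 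Then $c_{a_i}:=\gamma(g(a_i))\in M$ realises $\Sigma_0$: (i) holds by this dependence and (ii) holds with $f=g\r\aa$. Thus $\Sigma$ is consistent; realising it yields the required patterned indiscernible picture (landing in $\M$ itself when $\M$ is sufficiently saturated --- the form in which the Patterning property is applied --- and in an elementary extension otherwise).

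\emph{Expected main obstacle.} The two genuinely delicate points are: in the first implication, the fact that patterning alone pins down the colour of each $A$-copy inside $\B_0$ only \emph{locally} (each via its own $f$, while $\xi$ is wildly non-constant), so that it is really indiscernibility of $I$ that delivers a \emph{single} colour $i_0$ --- together with the bookkeeping identifying $\emb(A,\B_1)$ with the $A$-copies of $\B_0$; and in the second, the iterated use of the Ramsey property over \emph{all} substructure-patterns $D_s$ of $A_0$ to manufacture one ``$\Delta^*$-homogeneous'' finite copy, plus the small but essential observation that each clause of the patterning requirement is first-order, which is what licenses the compactness step.
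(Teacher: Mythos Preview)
The paper does not give its own proof of this theorem: it is stated as a result due to Scow \cite{scow-2011} and is simply cited, so there is nothing to compare against at the level of argument.

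Your proof is the standard one and is correct. A couple of remarks. In the Patterning $\Rightarrow$ Ramsey direction you use the paper's literal definition, in which the indiscernible picture $I$ lands in $\A^+$ itself; that is exactly what lets you read off $\xi(g)=i_0$ for $g\in\emb(A,\B_1)$ directly from $\A^+\models P_{i_0}(I\aa)$. If one works instead with $I$ into an elementary extension, the conclusion still follows by noting that ``there is a copy of $B$ all of whose $A$-embeddings satisfy $P_{i_0}$'' is a first-order sentence and descending by elementarity --- worth a sentence since you explicitly flag the extension issue in the other direction. In the Ramsey $\Rightarrow$ Patterning direction, your observation that each clause of (ii) is a finite disjunction (only finitely many complete $\Delta$-types in $|\aa|$ variables) is exactly the point that makes the compactness step legitimate, and the iterated-Ramsey construction of the $\Delta^*$-homogeneous copy of $A_0$ is the standard bookkeeping. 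The saturation caveat you record at the end is genuine: as literally stated in the paper, the Patterning property asks for $I$ into $\M$ itself, which fails for trivial reasons without a saturation hypothesis (already for $\KK=\mathbf{LO}$ and $\M=(\mathbb{N},<)$), so your parenthetical is the right way to read the definition.
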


Thus, if we wish to consider \fraisse classes that produce a coherent theory of indiscernibles, we are compelled to look at Ramsey classes.  Furthermore, as the next theorem will show, looking at Ramsey classes forces us to consider algebraically trivial classes which carry a $0$-definable linear order.  Therefore, in this section, we will be primarily interested in studying algebraically trivial \fraisse classes that, when one adds a generic linear order, become Ramsey classes.

%%%%----------------------------------------------------------------------------------------------------------------%%%%%
\begin{thm}
Let $\KK$ be a \fraisse class with disjoint-JEP, and let $\A$ be the generic model of $\KK$. If $\KK$ has the Ramsey property, then:
\begin{itemize}
\item (\cite{Nesetril03ramseyclasses}) $\KK$ is algebraically trivial.
\item (\cite{kpt-2005}) $\A$ carries a 0-definable linear ordering.
\end{itemize}
\end{thm}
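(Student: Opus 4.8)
Both clauses are classical, and the plan is to derive them from a single preliminary fact --- a class with the Ramsey property consists of \emph{rigid} structures --- then obtain algebraic triviality from rigidity together with the disjoint-JEP hypothesis, and obtain the $0$-definable order essentially as in \cite{kpt-2005}. For rigidity: suppose some $A\in\KK$ has an automorphism $\alpha\neq\mathrm{id}$; pick $a_0\in A$ with $\alpha a_0\neq a_0$ and let $m\geq 2$ be the length of the $\langle\alpha\rangle$-orbit of $a_0$. Apply the $A$-Ramsey property with $B=A$ and $k=2$ to get $C\in\KK$, fix an arbitrary linear order $\preceq$ of the underlying set of $C$, and for $e\in\emb(A,C)$ put $\xi(e)=0$ if $e(a_0)\preceq e(\alpha a_0)$ and $\xi(e)=1$ otherwise. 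For any embedding $u\colon A\to C$ the maps $u\alpha^i$ ($0\leq i<m$) all lie in $\emb(A,uA)$, and $\xi(u\alpha^{i})$ is decided by comparing $u(\alpha^{i}a_0)$ with $u(\alpha^{i+1}a_0)$; since $\alpha^{m}a_0=a_0$, these $m$ comparisons can be neither all ``$\preceq$'' nor all ``$\succ$'', so $\xi$ is not constant on $\emb(A,uA)$, contradicting the Ramsey property. Hence every $A\in\KK$ is rigid.

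For the first clause it suffices, by the characterization recalled above, to show $\acl^{\A}(X)=X$ for every finite $X\subseteq A$ (which, with disjoint-JEP in hand, is exactly disjoint-AP, hence algebraic triviality). Suppose not, and fix finite $X$ and $d\in\acl^{\A}(X)\setminus X$ with finite orbit $O$ under $\mathrm{Aut}(\A/X)$, $|O|=n$. If $n\geq 2$, the finite substructure of $\A$ on $X\cup O$ is setwise invariant under $\mathrm{Aut}(\A/X)$, which acts transitively on $O$, so that substructure has a nontrivial automorphism --- contradicting rigidity. If $n=1$ and $X=\emptyset$, then $d$ is the unique realization in $\A$ of $\qtp^{\A}(d)$, whereas disjoint-JEP together with $\KK$-universality exhibits two distinct such realizations --- again impossible. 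The leftover case, $d\in\mathrm{dcl}^{\A}(X)$ with $X\neq\emptyset$, is the step I expect to be the main obstacle to a self-contained proof: I would attack it by a Ramsey colouring of copies of the one-point extension on $X\cup\{d\}$, using that in a rigid structure the ``$d$-coordinate'' of an embedded copy is canonically located, but absent a clean argument I would simply cite \cite{Nesetril03ramseyclasses}, which gives strong amalgamation for Ramsey classes directly.

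For the second clause, rigidity gives $\qtp^{\A}(a,b)\neq\qtp^{\A}(b,a)$ for all distinct $a,b\in A$, and since $T_\KK$ is $\aleph_0$-categorical with quantifier elimination, a $0$-definable linear order on $A$ is exactly an $\mathrm{Aut}(\A)$-invariant one. To produce one I would follow \cite{kpt-2005}: $\mathrm{Aut}(\A)$ acts continuously on the compact space $\mathrm{LO}(A)\subseteq 2^{A\times A}$ of linear orders of the universe; choose a minimal closed invariant subflow $M$, and use the Ramsey property --- in the form that a finite colouring of the copies of a fixed finite substructure is monochromatic on arbitrarily large pieces --- to synchronise any two members of $M$ on every finite substructure, forcing $M$ to be a singleton, hence a fixed point. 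The synchronisation step (equivalently, transitivity of the candidate order) is the delicate point; alternatively one may note, again from \cite{kpt-2005}, that rigidity plus the Ramsey property makes $\mathrm{Aut}(\A)$ extremely amenable, so its action on $\mathrm{LO}(A)$ has a fixed point outright. Either way the invariant linear order obtained is quantifier-free $0$-definable, since $T_\KK$ eliminates quantifiers.
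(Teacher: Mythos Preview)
The paper itself offers no proof of this theorem; it simply records the two clauses with citations to \cite{Nesetril03ramseyclasses} and \cite{kpt-2005}. So there is nothing to compare against at the level of argument, and your proposal in fact supplies considerably more than the paper does.

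As a sketch of the cited proofs, your outline is sound. The rigidity argument is correct as written: with $B=A$ the cycle $u,u\alpha,\dots,u\alpha^{m-1}$ inside $\emb(A,uA)$ genuinely forces both colours to appear, since the $\preceq$-comparisons around a cycle cannot all point the same way. Your derivation of algebraic triviality from rigidity handles the $|O|\geq 2$ case cleanly (the induced substructure on $X\cup O$ is in $\KK$ by $\KK$-closedness, and the restriction of a nontrivial element of $\Aut(\A/X)$ gives a nontrivial automorphism of it), and you are right that the remaining case $d\in\dcl^{\A}(X)\setminus X$ with $X\neq\emptyset$ is where the real work lies. Your instinct to colour copies of the one-point extension is on the right track, but a complete argument does require a genuine use of the Ramsey property beyond rigidity --- this is exactly what \cite{Nesetril03ramseyclasses} provides, so deferring to that reference is appropriate and matches what the paper does. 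For the second clause, both routes you describe --- a direct minimal-subflow argument on $\mathrm{LO}(A)$, or the cleaner observation that rigidity plus Ramsey makes $\Aut(\A)$ extremely amenable --- are precisely the content of \cite{kpt-2005}, and either yields an $\Aut(\A)$-invariant, hence (by quantifier elimination and $\aleph_0$-categoricity) $0$-definable, linear order.
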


%%%%----------------------------------------------------------------------------------------------------------------%%%%%
\begin{defn}
Let $\KK$ be an algebraically trivial \fraisse class in the language $\L_\KK$, and assume that $|S_1(T_\KK)|=1$. Let $\L_\KK^<$ be the language obtained by adding one new binary relation $<$ to the signature of $\L_\KK$, and let $\KK^<$ be the class of all finite $\L_\KK^<$-structures $B$ such that $B\r\L_\KK\in\KK$ and $<^B$ is a linear ordering of $B$. Then $\KK^<$ is also an algebraically trivial \fraisse class with $|S_1(T_{\KK^<})|=1$, and if $\B$ is its generic model, then $<^\B$ is a dense linear ordering of $B$ without endpoints.

The class $\KK^<$ is the {\em generic order-expansion} of $\KK$. We will say that the original class $\KK$ is {\em simply Ramsey-expandable} if $\KK^<$ has the Ramsey property. (A more general notion of Ramsey-expandable would allow arbitrary expansions by finitely many relation symbols; this topic is addressed in, for example,
\cite{hubicka-nesetril-2014,hubiczka-neszetrzil-2016}, under the name of ``having a Ramsey-lift.'')
\end{defn}

%%%%%%%%%%%%%%%%%%%%%%%%%%%%%%%%%%%%%%%%%%%%%%%%%%%%%%%%
%%%%%%%%%%%%%%%%%%%%%%%%%%%%%%%%%%%%%%%%%%%%%%%%%%%%%%%%
\subsection{Order-Expansions}

\emph{A priori}, adding a generic linear order to an algebraically trivial \fraisse class could undermine the project of classifying dividing-lines by changing the corresponding class of theories.  Luckily, that seems not to be the case.  We will show, in Corollary \ref{Cor_unstable_yields_=}, that, given any algebraically trivial \fraisse class $\KK$, if $T_\KK$ has the order property, then $\KK$ and $\KK^<$ correspond to the same dividing-line.  Therefore, as long as the generic model of $\KK$ is unstable, we need not worry about adding a generic order to $\KK$.

We begin with the definition of ``coding orders,'' which is precisely what is needed to show that adding a generic order will have no effect on the corresponding dividing-line.  Then, we show that a class codes orders if and only if its generic model has the order property.

%%%%%--------------------------------------------------------------------------------------------------------------%%%%%
\begin{defn}
Let $\KK$ be an algebraically trivial \fraisse class of $\L$-structures. We say that {\em $\KK$ codes orders} if there are $0<n<\omega$, quantifier-free formulas $\theta_R(\xx_0,...,\xx_{r-1})$ ($R^{(r)}\in\sig(\L)$, $|\xx_i|=n$), and a quantifier-free formula $\theta_<(\xx,\yy)$ such that  for every $B\in\KK^<$, there are $C\in\KK$ and an injection $f:B\to C^n$ such that 
$$B\models R(b_0,...,b_{r-1})\iff C\models \theta_R(f(b_0),...,f(b_{r-1}))$$
for each $R^{(r)}\in\sig(\L)$ and all $b_0,...,b_{r-1}\in B$, and 
$$b<^Bb'\iff C\models\theta_<(f(b),f(b'))$$
for all $b,b'\in B$.
\end{defn}

%%%%%--------------------------------------------------------------------------------------------------------------%%%%%
\begin{obs}\label{Obs_codes_orders}
Let $\KK$ be an algebraically trivial \fraisse class that codes orders, and let ${\A^< = (\A,<)}$ be the generic model of $\KK^<$. Then there are $0<n<\omega$, quantifier-free formulas $\theta_R(\xx_0,...,\xx_{r-1})$ ($R^{(r)}\in\sig(\L)$, $|\xx_i|=n$), a quantifier-free formula $\theta_<(\xx,\yy)$, and an injection $u:A\to A^n$ such that 
$$\A\models R(a_0,...,a_{r-1})\iff \A\models \theta_R(u(a_0),...,u(a_{r-1}))$$
for each $R^{(r)}\in\sig(\L)$ and all $a_0,...,a_{r-1}\in A$, and 
$$a<a '\iff \A\models\theta_<(u(b),u(b'))$$
for all $a,a'\in A$.
\end{obs}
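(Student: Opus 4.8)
The plan is to transfer the finite coding property in the hypothesis up to the level of the generic models, by a compactness argument followed by a L\"owenheim--Skolem reduction. Fix $n$ together with the quantifier-free formulas $\theta_R$ ($R^{(r)}\in\sig(\L)$) and $\theta_<$ witnessing that $\KK$ codes orders; the same $n$ and the same formulas will witness the conclusion, so the only real task is to produce the injection $u\colon A\to A^n$ and verify the two coding equivalences. Equivalently: the conclusion is exactly the right-hand side of the second equivalence in Observation~\ref{Obs_witnessing} applied with $\KK_1=\KK^<$ and $\KK_2=\KK$ --- since $\L_\KK^<$ extends $\L_\KK$ only by $<$, and $\A^<=(\A,<)$ has the same universe $A$ as $\A$ --- so it is enough to prove $T_{\KK^<}\cleq T_\KK$.

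Expand $\L=\L_\KK$ by new constant symbols $d^0_a,\dots,d^{n-1}_a$ for each $a\in A$, writing $\bar d_a=(d^0_a,\dots,d^{n-1}_a)$; since $A$ is countable and $\L_\KK$ is finite relational, the expanded language is countable. Let $\Sigma$ consist of $T_\KK$, the non-equalities $\bar d_a\neq\bar d_{a'}$ for $a\neq a'$, and, for each finite $B\subseteq A$, the single sentence asserting that $(\bar d_b)_{b\in B}$ codes the finite $\L_\KK^<$-structure $\A^<\r B$ through the $\theta$'s (that is, $\theta_R(\bar d_{b_0},\dots,\bar d_{b_{r-1}})$ holds iff $\A^<\models R(b_0,\dots,b_{r-1})$, and $\theta_<(\bar d_b,\bar d_{b'})$ holds iff $b<^{\A^<}b'$). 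I claim $\Sigma$ is finitely satisfiable. Any finite fragment is entailed by $T_\KK$, the coding sentence for a single finite $B_0\subseteq A$ (which subsumes those for all $B\subseteq B_0$), and finitely many non-equalities. As $\A^<\r B_0\in\KK^<$, the hypothesis gives $C\in\KK$ and an injection $f\colon B_0\to C^n$ coding $\A^<\r B_0$ into $C$; every model of $T_\KK$ embeds every member of $\KK$, so choose $\M\models T_\KK$ and an $\L_\KK$-embedding $g\colon C\to\M$, and interpret $d^j_b$ as $g(f(b)_j)$. Because $\theta_R$ and $\theta_<$ are \emph{quantifier-free}, they are preserved and reflected by $g$, so the coding sentence for $B_0$ is satisfied, and injectivity of $f$ and $g$ yields the non-equalities.

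By compactness $\Sigma$ has a model $\M^*$; let $(e^j_a)_{a\in A,\,j<n}$ be the interpretations of the new constants in $\M^*$. Take a countable elementary substructure $\M_0\preceq\M^*$ (in the expanded language) containing all the $e^j_a$. Then $\M_0\r\L_\KK$ is a countable model of $T_\KK$, hence isomorphic to $\A$ by $\aleph_0$-categoricity; fix an isomorphism $\iota\colon\M_0\r\L_\KK\to\A$ and put $u(a):=(\iota(e^0_a),\dots,\iota(e^{n-1}_a))\in A^n$. The coding equivalences held by $\M^*$ pass to $\M_0$ by elementarity and then to $\A$ by $\iota$, giving $\A\models\theta_R(u(a_0),\dots,u(a_{r-1}))\iff\A^<\models R(a_0,\dots,a_{r-1})$ and $\A\models\theta_<(u(a),u(a'))\iff a<^{\A^<}a'$ for all relevant tuples; and the non-equalities force $u$ to be injective. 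Since $\A^<=(\A,<)$, ``$\A^<\models R(\bar a)$'' means ``$\A\models R(\bar a)$'' and ``$a<^{\A^<}a'$'' means ``$a<a'$,'' which is exactly the statement.

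The argument is essentially routine. The only points needing care are that finite satisfiability genuinely calls on the coding hypothesis for only one finite structure at a time (clear, since the coding sentence for $B_0$ entails those for its substructures), and the repeated use of quantifier-freeness of $\theta_R$ and $\theta_<$ to carry the coding along the embedding $g\colon C\to\M$ and along $\M_0\preceq\M^*$ and $\iota$; neither is a real obstacle. In short, ``$\KK$ codes orders'' already forces $T_{\KK^<}\cleq T_\KK$, and Observation~\ref{Obs_witnessing} then delivers the stated data.
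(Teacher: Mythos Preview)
Your argument is correct. The paper states this as an observation without proof, evidently regarding the passage from the finite coding hypothesis to the generic model as routine; your compactness-plus-L\"owenheim--Skolem argument is exactly the standard way to make that passage precise, and your identification of the conclusion with the condition $T_{\KK^<}\cleq T_\KK$ from Observation~\ref{Obs_witnessing} is on the mark.
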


%%%%%--------------------------------------------------------------------------------------------------------------%%%%%
\begin{prop}
 Let $\KK$ be an algebraically trivial \fraisse class.  Then, $\KK$ codes orders if and only if $T_\KK$ has the order property.
\end{prop}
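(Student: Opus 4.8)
The plan is to prove the two implications separately. The direction ``$\KK$ codes orders $\Rightarrow T_\KK$ has the order property'' is essentially immediate from Observation \ref{Obs_codes_orders}: if $\KK$ codes orders, that observation supplies a fixed $n$, a quantifier-free formula $\theta_<(\xx,\yy)$, and an injection $u\colon A\to A^n$ such that, writing $<$ for the (dense, endpointless) order of the generic model $\A^<$ of $\KK^<$, we have $a<a'\iff\A\models\theta_<(u(a),u(a'))$ for all $a,a'\in A$. I would then feed an increasing $\omega$-sequence $a_0<a_1<\cdots$ from $(A,<)$ through $u$ to obtain tuples $\bar d_i=u(a_i)$ in $\A$ with $\A\models\theta_<(\bar d_i,\bar d_j)\iff i<j$, so that the $\L_\KK$-formula $\theta_<$ witnesses the order property for $T_\KK=\Th(\A)$.

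For the converse, I would start from a formula witnessing the order property of $T_\KK$; since $T_\KK$ is $\aleph_0$-categorical it has quantifier elimination, so I may fix a quantifier-free $\L_\KK$-formula $\psi(\bar x,\bar y)$ such that, for each $k$, there is a length-$k$ order-property configuration for $\psi$ — and since the existence of such a configuration is, for fixed $k$, a first-order sentence true in $T_\KK$, every such configuration can be found inside the generic model $\A$. Concatenating $\bar a_i$ with $\bar b_i$ into a single $p$-tuple $\bar c_i$ (with $p=|\bar x|+|\bar y|$) and rewriting $\psi$ accordingly yields a single quantifier-free $\L_\KK$-formula $\chi(\bar z,\bar z')$ and, for each $k$, tuples $\bar c_0,\dots,\bar c_{k-1}$ in $\A$ with $\A\models\chi(\bar c_i,\bar c_j)\iff i<j$. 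I then claim $\KK$ codes orders with $n:=1+p$, with $\theta_R(\xx_0,\dots,\xx_{r-1})$ the atomic formula asserting $R$ of the first coordinates of $\xx_0,\dots,\xx_{r-1}$, and with $\theta_<(\xx,\yy):=\chi$ applied to the last $p$ coordinates of $\xx$ and of $\yy$.

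To verify the claim, given $B\in\KK^<$ with order $b_0<^B\cdots<^B b_{k-1}$, I would choose an embedding $g\colon B{\restriction}\L_\KK\to\A$ (by $\KK$-universality, since $B{\restriction}\L_\KK\in\KK$) and an order-property configuration $(\bar c_\ell)_{\ell<k}$ in $\A$ as above, let $X\subseteq A$ be the finite set consisting of $\img(g)$ together with all coordinates of $\bar c_0,\dots,\bar c_{k-1}$, and set $C=\A{\restriction}X$, which lies in $\KK$ by $\KK$-closedness. Defining $f\colon B\to C^n$ by $f(b_\ell)=(g(b_\ell),\bar c_\ell)$ (a flattened $n$-tuple), $f$ is injective because $g$ is; the embedding property of $g$ gives $C\models\theta_R(f(b_{\ell_0}),\dots,f(b_{\ell_{r-1}}))\iff B\models R(b_{\ell_0},\dots,b_{\ell_{r-1}})$ for each $R^{(r)}\in\sig(\L_\KK)$; and the quantifier-freeness of $\chi$, which makes it absolute between $\A$ and its induced substructure $C$, gives $C\models\theta_<(f(b_\ell),f(b_{\ell'}))\iff\A\models\chi(\bar c_\ell,\bar c_{\ell'})\iff\ell<\ell'\iff b_\ell<^B b_{\ell'}$. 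Since $n$, the $\theta_R$, and $\theta_<$ do not depend on $B$, this is exactly the assertion that $\KK$ codes orders.

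The one point that genuinely needs care — and the step I would call the main obstacle — is that ``codes orders'' requires a single $C\in\KK$, together with a single map $f$, to encode simultaneously the $\L_\KK$-reduct of $B$ and the linear order $<^B$. This is resolved by observing that no amalgamation is needed: an embedded copy of $B{\restriction}\L_\KK$ and an order-property configuration of the correct length can be placed independently inside the generic model $\A$, after which $C$ is simply carved out of $\A$ to contain both and $f$ is the coordinatewise concatenation of the two. The quantifier-freeness of the order-property formula, obtained for free from quantifier elimination of $T_\KK$, is precisely what allows the relevant formula to transfer down from $\A$ to the finite substructure $C$.
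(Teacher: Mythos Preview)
Your proof is correct and follows essentially the same approach as the paper's: both directions use $\theta_<$ to witness the order property, and for the converse both take a quantifier-free order-property formula (via QE), set $n$ to be one plus the tuple length, use the first coordinate together with an embedding of $B{\restriction}\L_\KK$ into $\A$ to recover the $\L_\KK$-structure, use the remaining coordinates with an order-property configuration to recover $<^B$, and carve $C$ out of $\A$. The only cosmetic difference is that the paper assumes from the outset that the order-property formula has $|\bar x|=|\bar y|$, whereas you achieve the same effect by concatenating $\bar a_i\bar b_i$ into a single $p$-tuple.
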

\begin{proof}
 Obviously, if $\KK$ codes orders,  $\theta_<(\xx,\yy)$ as in the definition of coding orders has the order property in $T_\KK$.

Coversely, suppose $T_\KK$ has the order property, and let $\A \models T_\KK$ be the generic model.  Since $\A$ has the order property and $T_\KK$ eliminates quantifiers, there is a quantifier-free $\L$-formula $\psi(\xx;\yy)$, $|\xx|=|\yy|=m$, such that for every $n$, there is a sequence $( \aa_i )_{ i < n}$ of $m$-tuples from $\A$ such that, for all $i,j < n$,
 $$  \A \models \psi(\aa_i; \aa_j) \iff i < j.$$
Let $\theta_<(x'\xx,y'\yy) = \psi(\xx,\yy)$, and for each $R^{(r)}\in\sig(\L)$, let 
$$\theta_R(x'_0\xx_0,x'_1\xx_1,...,x_{r-1}'\xx_{r-1}) = R(x'_0,...,x'_{r-1})$$

Now, let $B \in \KK^<$ be given -- and say, $n = |B|$ and  $B = \{ b_0<^B\cdots<^Bb_{n-1} \}$.   Since $\A$ is the generic model of $\KK$, we can choose a sequence $( \aa_i )_{ i < n}$ of $m$-tuples from $\A$ such that $\A \models \psi(\aa_i; \aa_j) \iff i < j$ for all $i,j < n$, and an embedding $u_0:B\r\L\to\A$. We define $f:B\to A^{m+1}$ by setting $f(b_i)=\left(u_0(b_i),\aa_i\right)$ for each $i<n$, and we take $C$ to be the induced substructure of $\A$ on $u_0B\cup\bigcup_i\aa_i$. It is routine to verify that $C$ and $f$ meet the requirements of coding orders for $B\in\KK^<$. As $B$ was arbitrary, $\KK$ indeed codes orders.
\end{proof}

These are the ingredients needed to prove the main result of this subsection: For an algebraically trivial \fraisse class whose generic model is unstable, adding a generic linear order does not change the corresponding dividing-line.

%%%%%--------------------------------------------------------------------------------------------------------------%%%%%
\begin{cor}\label{Cor_unstable_yields_=}
Let $\KK$ be an algebraically trivial \fraisse class with generic model $\A$ and generic order-expansion $\KK^<$ (whose generic model is $\A^< = (\A,<)$). If $T_\KK$ is unstable, then 
$\C_\KK=\C_{\KK^<}$.
\end{cor}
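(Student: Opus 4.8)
The plan is to prove the two inclusions $\C_\KK\subseteq\C_{\KK^<}$ and $\C_{\KK^<}\subseteq\C_\KK$ separately, using the equivalent reformulation of $\cleq$ given in Observation~\ref{Obs_witnessing} together with the coding-orders machinery just developed. Throughout, write $\L=\L_\KK$. Note first that since $\KK$ is algebraically trivial and $|S_1(T_\KK)|=1$ (which we may assume, as usual, after noting that the $1$-sorted reduction of Subsection~\ref{Subsec_one_sort} lets us arrange this; or simply observe that $\C_\KK$ is unchanged under passing to the relevant sort), $\KK^<$ is an algebraically trivial \fraisse class by the earlier Definition, so both $\C_\KK$ and $\C_{\KK^<}$ make sense.

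\emph{The easy inclusion $\C_{\KK^<}\subseteq\C_\KK$:} This amounts to showing $T_\KK\cleq T_{\KK^<}$. But $T_\KK$ is interpretable in $T_{\KK^<}$ — indeed $\A$ is literally the $\L$-reduct of the generic model $\A^<$ of $\KK^<$ — so by the Observation following Definition~\ref{Defn_TheOrdering} (``if $T_1$ is interpretable in $T_2$ then $T_1\cleq T_2$'') we get $T_\KK\cleq T_{\KK^<}$, hence $\C_{\KK^<}\subseteq\C_\KK$ by the last remark in Definition~\ref{Def_C_K}.

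\emph{The main inclusion $\C_\KK\subseteq\C_{\KK^<}$:} This amounts to showing $T_{\KK^<}\cleq T_\KK$. Since $T_\KK$ is unstable, $T_\KK$ has the order property, so by the Proposition just proved, $\KK$ codes orders; by Observation~\ref{Obs_codes_orders} there are $0<n<\omega$, quantifier-free $\L$-formulas $\theta_R(\xx_0,\dots,\xx_{r-1})$ (for $R^{(r)}\in\sig(\L)$, with $|\xx_i|=n$), a quantifier-free $\theta_<(\xx,\yy)$, and an injection $u:A\to A^n$ with $\A\models R(a_0,\dots,a_{r-1})\iff\A\models\theta_R(u(a_0),\dots,u(a_{r-1}))$ and $a<a'\iff\A\models\theta_<(u(a),u(a'))$. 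Here the ambient ordering ``$<$'' on $A$ is exactly the one from $\A^<$, so $u$ together with the formulas $\theta_R$ and $\theta_<$ is precisely the data required by item~(2) of the second half of Observation~\ref{Obs_witnessing} to witness $T_{\KK^<}\cleq T_\KK$: the language of $\KK^<$ is $\sig(\L)\cup\{<\}$, and the displayed biconditionals say that $u$ pulls back the quantifier-free diagram of $\A^<$ (in all of its relations, including $<$) to the quantifier-free formulas $\theta_R$, $\theta_<$ evaluated in $\A$. Hence $T_{\KK^<}\cleq T_\KK$, so $\C_\KK\subseteq\C_{\KK^<}$, completing the proof.

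\emph{Where the work is.} There is essentially no obstacle left at the level of this corollary — all the content has been front-loaded into the Proposition ($\KK$ codes orders $\iff T_\KK$ has OP) and into Observation~\ref{Obs_codes_orders}. The only points requiring a line of care are: (i) checking that ``$T_\KK$ unstable'' really does give the order property in the form used (this is Shelah's equivalence of instability with OP for complete theories, applicable since $T_\KK\in\TT$); and (ii) confirming that the injection $u:A\to A^n$ supplied by Observation~\ref{Obs_codes_orders} simultaneously captures \emph{both} the $\L$-structure and the ordering of $\A^<$, so that a single witness serves for the whole language $\L_{\KK^<}$ — but this is exactly what Observation~\ref{Obs_codes_orders} asserts. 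So the proof is genuinely short once the earlier results are in hand.
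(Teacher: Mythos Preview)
Your proof is correct and follows essentially the same route as the paper's: both use Observation~\ref{Obs_codes_orders} (available since $T_\KK$ unstable $\Rightarrow$ $\KK$ codes orders) to obtain the injection $u:A\to A^n$ and the formulas $\theta_R,\theta_<$. The only packaging difference is that you invoke Observation~\ref{Obs_witnessing} to conclude $T_{\KK^<}\cleq T_\KK$ directly and then use the equivalence in Definition~\ref{Def_C_K}, whereas the paper takes an arbitrary $T\in\C_\KK$, picks a witness $F:\A\to\M$, and explicitly composes to get $F'=(F,\dots,F)\circ u$ witnessing $T_{\KK^<}\in Q_T$; your version is a clean abstraction of exactly that composition.
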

\begin{proof}
$\C_{\KK^<} \subseteq \C_\KK$ is trivial because $\A^<$ is an expansion of $\A$. 
To show $\C_\KK \subseteq \C_{\KK^<}$, let $T\in\C_\KK$, $\M\models T$ saturated, and let $\pphi = \left(\phi_0,...,\phi_{m-1}\right)$ and $F:\A\to\M$ be a $\pphi$-resolved member of $\vec\FF_\pphi(T)$ such that $\A_\pphi(F)=\A$ up to an identification of relation symbols. Also, let $0<n<\omega$, $\theta_R$, $\theta_<$, and $u:A\to A^n$ be as described in Observation \ref{Obs_codes_orders}. It is not hard to see that $T_{\KK^<}\in Q_T$ via 
$$F' = (\underset{\textnormal{$n$ times}}{\underbrace{F,...,F}}){\circ}u$$
so $T\in\C_{\KK^<}$. 
\end{proof}

We can apply this result to the \fraisse class of finite sets with $k$ independent linear orders.  These classes were studied, for example, in Section 3 of \cite{guingona-hill-scow} in the classification of op-dimension.  It turns out that the dividing-line corresponding to $k$ independent linear orders is the same as the dividing line corresponding to one linear order.

%%%%%--------------------------------------------------------------------------------------------------------------%%%%%
\begin{defn}
For $0<k<\omega$, let $\L_k$ be the language whose signature consists of binary relation symbols $<_0,...,<_{k-1}$. Let $\MO_k$ be the class of all finite $\L_k$-structures $B$ such that $<_i^B$ is a linear ordering of $B$ for each $i<k$. Members of $\MO_k$ are called $k$-multi-orders, and it is not difficult to verify that $\MO_k$ is an algebraically trivial \fraisse class. Clearly, $\MO_1$ is just the class $\mathbf{LO}$ of all finite linear orders.
\end{defn}

%%%%%--------------------------------------------------------------------------------------------------------------%%%%%
\begin{prop}\label{Prop_MOandLO}
For every $0<k<\omega$, $\MO_{k+1}$ is the generic order-expansion of $\MO_k$: $\MO_{k+1}=\MO_k^<$. Since each $\MO_k$ is unstable, we find that $\C_{\MO_{k+1}}=\C_{\MO_k^<}=\C_{\MO_k}$. Thus, $\C_{\MO_k}=\C_\mathbf{LO}$ for every $k>0$.
\end{prop}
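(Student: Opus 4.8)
The plan is to prove the statement by a straightforward induction on $k$, using Corollary \ref{Cor_unstable_yields_=} as the engine. The crucial combinatorial observation is that $\MO_{k+1}$ is literally the same \fraisse class as $\MO_k^<$: a structure carrying $k+1$ linear orders $<_0,\dots,<_k$ is exactly a structure carrying $k$ linear orders $<_0,\dots,<_{k-1}$ (i.e. a member of $\MO_k$) together with one extra linear order $<_k$, which is precisely what it means to be a member of the generic order-expansion $\MO_k^<$. So the identification $\MO_{k+1}=\MO_k^<$ is just a matter of matching up the signatures, and requires only that one check that $\MO_k$ satisfies the hypotheses of the definition of generic order-expansion, namely that it is algebraically trivial (stated in the definition of $\MO_k$) and that $|S_1(T_{\MO_k})|=1$ (immediate, since $\L_k$ has no unary predicates and $\MO_k$ has disjoint-JEP so the generic model is a single $1$-type's worth of points).

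Next I would verify that each $\MO_k$ is unstable, in fact has the order property, which is obvious: the formula $x <_0 y$ linearly orders the generic model $\A$ of $\MO_k$, so witnessing the order property directly via $\phi(x;y) = (x <_0 y)$ and any strictly $<_0$-increasing sequence of elements. (Alternatively, $T_{\mathbf{LO}} = T_{\MO_1}$ is the theory of a dense linear order without endpoints, which is the prototypical unstable theory, and for $k>1$ one reduces by the reduct map.) Hence Corollary \ref{Cor_unstable_yields_=} applies to $\KK = \MO_k$ and gives $\C_{\MO_k} = \C_{\MO_k^<} = \C_{\MO_{k+1}}$.

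Finally I would assemble these into the chain: starting from $\MO_1 = \mathbf{LO}$, a trivial induction on $k$ using the equality $\C_{\MO_k} = \C_{\MO_{k+1}}$ at each step yields $\C_{\MO_k} = \C_{\mathbf{LO}}$ for all $k>0$. I do not expect any genuine obstacle here; the only point requiring a sentence of care is the identification $\MO_{k+1} = \MO_k^<$ (checking the definition's side-conditions on $\MO_k$, and noting that ``adding a linear order freely'' is exactly what imposing no constraint between $<_k$ and the other orders means), and the verification of instability, both of which are routine. The substantive content of the proposition lives entirely in Corollary \ref{Cor_unstable_yields_=}, which we are entitled to invoke.
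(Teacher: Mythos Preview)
Your proposal is correct and matches the paper's approach exactly: the paper presents this proposition as essentially self-proving, with the argument contained in the statement itself (identify $\MO_{k+1}=\MO_k^<$, note instability, invoke Corollary \ref{Cor_unstable_yields_=}, and induct). Your write-up simply makes explicit the routine checks the paper leaves implicit.
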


%% Vince added (2/9/17)
This is, perhaps, not surprising.  As noted in Section 3 of \cite{guingona-hill-scow}, $\MO_k$ relates to op-dimension $\ge k$, and a theory $T$ has sorts of arbitrarily high op-dimension if and only if $T$ is unstable (i.e., $T \in \C_\mathbf{LO}$).

%%%%%--------------------------------------------------------------------------------------------------------------%%%%%
\begin{obs}
$\mathbf{LO}$ is indecomposable. 
\end{obs}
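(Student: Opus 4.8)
The plan is to show that the only factorizations of $\mathbf{LO}$ (the class of finite linear orders) are, up to the reduction built into the definition of indecomposability, ones in which some factor already has the order property; equivalently, that any factorization $(\KK_0,\dots,\KK_{n-1})$ of $\mathbf{LO}$ must have some $\KK_i$ with $T_{\mathbf{LO}}\cleq T_{\KK_i}$. First I would recall what a factorization gives us: an injection $u\colon A\to B$ into the generic model $\B$ of $\Pi_i\KK_i$ such that for all $k$ and all $\aa,\aa'\in A^k$, $\qtp^\A(\aa)=\qtp^\A(\aa')$ iff $\tp^\B(u\aa)=\tp^\B(u\aa')$, where $\A=(\Q,<)$ is the generic model of $\mathbf{LO}$. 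Since $B=B_0\times\cdots\times B_{n-1}$ with coordinates from the generic models $\A_i$ of the $\KK_i$, the tuple $u(a)=(u_0(a),\dots,u_{n-1}(a))$ has coordinates $u_i(a)\in A_i$, and the type of $u(a),u(a')$ in $\B$ is determined coordinate-wise by the quantifier-free types of $(u_i(a),u_i(a'))$ in $\A_i$.

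The key step is to observe that the linear order $<$ on $\A$ has exactly three $2$-types of distinct elements realized (for a pair $(a,a')$: $a<a'$ or $a'<a$), and more importantly $<$ is a $2$-type that is ``totally non-trivial'' in the sense that the relation $a<a'$ together with its negation-on-distinct-elements $a'<a$ partitions ordered pairs, and transitivity forces a genuine linear order. Because the $\B$-type of $(u(a),u(a'))$ is the tuple of the $\A_i$-quantifier-free types of the pairs $(u_i(a),u_i(a'))$, and because $\qtp^\A(a,a')$ is faithfully recovered from $\tp^\B(u(a),u(a'))$, the order relation $a<a'$ must be expressible as a (finite boolean, hence by a $2$-type condition) combination of the coordinate quantifier-free relations holding between $u_i(a)$ and $u_i(a')$. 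Now I would run a Ramsey/pigeonhole argument on the coordinates: restricting to a large finite set $x_0,\dots,x_{N-1}\in A$ with $x_0<\cdots<x_{N-1}$, in each coordinate $i$ the quantifier-free type of $(u_i(x_p),u_i(x_q))$ as $p,q$ range over a Ramsey-homogeneous subset stabilizes to one of finitely many patterns; by finiteness of the language of each $\KK_i$ there are only finitely many $2$-types, so after passing to a monochromatic subset $I$ (size still arbitrarily large) the pattern of each coordinate is ``monotone'' along $I$ --- and since $<$ must be recoverable, in at least one coordinate $i$ the quantifier-free $2$-type of $(u_i(x_p),u_i(x_q))$ for $p<q$ in $I$ must differ from that for $p>q$, i.e. that coordinate's quantifier-free relations linearly pre-order $I$.

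From there I would invoke Observation~\ref{Obs_witnessing} (the version comparing two algebraically trivial \fraisse classes): we have produced, inside the generic model $\A_i$ of $\KK_i$, an arbitrarily long chain whose order is defined by a fixed quantifier-free formula $\theta_<(\mathbf x,\mathbf y)$ of $\L_i$ (with $m=1$, the identity-ish embedding $u_i$ on the chain), and hence $T_{\mathbf{LO}}$ has the order property witnessed in $T_{\KK_i}$; by compactness and $\aleph_0$-categoricity this gives $T_{\mathbf{LO}}\cleq T_{\KK_i}$, as the single formula $\theta_<$ realizes $\Age_{<}(T_{\mathbf{LO}})=\mathbf{LO}$ inside $T_{\KK_i}$. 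Therefore every factorization of $\mathbf{LO}$ has a factor $\cleq$-above $T_{\mathbf{LO}}$, so $\mathbf{LO}$ is indecomposable. The main obstacle I anticipate is the bookkeeping in the coordinate-wise Ramsey argument: one must be careful that ``the $\B$-type determines and is determined by the coordinate quantifier-free types'' is applied correctly (this uses the explicit description of $R^B$ in the definition of $\Pi_i\KK_i$ plus quantifier elimination in each $T_{\KK_i}$ and in the generic model of $\Pi_i\KK_i$), and that after passing to a homogeneous subset the recovered relation really is a linear order and not merely a preorder --- but since $<$ on $\A$ is antisymmetric and the faithful correspondence of types transfers antisymmetry, a further application of homogeneity (collapsing any ``equivalence classes'' would contradict faithfulness) handles this.
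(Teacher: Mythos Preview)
The paper states this observation without proof, so there is no ``paper's own proof'' to compare against; presumably the intended argument is the obvious analogue of the proof that $\HH_r$ is indecomposable.

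Your argument is correct, but the Ramsey/pigeonhole step is unnecessary and obscures the simplicity of the situation. You seem to be guarding against the possibility that different increasing pairs $(a,a')$ with $a<a'$ could have different $\B$-types, so that one must pass to a homogeneous subset to stabilize them. But this cannot happen: the definition of factorization requires the \emph{biconditional}
\[
\qtp^\A(\aa)=\qtp^\A(\aa')\iff \tp^\B(u\aa)=\tp^\B(u\aa'),
\]
and in $(\Q,<)$ there is exactly one quantifier-free $2$-type of an increasing pair. Hence \emph{all} increasing pairs already have the same $\B$-type $p$, and all decreasing pairs have the same $\B$-type $q\neq p$. Restricting to the languages $\L_i$, there is some $i_0$ with $p{\r}\L_{i_0}\neq q{\r}\L_{i_0}$; a formula $\theta$ isolating $p{\r}\L_{i_0}$ then witnesses $T_{\mathbf{LO}}\cleq T_{\KK_{i_0}}$ directly. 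This is exactly the shape of the paper's proof that $\HH_r$ is indecomposable (where the key observation is that there are only two qf-$r$-types of non-repeating $r$-tuples).

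Your worry about ``preorder versus linear order'' also dissolves: if the projection $u_{i_0}$ identified two points $a<a'$, then $\theta(u_{i_0}(a),u_{i_0}(a'))$ would both hold (from $p{\r}\L_{i_0}$) and fail (from $q{\r}\L_{i_0}$, swapping the arguments), which is impossible since $p{\r}\L_{i_0}\neq q{\r}\L_{i_0}$. So injectivity is automatic and no further homogeneity pass is needed.
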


%%%%%%%%%%%%%%%%%%%%%%%%%%%%%%%%%%%%%%%%%%%%%%%%%%%%%%%%
%%%%%%%%%%%%%%%%%%%%%%%%%%%%%%%%%%%%%%%%%%%%%%%%%%%%%%%%
\subsection{Collapse of indiscernibles}

In this subsection, we make explicit the connection between positive-local-combinatorial and collapse-of-indiscernible dividing-lines.  In Theorem \ref{Thm_collapse_char}, we show that the dividing-line corresponding to an algebraically trivial \fraisse class with the Ramsey property is characterized by the existence of an un-collapsed indiscernible picture.  In Theorem \ref{Thm_collapse_char_2}, we generalize this to simply Ramsey-expandable classes.  This partially captures a generalization of the standard collapse-of-indiscernible results found in the literature \cite{chernikov-et-al, guingona-hill-scow,scow-2011}.

%%%%%--------------------------------------------------------------------------------------------------------------%%%%%
\begin{thm}\label{Thm_collapse_char}
Let $\KK$ be an algebraically trivial \fraisse class with generic model $\A$ in a language $\L$. If $\KK$ is a Ramsey class, then for every $T\in\TT$, the following are equivalent:
\begin{enumerate}
\item $T\in\C_\KK$
\item There is an un-collapsed indiscernible picture of $\A$ in a model of $T$.
\end{enumerate}
\end{thm}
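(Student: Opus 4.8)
The plan is to prove the two implications separately, using the Patterning property (Theorem \ref{Thm_Lynn_patterning}), which is available since $\KK$ is Ramsey.

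For the direction $(2)\Rightarrow(1)$: suppose $I:\A\to\M$ is an un-collapsed indiscernible picture of $\A$ in some $\M\models T$. I would let $\pphi = (\phi_q)_{q}$ be a sequence of $\L_T$-formulas, one for each quantifier-free $n$-type $q$ of $T_\KK$ realized in $\A$ (for $n$ up to the arity of $\L$), where $\phi_q(\bar x_0,\dots,\bar x_{n-1})$ is chosen to isolate the $\M$-type $\tp^\M(I a_0,\dots,I a_{n-1})$ for any tuple $\bar a$ from $A$ with $\qtp^\A(\bar a)=q$ — this is well-defined by indiscernibility, and distinct $q$'s give distinct $\M$-types precisely because $I$ is un-collapsed. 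Then $F := I$ (viewed as a map $A\to\|\M\|$) should land in $\vec\FF_\pphi(T)$ once we check the non-algebraicity requirement; since $\KK$ is algebraically trivial and $I$ is an indiscernible picture, $\tp(Ia/IB_0)$ is non-algebraic for any finite $B_0\subsetneq B$ — essentially because there are infinitely many $a'$ with the same quantifier-free type over $B_0$ in $\A$, hence (by indiscernibility) infinitely many realizations of $\tp(Ia/IB_0)$ in $\M$. One then needs $F$ to be $\pphi$-resolved; if not, apply Proposition \ref{Prop_resolved}(1) to pass to a resolved $F'$ with $\Age_\pphi(F)\subseteq\Age_\pphi(F')$, and verify that $\Age_\pphi(F')$ is still exactly $\KK$ (using that $F$ already realizes all of $\KK$ and no more — this is where un-collapsedness and indiscernibility combine). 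Then $\Age_\pphi(F')=\KK$ witnesses $T\in\C_\KK$ by Definition \ref{Def_C_K}.

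For the direction $(1)\Rightarrow(2)$: suppose $T\in\C_\KK$, so by Observation \ref{Obs_witnessing} there are $0<m<\omega$, formulas $\phi_R(\bar x_0,\dots,\bar x_{r-1})\in\L_T$ ($|\bar x_j|=m$), and a $\pphi$-resolved $F\in\vec\FF_\pphi(T)$ with $\Age_\pphi(F)=\KK$; so $\A_\pphi(F)\cong\A$. Fix $\M\models T$ containing $\img F$ and regard $\gamma := F$ (after identifying $\A_\pphi(F)$ with $\A$) as a picture of $\A$ in $\M$ in the sense of the indiscernible-picture definition — note the picture maps $A$ into a single sort of $\M$, the sort of $\pphi$. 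By the Patterning property, there is an indiscernible picture $I:\A\to\M$ patterned on $\gamma$. The claim is that $I$ is un-collapsed. Indeed, suppose $\tp^\M(I\bar a)=\tp^\M(I\bar b)$ for tuples $\bar a,\bar b$ from $A$; I want $\qtp^\A(\bar a)=\qtp^\A(\bar b)$. Taking $\Delta$ to contain all the $\phi_R$'s, patterning gives embeddings $f_{\Delta,\bar a},f_{\Delta,\bar b}:\A\to\A$ with $\tp_\Delta^\M(I\bar a)=\tp_\Delta^\M(\gamma f\bar a)$ and likewise for $\bar b$; so $\tp_\Delta^\M(\gamma f_{\Delta,\bar a}\bar a)=\tp_\Delta^\M(\gamma f_{\Delta,\bar b}\bar b)$, which via the $\phi_R$'s translates to $\qtp^\A(f_{\Delta,\bar a}\bar a)=\qtp^\A(f_{\Delta,\bar b}\bar b)$ in $\A_\pphi(F)\cong\A$; since embeddings of $\A$ into itself preserve quantifier-free types, $\qtp^\A(\bar a)=\qtp^\A(\bar b)$.

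I expect the main obstacle to be in the $(2)\Rightarrow(1)$ direction: carefully engineering the sequence $\pphi$ so that $\Age_\pphi(F)$ is \emph{exactly} $\KK$ (not just contains or is contained in it) and, more delicately, ensuring this equality survives the passage to a $\pphi$-resolved refinement. The containment $\Age_\pphi(F)\subseteq\KK$ relies on un-collapsedness (no spurious types appear), while $\KK\subseteq\Age_\pphi(F)$ relies on the picture being of the full generic model $\A$; the resolved refinement $F'$ can only add finite configurations that are already "absorbed" into $\KK$-limits, and since $\KK$ is itself a \fraisse class, $\Age_\pphi(F')$ — being an algebraically trivial \fraisse class sandwiched appropriately — should coincide with $\KK$. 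A secondary technical point worth care is verifying the non-algebraicity clause in $F\in\vec\FF_\pphi(T)$ from indiscernibility plus algebraic triviality of $\KK$; I'd handle this by noting that in $\A$ every proper finite subconfiguration has infinitely many quantifier-free-type-preserving extensions (disjoint-AP/disjoint-JEP), and indiscernibility transports this to non-algebraicity over the image.
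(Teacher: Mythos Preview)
Your approach is essentially the paper's, and correct in outline, with two minor points worth flagging.

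For $(2)\Rightarrow(1)$: the paper dismisses this direction as ``trivial.'' Your elaboration is on the right track, but indexing $\pphi=(\phi_q)_q$ by quantifier-free types creates a language mismatch: $\L_\pphi$ then has one relation symbol per qf-type, so $\Age_\pphi(F)$ is not literally $\KK$ (it lives in the wrong signature). The clean fix is to choose, for each $R\in\sig(\L)$, a single formula $\phi_R$ with $\M\models\phi_R(I\aa)\iff\A\models R(\aa)$; this is possible because there are only finitely many qf-types of each arity $\leq\ari(\L)$, indiscernibility makes the $\M$-type depend only on the qf-type, and un-collapsedness makes distinct qf-types yield distinct $\M$-types (hence separable by a formula). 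With this $\pphi$ one has $\A_\pphi(I)=\A$ on the nose, and your worry about passing to a $\pphi$-resolved refinement evaporates: since $\A$ is already the generic model of the algebraically trivial class $\KK$, the map $F=I$ is itself $\pphi$-resolved (disjoint-AP in $\A$ supplies the required copies). Your non-algebraicity check via indiscernibility plus algebraic triviality is exactly right.

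For $(1)\Rightarrow(2)$: your argument is the paper's. The only cosmetic difference is that the paper applies patterning once to the concatenated tuple $\aa\aa'$, obtaining a single embedding $u:\A\r\aa\aa'\to\A$, whereas you pattern $\aa$ and $\bb$ separately. Both work, since any embedding of a finite substructure into $\A$ preserves and reflects quantifier-free types; the single-embedding version is just slightly tidier. (Also, you can take $\pphi$ directly from Definition~\ref{Def_C_K} with $m=1$ rather than invoking the witness number of Observation~\ref{Obs_witnessing}; this avoids the sort-of-$m$-tuples detour.)
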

\begin{proof}
2$\implies$1 is trivial. For 1$\implies$2, let $T\in\TT$, $\M\models T$ saturated, and let $\pphi = \left(\phi_0,...,\phi_{m-1}\right)$ and $F:\A\to\M$ be a $\pphi$-resolved member of $\vec\FF_\pphi(T)$ such that $\A_\pphi(F)=\A$ up to an identification of relation symbols.

Let $I:\A\to\M$ be an indiscernible picture of $\A$ patterned on $F$. We claim that $I$ is un-collapsed. Towards a contradiction, suppose there are $0<k<\omega$ and $\aa,\aa'\in A^k$ such that $\qtp^\A(\aa)\neq\qtp^\A(\aa')$ but $\tp^\M(\aa)=\tp^\M(\aa')$. Since $\pphi$ is finite and $I$ is patterned on $F$, there is an $\L$-embedding $u:\aa\aa'\to\A$ such that $\tp_\pphi^\M(I\aa\aa')=\tp_\pphi^\M(Fu\aa\aa')$, and it follows that 
$$\tp_\pphi^\M(Fu\aa)=\tp_\pphi^\M(I\aa)=\tp_\pphi^\M(I\aa')=\tp_\pphi^\M(Fu\aa').$$
Since $\A = \A_\pphi(F)$, we find that $\qtp^\A(u\aa)=\qtp^\A(u\aa')$, and since $u$ is an embedding and $\A=\A_\pphi(F)$, we find that $\qtp^\A(\aa)=\qtp^\A(\aa')$ -- a contradiction. Thus, $I$ must be un-collapsed.
\end{proof}

Though Theorem \ref{Thm_collapse_char} is certainly an interesting result in its own right, it does not match the flavor of collapse-of-indiscernible results discovered thus far (in, say, \cite{chernikov-et-al}, \cite{guingona-hill-scow}, and \cite{scow-2011}).  ``Natural'' dividing-lines typically do not involve linear orders.  For example, the independence property naturally corresponds to the \fraisse class of finite graphs (and not finite ordered graphs).  However, in light of Corollary \ref{Cor_unstable_yields_=}, adding generic linear orders should not matter.  Therefore, we get a stronger result, more in the spirit of typical collapse-of-indiscernibles results.

%%%%%--------------------------------------------------------------------------------------------------------------%%%%%
\begin{thm}\label{Thm_collapse_char_2}
Let $\KK$ be an algebraically trivial \fraisse class. If $T_\KK$ is unstable and $\KK$ is simply Ramsey-expandable, then the following are equivalent:
\begin{enumerate}
\item $T\in\C_\KK$
\item There is an un-collapsed indiscernible picture of $\A^<$ in a model of $T$, where $\A^<$ is the generic model of $\KK^<$.
\end{enumerate}
\end{thm}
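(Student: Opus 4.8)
The goal is to deduce Theorem \ref{Thm_collapse_char_2} by combining the two tools already in hand: Corollary \ref{Cor_unstable_yields_=} (which says $\C_\KK = \C_{\KK^<}$ when $T_\KK$ is unstable) and Theorem \ref{Thm_collapse_char} (which characterizes $\C_{\KK'}$ by un-collapsed indiscernible pictures whenever $\KK'$ is a Ramsey class). The plan is to apply Theorem \ref{Thm_collapse_char} to the class $\KK^< $ rather than to $\KK$ itself.

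First I would observe that the hypotheses line up exactly. Since $\KK$ is simply Ramsey-expandable, $\KK^<$ has the Ramsey property, i.e. $\KK^<$ is a Ramsey class; and $\KK^<$ is an algebraically trivial \fraisse class with $|S_1(T_{\KK^<})|=1$ by the discussion following the definition of the generic order-expansion. So Theorem \ref{Thm_collapse_char} applies verbatim with $\KK$ replaced by $\KK^<$ and $\A$ replaced by $\A^<$: for every $T\in\TT$,
\[
 T\in\C_{\KK^<}\iff\text{there is an un-collapsed indiscernible picture of $\A^<$ in a model of $T$.}
\]

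Next I would invoke Corollary \ref{Cor_unstable_yields_=}: because $T_\KK$ is unstable, $\C_\KK=\C_{\KK^<}$. Chaining the two equivalences gives $T\in\C_\KK\iff T\in\C_{\KK^<}\iff$ there is an un-collapsed indiscernible picture of $\A^<$ in a model of $T$, which is precisely the statement of Theorem \ref{Thm_collapse_char_2}. There is essentially no obstacle here: the theorem is a corollary of the two named results, and the only thing to check carefully is that ``simply Ramsey-expandable'' for $\KK$ is exactly the hypothesis ``$\KK^<$ is a Ramsey class'' needed to feed $\KK^<$ into Theorem \ref{Thm_collapse_char}, and that the standing assumption $|S_1(T_\KK)|=1$ (implicit in forming $\KK^<$) carries over. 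If one wanted to be scrupulous, the one genuinely substantive input is Corollary \ref{Cor_unstable_yields_=}, whose proof in turn rests on the fact that an unstable $T_\KK$ has the order property and hence $\KK$ codes orders — but that is already established in the excerpt, so here it can be cited as a black box.
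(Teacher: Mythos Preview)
Your proposal is correct and follows exactly the paper's own proof, which reads in its entirety: ``By Theorem \ref{Thm_collapse_char} and Corollary \ref{Cor_unstable_yields_=}.'' You have simply unpacked this one-line citation, applying Theorem \ref{Thm_collapse_char} to the Ramsey class $\KK^<$ and then invoking $\C_\KK=\C_{\KK^<}$ from Corollary \ref{Cor_unstable_yields_=}.
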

\begin{proof}
By Theorem \ref{Thm_collapse_char} and Corollary \ref{Cor_unstable_yields_=}.
\end{proof} 

Theorem \ref{Thm_collapse_char_2} is, in spirit, a generalization of the collapse-of-indiscernible results from \cite{chernikov-et-al}, \cite{guingona-hill-scow}, and \cite{scow-2011}.  The following example explores this connection.

%%%%%%%%%%%%%%%%%%%%%%%%%%%%%
%%% Vince added 12/7/2016 %%%
%%%%%%%%%%%%%%%%%%%%%%%%%%%%%

\begin{example}
 By choosing the appropriate $\KK$, we obtain the following corollaries of Theorem \ref{Thm_collapse_char_2}:
 \begin{enumerate}
  \item If $\KK = \HH_{r+1}$ is the class of finite $(r+1)$-hypergraphs, then $\C_\KK$ is the class of theories with $r$-IP (see Proposition \ref{Prop_Chernikov}).  Therefore, $T$ has $r$-IP if and only if there is an un-collapsed indiscernible picture of a model of the generic ordered $(r+1)$-hypergraph in a model of $T$.  This is analogous to Theorem 5.4 in \cite{chernikov-et-al}.
  \item In particular, consider $r = 1$.  Then, $T$ has IP if and only if there is an un-collapsed indiscernible picture of a model of the generic ordered graph in a model of $T$.  This is analogous to the main result of \cite{scow-2011}.
 \end{enumerate}
\end{example}

One thing missing from the discussion here is precisely how these indiscernibles resist collapse.  The results from \cite{chernikov-et-al}, \cite{guingona-hill-scow}, and \cite{scow-2011} all provide a more precise reason for the lack of collapse.  For example, in \cite{scow-2011}, we see that a theory has NIP if and only if all ordered graph indiscernibles collapse to ordered indiscernibles.  It is precisely the collapse of the edge relation that determines whether or not a theory has NIP.  In future work, it would be interesting to explore exactly what reducts of $\A^<$ characterize whether or not a theory belongs to $\C_\KK$ (in Theorem \ref{Thm_collapse_char_2}).

\newpage
%%%%%%%%%%%%%%%%%%%%%%%%%%%%%%%%%%%%%%%%%%%%%%%%%%%%%%%%
%%%%%%%%%%%%%%%%%%%%%%%%%%%%%%%%%%%%%%%%%%%%%%%%%%%%%%%%
%%%%%%%%%%%%%%%%%%%%%%%%%%%%%%%%%%%%%%%%%%%%%%%%%%%%%%%%
%%%%%%%%%%%%%%%%%%%%%%%%%%%%%%%%%%%%%%%%%%%%%%%%%%%%%%%%
\section{Case studies}\label{Sec_Case_studies}

In this section, to get a better feel for the quasi-ordering $\cleq$, we study some examples of classes in the $\cleq$-ordering.  The first one we examine is the $\cleq$-maximal class, which includes, for example, the theory of hereditarily finite sets.  Next, we explore hypergraphs.  We show that prohibiting cliques of a certain size does not change the corresponding dividing-line, nor does adding a generic linear order.  We show that hypergraphs form a strict infinite ascending $\cleq$-chain, and use this to show that there are infinitely many irreducible dividing-lines.  In the next subsection, we look at societies, showing that adding generic hyperedge relations of lower or equal arity to hypergraphs does not change the corresponding dividing-line.  Finally, we look at how multi-partite multi-concept classes relate to hypergraphs, showing that the dividing-lines corresponding to hypergraphs are precisely the higher-order independence properties.

%%%%%%%%%%%%%%%%%%%%%%%%%%%%%%%%%%%%%%%%%%%%%%%%%%%%%%%%
%%%%%%%%%%%%%%%%%%%%%%%%%%%%%%%%%%%%%%%%%%%%%%%%%%%%%%%%
\subsection{The top of the $\cleq$-order}

In some sense, the top of the $\cleq$-order is not surprising.  If this is a sensible ordering on the local complexity of theories, the maximal class must include the theory of true arithmetic.  Moreover, the top class must also include the theory of hereditarily finite sets since these two theories are bi-interpretable.  We show that every algebraically trivial \fraisse class is subordinate to this theory, and we conclude from this that all theories lie $\cleq$-below it.  Finally, we give an example of a theory in the top class which interprets neither hereditarily finite sets nor true arithmetic.

%%%%%--------------------------------------------------------------------------------------------------------------%%%%%
\begin{defn}
Let $\L_0$ be the language of set theory, with signature $\{\in\}$. We define $\H = (H,\in^\H)$, the hereditarily finite sets, as follows:
\begin{itemize}
\item $H_0 = \emptyset$, and for each $n<\omega$, $H_{n+1} = H_n\cup\P(H_n)$
\item Then $H = \bigcup_nH_n$, and $\in^\H$ is the usual membership relation.
\end{itemize}
We write $T_\fs$ for the complete theory of $\H$ -- the theory of finite sets.
\end{defn}

%%%%%--------------------------------------------------------------------------------------------------------------%%%%%
\begin{prop}\label{Prop_top}
$T_\KK\cleq T_\fs$ for every algebraically trivial \fraisse class $\KK$, and the sequence of formulas involved depends only on the signature of $\KK$, not on $\KK$ itself. Thus, $T\cleq T_\fs$ for every $T\in\TT$.

\end{prop}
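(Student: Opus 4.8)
The plan is to show directly that the hereditarily finite sets $\H$, via a single uniform coding, realize every algebraically trivial \fraisse class $\KK$ in a fixed finite relational signature, and then to invoke Observation \ref{Obs_witnessing} together with the fact that $T_\fs$ eliminates imaginaries (after passing to $T_\fs^\eq$, which by our convention we identify with $T_\fs$). The key point is that $\H$ is a model of a theory rich enough to encode arbitrary finite structures in a first-order-definable way, uniformly in the signature. So the first step is: fix a finite relational signature $\sig(\L) = \{R_0^{(r_0)},\dots,R_{k-1}^{(r_{k-1})}\}$, and fix the generic model $\A$ of $\KK$ on the universe $A$. Since $\A$ is countable, choose any injection $A \to H$; because every element of $H$ codes a finite set and $\H$ can define ordered tuples (Kuratowski pairs are $\{\in\}$-definable), a structure with universe $A$ and relations $R_i \subseteq A^{r_i}$ can itself be coded by a single element $c \in H$, namely the hereditarily finite set encoding the tuple $(A, R_0^\A, \dots, R_{k-1}^\A)$ restricted appropriately --- but since $A$ is infinite this literal coding fails, so instead we code \emph{locally}: we want formulas $\phi_{R_i}(x_0,\dots,x_{r_i-1})$ of $\L_0$ and an $F \in \vec\FF_\pphi(T_\fs)$ with $\Age_\pphi(F) = \KK$.

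The cleaner route, which I would actually carry out, is to use the ambient set theory to define a ``universal'' family of relations. Work in $\H$ and note that for each arity $r \le \ari(\L)$ there is an $\L_0$-formula $E_r(u; x_0,\dots,x_{r-1})$ expressing ``$u$ is a set of (Kuratowski) $r$-tuples and $(x_0,\dots,x_{r-1}) \in u$''. Now, given $B \in \KK$, the finite structure $B$ with its relations can be encoded by finitely many elements $u_{R_0},\dots,u_{R_{k-1}} \in H$ together with an injection $g: B \to H$ such that $B \models R_i(\bar b) \iff \H \models E_{r_i}(u_{R_i}; g\bar b)$. To get a single $F: A \to \|\M\|$ for a saturated $\M \models T_\fs$ realizing all of $\KK$ at once, pass to a saturated model and use compactness/genericity: since $\KK$ has only countably many isomorphism types of finite structures, enumerate them and amalgamate the codes, using that $\M$ contains, for any countable family of finite structures in the signature, elements coding each of them coherently along a chain --- this is where one checks that the required type over the union is consistent, which follows because each finite piece is satisfiable in $\H$ itself. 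The algebraic-triviality side condition ($\tp(F[B]/F[B_0])$ non-algebraic for $B_0 \subsetneq B$) is arranged by choosing $F$'s values among the infinitely many ``indistinguishable'' codes available --- $T_\fs$ has infinite definable sets with no algebraicity constraints relevant here --- and one then applies Proposition \ref{Prop_resolved}(1) to replace $F$ by a $\pphi$-resolved $F'$ with $\Age_\pphi(F) \subseteq \Age_\pphi(F')$, shrinking back down if necessary so that equality $\Age_\pphi(F') = \KK$ holds by also exhibiting that no extra structures appear (alternatively, argue only $\KK \subseteq \Age_\pphi(F')$, which already gives $T_\KK \cleq T_\fs$ by Definition \ref{Defn_TheOrdering}).

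The second step records the uniformity claim: the formulas $\phi_{R_i} := E_{r_i}(u_{R_i}; \bar x)$ depend only on the arities $r_0,\dots,r_{k-1}$, i.e.\ on $\sig(\L)$, and the data distinguishing one $\KK$ from another is pushed entirely into the choice of the parameters $u_{R_i}$ and the map $F$; after absorbing parameters (legitimate since we may take the $u_{R_i}$ from $\M$ and treat them as part of the construction of $F$ rather than the formula), the sequence $\pphi$ genuinely depends only on $\sig(\L)$. The final step is the deduction ``$T \cleq T_\fs$ for every $T \in \TT$'': given $T$, any sequence $\pphi^1$ of $\L_T$-formulas yields an algebraically trivial \fraisse class realized in $T$ --- more precisely, for each $f \in \FF_{\pphi^1}(T)$, Corollary \ref{Lemma_up_res} gives a $\pphi^1$-resolved $F_1$ with $B_{\pphi^1}(f) \in \Age_{\pphi^1}(F_1)$, and $\Age_{\pphi^1}(F_1)$ is an algebraically trivial \fraisse class by Proposition \ref{Prop_resolved}(2); combining over the countably many such classes via Observation \ref{Obs_combine_pictures} into one class $\KK$ with $\Age_{\pphi^1}(T) \subseteq \KK$ (or simply: it suffices to match each $B_{\pphi^1}(f)$, and each lies in some algebraically trivial \fraisse class which embeds into $T_\fs$ by the first part), we get $\Age_{\pphi^1}(T) \subseteq \Age_{\pphi^2}(T_\fs)$ for a suitable $\pphi^2$, i.e.\ $T \cleq T_\fs$.

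\textbf{Main obstacle.} The delicate point is \emph{not} the existence of the coding --- that $\H$ interprets arbitrary finite structures is standard --- but rather meeting the strong non-algebraicity requirement built into $\FF(T_\fs)$ (every $\tp(F[B]/F[B_0])$ non-algebraic) while simultaneously coding a \emph{rigid} finite structure, since the obvious codes of a finite rigid structure are algebraic over nothing. The resolution is that we are free to code each finite $B \in \KK$ by \emph{infinitely many} distinct but ``$\pphi$-equivalent'' tuples in the saturated model --- the formulas $\phi_{R_i}$ only see the $E_{r_i}$-pattern, not the identity of the code --- so the real content of the argument is verifying that a saturated model of $T_\fs$ contains, for each isomorphism type and each finite subset of $A$ already placed, infinitely many fresh codes extending the existing configuration non-algebraically; this is a routine but careful compactness argument inside $T_\fs$, and it is the step I would write out in full. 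Everything else (Kuratowski pairs, the $E_r$ formulas, uniformity in the signature, and the final reduction via Proposition \ref{Prop_resolved} and Definition \ref{Defn_TheOrdering}) is bookkeeping.
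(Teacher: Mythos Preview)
Your approach is the paper's approach, but the paper executes it far more directly. Rather than introducing parametrized membership formulas $E_r(u;\bar x)$ and then proposing to ``absorb'' the parameters into $F$, the paper builds the parameters into the tuple from the outset: with $\sig(\L)=\{R_0,\dots,R_{n-1}\}$ and $m=n+1$, it sets $u(b)=\big(u_0(b),\,u_0[R_0^B],\dots,u_0[R_{n-1}^B]\big)\in H^m$ for an arbitrary injection $u_0:B\to H$ and lets $\theta_i(\xx_0,\dots,\xx_{r-1})$ assert $(x_{0,0},\dots,x_{r-1,0})\in x_{0,i+1}$. A single compactness step on the diagram of the generic model $\A$ then produces $F$, and uniformity in the signature is immediate with no separate parameter-handling. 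Your compactness via enumerating isomorphism types and amalgamating along a chain is more elaborate than needed. The non-algebraicity condition you flag as the ``main obstacle'' is a legitimate side check (one the paper also leaves implicit), but it is routine once the relation codes become nonstandard in $\H'$ --- it is not the conceptual heart of the argument. Likewise your final reduction for arbitrary $T\in\TT$ via Proposition~\ref{Prop_resolved} and Observation~\ref{Obs_combine_pictures} is heavier than required: since the $\theta_i$ depend only on the signature, every $B_{\pphi^1}(f)$ is already covered by the same one-line encoding.
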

\begin{proof}
Let $\KK$ be an algebraically trivial \fraisse class in a language $\L$ with signature $\sig(\L) = \left\{R_0,...,R_{n-1}\right\}$, and let $\A$ be its generic model. Let $m = n+1$, and for each $i<n$, if $r = \ari(R_i)$, then let $\theta_i(\xx_0,...,\xx_{r-1})$, where $|\xx_i|=m$, be the formula asserting
$$(x_{0,0},...,x_{r-1,0})\in x_{0,i+1}.$$
Now, if $B\in\KK$, we define a certain injection $u:B\to H$. First, let $u_0:B\to H$ be any injection at all, and then define $u:B\to H$ by 
$$u(b) = \left(u_0(b),u_0{\left[R_0^B\right]},...,u_0{\left[R^B_{n-1}\right]}\right).$$
We observe that for each $i<n$, for all $b_0,...,b_{r-1}\in A$ where $r = \ari(R_i)$, 
\begin{align*}
B\models R_i(b_0,...,b_{r-1}) &\iff \left(u_0(b_0),...,u_0(b_{r-1})\right)\in u_0{\left[R_i^B\right]}\\
&\iff \left(u(b_0)_0,...,u(b_{r-1})_0\right)\in u(b_0)_{i+1}\\
&\iff \H\models\theta_i(u(b_0),...,u(b_{r-1})).
\end{align*}
Since $B$ was an arbitrary member of $\KK$, by compactness, there are an elementary extension $\H'$ of $\H$ and an injection $F:A\to H'$ such that for all $i<n$, $r = \ari(R_i)$, for all $a_0,...,a_{r-1}\in A$, 
$$\A\models R_i(a_0,...,a_{r-1})\iff \H'\models\theta_i(F(a_0),...,F(a_{r-1})).$$
It follows that $T_\KK\cleq T_\fs$.
\end{proof}

Using the bi-interpretablility of true arithmetic with the theory of hereditarily finite sets, we conclude that true arithmetic also lives in the top class.

%%%%%--------------------------------------------------------------------------------------------------------------%%%%%
\begin{cor}
Let $\mathit{TA} = \Th(\NN,+,\cdot,0,1)$, often called ``true arithmetic.'' Since $T_\fs$ is interpretable in $\mathit{TA}$, it follows that $T\cleq \mathit{TA}$ for every $T\in\TT$.
\end{cor}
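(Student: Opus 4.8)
The plan is to chain together three ingredients: Proposition~\ref{Prop_top}, the interpretability of $T_\fs$ in $\mathit{TA}$, and transitivity of $\cleq$ together with the Observation (just after Definition~\ref{Defn_TheOrdering}) that interpretability implies $\cleq$.

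First I would apply Proposition~\ref{Prop_top} directly to obtain $T\cleq T_\fs$ for every $T\in\TT$. The substantive point is then that $T_\fs$ is interpretable in $\mathit{TA}$, which is the classical Ackermann coding of hereditarily finite sets inside arithmetic. Concretely, working in the language $\{+,\cdot,0,1\}$, one first arithmetically defines exponentiation (via G\"odel's $\beta$-function, or any standard bounded-quantifier coding of finite sequences), and then defines a formula $E(x,y)$ expressing ``bit $x$ of $y$ is $1$''. Assigning to each hereditarily finite set $s$ the natural number $\sum_{t\in s}2^{c(t)}$, where $c\colon H\to\NN$ is the recursively-defined Ackermann code, yields a bijection $H\to\NN$ under which $\in^\H$ corresponds to $E$. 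Since $\mathit{TA}=\Th(\NN,+,\cdot,0,1)$ is the complete theory of the standard model, this definable isomorphic copy of $\H$ witnesses that $T_\fs$ is interpretable in $\mathit{TA}$; hence by the Observation after Definition~\ref{Defn_TheOrdering}, $T_\fs\cleq\mathit{TA}$.

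Finally, since $\cleq$ is a quasi-ordering and therefore transitive, from $T\cleq T_\fs$ and $T_\fs\cleq\mathit{TA}$ we conclude $T\cleq\mathit{TA}$ for every $T\in\TT$, as desired.

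There is no real obstacle here: the only step carrying mathematical content is the arithmetical interpretation of $\H$, and that is entirely classical (Ackermann, 1937). In effect the corollary is a formal consequence of Proposition~\ref{Prop_top} plus a well-known interpretability fact, so the ``proof'' is essentially bookkeeping with the transitivity of $\cleq$.
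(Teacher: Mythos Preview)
Your proposal is correct and follows essentially the same route as the paper: invoke Proposition~\ref{Prop_top} to get $T\cleq T_\fs$, use the (bi-)interpretability of $T_\fs$ in $\mathit{TA}$ together with the Observation after Definition~\ref{Defn_TheOrdering} to get $T_\fs\cleq\mathit{TA}$, and conclude by transitivity. The only difference is that you spell out the Ackermann coding explicitly, whereas the paper simply cites the bi-interpretability as a known fact.
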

\begin{proof}
In general, we know that for any $T_1,T_2\in\TT$, if $T_1$ is interpretable in $T_2$, then $T_1\cleq T_2$. The corollary then follows immediately from Theorem \ref{Prop_top} and the fact that $T_\fs$ and $\mathit{TA}$ are bi-interpretable. 
\end{proof}

Examining the proof of Corollary \ref{Cor_continuum_bound} and the proof of Proposition \ref{Prop_indec_yields_irred} gives another example of a theory $T^*$ that belongs to the $\cleq$-maximal class.  However, $T^*$ does not interpret $T_\fs$ or $\mathit{TA}$, showing that there are theories in the top class that are not bi-interpretable.

%%%%%--------------------------------------------------------------------------------------------------------------%%%%%
\begin{defn}
Following the proof of Corollary \ref{Cor_continuum_bound}, let $S$ be the set of functions $s:\omega\to\omega$ of finite support. For each $s\in S$, let $\sig(\L_s)$ be the signature with relation symbols $R_i^{(s(i))}$ for each $i\in \mathrm{supp}(s)$, and let $\K_s$ be the set of all algebraically trivial \fraisse classes of finite $\L_s$-structures. Then, let $T^*$ be the theory $T_S$ constructed from $S = \left\{T_\KK:\KK\in\K_s, s\in S\right\}$ as in the proof of Proposition \ref{Prop_indec_yields_irred}. 
\end{defn}

%%%%%--------------------------------------------------------------------------------------------------------------%%%%%
\begin{fact}
$T\cleq T^*$ for every $T\in\TT$, so in particular, $T_\fs,\mathit{TA}\cleq T^*$. However $T^*$ does not interpret $T_\fs$ or $\mathit{TA}$.
\end{fact}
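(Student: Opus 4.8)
The statement has two halves, which I would handle independently.

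\medskip

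For $T\cleq T^{*}$ (for arbitrary $T\in\TT$), the plan is to reduce, one formula-sequence at a time, to the fact that $T^{*}=T_{S}$ is an upper bound for $S$. The one genuinely needed ingredient---essentially already in the proof of Lemma~\ref{Cor_support_by_fraisse}---is that for every $T\in\TT$ and every sequence $\pphi$ of $\L_{T}$-formulas, $\Age_{\pphi}(T)$ is an algebraically trivial \fraisse class in the finite relational language $\L_{\pphi}$: the $\pphi$-resolved picture $F_{\pphi}\in\vec\FF_{\pphi}(T)$ constructed there absorbs all $\pphi$-resolved pictures, so $\Age_{\pphi}(T)=\Age_{\pphi}(F_{\pphi})$, and the right-hand side is an algebraically trivial \fraisse class by Proposition~\ref{Prop_resolved}. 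Call it $\KK_{\pphi}$. Since $\L_{\pphi}$ is finite relational, $T_{\KK_{\pphi}}$ is, up to renaming relation symbols, one of the generic theories comprising $S$, so $T_{\KK_{\pphi}}\in S$; and since $T^{*}=T_{S}\in{\uparrow}S$ by the proof of Proposition~\ref{Prop_indec_yields_irred}, we get $T_{\KK_{\pphi}}\cleq T^{*}$, i.e.\ $T^{*}\in\C_{\KK_{\pphi}}$. Unwinding Definition~\ref{Def_C_K} and lining up arities with $\pphi$ then yields a sequence $\pphi^{2}$ over $\L_{T^{*}}$ with $\Age_{\pphi}(T)=\KK_{\pphi}\subseteq\Age_{\pphi^{2}}(T^{*})$; as $\pphi$ was arbitrary, $T\cleq T^{*}$. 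The ``in particular'' for $T_{\fs}$ and $\mathit{TA}$ is then immediate---or, alternatively, it follows from Proposition~\ref{Prop_top} and transitivity of $\cleq$, since $T\cleq T_{\fs}$ for all $T$.

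\medskip

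For the non-interpretability claims, suppose toward a contradiction that $T_{\fs}$ is interpretable in $T^{*}$ (the $\mathit{TA}$ case is identical). An interpretation is given by finitely many $\L_{T^{*}}$-formulas (defining the domain, the equivalence relation, and the interpreted $\in$) and finitely many parameters, so it involves only finitely many of the sorts $\sortY_{T:\sortX}$; since each such sort belongs to a unique member of $S$, the set $S_{0}\subseteq S$ of theories thereby involved is finite. Now I would invoke, from the proof of Proposition~\ref{Prop_indec_yields_irred}, that $\bigcup_{T\in S}\L_{T}$ is an elimination set for $T_{S}$ and that the reducts $\M_{T}$ are pairwise orthogonal in $\M\models T_{S}$; together these imply that every $\M$-definable subset of a product of $S_{0}$-sorts is already definable in $\M_{S_{0}}$ (i.e.\ $\M_{S_{0}}$ is fully embedded in $\M$). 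Consequently the interpretation descends to an interpretation of $T_{\fs}$ in $T_{S_{0}}$, the theory of $\M_{S_{0}}$. It remains to observe that $T_{S_{0}}$ is $\aleph_{0}$-categorical: it is an ``orthogonal union'' of the finitely many theories in $S_{0}$, each of which is $\aleph_{0}$-categorical (each $T_{\KK}$ is, hence so is $T_{\KK}^{\eq}$), so by orthogonality the type of a finite tuple is determined by the types of its projections into the factors, leaving only finitely many $n$-types for each $n$. Since interpretability transfers $\aleph_{0}$-categoricity downward (an $n$-type of the interpreted structure is the image of an $(nd)$-type of the interpreting one, $d$ the dimension, together with the parameters), $T_{\fs}$ would be $\aleph_{0}$-categorical; but $\emptyset,\{\emptyset\},\{\{\emptyset\}\},\ldots$ realize infinitely many $1$-types in $\H$, so $T_{\fs}$ is not $\aleph_{0}$-categorical (and $0,1,2,\ldots$ likewise rule out $\mathit{TA}$)---a contradiction.

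\medskip

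The first half is essentially bookkeeping once the identity $\Age_{\pphi}(T)=\Age_{\pphi}(F_{\pphi})$ is recalled. The delicate step, and the one I would spend most care on, is the descent ``interpretable in $T^{*}$'' $\Rightarrow$ ``interpretable in $T_{S_{0}}$ for finite $S_{0}$'': one must rule out that the interpretation smuggles in information from the infinitely many remaining sorts through quantification, which is exactly what the orthogonality of the reducts together with the $\bigcup_{T}\L_{T}$ elimination set are there to prevent. The remaining ingredients---$\aleph_{0}$-categoricity of a finite orthogonal union, and its preservation downward under interpretation---are standard.
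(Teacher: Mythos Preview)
Your proposal is correct and follows essentially the same approach as the paper. The paper's proof is extremely terse---``for every algebraically trivial \fraisse class $\KK$, $T_\KK\cleq T^*$ by construction'' for the first half, and ``any reduct of $T^*$ to finitely many of its sorts is $\aleph_0$-categorical, but obviously $T_\fs$ is not'' for the second---and you have correctly unpacked both steps, including the passage from $T_\KK\cleq T^*$ for all $\KK$ to $T\cleq T^*$ for arbitrary $T$ (via $\Age_\pphi(T)=\Age_\pphi(F_\pphi)$), and the descent of the interpretation to a finite reduct, which the paper leaves implicit.
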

\begin{proof}
For the first part of the claim, we notice that for every algebraically trivial \fraisse class $\KK$, $T_\KK\cleq T^*$ by construction. For non-interpretability, just notice that any reduct of $T^*$ to finitely many of its sorts is $\aleph_0$-categorical, but obviously $T_\fs$ is not $\aleph_0$-categorical.
\end{proof}

%%%%%%%%%%%%%%%%%%%%%%%%%%%%%%%%%%%%%%%%%%%%%%%%%%%%%%%%
%%%%%%%%%%%%%%%%%%%%%%%%%%%%%%%%%%%%%%%%%%%%%%%%%%%%%%%%
\subsection{Hypergraphs}

In this subsection, we explore hypergraphs, showing that the \fraisse class of $r$-hypergraphs is indecomposible (i.e., corresponds to an irreducible dividing-line).  We show that adding a generic order or prohibiting cliques of a fixed size does not alter the corresponding dividing-line.  We also show that these hypergraph classes form a strict chain.  We begin the discussion with the definition of the relevant algebraically trivial \fraisse classes, $\HH_r$, $\HH_{r,k}$, and $\HH^*_r$.

%%%%%--------------------------------------------------------------------------------------------------------------%%%%%
\begin{defn}
Let $2\leq r<\omega$, and let $\L_r$ be the language whose signature is a single $r$-ary relation symbol $R$. 
\begin{itemize}
\item $\HH_r$ is the class of all finite $r$-hypergraphs -- i.e., finite models of the two sentences
$$\forall \xx\left(R(\xx)\cond\bigwedge_{i<j}x_i\neq x_j\right),\,\,\forall \xx\left(R(\xx)\cond\bigwedge_{\sigma\in\sym(r)}R(x_{\sigma(0)},...,x_{\sigma(r-1)})\right).$$
\item For $k>r$, $\HH_{r,k}$ is the sub-class consisting of those $A\in\HH_r$ that exclude $K_{k}(r)$, the complete $r$-hypergraph on $k$ vertices. So, $\HH_{r,k}$ is the class of finite models of the previous two sentences and the sentence
$$\forall x_0,...,x_{k-1}\left(\bigwedge_{i<j}x_i\neq x_j\cond\bigvee_{i_0<\cdots<i_{r-1}<k}\neg R(x_{i_0},...,x_{i_{r-1}})\right)$$
\end{itemize}
One easily verifies that all $\HH_r$'s and $\HH_{r,k}$'s are algebraically trivial \fraisse classes. 

Now, let $\L_r^*$ be the expansion of $\L_r$ with new unary predicate symbols $U_0,...,U_{r-1}$, and let $\HH^*_r$ be the set of models of the first two sentences and the sentences
$$\forall x\bigvee_{i<r}\left(U_i(x)\wedge \bigwedge_{j\neq i}\neg U_j(x)\right),$$
$$\forall x_0...x_{r-1}\left(R(\xx)\cond \bigvee_{\sigma\in\sym(r)} \bigwedge_{i < r} U_{\sigma(i)}(x_i)\right)$$
Again, it is not hard to see that $\HH^*_r$ is, again, an algebraically trivial \fraisse class.
\end{defn}

We make a few observations about the relationship between these \fraisse classes.

%%%%%--------------------------------------------------------------------------------------------------------------%%%%%
\begin{obs}
Let $2\leq r<k<\omega$, and let $\A^<,\B^<$ be the generic models of $\HH_{r,k}^<$, $\HH_r^<$, respectively, and $\A = \A^<\r\L_r$, $\B=\B^<\r\L_r$. Then, there are embeddings $\A\to\B$ and $\A^<\to\B^<$. It follows that $\C_{\HH_r}\subseteq\C_{\HH_{r,k}}$ and $\C_{\HH_r^<}\subseteq\C_{\HH_{r,k}^<}$ whenever $2\leq r<k<\omega$.
\end{obs}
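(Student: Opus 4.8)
The plan is to derive the two embeddings from the extension property of the relevant \fraisse limits, and then to transfer this to the $\C$-classes through Observation~\ref{Obs_witnessing}. First I would record the easy inclusions $\HH_{r,k}\subseteq\HH_r$ and $\HH_{r,k}^<\subseteq\HH_r^<$: excluding $K_k(r)$ is a condition on the $\L_r$-reduct alone, so a finite model of the defining sentences of $\HH_{r,k}^<$ restricts in $\L_r$ to a finite model of those of $\HH_{r,k}$, hence to a member of $\HH_r$. Consequently $\age(\A^<)=\HH_{r,k}^<\subseteq\HH_r^<$ and, passing to $\L_r$-reducts, $\age(\A)\subseteq\HH_{r,k}\subseteq\HH_r$; likewise the generic model of $\HH_{r,k}$ has age contained in $\HH_r$.

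Next I would use the standard extension property of a \fraisse limit (see \cite{big-hodges}): since $\B^<$ is the limit of $\HH_r^<$, for any $B\leq C$ in $\HH_r^<$ and any embedding $g:B\to\B^<$ there is an embedding $C\to\B^<$ extending $g$ --- embed $C$ into $\B^<$ by $\HH_r^<$-universality, then compose with an automorphism furnished by ultrahomogeneity so as to match $g$ on $B$. Enumerating the universe of $\A^<$ as $\{a_0,a_1,\dots\}$ and setting $C_n=\A^<\r\{a_0,\dots,a_{n-1}\}\in\HH_{r,k}^<\subseteq\HH_r^<$, one builds an increasing chain of embeddings $f_n:C_n\to\B^<$ by iterating the extension property (a one-sided back-and-forth), and $\bigcup_n f_n:\A^<\to\B^<$ is the desired embedding; its $\L_r$-restriction is an embedding $\A\to\B$. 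Running the same argument with $\B$ in place of $\B^<$ and the generic model of $\HH_{r,k}$ (whose age lies in $\HH_r$) in place of $\A^<$ produces an embedding of the generic model of $\HH_{r,k}$ into $\B$.

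Finally, since an embedding between purely relational structures preserves and reflects every quantifier-free formula, the embedding $\A^<\to\B^<$ is exactly a witness of clause~(2) in the second part of Observation~\ref{Obs_witnessing}, taken with $m=1$, each $\theta_R$ equal to $R(x_0,\dots,x_{r-1})$, and $\theta_<$ equal to $x_0<x_1$; hence $T_{\HH_{r,k}^<}\cleq T_{\HH_r^<}$, and by the remark following Definition~\ref{Def_C_K} this gives $\C_{\HH_r^<}\subseteq\C_{\HH_{r,k}^<}$. The same reasoning applied to the embedding of the generic model of $\HH_{r,k}$ into $\B$ gives $T_{\HH_{r,k}}\cleq T_{\HH_r}$, hence $\C_{\HH_r}\subseteq\C_{\HH_{r,k}}$. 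There is no real obstacle here; the only steps meriting a line of care are the extension property of the \fraisse limit (which powers the construction of the embeddings) and the observation that embeddings of relational structures reflect quantifier-free formulas, which is precisely what lets $m=1$ suffice in Observation~\ref{Obs_witnessing}.
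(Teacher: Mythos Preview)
Your argument is correct and is precisely the routine verification the paper leaves implicit (the statement is recorded as an Observation with no proof given). The only redundancy is that once you have the embedding $\A^<\to\B^<$, its $\L_r$-reduct already is an embedding $\A\to\B$ between the generic models of $\HH_{r,k}$ and $\HH_r$ (since the $\L_r$-reduct of the generic model of $\KK^<$ is the generic model of $\KK$ for these algebraically trivial classes), so your separate construction of an embedding of the generic model of $\HH_{r,k}$ into $\B$ is not needed.
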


%%%%%--------------------------------------------------------------------------------------------------------------%%%%%
\begin{obs}
For every $2\leq r<\omega$, $\C_{\HH_{r}^*} =\C_{\HH_{r}}$. (Because every $T\in\TT$ eliminates imaginaries.)% = \widetilde{\Fin(\K_p)}$.
\end{obs}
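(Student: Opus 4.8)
The plan is to establish the two inequalities $T_{\HH_r}\cleq T_{\HH_r^*}$ and $T_{\HH_r^*}\cleq T_{\HH_r}$; by the remark at the end of Definition~\ref{Def_C_K} these together give $\C_{\HH_r^*}=\C_{\HH_r}$. Both inequalities are verified through the form of Observation~\ref{Obs_witnessing} for two \fraisse classes, and it is precisely the freedom there to witness $\cleq$ with $m$-tuples rather than single elements --- a freedom available exactly because the theories in $\TT$ eliminate imaginaries --- that makes the argument run. Write $\A$ for the generic model of $\HH_r$ and $\A^*=(\A^*,U_0,\dots,U_{r-1},R)$ for the generic model of $\HH_r^*$.

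For $T_{\HH_r}\cleq T_{\HH_r^*}$, I would take $m=r$ and send a vertex $v$ of $\A$ to a ``cell'' $u(v)=(v_0,\dots,v_{r-1})$ with $v_k\in U_k(\A^*)$, choosing the cells in ``general position'' (all coordinates $v_k$, over all $v$ and $k$, distinct), so that $u$ is an injection $A\to (A^*)^r$. Put
\[
\theta_R(\xx_0,\dots,\xx_{r-1}):=\Bigl(\bigwedge_{i<j}\xx_i\neq\xx_j\Bigr)\wedge\bigvee_{\sigma\in\sym(r)}R\bigl(x_{\sigma(0),0},x_{\sigma(1),1},\dots,x_{\sigma(r-1),r-1}\bigr),
\]
where $\xx_i=(x_{i,0},\dots,x_{i,r-1})$ and $\xx_i\neq\xx_j$ abbreviates $\bigvee_k x_{i,k}\neq x_{j,k}$. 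This formula is symmetric in its blocks (permuting them merely reindexes $\sigma$), matching the symmetry of $R^\A$, and it is false as soon as two blocks agree, matching irreflexivity; the only thing left is to choose the cells so that, for distinct $v^{(0)},\dots,v^{(r-1)}$, the disjunction $\bigvee_\sigma R^{\A^*}\bigl(v^{(\sigma(0))}_0,\dots,v^{(\sigma(r-1))}_{r-1}\bigr)$ agrees with $R^\A(v^{(0)},\dots,v^{(r-1)})$. Since the cells are in general position, each transversal is pinned down by a single $r$-subset of cells, so the requirement ``some transversal is an $R^{\A^*}$-edge'' (when we want a hyperedge) versus ``all transversals are non-edges'' (otherwise) is internally consistent on every finite fragment; by $\HH_r^*$-universality and homogeneity of $\A^*$ the whole configuration embeds, so such a $u$ exists and Observation~\ref{Obs_witnessing} applies.

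For $T_{\HH_r^*}\cleq T_{\HH_r}$, the point is to manufacture $r$ distinguishable unary predicates inside $\A$, which has a single $1$-type; this is exactly where passing to $m$-tuples (imaginaries) is essential. Fix $m$ large enough that $\A$ has at least $r$ pairwise distinct quantifier-free types $q_0,\dots,q_{r-1}$ of non-repeating $m$-tuples (there are $2^{\binom{m}{r}}$ such types), and let $\theta_{U_i}(\xx)$ be the quantifier-free formula isolating $q_i$. Send a vertex $v\in U_i(\A^*)$ to an $m$-tuple $u(v)$ of type $q_i$ whose first coordinate $v^\bullet$ is chosen so that $v\mapsto v^\bullet$ embeds the $\{R\}$-reduct of $\A^*$ --- a countable $r$-partite $r$-hypergraph, hence a structure all of whose finite substructures lie in $\HH_r$ --- into $\A$, with all remaining coordinates taken globally fresh. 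Then set
\[
\theta_R(\xx_0,\dots,\xx_{r-1}):=\Bigl(\bigwedge_{i<j}\ \bigvee_{k\neq l}\bigl(\theta_{U_k}(\xx_i)\wedge\theta_{U_l}(\xx_j)\bigr)\Bigr)\wedge R\bigl(x_{0,0},x_{1,0},\dots,x_{r-1,0}\bigr),
\]
whose first conjunct, on the image of $u$, says the $r$ vertices lie in $r$ distinct parts (a transversal) and whose second conjunct says their first coordinates form an $R$-hyperedge of $\A$; this is symmetric in its blocks because $R^\A$ is. With $u$, $\theta_R$, and the $\theta_{U_i}$ so chosen one reads off $\A^*\models U_i(a)\iff\A\models\theta_{U_i}(u(a))$ and $\A^*\models R(\bar a)\iff\A\models\theta_R(u\bar a)$, so Observation~\ref{Obs_witnessing} gives $T_{\HH_r^*}\cleq T_{\HH_r}$.

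The only genuinely substantive point --- the main obstacle --- is checking, in each direction, that the configuration we want to place inside $\A^*$, resp.\ inside $\A$, is actually realizable there; but once one arranges the cells (resp.\ the auxiliary coordinates) in general position, every finite fragment of the target configuration is easily seen to be a member of $\HH_r^*$ (resp.\ $\HH_r$), so realizability is immediate from universality and homogeneity of the generic models, and that the resulting $\Age_\pphi$'s are exactly $\HH_r^*$ and $\HH_r$ follows from $\aleph_0$-categoricity and algebraic triviality.
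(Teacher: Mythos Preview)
Your proof is correct. The paper offers no argument beyond the parenthetical hint, so your two-direction verification via Observation~\ref{Obs_witnessing} is exactly the kind of fleshing-out the authors leave to the reader, and your identification of elimination of imaginaries as what legitimizes the passage to $m$-tuples (so that the product sort counts as a genuine sort for the purposes of $\vec\FF_\pphi$) is precisely the content of the hint.
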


As promised, we show that the \fraisse class of hypergraphs of a fixed arity is indecomposable.  Since this is an algebraically trivial \fraisse class, it corresponds to an irreducible dividing-line (by Theorem \ref{Thm_prime_equals_fraisse}).

%%%%%--------------------------------------------------------------------------------------------------------------%%%%%
\begin{prop}
For every $2\leq r<\omega$, $\HH_r$ is indecomposable.
\end{prop}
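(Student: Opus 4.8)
The plan is to show that every factorization of $\HH_r$ is already witnessed by one of its factors. So fix a factorization $(\KK_0,\dots,\KK_{n-1})$ of $\HH_r$, witnessed by an injection $u:A\to B$, where $\A$ is the generic $r$-hypergraph, $\B$ is the generic model of $\Pi_i\KK_i$, and $B_j$ is the generic model of $\KK_j$; the goal is to find a single index $j^\ast<n$ with $T_{\HH_r}\cleq T_{\KK_{j^\ast}}$. The whole argument turns on one elementary feature of $\HH_r$: the generic $r$-hypergraph has exactly two quantifier-free types of $r$-tuples of pairwise distinct elements (``edge'' and ``non-edge''), because of the symmetry axiom. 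The point will be that this forces the edge relation to live in a single coordinate of the product rather than being genuinely distributed among several.

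First I would record the structural input about $\B$. Since $\B$ is the generic model of the product $\Pi_i\KK_i$, its types are freely superposed over the reducts $\B\restriction\L_j$: for tuples $\bar c,\bar c'$ from $B$, one has $\tp^\B(\bar c)=\tp^\B(\bar c')$ iff $\tp^{\B\restriction\L_j}(\bar c)=\tp^{\B\restriction\L_j}(\bar c')$ for every $j<n$. (This uses $\Pi_i\KK_i$-homogeneity of $\B$ together with the fact that ``having the same $j$-th coordinate'' is $\emptyset$-definable in $\B\restriction\L_j$, which is just $B_j$ with each point blown up into an infinite family of $\L_j$-indistinguishable copies.) From the same description one gets $\Th(\B\restriction\L_j)\cleq T_{\KK_j}$: such a blow-up is coded inside $B_j$ by sending a point to the pair consisting of its $j$-th coordinate and an auxiliary coordinate chosen to make the map injective on fibers, reading the $\L_j$-structure off the first coordinate with a quantifier-free formula --- an instance of Observation~\ref{Obs_witnessing}. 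By transitivity of $\cleq$ it therefore suffices to find $j^\ast$ with $T_{\HH_r}\cleq\Th(\B\restriction\L_{j^\ast})$.

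Now the combinatorial core. Call an $(r{+}1)$-tuple $P=(c_0,\dots,c_{r-2},d,e)$ of distinct elements of $A$ a \emph{standard configuration} if $R^\A(c_0,\dots,c_{r-2},d)$ holds and no other $r$-subset of $\{c_0,\dots,c_{r-2},d,e\}$ is an edge; by $\KK$-universality such tuples exist, and all of them realize the same $\qtp^\A$. For such $P$ put $J(P)=\{\,j<n:\tp^{\B\restriction\L_j}(u(c_0,\dots,c_{r-2},d))\neq\tp^{\B\restriction\L_j}(u(c_0,\dots,c_{r-2},e))\,\}$. The two displayed $r$-subtuples have distinct $\qtp^\A$ (one is an edge, one a non-edge), so the factorization condition gives $\tp^\B$-inequality and hence, by free superposition, $J(P)\neq\emptyset$; and because any two standard configurations have the same $\qtp^\A$, the factorization condition and free superposition force $J(P)$ to be independent of $P$. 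Fix $j^\ast$ in this common value $J_0$. Since all $R$-edges (as $r$-tuples of distinct elements) share one $\qtp^\A$, they share one reduct-type $Q^{\mathrm{edge}}:=\tp^{\B\restriction\L_{j^\ast}}(u\bar a)$, and likewise all non-edges share a reduct-type $Q^{\mathrm{non}}$, and $j^\ast\in J_0$ says exactly $Q^{\mathrm{edge}}\neq Q^{\mathrm{non}}$. Let $\theta_R(x_0,\dots,x_{r-1})$ be an $\L_{j^\ast}$-formula isolating $Q^{\mathrm{edge}}$ among $r$-types of $\B\restriction\L_{j^\ast}$ (available by $\aleph_0$-categoricity, with the distinctness clause built in); then $\A\models R(\bar a)\iff\B\restriction\L_{j^\ast}\models\theta_R(u\bar a)$ for all $\bar a\in A^r$. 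Taking $\pphi=(\theta_R)$ and $F=u\in\vec\FF_\pphi(\Th(\B\restriction\L_{j^\ast}))$ (valid since that reduct is algebraically trivial), we get $\A_\pphi(F)=\A$, so $\HH_r=\Age_\pphi(F)\subseteq\Age_\pphi(\Th(\B\restriction\L_{j^\ast}))$, i.e. $T_{\HH_r}\cleq\Th(\B\restriction\L_{j^\ast})\cleq T_{\KK_{j^\ast}}$, as required.

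The step I expect to be the real obstacle is the structural lemma about $\B$ --- the free-superposition description of types in the generic model of $\Pi_i\KK_i$, together with the attendant $\Th(\B\restriction\L_j)\cleq T_{\KK_j}$. Once that is in place, the rest is a fairly mechanical exploitation of the fact that $\HH_r$ admits only two quantifier-free types of distinct $r$-tuples, so ``edge information'' cannot be split across coordinates.
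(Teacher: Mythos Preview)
Your approach is correct and shares the paper's core idea---there are exactly two quantifier-free types of non-repeating $r$-tuples in the generic $r$-hypergraph (edge and non-edge), so the factorization must distinguish them in at least one coordinate $i_0$, and that coordinate then witnesses $T_{\HH_r}\cleq T_{\KK_{i_0}}$. However, the paper's execution is considerably more direct: it simply fixes one edge $\aa$ and one non-edge $\aa'$, sets $\theta_i=\tp^\B(u\aa)\r\L_i$ for each $i<n$, observes that $\neg\theta_{i_0}\in\tp^\B(u\aa')$ for some $i_0$ (since the $\tp^\B$'s differ), and declares that $\theta_{i_0}$ witnesses $T_{\HH_r}\cleq T_{\KK_{i_0}}$. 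Your ``standard configurations'' are unnecessary---since all edges share one $\qtp^\A$ and all non-edges share another, the factorization condition already gives that the sets $J(P)$ are automatically independent of $P$ without needing the edge and non-edge to sit inside a single $(r{+}1)$-tuple. Your detour through the intermediate theory $\Th(\B\r\L_{j^\ast})$ (and the attendant claim that $\Th(\B\r\L_j)\cleq T_{\KK_j}$) is also avoided in the paper, which passes directly from the $\L_{i_0}$-formula $\theta_{i_0}$ to the conclusion. Finally, your justification of the free-superposition claim via ``$\sim_j$ is $\emptyset$-definable in $\B\r\L_j$'' is not correct in general (consider a factor $\KK_j$ whose language carries no nontrivial structure), though what is actually needed---that distinct $\tp^\B$ implies distinct $\tp^{\A_{i_0}}(\pi_{i_0}\,\cdot\,)$ for some $i_0$---follows immediately from the fact that products of coordinate-wise automorphisms are automorphisms of $\B$, which is the content the paper is implicitly invoking. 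In short: right idea, same mechanism, but you have built substantially more scaffolding than the paper uses, and the piece you flagged as the ``real obstacle'' is in fact the routine part.
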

\begin{proof}
Let $(\KK_0,...,\KK_{n-1})$ be a factorization of $\HH_r$. Let $\A$ be the generic model of $\HH_r$, and for each $i<n$, let $\A_i$ be the generic model of $\KK_i$. Let $\B$ be the generic model of $\Pi_i\KK_i$, and let $u:A\to B$ be an injection such that for all $k$ and all $\aa,\aa'\in A^k$, $\qtp^\A(\aa)=\qtp^\A(\aa')\iff\tp^\B(u\aa) = \tp^\B(u\aa')$. We make three observations:
\begin{itemize}
\item If $k<r$ and $\aa,\aa'\in A^k$ are both non-repeating, then $\qtp^\A(\aa) = \qtp^\A(\aa')$.
\item If $\aa\in A^r$ is non-repeating, then $\qtp^\A(\aa) = \qtp^\A(a_{\sigma(0)},...,a_{\sigma(r-1)})$ for any $\sigma\in\sym(r)$.
\item For any $k$ and any $\aa\in A^k$, $\qtp^\A(\aa) = \bigcup_{I\in{k\choose \leq r}}\qtp^\A(\aa_{\r I})$.
\end{itemize}
Let $\aa\in R^\A$ and $\aa'\in A^r\setminus R^\A$ non-repeating, and for each $i<n$, let $\theta_i(x_0,...,x_{r-1}) = \tp^\B(u\aa)\r\L_i$. Then for some $i_0<n$, $\neg\theta_{i_0}\in\tp^\B(u\aa')$. From this, we see that $\theta_{i_0}$ witnesses $T_{\HH_r}\cleq T_{\KK_{i_0}}$.
\end{proof}

We now establish the fact that hypergraphs form a strict chain (Corollary \ref{Cor_hyp_desc_chain}).  In Lemma \ref{Lemma_contain_hypergraphs}, we show that classes defined from hypergraphs form a decreasing chain and, in Lemma \ref{Lemma_strict_hypergraphs}, we establish that this chain is strict.  Together, these yield Theorem \ref{Thm_hypergraph_containment} and Corollary \ref{Cor_hyp_desc_chain}.  In particular, this shows that there are infinitely many irreducible dividing-lines, as noted in Corollary \ref{Cor_InfiniteClasses}.

%%%%%--------------------------------------------------------------------------------------------------------------%%%%%
\begin{lemma}\label{Lemma_contain_hypergraphs}
For all $2\leq r_1<r_2<\omega$, $\C_{\HH_{r_2}}\subseteq\C_{\HH_{r_1}}$. 
\end{lemma}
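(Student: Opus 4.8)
The goal is to show $\C_{\HH_{r_2}}\subseteq\C_{\HH_{r_1}}$ when $2\le r_1<r_2<\omega$, and by Definition \ref{Def_C_K} this is equivalent to establishing $T_{\HH_{r_1}}\cleq T_{\HH_{r_2}}$. By Observation \ref{Obs_witnessing}, it suffices to produce a witness: some $0<m<\omega$, an injection $u:A_{r_1}\to A_{r_2}^m$ (where $A_{r_i}$ is the universe of the generic $r_i$-hypergraph $\A_{r_i}$), and a quantifier-free $\L_{r_2}$-formula $\theta_R(\xx_0,\dots,\xx_{r_1-1})$ with $|\xx_j|=m$ such that
\[
\A_{r_1}\models R(a_0,\dots,a_{r_1-1})\iff \A_{r_2}\models\theta_R(u(a_0),\dots,u(a_{r_1-1}))
\]
for all $a_0,\dots,a_{r_1-1}\in A_{r_1}$.

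**The construction.** The natural idea is to code an $r_1$-edge in $\A_{r_1}$ by an $r_2$-edge in $\A_{r_2}$ after padding each vertex with a fixed block of ``dummy'' vertices that will fill out the remaining $r_2-r_1$ coordinates of the hyperedge. Concretely, set $m = r_2 - r_1 + 1$. Fix in $\A_{r_2}$ a single non-repeating tuple $(c_1,\dots,c_{r_2-r_1})$ of distinct elements, and — using genericity of $\A_{r_2}$ and the fact that $|S_1(T_{\HH_{r_2}})|=1$ — choose an injection $u:A_{r_1}\to A_{r_2}^m$ of the form $u(a) = (g(a), c_1,\dots,c_{r_2-r_1})$, where $g:A_{r_1}\to A_{r_2}$ is an injection whose image is disjoint from $\{c_1,\dots,c_{r_2-r_1}\}$ (such a $g$ exists since $A_{r_2}$ is countably infinite). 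Then define $\theta_R(\xx_0,\dots,\xx_{r_1-1})$, with $\xx_j = (x_j^0,\dots,x_j^{r_2-r_1})$, to assert
\[
R\bigl(x_0^0,\;x_1^0,\;\dots,\;x_{r_1-1}^0,\;x_0^1,\;x_0^2,\;\dots,\;x_0^{r_2-r_1}\bigr),
\]
i.e.\ the $r_2$-ary relation applied to the $r_1$ ``head'' coordinates together with the $r_2-r_1$ dummy coordinates taken from $\xx_0$. Plugging in $u(a_j)$ for $\xx_j$, the relation becomes $R\bigl(g(a_0),\dots,g(a_{r_1-1}),c_1,\dots,c_{r_2-r_1}\bigr)$, which we need to be equivalent to $\A_{r_1}\models R(a_0,\dots,a_{r_1-1})$.

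**The verification.** The forward direction uses genericity/universality: given that the $a_j$ form an edge in $\A_{r_1}$, we need that $\{g(a_0),\dots,g(a_{r_1-1}),c_1,\dots,c_{r_2-r_1}\}$ (an $r_2$-element set, since $g$ is injective with image disjoint from the $c_i$'s) is an edge in $\A_{r_2}$. The reverse direction needs: if these $a_j$ do \emph{not} form an edge, then that $r_2$-set is \emph{not} an edge in $\A_{r_2}$. Neither of these can be arranged by simply choosing $g$ once, since a single injection $g$ cannot simultaneously control edge/non-edge status of every image tuple — so the correct approach is to build $g$ (hence $u$) coordinate-by-coordinate via a compactness / back-and-forth argument against $\A_{r_2}$, or more cleanly, to work on the \emph{finite} level first: for each $B\in\HH_{r_1}$, exhibit $C\in\HH_{r_2}$ and an injection $f:B\to C^m$ of the above shape realizing the correspondence (this $C$ is easy to write down explicitly — take the vertex set $B\sqcup\{c_1,\dots,c_{r_2-r_1}\}$ and put an $r_2$-edge on $\{b_0,\dots,b_{r_1-1},c_1,\dots,c_{r_2-r_1}\}$ exactly when $\{b_0,\dots,b_{r_1-1}\}\in R^B$), check it lands in $\HH_{r_2}$, and then pass to the generic models by the standard compactness argument (as in the proof of Proposition \ref{Prop_top}) together with Observation \ref{Obs_witnessing}.

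**Main obstacle.** The genuinely delicate point is well-definedness and consistency of the relation $\theta_R$ across permutations of the arguments: $\HH_{r_2}$-structures must be symmetric hypergraphs, and the dummy coordinates $c_1,\dots,c_{r_2-r_1}$ are drawn asymmetrically from $\xx_0$ only. One must check that the finite structure $C$ built above really does satisfy the symmetry and irreflexivity axioms of $\HH_{r_2}$ (it does, because whether $\{b_0,\dots,b_{r_1-1},c_1,\dots,c_{r_2-r_1}\}$ is an edge depends only on the underlying \emph{set} $\{b_0,\dots,b_{r_1-1}\}$, and all these elements are distinct), and that $\theta_R$, read as a quantifier-free formula, is compatible with the symmetry of $R^{\A_{r_1}}$ — i.e.\ that permuting the $\xx_j$'s doesn't break the equivalence. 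This is a bookkeeping matter rather than a deep one, but it is where the proof must be written carefully; everything else is a routine compactness transfer from $\HH_{r_1}$ to $\HH_{r_2}$ exactly parallel to earlier arguments in the paper.
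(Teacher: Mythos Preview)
Your proposal is correct and is essentially identical to the paper's own proof: the paper also takes $m=r_2-r_1+1$, defines $\theta(x_0\yy_0,\dots,x_{r_1-1}\yy_{r_1-1})=R(x_0,\dots,x_{r_1-1},\yy_0)$, builds for each $B_1\in\HH_{r_1}$ the structure $B_2$ on $B_1\sqcup\{\cc\}$ with exactly the edges you describe, sets $u(b)=(b,\cc)$, and then passes to the generic models by compactness. The ``main obstacle'' you flag is a non-issue in practice: since every $u(a_j)$ shares the same tail $\cc$, the asymmetry of $\theta_R$ in its tuple arguments vanishes upon substitution, so the biconditional is automatically symmetric without any extra bookkeeping.
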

%\begin{proof}
%Let $\A_2^*$ be the generic model of $\HH_{r_2}^*$, which we recall has the additional unary predicate symbols $U_0,...,U_{r_1-1},U_{r_1},...,U_{r_2-1}$. We define an $\L_{r_1}$-structure $\B$ as follows:
%\begin{itemize}
%\item $B = \bigcup_{i<r_1}\left(U_i^{\A_2^*}\times\prod_{r_1\leq j<r_2}U_j^{\A_2^*}\right)$.
%\item $R^\B=$
%$$\left\{(\aa_0,...,\aa_{r_1-1}):\bigwedge_{t<r_1}\left(a_{0,0},...,a_{r_1-1,0},a_{t,r_1},...,a_{t,r_2-1}\right)\in R^{\A_2^*}\right\}.$$
%\end{itemize}
%It is routine to check to that $\B\models T_{\HH_{r_1}}$ (extension axioms), and it follows that $\C_{\HH_{r_1}}\subseteq\C_{\HH_{r_2}^*}=\C_{\HH_{r_2}}$.
%\end{proof}

% The appeal to \HH_{r_2}^* was unnecessary.
%%%%%%%%%%%%%%%%%%%%%%%%
%%% Vince added 12/9 %%%
%%%%%%%%%%%%%%%%%%%%%%%%
\begin{proof}
 For $i=1,2$, let $\A_i$ be the generic model of $\HH_{r_i}$.  Consider the $\L_{r_2}$-formula
 \[
  \theta(x_0 \yy_0, ..., x_{r_1-1} \yy_{r_1-1}) = R(x_0, ..., x_{r_1-1}, \yy_0)
 \]
 where $|\yy_t| = r_2 - r_1$ for all $t < r_1-1$.  For any $B_1 \in \HH_{r_1}$, construct an $\L_{r_2}$-structure $B_2$ as follows:
 \begin{itemize}
  \item As a set, $B_2 = B_1 \cup \{ \cc \}$ for some $\cc$ where $|\cc| = r_2 - r_1$ and $\cc \cap B_1 = \emptyset$.
  \item $R^{B_2}$ is $\{ (b_0, ..., b_{r_1-1}, \cc) : (b_0, ..., b_{r_1-1}) \in R^{B_1} \}$, closed under symmetry.
  \item Consider the injection $u : B_1 \rightarrow B_2^{r_2 - r_1 + 1}$ given by $u(b) = (b, \cc)$.
 \end{itemize}
 Then, for all $b_0, ..., b_{r_1-1} \in B_1$,
 \[
  B_1 \models R(b_0, ..., b_{r_1-1}) \iff B_2 \models \theta(u(b_0), ..., u(b_{r_1-1})).
 \]
 By compactness, we get an injection $u : A_1 \rightarrow A_2^{r_2 - r_1 + 1}$ such that, for all $a_0, ..., a_{r-1} \in A_1$
 \[
  A_1 \models R(a_0, ..., a_{r_1-1}) \iff A_2 \models \theta(u(a_0), ..., u(a_{r_1-1})).
 \]
 Therefore, $T_{\HH_{r_1}} \cleq T_{\HH_{r_2}}$, so $\C_{\HH_{r_2}} \subseteq \C_{\HH_{r_1}}$.
\end{proof}
%%%%%%%%%%%%%%%%%%%%%%%%

%%%%%--------------------------------------------------------------------------------------------------------------%%%%%
\begin{lemma}\label{Lemma_strict_hypergraphs}
If $2\leq r_1<r_2<\omega$, then $\C_{\HH_{r_1}}\nsubseteq\C_{\HH_{r_2}}$.
\end{lemma}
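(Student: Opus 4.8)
The plan is to separate the two hypergraph dividing-lines by exhibiting a single theory $T$ with $T\in\C_{\HH_{r_1}}$ but $T\notin\C_{\HH_{r_2}}$, and the natural candidate is $T=T_{\HH_{r_1}}$ itself. Since $T_{\HH_{r_1}}\cleq T_{\HH_{r_1}}$ trivially, we have $T_{\HH_{r_1}}\in\C_{\HH_{r_1}}$, so everything reduces to showing $T_{\HH_{r_2}}\not\cleq T_{\HH_{r_1}}$, i.e. that the generic $r_2$-hypergraph cannot be ``locally coded'' inside the generic $r_1$-hypergraph. By Observation \ref{Obs_witnessing}, this amounts to showing there is no $0<m<\omega$, no injection $u\colon A_2\to A_1^m$, and no quantifier-free $\L_{r_1}$-formula $\theta(\xx_0,\dots,\xx_{r_2-1})$ (with $|\xx_i|=m$) such that $\A_2\models R(a_0,\dots,a_{r_2-1})\iff\A_1\models\theta(u(a_0),\dots,u(a_{r_2-1}))$ for all tuples from $A_2$, where $\A_i$ is the generic model of $\HH_{r_i}$.

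The key structural fact to exploit is that $T_{\HH_{r_1}}$ is ``$(r_1-1)$-ary'' in a precise sense: in the generic $r_1$-hypergraph, the quantifier-free type of any tuple $\bar c$ is determined by the quantifier-free types of its sub-$(r_1)$-tuples, and moreover any two non-repeating tuples of length $<r_1$ have the same quantifier-free type. I would first promote this to a statement about $\A_1^m$: for a tuple $(\bar d_0,\dots,\bar d_{\ell-1})$ of $m$-tuples from $A_1$, its quantifier-free type is a boolean combination of the quantifier-free types of the $\le r_1$-element sub-collections of the underlying set $\bigcup_j\bar d_j$; and since each $\bar d_j$ has $\le m$ coordinates, any ``block'' of the tuple spanning at most $r_1$ distinct elements of $A_1$ is already determined by $O(1)$ data. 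The upshot: if we have $r_2>r_1$ many $m$-blocks $u(a_0),\dots,u(a_{r_2-1})$, then $\theta$ evaluated on them factors through the values of $\theta$-like conditions on sub-collections spanning few elements.

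Now I would run a Ramsey / pigeonhole argument to derive the contradiction. Given the hypothetical $m,u,\theta$, consider a large non-repeating tuple $a_0,\dots,a_{N-1}$ in $A_2$ with $N$ huge. Each $u(a_i)$ is an $m$-tuple in $A_1$; there are only finitely many quantifier-free $2$-types between pairs of $m$-tuples in $A_1$ (since $\L_{r_1}$ is finite relational), so by Ramsey's theorem we may pass to a large sub-tuple on which the family $(u(a_i))_i$ is ``indiscernible'' in the sense that the quantifier-free type of any sub-$s$-tuple $(u(a_{i_0}),\dots,u(a_{i_{s-1}}))$ depends only on $s$ and the pattern of coordinate-coincidences — and for $s\le r_2$ these patterns stabilize. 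But then the truth value of $\theta(u(a_{i_0}),\dots,u(a_{i_{r_2-1}}))$ depends only on which coordinates of the various $u(a_{i_j})$ coincide, hence is \emph{constant} on all non-repeating increasing $r_2$-tuples from the selected index set (using that $\A_2$ is algebraically trivial, so $u$ can be arranged to avoid coincidences, or handling coincidences as a separate bounded case). This forces $R^{\A_2}$ to be constant (all-true or all-false) on a large set of non-repeating $r_2$-tuples, contradicting the fact that the generic $r_2$-hypergraph realizes both $R$ and $\neg R$ on every finite set of $\ge r_2$ independent points (indeed it contains $K_{r_2}(r_2)$ and its complement as substructures). The main obstacle is making the indiscernibility-extraction interact cleanly with coordinate coincidences inside and across the blocks $u(a_i)$ — one must check that the relevant quantifier-free type of an $r_2$-block genuinely cannot ``see'' more than $r_1$-ary information no matter how the $\le m r_2$ coordinates overlap; this is where the $(r_1-1)$-arity of $T_{\HH_{r_1}}$ does the real work, and it should be isolated as a preliminary lemma before the Ramsey step.
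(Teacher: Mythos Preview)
Your reduction to $T_{\HH_{r_2}}\not\cleq T_{\HH_{r_1}}$ via Observation \ref{Obs_witnessing} is correct, and the observation that quantifier-free types in $\A_1$ are governed by $\leq r_1$-subtuples is exactly the right structural input. The gap is at the very end: the assertion that ``the generic $r_2$-hypergraph realizes both $R$ and $\neg R$ on every finite set of $\geq r_2$ independent points'' is false. Since $K_n(r_2)\in\HH_{r_2}$ and the edgeless hypergraph on $n$ vertices is in $\HH_{r_2}$ for every $n$, the generic model $\A_2$ contains arbitrarily large finite sets on which $R$ is identically true, and others on which it is identically false. So once your Ramsey extraction produces a subset on which $\theta(u(a_{i_0}),\dots,u(a_{i_{r_2-1}}))$---and hence $R^{\A_2}(a_{i_0},\dots,a_{i_{r_2-1}})$---is constant, there is no contradiction. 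Worse, no clever choice of the initial $a_i$'s can rescue this: the classical Ramsey theorem for $r_2$-uniform hypergraphs already guarantees large $R$-homogeneous subsets of \emph{any} large finite set in $\A_2$, so an argument that terminates in ``$R$ is constant on a large set'' cannot succeed.

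The paper replaces the Ramsey step with a counting argument that uses your $r_1$-arity observation in a different way. With $m$ and $\theta$ fixed, choose $N$ large enough that $\binom{Nm}{r_1}<\binom{N}{r_2}$; this is possible because $r_1<r_2$ makes the right-hand side of higher polynomial degree in $N$. There are $2^{\binom{N}{r_2}}$ many $r_2$-hypergraphs on $N$ vertices, each realized as an $N$-element substructure $B\leq\A_2$. For any such $B$, the $\theta$-structure on $u[B]\subset A_1^m$ is determined by the $r_1$-hypergraph on the at most $Nm$ underlying coordinates in $A_1$, so at most $2^{\binom{Nm}{r_1}}$ isomorphism types can arise---yet each must recover $B$ up to isomorphism. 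This yields $2^{\binom{N}{r_2}}\leq 2^{\binom{Nm}{r_1}}$, contradicting the choice of $N$. In short: rather than forcing homogeneity (which is not forbidden in $\A_2$), one bounds how many distinct $r_2$-hypergraphs the $r_1$-ary data can encode.
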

\begin{proof}
For $i=1,2$, let $\A_i$ be the generic model of $\HH_{r_i}$. Towards a contradiction, suppose $\C_{\HH_{r_1}}\subseteq \C_{\HH_{r_2}}$, (i.e., $T_{\HH_{r_2}} \cleq T_{\HH_{r_1}}$) -- that is, there are $0<m<\omega$, an injection $u:A_2\to A_1^m$, and a quantifier-free formula $\theta(\xx_0,...,\xx_{r_2-1})$ such that $|\xx_i| = m$ for each $i<r_2$, and for all $a_0,...,a_{r_2-1}$ in $A_2$, 
$$\A_2\models R(a_0,...,a_{r_2-1})\iff \A_1\models\theta(u(a_0),...,u(a_{r_2-1}))$$
where $R$ is the single $r_2$-ary relation symbol of the language $\HH_{r_2}$. Since $r_1<r_2$, we may choose a number $N<\omega$ such that ${Nm\choose r_1}<{N\choose r_2}$ -- so that $2^{Nm\choose r_1}<2^{N\choose r_2}$. Now, let $B_0,...,B_{k-1}$ (where $k = 2^{N\choose r_2}$) be an enumeration of $N$-element substructures of $\A_2$ up to isomorphism -- that is, an enumeration of all $r_2$-hypergraphs on $N$ vertices. For each $i<k$, let $B_i^u$ be the $r_2$-hypergraph with universe $u[B_i]\subset A_1^m$ and interpretation $R^{B_i^u} = \theta(\A_1)\cap u[B_i]^m$. Then we find that 
$$2^{N\choose r_2}\leq \big|\left\{B_i^u:i<k\right\}/\cong\big|\leq 2^{Nm\choose r_1}<2^{N\choose r_2}$$
which is impossible. Thus $\C_{\HH_{r_1}}\nsubseteq \C_{\HH_{r_2}}$, as claimed.
\end{proof}

%%%%%--------------------------------------------------------------------------------------------------------------%%%%%
\begin{thm}\label{Thm_hypergraph_containment}
For all $2\leq r_1<r_2<\omega$, $\C_{\HH_{r_2}}\subsetneq\C_{\HH_{r_1}}$. 
\end{thm}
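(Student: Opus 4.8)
The theorem is the conjunction of the two preceding lemmas, so the plan is simply to assemble them. Lemma \ref{Lemma_contain_hypergraphs} already supplies the inclusion $\C_{\HH_{r_2}}\subseteq\C_{\HH_{r_1}}$ for every $2\leq r_1<r_2<\omega$: one codes the $r_1$-ary edge relation of the generic model $\A_1$ of $\HH_{r_1}$ inside the $r_2$-ary edge relation of the generic model $\A_2$ of $\HH_{r_2}$ by padding each vertex with a fixed $(r_2-r_1)$-tuple $\cc$ of auxiliary coordinates, using the quantifier-free formula $\theta(x_0\yy_0,\dots,x_{r_1-1}\yy_{r_1-1})=R(x_0,\dots,x_{r_1-1},\yy_0)$, and then passing to the limit by compactness; via Observation \ref{Obs_witnessing} this gives $T_{\HH_{r_1}}\cleq T_{\HH_{r_2}}$, hence $\C_{\HH_{r_2}}\subseteq\C_{\HH_{r_1}}$ by Definition \ref{Def_C_K}. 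So only strictness remains to be recorded.

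For strictness, suppose toward a contradiction that $\C_{\HH_{r_2}}=\C_{\HH_{r_1}}$. Then in particular $\C_{\HH_{r_1}}\subseteq\C_{\HH_{r_2}}$, i.e.\ $T_{\HH_{r_2}}\cleq T_{\HH_{r_1}}$, which is precisely the inclusion excluded by Lemma \ref{Lemma_strict_hypergraphs}. Therefore the inclusion from the first paragraph is proper: $\C_{\HH_{r_2}}\subsetneq\C_{\HH_{r_1}}$.

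There is no genuine obstacle at the level of this theorem; all the work lives in the two lemmas. The only nontrivial ingredient is the counting argument behind Lemma \ref{Lemma_strict_hypergraphs}: given a putative witness $u:A_2\to A_1^m$ and quantifier-free $\theta$, one picks $N$ large enough that ${Nm\choose r_1}<{N\choose r_2}$, so that the $2^{N\choose r_2}$ pairwise non-isomorphic $r_2$-hypergraphs on $N$ vertices cannot all receive distinct images among the at most $2^{Nm\choose r_1}$ $r_1$-hypergraph structures induced on $m$-tuples over $A_1$. Since that argument is already carried out, the present statement follows as an immediate corollary.
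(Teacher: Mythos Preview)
Your proof is correct and follows exactly the paper's approach: the theorem is obtained directly by combining Lemma \ref{Lemma_contain_hypergraphs} (for the inclusion) with Lemma \ref{Lemma_strict_hypergraphs} (for strictness). The paper's own proof is just the one-line citation of these two lemmas; your additional paragraphs simply recapitulate their content.
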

\begin{proof}
By Lemmas \ref{Lemma_contain_hypergraphs} and \ref{Lemma_strict_hypergraphs}.
\end{proof}

%%%%%--------------------------------------------------------------------------------------------------------------%%%%%
\begin{cor}\label{Cor_hyp_desc_chain}
$\C_{\HH_2}\supsetneq\C_{\HH_3}\supsetneq\cdots\supsetneq\C_{\HH_r}\supsetneq\cdots$. Thus, there is a strict nested chain of irreducible dividing-lines.
\end{cor}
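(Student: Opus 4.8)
The plan is to simply assemble the pieces already in place, since this corollary is a direct consequence of the preceding results. First I would invoke Theorem \ref{Thm_hypergraph_containment}: for every $2\leq r<\omega$, taking $r_1 = r$ and $r_2 = r+1$, we have $\C_{\HH_{r+1}}\subsetneq\C_{\HH_r}$. Chaining these strict inclusions over all $r\geq 2$ immediately yields the descending chain
$$\C_{\HH_2}\supsetneq\C_{\HH_3}\supsetneq\cdots\supsetneq\C_{\HH_r}\supsetneq\cdots,$$
which is the first assertion of the corollary.

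For the second assertion --- that this is a chain of \emph{irreducible} dividing-lines --- I would recall that each $\HH_r$ has been shown above to be an indecomposable algebraically trivial \fraisse class. By Theorem \ref{Thm_prime_equals_fraisse} (the implication 2$\implies$1), for any indecomposable algebraically trivial \fraisse class $\KK$ the class $\C_\KK$ is irreducible, i.e.\ a complete prime filter class. Applying this to each $\HH_r$ shows that every $\C_{\HH_r}$ is irreducible, and hence the displayed chain is a strictly nested chain of irreducible dividing-lines.

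There is essentially no obstacle here; the only point requiring care is that ``strict'' is to be read as genuine properness of the set-theoretic containments $\C_{\HH_{r+1}}\subsetneq\C_{\HH_r}$ delivered by Theorem \ref{Thm_hypergraph_containment} (equivalently, $T_{\HH_{r+1}}\cleq T_{\HH_r}$ while $T_{\HH_r}\not\cleq T_{\HH_{r+1}}$), and not strictness in any coarser sense. One may also remark, as is recorded in the subsequent Corollary \ref{Cor_InfiniteClasses}, that since the chain contains infinitely many pairwise distinct classes, the family $\mathscr{F}$ of irreducible dividing-lines is infinite.
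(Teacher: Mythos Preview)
Your proposal is correct and follows exactly the intended approach: the paper states this corollary without an explicit proof, leaving it as an immediate consequence of Theorem \ref{Thm_hypergraph_containment} (for the strict chain) together with the earlier proposition that each $\HH_r$ is indecomposable and Theorem \ref{Thm_prime_equals_fraisse} (for irreducibility). There is nothing to add.
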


%%%%%---------------------------------------------------------------------------------------------------------------------------%%%%%
\begin{cor}\label{Cor_InfiniteClasses}
If $\mathscr{F}$ denotes the family of all irreducible dividing-lines, then $\aleph_0\leq \big|\mathscr{F}\big|\leq 2^{\aleph_0}$.
\end{cor}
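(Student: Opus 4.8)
The plan is to assemble this corollary from two pieces already in hand. The upper bound $\big|\mathscr{F}\big| \le 2^{\aleph_0}$ is nothing more than the second assertion of Corollary \ref{Cor_continuum_bound}, so I would simply cite it. The work, such as it is, lies in the lower bound $\aleph_0 \le \big|\mathscr{F}\big|$, and here the idea is to exhibit countably many pairwise-distinct members of $\mathscr{F}$.

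First I would invoke the proposition that each $\HH_r$ (for $2 \le r < \omega$) is an indecomposable algebraically trivial \fraisse class. By Theorem \ref{Thm_prime_equals_fraisse}, applied to $\KK = \HH_r$, each class $\C_{\HH_r}$ is therefore irreducible, i.e.\ $\C_{\HH_r} \in \mathscr{F}$ for every $r \ge 2$. Next I would appeal to Corollary \ref{Cor_hyp_desc_chain} (equivalently, Theorem \ref{Thm_hypergraph_containment}), which gives the strict nesting $\C_{\HH_2} \supsetneq \C_{\HH_3} \supsetneq \cdots \supsetneq \C_{\HH_r} \supsetneq \cdots$. In particular the classes $\C_{\HH_2}, \C_{\HH_3}, \dots$ are pairwise distinct, so $\big\{\C_{\HH_r} : 2 \le r < \omega\big\}$ is a countably infinite subfamily of $\mathscr{F}$, whence $\big|\mathscr{F}\big| \ge \aleph_0$.

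Combining the two bounds yields $\aleph_0 \le \big|\mathscr{F}\big| \le 2^{\aleph_0}$, which is the claim. There is no real obstacle here: the corollary is a bookkeeping consequence of Corollary \ref{Cor_continuum_bound} and the hypergraph analysis (the indecomposability of $\HH_r$ plus the strict chain of Theorem \ref{Thm_hypergraph_containment}), and all genuine content has been discharged in those earlier results.
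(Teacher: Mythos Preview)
Your proposal is correct and follows exactly the paper's approach: the paper's proof simply reads ``Combine Corollaries \ref{Cor_continuum_bound} and \ref{Cor_hyp_desc_chain},'' and your argument unpacks precisely what that combination means (the upper bound from Corollary \ref{Cor_continuum_bound}, and the lower bound from the strict chain of irreducible classes $\C_{\HH_r}$ established in Corollary \ref{Cor_hyp_desc_chain}).
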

\begin{proof}
Combine Corollaries \ref{Cor_continuum_bound} and \ref{Cor_hyp_desc_chain}
\end{proof}

Now that we have established that the $\C_{\HH_r}$'s form a strictly decreasing chain, one might wonder what the intersection of these classes looks like.  It turns out that there is a theory that characterizes the intersection of all classes corresponding to hypergraphs, $\bigcap_{2\leq r<\omega}\C_{\HH_r}$.  We build this theory in the obvious manner, by disjointly ``gluing'' together the generic hypergraphs of each arity.

%%%%%--------------------------------------------------------------------------------------------------------------%%%%%
\begin{defn}
Let $\L$ be the language with,  for each $2\leq r<\omega$, a sort $\sortH_r$ and an $r$-ary relation $R_r$ on $\sortH_r$. Let $\M$ be the $\L$-structure such that $\sortH_r(\M)$ is the generic model of $\HH_r$ for each $2\leq r<\omega$, and let $T_\textsc{hyp} = \Th(\M)$.
\end{defn}

%%%%%--------------------------------------------------------------------------------------------------------------%%%%%
\begin{obs}
Let $T\in\TT$. The following are equivalent:
\begin{enumerate}
\item $T_\textsc{hyp}\cleq T$;
\item $T\in\bigcap_{2\leq r<\omega}\C_{\HH_r}$.
\end{enumerate}
\end{obs}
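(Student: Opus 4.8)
The plan is to establish that $T_\textsc{hyp}$ is the $\cleq$-minimum element of $\mathscr{D}:=\bigcap_{2\leq r<\omega}\C_{\HH_r}$; once that is done, both implications follow formally. Note that each $\C_{\HH_r}=\{T\in\TT:T_{\HH_r}\cleq T\}$ is $\cleq$-upward-closed by transitivity of $\cleq$, hence so is $\mathscr{D}$. For the implication $1\implies2$: for each $r$ the sort $\sortH_r$ of $\M$, equipped with $R_r$, is a $0$-definable copy of the generic $r$-hypergraph, so $T_{\HH_r}$ (identified as usual with $T_{\HH_r}^\eq$) is interpretable in $T_\textsc{hyp}$ and hence $T_{\HH_r}\cleq T_\textsc{hyp}$ by the Observation following Definition~\ref{Defn_TheOrdering}; thus $T_\textsc{hyp}\in\mathscr{D}$, and since $\mathscr{D}$ is upward-closed, $T_\textsc{hyp}\cleq T$ forces $T\in\mathscr{D}$. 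It therefore remains to prove $2\implies1$: assuming $T_{\HH_r}\cleq T$ for every $r$, we must, given a finite sequence $\pphi^1=(\phi^1_0,\dots,\phi^1_{N-1})$ of $\L_{T_\textsc{hyp}}$-formulas, produce a sequence $\pphi^2$ of $\L_T$-formulas of matching arities with $\Age_{\pphi^1}(T_\textsc{hyp})\subseteq\Age_{\pphi^2}(T)$.

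The structural fact to exploit is that the sorts $\sortH_r$ are pairwise orthogonal in $T_\textsc{hyp}$, by the argument already used in the proof of Proposition~\ref{Prop_indec_yields_irred}. By our conventions all free variables of $\pphi^1$ lie in a single common sort, and I first treat the case that this is a real sort $\sortH_{r_0}$. Orthogonality implies that every $\emptyset$-definable subset of a power of the sort $\sortH_{r_0}(\M)$ is already $\emptyset$-definable in the structure that $\M$ induces on $\sortH_{r_0}$, which is a model of $T_{\HH_{r_0}}$; and since $T_{\HH_{r_0}}$ eliminates quantifiers, each $\phi^1_i$ agrees on $\sortH_{r_0}$ with a quantifier-free $\L_{r_0}$-formula $\psi_i$. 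Writing $\ppsi=(\psi_0,\dots,\psi_{N-1})$, one checks directly that $\Age_{\pphi^1}(T_\textsc{hyp})=\Age_{\ppsi}(T_{\HH_{r_0}})$: the non-algebraicity clause in the definition of $\FF_{\ppsi}(T_{\HH_{r_0}})$ is automatic because $T_{\HH_{r_0}}$ is algebraically trivial, and the $\sortH_{r_0}$-reducts of models of $T_\textsc{hyp}$ are exactly the models of $T_{\HH_{r_0}}$. Applying $T_{\HH_{r_0}}\cleq T$ to the sequence $\ppsi$ now yields a $\pphi^2$ in $\L_T$ with $\Age_{\ppsi}(T_{\HH_{r_0}})\subseteq\Age_{\pphi^2}(T)$, which is what we want.

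The main obstacle is the remaining case, in which the common sort of $\pphi^1$ is an imaginary sort of $T_\textsc{hyp}^\eq$ (recall that we identify $T_\textsc{hyp}$ with $T_\textsc{hyp}^\eq$). Such a sort is interpreted using only finitely many of the symbols $R_r$, hence refers to only finitely many of the sorts $\sortH_r$, say those with $r$ in a finite set $J$; consequently $\Age_{\pphi^1}(T_\textsc{hyp})$ already equals the corresponding $\Age$ computed in the finite orthogonal disjoint union $\Th\big(\bigsqcup_{r\in J}\sortH_r(\M)\big)$. One then handles this finite disjoint union by the same orthogonality analysis as in the proof of Proposition~\ref{Prop_indec_yields_irred}: a $\emptyset$-definable relation on the imaginary sort decomposes into a Boolean combination of products of relations pulled back from the individual $\sortH_r(\M)$'s, and the witnesses realising the finitely many relevant instances $T_{\HH_r}\cleq T$ are amalgamated into a single $F\in\vec\FF_{\pphi^2}(T)$ via Observation~\ref{Obs_combine_pictures}. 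Alternatively, if one verifies that $T_\textsc{hyp}$ — a disjoint union of the theories of random hypergraphs, each of which eliminates imaginaries — itself eliminates imaginaries, then this case never arises and the proof collapses to the previous paragraph.
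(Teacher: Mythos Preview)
The paper records this as an Observation without proof, so your write-up already goes well beyond what the paper offers. Your argument for $1\Rightarrow2$ and for the real-sort sub-case of $2\Rightarrow1$ is correct.

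The imaginary-sort case, however, contains two genuine gaps. First, Observation~\ref{Obs_combine_pictures} amalgamates $F_n\in\vec\FF_\pphi(T)$ for a \emph{single fixed} sequence $\pphi$ --- in particular, into a single fixed sort of $\M$ --- whereas the witnesses you obtain for the various instances $T_{\HH_r}\cleq T$ lie in $\vec\FF_{\pphi^{(r)}}(T)$ for \emph{different} sequences $\pphi^{(r)}$, possibly landing in different sorts of $T$; that observation does not apply to this situation. Second, your proposed shortcut rests on a false premise: the theory of the random $r$-hypergraph does \emph{not} eliminate imaginaries (it has only weak elimination --- e.g.\ an unordered pair $\{a,b\}$ is not coded by any real tuple), so the imaginary-sort case cannot be sidestepped this way. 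A clean repair goes through Theorem~\ref{Thm_one_sort}. Your reduction to a finite index set $J$ is correct, and $T_J$ is the generic theory of the $|J|$-sorted algebraically trivial \fraisse class $\KK_J$ whose structures carry an $r$-hypergraph on sort $\sortH_r$ for each $r\in J$. Theorem~\ref{Thm_one_sort} produces a one-sorted $\tilde\KK_J$ with $T\in\C_{\tilde\KK_J}$ equivalent to its condition~1, and condition~1 is exactly what the hypotheses $T_{\HH_r}\cleq T$ (for $r\in J$, realised inside a single saturated $\M\models T$) provide. Since the generic models of $\KK_J$ and $\tilde\KK_J$ are bi-interpretable by the construction in Subsection~\ref{Subsec_one_sort}, one obtains $T_J\cleq T_{\tilde\KK_J}\cleq T$, which handles every sort of $T_J^\eq$ --- real or imaginary --- at once.
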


We return our attention to exploring the relationship between $\HH_r$, $\HH_{r,k}$, $\HH_r^<$, and $\HH_{r,k}^<$, culminating in Theorem \ref{Thm_exc_is_nothing}, which states that they all correspond to the same irreducible dividing line.

%%%%%--------------------------------------------------------------------------------------------------------------%%%%%
\begin{obs}\label{Obs_HHrHHrkOP}
For every $2\leq r<k<\omega$, $\HH_{r}$ and $\HH_{r,k}$ code orders  because $T_{\HH_r}$ and $T_{\HH_{r,k}}$ both have the order property. Hence, $\C_{\HH_r} = \C_{\HH_r^<}$ and $\C_{\HH_{r,k}} = \C_{\HH_{r,k}^<}$.
\end{obs}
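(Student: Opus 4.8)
The plan is to reduce both equalities to two facts already established: the proposition stating that an algebraically trivial \fraisse class codes orders if and only if its generic theory has the order property, and Corollary~\ref{Cor_unstable_yields_=}, which gives $\C_\KK=\C_{\KK^<}$ whenever $T_\KK$ is unstable. So the only genuine work is to exhibit the order property in $T_{\HH_r}$ and in $T_{\HH_{r,k}}$; once that is in place, ``codes orders'' is immediate from the first fact, and $\C_{\HH_r}=\C_{\HH_r^<}$ and $\C_{\HH_{r,k}}=\C_{\HH_{r,k}^<}$ follow from Corollary~\ref{Cor_unstable_yields_=} (using that the order property implies instability).

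To get the order property in $T_{\HH_r}$, I would work in the generic $r$-hypergraph $\A$ with the single quantifier-free formula $\phi(x;y_0,\dots,y_{r-2}) := R(x,y_0,\dots,y_{r-2})$. Fix $n<\omega$. Take pairwise distinct elements $a_0,\dots,a_{n-1}$ and, for each $j<n$, a non-repeating $(r-1)$-tuple $\bar b_j=(b_j^0,\dots,b_j^{r-2})$, all of the $a$'s and all entries of all the $\bar b_j$'s chosen distinct from one another. Declare an $r$-element set to be an edge exactly when it equals $\{a_i\}\cup\mathrm{range}(\bar b_j)$ for some $i<j$, and close under symmetry. Because all chosen points are distinct, this is a valid finite $r$-hypergraph (irreflexive, and each edge has a unique such representation), so by $\HH_r$-universality it embeds into $\A$; and since $\{a_i\}\cup\mathrm{range}(\bar b_j)$ is an edge iff $i<j$, we get $\A\models\phi(a_i;\bar b_j)\iff i<j$. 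As $n$ was arbitrary, compactness yields the order property for $T_{\HH_r}$.

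For $T_{\HH_{r,k}}$ with $k>r$, I would reuse the identical configuration and verify only that it lies in $\HH_{r,k}$, i.e. omits $K_k(r)$. Suppose $V$ were a set of $k\ge r+1$ vertices all of whose $r$-subsets are edges. Every edge in the configuration meets $\{a_0,\dots,a_{n-1}\}$ in exactly one point, so every $r$-subset of $V$ contains exactly one of the $a$'s. Since $|V|=k\ge r+1$, one can pick an $r$-subset of $V$ avoiding any prescribed vertex, so $V$ must contain at least two of the $a$'s, say $a_i\ne a_{i'}$; then an $r$-subset of $V$ containing both $a_i$ and $a_{i'}$ (together with $r-2$ further vertices of $V$, which exist as $r\ge 2$ and $|V|>r$) contains two $a$'s and hence is not an edge, a contradiction. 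So the configuration is $K_k(r)$-free, embeds into the generic model of $\HH_{r,k}$ by universality, and compactness gives the order property for $T_{\HH_{r,k}}$.

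Finally, the proposition on coding orders gives that $\HH_r$ and $\HH_{r,k}$ code orders, and Corollary~\ref{Cor_unstable_yields_=}, applied to $T_{\HH_r}$ and $T_{\HH_{r,k}}$ (both unstable, since they have the order property), yields $\C_{\HH_r}=\C_{\HH_r^<}$ and $\C_{\HH_{r,k}}=\C_{\HH_{r,k}^<}$. The only step calling for any care is the $K_k(r)$-freeness check for the staircase configuration in the third paragraph; the rest is a routine genericity-plus-compactness argument together with invocations of the two cited results.
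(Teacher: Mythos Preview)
Your proposal is correct. The paper itself supplies no proof for this observation---it is stated as a bare \emph{Observation} and immediately used---so your argument is not so much a different route as a careful filling-in of the details the authors left implicit. Your staircase construction and the $K_k(r)$-freeness verification are exactly the kind of routine check the observation is pointing to, and the concluding appeals to the coding-orders proposition and Corollary~\ref{Cor_unstable_yields_=} match the paper's intended logic precisely.
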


Next, we show that prohibiting a $k$-clique has no effect on the corresponding dividing line.

%%%%%--------------------------------------------------------------------------------------------------------------%%%%%
\begin{lemma}
Let $2\leq r<k<\omega$. Then there is a quantifier-free formula $\theta(\xx_0,...,\xx_{r-1})$, $|\xx_i|=k-1$ such that for every $B\in\HH_r$, there are $C\in\HH_{r,k}$ and an injection $u:B\to C^{k-1}$ such that for all $b_0,...,b_{r-1}\in B$, 
$$B\models R(b_0,...,b_{r-1})\iff C\models\theta(u(b_0),...,u(b_{r-1})).$$
\end{lemma}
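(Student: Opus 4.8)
The plan is to prove this by an explicit construction rather than a compactness argument: given $B\in\HH_r$, I will exhibit $C\in\HH_{r,k}$ as a ``colored blow-up'' of $B$. The guiding idea is that a copy of $K_k(r)$ needs $k$ vertices that are pairwise ``compatible''; if I arrange that every $r$-edge of $C$ uses $r$ pairwise distinct colours drawn from a palette of only $k-1$ colours, then $k$ pairwise-compatible vertices become impossible by pigeonhole, so $C$ omits $K_k(r)$ automatically, while a single fixed quantifier-free formula still recovers $R^B$.

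Concretely, I would take $C = B\times\{0,1,\dots,k-2\}$ as a vertex set, with colour map $\chi(b,i)=i$, and declare $R^C(v_0,\dots,v_{r-1})$ (for $v_j=(b_j,i_j)$) to hold iff the colours $i_0,\dots,i_{r-1}$ are pairwise distinct and $B\models R(b_0,\dots,b_{r-1})$. Since $B\in\HH_r$, this relation is $\sym(r)$-symmetric, and --- because the colour condition forces the $v_j$ to be pairwise distinct --- it is irreflexive, so $C\in\HH_r$. For the coding formula I would take the atomic (hence quantifier-free) formula $\theta(\xx_0,\dots,\xx_{r-1}) := R(x_{0,0},x_{1,1},\dots,x_{r-1,r-1})$, which is legitimate because $r<k$ makes $0,\dots,r-1$ valid coordinates of $(k-1)$-tuples, and which depends only on $r$ and $k$, not on $B$. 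For the embedding I would take $u(b)=\bigl((b,0),(b,1),\dots,(b,k-2)\bigr)\in C^{k-1}$, injective since its $0$-th coordinate is $(b,0)$.

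The remaining work splits into four checks, carried out in this order: (i) $C\in\HH_r$, already noted; (ii) $u$ is injective, immediate; (iii) the coding identity holds --- here one observes that $\theta(u(b_0),\dots,u(b_{r-1}))$ is literally $R^C\bigl((b_0,0),(b_1,1),\dots,(b_{r-1},r-1)\bigr)$, whose colours $0,1,\dots,r-1$ are automatically distinct, so the edge holds iff $B\models R(b_0,\dots,b_{r-1})$ (both sides being false when some $b_i=b_j$, by irreflexivity of $R^B$); and (iv) $C$ omits $K_k(r)$. Step (iv) is the only one with content: if $S\subseteq C$ has $|S|=k$ with every $r$-subset an edge, then since there are only $k-1$ colours, two distinct $v,v'\in S$ share a colour; since $k>r\ge 2$ we may pick an $r$-subset $T\subseteq S$ containing both $v$ and $v'$, and then $R^C$ fails on $T$ by the colour condition --- a contradiction.

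I do not anticipate a real obstacle: essentially all the difficulty lies in choosing a gadget for which one \emph{fixed} quantifier-free formula records the edge relation of \emph{every} $B\in\HH_r$ while the clique number is kept below $k$, and once one commits to ``distinct colours on every edge'' the colour-pigeonhole argument of step (iv) gives clique-freeness for free. Downstream, this lemma is meant to feed (via a compactness argument of the same shape as in the proof of Lemma~\ref{Lemma_contain_hypergraphs}) into $T_{\HH_r}\cleq T_{\HH_{r,k}}$; combined with the containment $\C_{\HH_r}\subseteq\C_{\HH_{r,k}}$ already observed, this yields $\C_{\HH_r}=\C_{\HH_{r,k}}$, i.e. forbidding a $k$-clique does not change the associated dividing-line.
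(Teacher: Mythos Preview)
Your proposal is correct and follows essentially the same construction as the paper: the same coloured blow-up $C=B\times\{0,\dots,k-2\}$, the same edge rule (distinct colours plus $R^B$), the same injection $u(b)=\bigl((b,0),\dots,(b,k-2)\bigr)$, and the same pigeonhole argument for $K_k(r)$-freeness. The only difference is cosmetic: the paper takes $\theta$ to be the conjunction $\bigwedge_{s:r\hookrightarrow k-1}R(x_{0,s(0)},\dots,x_{r-1,s(r-1)})$ over all injections, whereas you use the single atomic conjunct corresponding to $s=\mathrm{id}$; under this construction every such conjunct is equivalent, so your simpler $\theta$ is a mild improvement.
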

\begin{proof}
Let $\theta(\xx_0,...,\xx_{r-1}) = $
$$\bigwedge_{\textnormal{inj. $s:r\to k{-}1$}}R(x_{s(0),0},...,x_{s(r-1),r-1}).$$
Given $B\in\HH_r$, we define $C\in\HH_{r}$ as follows:
\begin{itemize}
\item $C = B\times\{0,1,...,k-2\}$ as a set.
\item $R^C = $
$$\left\{\big((b_0,i_0),...,(b_{r-1},i_{r-1})\big):\begin{array}{l}
(b_0,...,b_{r-1})\in R^B,\\
i_0,...,i_{r-1}<k-1\textnormal{ pairwise distinct}
\end{array}\right\}$$
\end{itemize}
We claim that $C$ is $K_k(r)$-free, where $K_k(r)$ is the complete $r$-hypergraph on $\{0,1,...,k-1\}$. Towards a contradiction, suppose $X\in{C\choose k}$ is such that $C\r X\cong K_k(r)$. Let $(b_0,i_0),...,(b_{k-1},i_{k-1})$ be pairwise distinct elements of $X$. By the pigeonhole principle, there are $s<t<k$ such that $i_s=i_t$. Selecting pairwise distinct $j_0,...,j_{r-3}\in k\setminus\{s,t\}$ arbitrarily, we have
$$\big((b_{j_0},i_{j_0}),...,(b_{j_{r-3}}, i_{j_{r-3}}),(b_s,i_s),(b_t,i_t)\big)\in R^C$$
because $C\r X$ is complete. But this contradicts the definition of $R^C$. Thus, $C$ is $K_k(r)$-free as claimed.

Finally, we define $u:B\to C^{k-1}$ by setting $u(b) = \big((b,0),...,(b,k-2)\big)$. For $b_0,...,b_{r-1}\in B$, we see that for every injection $s:r\to k{-}1$
$$(b_0,...,b_{r-1})\in R^B \iff \big((b_0,s(0)),(b_1,s(1)),...,(b_{r-1},s(r-1))\big)\in R^C$$
so 
$$(b_0,...,b_{r-1})\in R^B \iff C\models\theta(u(b_0),...,u(b_{r-1}))$$
as desired.
\end{proof}

%%%%%--------------------------------------------------------------------------------------------------------------%%%%%
\begin{cor}
Let $2\leq r<k<\omega$. Then there are a quantifier-free formula $\theta(\xx_0,...,\xx_{r-1})$ ($|\xx_i|=k-1$) and
an injection $u:A_r\to A_{r,k}^{k-1}$ such that 
for all $b_0,...,b_{r-1}\in B$, 
$$\A_r\models R(b_0,...,b_{r-1})\iff \A_{r,k}\models\theta(u(b_0),...,u(b_{r-1})).$$
It follows that $\C_{\HH_{r,k}}\subseteq \C_{\HH_r}$.
\end{cor}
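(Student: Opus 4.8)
The plan is to lift the finite-level statement of the preceding lemma to the level of the generic models by a compactness/saturation argument in the spirit of the proof of Lemma~\ref{Lemma_contain_hypergraphs}, and then to read off the inclusion of dividing-lines from Observation~\ref{Obs_witnessing} and Definition~\ref{Def_C_K}.

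First, recall that the preceding lemma supplies one fixed quantifier-free formula $\theta(\xx_0,\dots,\xx_{r-1})$ with $|\xx_i|=k-1$ together with, for each $B\in\HH_r$, some $C=C(B)\in\HH_{r,k}$ and an injection $u_B:B\to C^{k-1}$ such that $B\models R(\bb)\iff C\models\theta(u_B(\bb))$ for all $\bb\in B^r$; in particular $\neg\theta(u_B(\bb))$ holds whenever $\bb$ has a repeated entry, since $R^B$ is irreflexive. Since $\A_{r,k}$ is $\HH_{r,k}$-universal, each $C(B)$ embeds into $\A_{r,k}$, so we may regard $u_B$ as an injection into $A_{r,k}^{k-1}$ respecting $\theta$.

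Next I would assemble these finite pictures into a single injection $u:A_r\to A_{r,k}^{k-1}$ with $\A_r\models R(\aa)\iff\A_{r,k}\models\theta(u(\aa))$ for all $\aa\in A_r^r$. The conditions describing such a $u$ --- injectivity of $u$, together with $\pm\theta(u(\aa))$ dictated by whether $\A_r\models R(\aa)$ --- are finitely satisfiable in $\A_{r,k}$: a finite fragment involves only finitely many elements $a_0,\dots,a_{n-1}$ of $A_r$, and the lemma applied to $B=\A_r\r\{a_0,\dots,a_{n-1}\}\in\HH_r$, composed with an embedding $C(B)\hookrightarrow\A_{r,k}$, realizes that fragment. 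Since $T_{\HH_{r,k}}$ is $\aleph_0$-categorical (so $\A_{r,k}$ is $\aleph_0$-saturated and ultrahomogeneous) and eliminates quantifiers, a step-by-step construction along an enumeration of $A_r$ --- at each stage amalgamating the current partial picture with the target picture furnished by the lemma over their common restriction and invoking ultrahomogeneity of $\A_{r,k}$ --- produces such a global $u$. (Alternatively one may take $u$ into an elementary extension of $\A_{r,k}$ via bare compactness, exactly as in Lemma~\ref{Lemma_contain_hypergraphs}; that is already enough for the conclusion.)

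Finally, $(u,\theta)$ is precisely a witness of the form required by Observation~\ref{Obs_witnessing} (with $\KK_1=\HH_r$, $\KK_2=\HH_{r,k}$, $m=k-1$, and $\theta_R=\theta$ for the unique relation symbol $R$ of $\L_r$): the relation on $A_r$ defined by $\theta\circ u$ is symmetric in the blocks $\xx_0,\dots,\xx_{r-1}$ by the shape of $\theta$ and irreflexive by the finite pictures, hence it is genuinely the hypergraph relation of $\A_r$. Therefore $T_{\HH_r}\cleq T_{\HH_{r,k}}$, and by the displayed equivalence at the end of Definition~\ref{Def_C_K} this yields $\C_{\HH_{r,k}}\subseteq\C_{\HH_r}$. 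The only point needing real care is the coherent gluing of the independently-produced local pictures inside $\A_{r,k}$, which is handled routinely by the amalgamation property of $\HH_{r,k}$, so I would not dwell on it.
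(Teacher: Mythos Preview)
Your proposal is correct, and the compactness route you give is exactly the argument the paper has in mind (the corollary is stated there without proof, relying on the same pattern already used in Proposition~\ref{Prop_top} and Lemma~\ref{Lemma_contain_hypergraphs}); the deduction of $\C_{\HH_{r,k}}\subseteq\C_{\HH_r}$ via Observation~\ref{Obs_witnessing} and Definition~\ref{Def_C_K} is also right.

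One caveat on your step-by-step alternative: ``amalgamating the current partial picture with the target picture over their common restriction'' is not as routine as you suggest. The lemma controls only the $\theta$-relations among the tuples $u_B(b)$, not their full quantifier-free type in $\A_{r,k}$, so the two partial pictures on $\{a_0,\dots,a_{n-1}\}$ need not share a common substructure of $\A_{r,k}$ over which disjoint-AP could be invoked. A clean fix, if you want to avoid compactness, is to observe that the explicit construction in the preceding lemma is coherent in $B$ (if $B\subseteq B'$ then $C(B)\leq C(B')$ and $u_B\subseteq u_{B'}$), so one may form the direct limit over the finite substructures of $\A_r$ and embed the resulting countable member of $\overline{\HH_{r,k}}$ into $\A_{r,k}$ by universality. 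Since you already flag compactness as sufficient, the proof stands as written.
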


By combining these ideas, we get the desired theorem, showing that the irreducible dividing-line corresponding to the class of all finite $r$-hypergraphs is also characterized by any one of the other classes of (ordered) $r$-hypergraphs mentioned above.

%%%%%--------------------------------------------------------------------------------------------------------------%%%%%
\begin{thm}\label{Thm_exc_is_nothing}
$\C_{\HH_r} = \C_{\HH_r^<} = \C_{\HH_{r,k}}=\C_{\HH_{r,k}^<}$ whenever $2\leq r<k<\omega$.
\end{thm}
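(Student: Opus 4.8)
The plan is to assemble this equality from the pieces already established, chaining inclusions in both directions. First I would record that $\C_{\HH_r} = \C_{\HH_r^<}$ and $\C_{\HH_{r,k}} = \C_{\HH_{r,k}^<}$ are immediate from Observation \ref{Obs_HHrHHrkOP} (or equivalently Corollary \ref{Cor_unstable_yields_=}, using that $T_{\HH_r}$ and $T_{\HH_{r,k}}$ have the order property). So it suffices to show $\C_{\HH_r} = \C_{\HH_{r,k}}$, and in fact, since $\HH_{r,k}^<$ is the generic order-expansion of $\HH_{r,k}$ and $T_{\HH_{r,k}}$ is unstable, the ordered versions come along for free.

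Second, I would recall the inclusion $\C_{\HH_r}\subseteq\C_{\HH_{r,k}}$: this is the Observation immediately preceding the proposition that $\HH_r$ is indecomposable, which notes there are embeddings $\A\to\B$ and $\A^<\to\B^<$ between the relevant generic models (or one can see it directly: any $\HH_{r,k}$-structure is an $\HH_r$-structure, so forbidding $k$-cliques only shrinks the class, giving $T_{\HH_r}\cleq T_{\HH_{r,k}}$ via the identity-style coding, hence $\C_{\HH_r}\subseteq\C_{\HH_{r,k}}$). Wait — I should be careful about the direction: $T_{\HH_r}\cleq T_{\HH_{r,k}}$ means $\C_{\HH_{r,k}}\subseteq\C_{\HH_r}$ by Definition \ref{Def_C_K}. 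That is exactly the content of the Corollary just before this theorem, which produces a quantifier-free $\theta(\xx_0,\dots,\xx_{r-1})$ with $|\xx_i| = k-1$ and an injection $u:A_r\to A_{r,k}^{k-1}$ coding $\A_r$ into $\A_{r,k}$, whence $T_{\HH_r}\cleq T_{\HH_{r,k}}$ and thus $\C_{\HH_{r,k}}\subseteq\C_{\HH_r}$.

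Third, for the reverse inclusion $\C_{\HH_r}\subseteq\C_{\HH_{r,k}}$, I would invoke the Observation preceding the indecomposability proposition: since $\HH_{r,k}\subseteq\HH_r$, there is an embedding $\A_{r,k}\hookrightarrow\A_r$ of generic models (more precisely, every finite $\HH_{r,k}$-structure embeds into $\A_r$, so by Observation \ref{Obs_witnessing} with $m=1$ and the atomic formula $R$ itself, $T_{\HH_{r,k}}\cleq T_{\HH_r}$), giving $\C_{\HH_r}\subseteq\C_{\HH_{r,k}}$. Combining the two inclusions yields $\C_{\HH_r} = \C_{\HH_{r,k}}$, and with the first step this gives $\C_{\HH_r} = \C_{\HH_r^<} = \C_{\HH_{r,k}} = \C_{\HH_{r,k}^<}$.

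The only genuine content here lies in the Corollary invoked in the third-to-last step — the clique-freeing construction $C = B\times\{0,\dots,k-2\}$ with the "rainbow" interpretation of $R^C$ and the formula $\theta$ conjoining $R(x_{s(0),0},\dots,x_{s(r-1),r-1})$ over all injections $s:r\to k-1$ — but that is already proved in the Lemma and Corollary immediately above, so here the proof is purely a bookkeeping exercise. I anticipate no real obstacle; the main thing to be careful about is getting the directions of $\cleq$ and the corresponding set-inclusions of the $\C$-classes straight, since Definition \ref{Def_C_K} reverses them.

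\begin{proof}
By Observation \ref{Obs_HHrHHrkOP}, $\C_{\HH_r} = \C_{\HH_r^<}$ and $\C_{\HH_{r,k}} = \C_{\HH_{r,k}^<}$, so it suffices to prove $\C_{\HH_r} = \C_{\HH_{r,k}}$. Since $\HH_{r,k}\subseteq\HH_r$, every finite $\HH_{r,k}$-structure embeds into the generic model $\A_r$ of $\HH_r$, so by Observation \ref{Obs_witnessing} (with witnessing number $m=1$ and the atomic formula $R$ itself) we get $T_{\HH_{r,k}}\cleq T_{\HH_r}$, and hence $\C_{\HH_r}\subseteq\C_{\HH_{r,k}}$. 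Conversely, by the Corollary preceding this theorem there are a quantifier-free formula $\theta(\xx_0,\dots,\xx_{r-1})$ with $|\xx_i| = k-1$ and an injection $u:A_r\to A_{r,k}^{k-1}$ with $\A_r\models R(b_0,\dots,b_{r-1})\iff\A_{r,k}\models\theta(u(b_0),\dots,u(b_{r-1}))$ for all $b_0,\dots,b_{r-1}\in A_r$; by Observation \ref{Obs_witnessing} this gives $T_{\HH_r}\cleq T_{\HH_{r,k}}$, and hence $\C_{\HH_{r,k}}\subseteq\C_{\HH_r}$. Therefore $\C_{\HH_r} = \C_{\HH_{r,k}}$, and combining with the first sentence, $\C_{\HH_r} = \C_{\HH_r^<} = \C_{\HH_{r,k}} = \C_{\HH_{r,k}^<}$.
\end{proof}
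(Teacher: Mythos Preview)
Your proof is correct and follows essentially the same approach as the paper's: use $\HH_{r,k}\subseteq\HH_r$ for the inclusion $\C_{\HH_r}\subseteq\C_{\HH_{r,k}}$, invoke the preceding Corollary for the reverse inclusion, and apply Observation \ref{Obs_HHrHHrkOP} for the ordered versions. The only differences are cosmetic --- you handle the ordered cases first rather than last, and you make the appeal to Observation \ref{Obs_witnessing} explicit where the paper's proof leaves it implicit.
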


\begin{proof}
Clearly $\HH_{r,k} \subseteq \HH_r$, so $\C_{\HH_r} \subseteq \C_{\HH_{r,k}}$.  By the previous corollary, we conclude $\C_{\HH_r} = \C_{\HH_{r,k}}$.  The remainder of the theorem follows by Observation \ref{Obs_HHrHHrkOP}.
\end{proof}

%%%%%--------------------------------------------------------------------------------------------------------------%%%%%
\begin{rem}
From Theorem \ref{Thm_exc_is_nothing}, we learn that the class/dividing-line $\C = \{\textnormal{unsimple theories}\}$ is not irreducible in the sense of this paper. To see this, suppose $\C$ {\em were} irreducible -- say $\C = \C_\KK$ for some indecomposable algebraically trivial \fraisse class $\KK$. Since $T_{\HH_{2,3}}$, the theory of the Henson graph, is in $\C$, we find that $T_\KK\cleq T_{\HH_{2,3}}\cleq T_{\HH_2}$, so $T_{\HH_2}\in\C$ -- i.e., $T_{\HH_2}$ is unsimple. But $T_{\HH_2}$ is the theory of the random graph, which certainly {\em is} simple -- a contradiction.
\end{rem}

%%%%%%%%%%%%%%%%%%%%%%%%%%%%%%%%%%%%%%%%%%%%%%%%%%%%%%%%
%%%%%%%%%%%%%%%%%%%%%%%%%%%%%%%%%%%%%%%%%%%%%%%%%%%%%%%%
\subsection{Societies}

Theorem \ref{Thm_exc_is_nothing} tells us that, given any $k > r \ge 2$, the class of theories corresponding to the algebraically trivial \fraisse class of finite (ordered) $r$-hypergraphs (omitting $k$-cliques) coincide.  What happens if we have more than one hyperedge relation, each acting independently?  Do we get a new dividing-line?  As it turns out, we get nothing new; adding new hyperedge relations of smaller or equal arity does not change the corresponding dividing-line.  We begin by formally defining the notion of a society, which captures the idea of having multiple independent hyperedge relations.

%%%%%--------------------------------------------------------------------------------------------------------------%%%%%
\begin{defn}
Let $\L$ be a finite relational language in which all relation symbols have arity $\geq 2$. For each $R^{(n)}\in\sig(\L)$, let $\phi_R$ be the sentence
$$\forall \xx\left(R(\xx)\cond\bigwedge_{i<j<n}x_i\neq x_j\right)\wedge\forall\xx\left(R(\xx)\cond\bigwedge_{\sigma\in\sym(n)}R(x_{\sigma(0)},...,x_{\sigma(n-1)})\right)$$
and let $\Sigma_\L$ for the set of sentences $\left\{\phi_R:R\in\sig(\L)\right\}$. Following \cite{nesetril-rodl-1989}, we write $\SS_\L$ for the class of $\L$-societies -- that class of all finite models of $\Sigma_\L$. (One easily verifies that $\SS_\L$ is an algebraically trivial \fraisse class.)
\end{defn}

%%%%%--------------------------------------------------------------------------------------------------------------%%%%%
\begin{obs}\label{Obs_HHrSSL}
Let $\L$ be a finite relational language in which all relation symbols have arity $\geq 2$, and let $r = \ari(\L)$; then $T_{\HH_r} \cleq T_{\SS_\L}$. To see this, just fix a relation $R\in\sig(\L)$ of arity $r$ -- then the reduct of the generic model $\A$ of $\SS_\L$ to the signature $\{R\}$ actually is the generic model of the class $\HH_r$ of $r$-hypergraphs.
\end{obs}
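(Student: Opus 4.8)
The plan is to invoke the second equivalence in Observation~\ref{Obs_witnessing}. Since $r=\ari(\L)$ is the maximal arity of a symbol of $\sig(\L)$, fix $R^*\in\sig(\L)$ with $\ari(R^*)=r$; let $R$ denote the single $r$-ary symbol of $\L_r=\L_{\HH_r}$, take $m=1$, and put $\theta_R(x_0,\dots,x_{r-1}):=R^*(x_0,\dots,x_{r-1})$. Writing $\A_1$ for the generic model of $\HH_r$ (the random $r$-hypergraph) and $\A_2$ for that of $\SS_\L$, it then suffices to produce an injection $u:A_1\to A_2$ with
\[
\A_1\models R(a_0,\dots,a_{r-1})\iff \A_2\models R^*(u(a_0),\dots,u(a_{r-1}))
\]
for all $a_0,\dots,a_{r-1}\in A_1$ -- i.e., $u$ should be an embedding of $\A_1$ into the reduct $\A_2\r\{R^*\}$.

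To build $u$, first note that $\A_2\r\{R^*\}$ is itself an $r$-hypergraph, since the sentence $\phi_{R^*}\in\Sigma_\L$ forces $R^{*\A_2}$ to be irreflexive and symmetric. Then I would check that $\A_2\r\{R^*\}$ has the one-point extension property for finite $r$-hypergraphs: given finite $X\subset A_2$ and a one-vertex extension $G'$ of $(\A_2\r\{R^*\})\r X$ lying in $\HH_r$, let $\B$ be the $\L$-structure on $X\cup\{*\}$ that agrees with $\A_2\r X$ on $X$, whose $R^*$-edges through the new point $*$ are those prescribed by $G'$, and whose every other relation through $*$ is empty. Each relation of $\B$ is irreflexive and symmetric, so $\B\in\SS_\L$, and genericity of $\A_2$ yields a copy of $\B$ over $X$ inside $\A_2$, which realizes $G'$ inside $\A_2\r\{R^*\}$. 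A countable $r$-hypergraph with the one-point extension property is the random $r$-hypergraph, so $\A_2\r\{R^*\}\cong\A_1$, and any such isomorphism serves as $u$; this gives $T_{\HH_r}\cleq T_{\SS_\L}$.

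Alternatively -- and perhaps most economically -- one can argue directly at the finite level via Definition~\ref{Defn_TheOrdering}: taking $\pphi^1=(R(x_0,\dots,x_{r-1}))$ over $\L_r$ and $\pphi^2=(R^*(x_0,\dots,x_{r-1}))$ over $\L$, one has $\Age_{\pphi^1}(T_{\HH_r})=\HH_r$, while any $G\in\HH_r$, regarded as an $\L$-society with all relation symbols other than $R^*$ interpreted as empty, lies in $\SS_\L$ and hence embeds into a model of $T_{\SS_\L}$, witnessing $G\in\Age_{\pphi^2}(T_{\SS_\L})$. I do not expect a genuine obstacle here; the single point requiring care is that a reduct of an ultrahomogeneous structure need not itself be ultrahomogeneous, so the identification $\A_2\r\{R^*\}\cong\A_1$ must be obtained from the one-point extension property rather than directly from ultrahomogeneity of $\A_2$ -- but as $\SS_\L$ is a free-amalgamation class this verification is routine.
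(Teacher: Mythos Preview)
Your proposal is correct and follows the same approach as the paper: the paper's entire argument is the one-line claim that the $\{R^*\}$-reduct of the generic $\SS_\L$-structure is the generic $r$-hypergraph, which is exactly what you establish (with the one-point extension property supplying the justification the paper leaves implicit). Your caveat that reducts of ultrahomogeneous structures need not be ultrahomogeneous, and that free amalgamation in $\SS_\L$ is what makes the verification go through, is a worthwhile point of care that the paper omits.
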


In the following theorem, we show that the converse is also true; i.e., the dividing-line corresponding to a class of societies is the same as the class of $r$-hypergraphs, where $r$ is the largest arity in the language.  This conclusion is formally stated in Corollary \ref{Cor_SocietiesDontMatter}.

%%%%%--------------------------------------------------------------------------------------------------------------%%%%%
\begin{thm}\label{Thm_SSLHHr}
Let $\L$ be a finite relational language in which all relation symbols have arity $\geq 2$, and let $r = \ari(\L)$. Then $T_{\SS_\L} \cleq T_{\HH_r}$.
\end{thm}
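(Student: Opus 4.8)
The plan is to verify the generic-model criterion in the second half of Observation~\ref{Obs_witnessing}: I will produce an integer $m$, an injection $u$ from the generic society $\A_{\SS_\L}$ into $\A_{\HH_r}^m$, and quantifier-free $\L_r$-formulas $\theta_R$ ($R\in\sig(\L)$) such that $R$ holds on a tuple of $\A_{\SS_\L}$ exactly when $\theta_R$ holds on its $u$-image. Write $\sig(\L)=\{R_0,\dots,R_{k-1}\}$ with $R_i$ of arity $r_i$, so $2\le r_i\le r$ for every $i$. I will take $m=k+(r-2)$ and think of an element $b$ of the society as coded by the tuple recording $k$ pairwise disjoint ``copies'' $g_0(b),\dots,g_{k-1}(b)$ of $b$ together with $r-2$ fixed ``marker'' vertices $c_1,\dots,c_{r-2}$, i.e.\ $u(b)=(g_0(b),\dots,g_{k-1}(b),c_1,\dots,c_{r-2})$.

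Following the pattern of Lemma~\ref{Lemma_contain_hypergraphs}, I would first build, for each $B\in\SS_\L$, a finite $r$-hypergraph $C\in\HH_r$ realizing this picture, then pass to the limit. The vertex set of $C$ is the disjoint union of $k$ copies of $B$ (copy $i$ being $g_i(B)$, say $g_i(b)=(i,b)$) together with $r-2$ fixed marker vertices, and the edge relation is prescribed so that, for each $i<k$ and each non-repeating tuple $b_0,\dots,b_{r_i-1}$ from $B$, the $r$-element set $\{g_i(b_0),\dots,g_i(b_{r_i-1}),c_1,\dots,c_{r-r_i}\}$ is an edge precisely when $B\models R_i(b_0,\dots,b_{r_i-1})$, with no other edges. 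The point I expect to be the main obstacle — and which I would treat as the heart of the argument — is that this prescription is \emph{consistent}, i.e.\ no set of vertices is required to be both an edge and a non-edge: each candidate edge is ``typed'' by the pair $\bigl(i,\{g_i(b_0),\dots,g_i(b_{r_i-1})\}\bigr)$, since the number of markers it contains recovers $r-r_i$ hence $r_i$, its non-marker part lies entirely in copy $i$ hence recovers $i$ (using arity $\ge 2$, so the non-marker part is nonempty), and within copy $i$ it recovers $\{b_0,\dots,b_{r_i-1}\}$; thus distinct relations and distinct tuples never compete for the same edge. Symmetry of each $R_i$ is automatic because edges are sets, and irreflexivity matches irreflexivity of $R$ in an $r$-hypergraph.

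I would then set, with $\xx_j=(x_{j,0},\dots,x_{j,m-1})$,
\[
\theta_{R_i}(\xx_0,\dots,\xx_{r_i-1}) \;=\; R\bigl(x_{0,i},x_{1,i},\dots,x_{r_i-1,i},\,x_{0,k},x_{0,k+1},\dots,x_{0,k+(r-r_i)-1}\bigr),
\]
an atomic (hence quantifier-free) $\L_r$-formula with exactly $r$ argument slots; evaluating it on $u(b_0),\dots,u(b_{r_i-1})$ reads off membership of $\{g_i(b_0),\dots,g_i(b_{r_i-1}),c_1,\dots,c_{r-r_i}\}$ in the edge relation, so the required equivalence holds for non-repeating tuples by construction and for tuples with a repeated entry because both sides are then false by irreflexivity. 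Finally, since the construction is canonical (fixed $g_i$ and fixed $c_j$), the finite hypergraphs built for the various $B$ cohere, so a routine compactness/union argument — exactly as in Lemma~\ref{Lemma_contain_hypergraphs} — yields a single countable $r$-hypergraph on $(\{0,\dots,k-1\}\times A_{\SS_\L})\cup\{c_1,\dots,c_{r-2}\}$ all of whose finite substructures lie in $\HH_r$; this structure embeds into $\A_{\HH_r}$, and composing the coding map with that embedding gives the desired injection $u\colon A_{\SS_\L}\to A_{\HH_r}^m$. With the $\theta_{R_i}$ above, Observation~\ref{Obs_witnessing} then yields $T_{\SS_\L}\cleq T_{\HH_r}$ (and, with Observation~\ref{Obs_HHrSSL}, $\C_{\SS_\L}=\C_{\HH_r}$).
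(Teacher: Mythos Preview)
Your proof is correct, and the overall strategy---encode each relation in its own ``copy'' and pad up to arity $r$---is the same as the paper's. The implementations differ in a pleasant way, however. The paper takes $m=\sum_{Q\in\sig(\L)}\ari(Q)$, partitions the coordinate set $m$ into blocks $I_Q$ (one of size $\ari(Q)$ per relation), builds the hypergraph on $C\times m$, and pads a $Q$-edge on $(c_0,\dots,c_{n-1})$ using $c_0$'s own copies at coordinates \emph{outside} $I_Q$; the disjointness of the $I_Q$'s is what prevents collisions. You instead take the smaller $m=k+(r-2)$, use a single copy $g_i(B)$ per relation symbol $R_i$, and pad with $r-2$ \emph{external} marker vertices shared by all elements of the image of $u$. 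Your consistency check is arguably cleaner: from a putative edge one reads off the marker set (recovering $r_i$) and sees that the non-marker part lies entirely in a single copy (recovering $i$), so no two prescriptions compete for the same $r$-set. The paper's scheme avoids fixed marker vertices altogether, which keeps $u$ free of constant coordinates, but at the cost of a larger $m$ and a slightly more delicate collision argument. Either route yields the criterion of Observation~\ref{Obs_witnessing} and hence $T_{\SS_\L}\cleq T_{\HH_r}$.
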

\begin{proof}
We assume that the signature of $\HH_r$ is $\{R^{(r)}\}$. Let $m = \sum_{Q\in\sig(\L)}\ari(Q)$, and let $(I_Q)_{Q\in\sig(\L)}$ be a partition of $m$ such that for each $Q^{(n)}\in\sig(\L)$, we have $I_Q = \left\{i_0(Q)<\cdots<i_{n-1}(Q)\right\}$ and an enumeration  $j_0(Q)<\cdots<j_{m-n-1}(Q)$ of $m\setminus I_Q$. For each $Q\in\sig(\L)$ of arity $n$, let 
$$\theta_Q(\xx_0,...,\xx_{n-1})=R(x_{0,i_0(Q)},...,x_{n-1,i_{n-1}(Q)},x_{0,j_0(Q)},...,x_{0,j_{r-n-1}(Q)}).$$
Given $C\in\SS_\L$, we define $B_C\in\HH_r$ as follows:
\begin{itemize}
\item $B_C = C\times m$
\item For $Q^{(n)}\in\sig(\L)$, we define the intermediate relation $R_Q^C$ to be the symmetric closure of 
$$\left\{\left((c_0,i_0(Q)),...,(c_{n-1},i_{n-1}(Q)),(c_0,j_0(Q)),...,(c_0,j_{r-n-1}(Q))\right):(c_0,...,c_{n-1})\in Q^C\right\}$$
Then we define $R^{B_C} = \bigcup_{Q\in\sig(\L)}R_Q^C$.
%\item For $(c_0,t_0),...,(c_{r-1},t_{r-1})$ in $B_C$, we put 
%$$\big((c_0,t_0),...,(c_{r-1},t_{r-1})\big)\in R^{B_C}\iff \bigvee_{Q^{(n)}\in\sig(\L)}(c_{i_0(Q)},...,c_{i_{n-1}(Q)})\in Q^C$$
%% iff for some $Q^{(n)}\in\sig(\L)$ and some $\sigma\in\sym(m)$, $I_Q = \left\{\sigma(j_0),...,\sigma(j_{r-1})\right\}$ and $C\models Q(c_{\sigma(j_0)},...,c_{\sigma(j_{n-1})})$
\end{itemize}
We define $u:C\to B_C^m$ by $u(c) = \big((c,0),...,(c,m-1)\big)$. Let $Q^{(n)}\in\sig(\L)$ and $c_0,...,c_{n-1}\in C$ be given. Firstly, we have 
\begin{align*}
C\models Q(c_0,...,c_{n-1}) &\implies B_C\models R{\left((c_0,i_0(Q)),...,(c_{n-1},i_{n-1}(Q)),(c_0,j_0(Q)),...,(c_0,j_{r-n-1}(Q))\right)}\\
&\iff B_C\models \theta_Q(u(c_0),...,u(c_{n-1}))
\end{align*}
Now, we claim that if 
$$B_C\models R{\left((c_0,i_0(Q)),...,(c_{n-1},i_{n-1}(Q)),(c_0,j_0(Q)),...,(c_0,j_{r-n-1}(Q))\right)},$$ then  $(c_0,...,c_{n-1})$ is in  $Q^C$. If not, then for some $Q_1^{(n_1)}\in\sig(\L)$ different from $Q$, we have 
$$\left((c_0,i_0(Q)),...,(c_{n_1-1},i_{n_1-1}(Q)),(c_0,j_0(Q)),...,(c_0,j_{r-n_1-1}(Q))\right)\in R^C_{Q_1}.$$
Since $Q_1^C$ contains only non-repeating tuples, it must then be that $I_{Q_1}\subseteq I_Q$ -- contradicting the fact that $(I_Q)_{Q\in\sig(\L)}$ is partition of $m$. Thus, $(c_0,...,c_{n-1})$ is in  $Q^C$, and we have proven that 
$$C\models Q(c_0,...,c_{n-1})\iff B_C\models \theta_Q(u(c_0),...,u(c_{n-1})).$$
As $C\in\SS_\L$ was arbitrary, we have shown that $T_{\SS_\L} \cleq T_{\HH_r}$.
\end{proof}

\begin{cor}\label{Cor_SocietiesDontMatter}
 Let $\L$ be a finite relational language in which all relation symbols have arity $\geq 2$.  Then, $\C_{\SS_\L} = \C_{\HH_{\ari(\L)}}$.
\end{cor}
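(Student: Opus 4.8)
The plan is simply to combine the two $\cleq$-comparisons that have already been established: $T_{\HH_r}\cleq T_{\SS_\L}$ from Observation \ref{Obs_HHrSSL}, and $T_{\SS_\L}\cleq T_{\HH_r}$ from Theorem \ref{Thm_SSLHHr}, where throughout $r=\ari(\L)$. Since $\cleq$ is (only) a quasi-order, these two comparisons say precisely that $T_{\SS_\L}$ and $T_{\HH_r}$ are $\cleq$-equivalent, and $\C_\KK$ depends on $\KK$ only through the $\cleq$-class of $T_\KK$, so the conclusion $\C_{\SS_\L}=\C_{\HH_{\ari(\L)}}$ should fall out immediately.

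Concretely, I would invoke the contravariant translation recorded in Definition \ref{Def_C_K}: for algebraically trivial \fraisse classes $\KK_1,\KK_2$ one has $T_{\KK_1}\cleq T_{\KK_2}\iff\C_{\KK_2}\subseteq\C_{\KK_1}$. Applying this with $(\KK_1,\KK_2)=(\HH_r,\SS_\L)$ to Observation \ref{Obs_HHrSSL} yields $\C_{\SS_\L}\subseteq\C_{\HH_r}$; applying it with $(\KK_1,\KK_2)=(\SS_\L,\HH_r)$ to Theorem \ref{Thm_SSLHHr} yields $\C_{\HH_r}\subseteq\C_{\SS_\L}$. Intersecting these two inclusions gives $\C_{\SS_\L}=\C_{\HH_r}=\C_{\HH_{\ari(\L)}}$, which is the claim.

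There is essentially no obstacle remaining at this point: all of the genuine content lives in Theorem \ref{Thm_SSLHHr} (the combinatorial construction of $B_C\in\HH_r$ coding an arbitrary society $C$, via the map $u(c)=((c,0),\dots,(c,m-1))$ with $m=\sum_{Q\in\sig(\L)}\ari(Q)$) and, in the easy direction, in the observation that the $R$-reduct of the generic $\L$-society is itself the generic $r$-hypergraph. The only things one must be slightly careful about are bookkeeping items already handled upstream: that $\SS_\L$ and $\HH_r$ are indeed algebraically trivial \fraisse classes, so that the symbols $\C_{\SS_\L}$ and $\C_{\HH_r}$ are even defined, and that the arity parameter $r$ appearing in Observation \ref{Obs_HHrSSL} and in Theorem \ref{Thm_SSLHHr} is the same one, namely $\ari(\L)$.
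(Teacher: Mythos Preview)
Your proof is correct and is essentially identical to the paper's own proof, which simply reads ``By Observation \ref{Obs_HHrSSL} and Theorem \ref{Thm_SSLHHr}.'' You have just made explicit the passage through the contravariant translation $T_{\KK_1}\cleq T_{\KK_2}\iff\C_{\KK_2}\subseteq\C_{\KK_1}$ from Definition \ref{Def_C_K}, which is exactly what is intended.
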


\begin{proof}
 By Observation \ref{Obs_HHrSSL} and Theorem \ref{Thm_SSLHHr}.
\end{proof}

As we shall see in the following subsection, $\C_{\HH_{r+1}}$ is exactly equal to the theories that have $r$-IP.  One might hope that, by studying societies in general, one might find a whole zoo of generalizations of the independence property, akin to $r$-IP.  However, Corollary \ref{Cor_SocietiesDontMatter} says that no such thing seems to exist.  Adding more relation symbols (i.e., considering more formulas) does nothing to alter the positive local complexity of the theory.  In some sense, the $r$-IP's are the only generalizations of the independence property of this type.

%%%%%%%%%%%%%%%%%%%%%%%%%%%%%%%%%%%%%%%%%%%%%%%%%%%%%%%%
%%%%%%%%%%%%%%%%%%%%%%%%%%%%%%%%%%%%%%%%%%%%%%%%%%%%%%%%
\subsection{Multi-partite multi-concept classes}

In this subsection, we show that the irreducible dividing-line corresponding to $(r+1)$-hypergraphs is precisely the same as $r$-IP, the $r$-independence property.  We begin by discussing $p$-partite concept classes, which is a generalization of (standard) concept classes used to study the independence property.

%%%%%--------------------------------------------------------------------------------------------------------------%%%%%
\begin{defn}
If $W$ is a set, then $S\subseteq_mW$ means that $S$ is a multi-set all of whose elements are members of $W$, each with finite multiplicity.
\end{defn}

%%%%%--------------------------------------------------------------------------------------------------------------%%%%%
\begin{defn}
For $0<p<\omega$, $\L_p$ is the $(p+1)$-sorted language (with sorts $\sortS_0,\sortS_1,...,\sortS_p$) and just one relation symbol $R\subseteq \sortS_0\times\sortS_1\times\cdots\times\sortS_p$. The \fraisse class $\Fin(\L_p)$ of all finite $\L_p$-structures is also known as the class of $(p+1)$-partite $(p+1)$-hypergraphs.
\end{defn}

%%%%%--------------------------------------------------------------------------------------------------------------%%%%%
\begin{defn}
For $0<p<\omega$, a finite $p$-partite concept class is a multi-set $\cclass\subseteq_m\P(X_1\times\cdots\times X_{p})$. 

Given a finite $p$-partite concept class $\cclass\subseteq_m\P(X_1\times\cdots\times X_{p})$ (multi-set of subsets of $X_1\times\cdots\times X_p$), we define an $\L_p$-structure $B_\cclass$ with $\sortS_0(B_\cclass) = \cclass$, $\sortS_i(B_\cclass) = X_i$ for each $i=1,...,p$, and 
$$R^{B_\cclass} = \left\{(S,x_1,...,x_p):(x_1,...,x_p)\in S\in\cclass\right\}.$$
Let $\JJ_p$ be the isomorphism-closure of 
$$\left\{B_\cclass:\textnormal{$\cclass$ is a finite $p$-partite concept class}\right\}.$$
\end{defn}

%%%%%--------------------------------------------------------------------------------------------------------------%%%%%
\begin{obs}
For every $0<p<\omega$, $\JJ_p = \Fin(\L_p)$ and $\HH_{p+1}^* = \widetilde{\Fin(\L_p)}$ (in the notation of Subsection \ref{Subsec_one_sort}).
\end{obs}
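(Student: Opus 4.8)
The plan is to prove the two identities separately, since they are essentially bookkeeping once the right structures are lined up.

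For the first identity, $\JJ_p = \Fin(\L_p)$: one inclusion is immediate, since every $B_\cclass$ is a finite $\L_p$-structure, so $\JJ_p\subseteq\Fin(\L_p)$. For the reverse, I would take an arbitrary finite $\L_p$-structure $B$ (with sorts $\sortS_0(B),\dots,\sortS_p(B)$ and relation $R^B\subseteq\sortS_0(B)\times\cdots\times\sortS_p(B)$) and build a $p$-partite concept class realizing it. Set $X_i=\sortS_i(B)$ for $i=1,\dots,p$, and for each $b\in\sortS_0(B)$ let $S_b = \{(x_1,\dots,x_p)\in X_1\times\cdots\times X_p : (b,x_1,\dots,x_p)\in R^B\}$. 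Let $\cclass$ be the multi-set $\{S_b : b\in\sortS_0(B)\}$ (keeping one copy of $S_b$ for each $b$, so that the multiplicity records how many elements of $\sortS_0(B)$ have the same ``row''). Then the map $b\mapsto S_b$ together with the identities on the other sorts is an isomorphism $B\to B_\cclass$, so $B\in\JJ_p$. The only subtlety is being careful that $\cclass$ is genuinely a \emph{multi}-set, which is exactly why the definition of $p$-partite concept class allows multiplicities — distinct elements of $\sortS_0(B)$ with identical neighborhoods must be kept separate.

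For the second identity, $\HH^*_{p+1} = \widetilde{\Fin(\L_p)}$: here I would unwind the definition of $\widetilde{\KK}$ from Subsection~\ref{Subsec_one_sort} applied to $\KK = \Fin(\L_p)$, which is an algebraically trivial \fraisse class in the $(p+1)$-sorted language $\L_p$. The one-sorted language $\L_{\Fin(\L_p)}$ has a relation symbol $R_q$ for each irreflexive quantifier-free-complete type $q$ of arity $r\le\ari(\L_p)=p+1$ in $T_{\Fin(\L_p)}$. Since $\Fin(\L_p)$ has quantifier elimination and its only relation $R$ has arity $p+1$ and holds only on tuples hitting each sort exactly once (in the fixed order $\sortS_0,\dots,\sortS_p$), the irreflexive qf-types are: for $r=1$, the $p+1$ types ``$x\in\sortS_i$'' (these will become the unary predicates $U_0,\dots,U_p$ of $\HH^*_{p+1}$); for $2\le r\le p$, a unique type for each way of assigning the $r$ variables to distinct sorts, none of which involve $R$; and for $r=p+1$, the types assigning the variables bijectively to the $p+1$ sorts, each of which comes in two flavors according to whether $R$ holds. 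Translating $R_q$ for the ``$R$-positive'' $(p+1)$-ary types into the single symmetrized relation of $\HH^*_{p+1}$ — this is legitimate because $\HH^*_{p+1}$'s relation $R$ is required to be symmetric and to hold only when the arguments realize all $p+1$ unary predicates — one checks that the $\L_{\Fin(\L_p)}$-structures $A_B$ ($B\in\Fin(\L_p)$) are exactly the structures of the form ``$B_\cclass$ flattened onto one sort with the sort-membership recorded by $U_0,\dots,U_p$'', i.e.\ precisely the models in $\HH^*_{p+1}$, and closing under substructure and isomorphism on both sides matches the definitions. I would invoke Lemma~\ref{Lemma_transfer_1sort} and its corollary to transfer freely between $B$ and $A_B$, and between $C$ and $B^C$, to avoid recomputing anything.

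The main obstacle is purely notational: getting the correspondence between qf-types of $T_{\Fin(\L_p)}$ and the relation symbols of $\HH^*_{p+1}$ exactly right, in particular verifying that the symmetrization built into $\HH^*_{p+1}$ (and the defining sentences forcing $R$ to respect the $U_i$-partition) matches the way $\widetilde{(\cdot)}$ bundles together all the reorderings of a single $(p+1)$-ary type into one symmetric relation. Once that dictionary is pinned down, both equalities are immediate from the relevant definitions together with the fact (already noted in the excerpt) that every $T\in\TT$ eliminates imaginaries, which is what makes $\HH^*_{p+1}$ and $\HH_{p+1}$ interchangeable at the level of $\cleq$ — though that last point is not needed for the bare identities asserted here.
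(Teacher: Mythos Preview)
The paper states this as an \emph{Observation} without proof, so there is no argument to compare against; your sketch is essentially what the paper intends the reader to supply, and the first identity is handled exactly right (including the need for multisets to absorb elements of $\sortS_0(B)$ with identical neighbourhoods).

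For the second identity there is one small inaccuracy and one point worth making explicit. You write that for $2\le r\le p$ there is ``a unique type for each way of assigning the $r$ variables to \emph{distinct} sorts''; in fact, nothing prevents two variables from landing in the same sort (an irreflexive type just requires the \emph{elements} to be distinct), so there are more $R_q$'s than you list. This does not damage the argument, since every such $R_q$ with $r\le p$ --- and likewise every $(p{+}1)$-ary $R_q$ whose sort-assignment is not a bijection onto $\{\sortS_0,\dots,\sortS_p\}$ --- is quantifier-free definable from the unary $R_q$'s (equivalently, the $U_i$'s) together with inequality. The point to make explicit is that the asserted equality $\HH_{p+1}^* = \widetilde{\Fin(\L_p)}$ is therefore not a literal identity of classes in the same signature but an identification up to quantifier-free interdefinability: one passes from $\L_{\Fin(\L_p)}$ to $\L_{p+1}^*$ by keeping the unary $R_q$'s as the $U_i$'s, taking the disjunction over all permutations of the $R$-positive $(p{+}1)$-ary $R_q$'s as the single symmetric $R$, and discarding the remaining $R_q$'s as redundant. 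Once you say this, your dictionary is complete and the claim follows.
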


We define the $r$-independence property ($r$-IP), which generalizes the usual independence property.  Then, in Proposition \ref{Prop_Chernikov}, we show that this exactly equals the dividing-line corresponding to $(r+1)$-hypergraphs.

%%%%%--------------------------------------------------------------------------------------------------------------%%%%%
\begin{defn}
Let $T\in\TT$, and let $\M\models T$ be $\aleph_1$-saturated. We say that $T$ has the {\em $r$-independence property} ($r$-IP) if there is a formula $\phi(x_0,...,x_{r-1};y)\in \L_T$ such that  there are $a_{0,i},...,a_{r-1,i}$ in $\M$ ($i<\omega$) such that for any finite $X\subset\omega^r$, there is some $b_X$ in $\M$ such that for all $(i_0,...,i_{n-1})\in\omega^r$,
$$\M\models \phi(a_{0,i_0},...,a_{r-1,i_{r-1}};b_X)\iff (i_0,...,i_{r-1})\in X.$$
Let 
$\mathbf{IP}_r=\left\{T\in\TT:\textnormal{$T$ has $r$-IP} \right\}.$
Note that the usual independence property is $1$-IP.
\end{defn}

%%%%%--------------------------------------------------------------------------------------------------------------%%%%%
\begin{prop}[\cite{chernikov-et-al}]\label{Prop_Chernikov}
$\mathbf{IP}_r=\C_{\HH_{r+1}}$.
\end{prop}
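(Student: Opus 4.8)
The plan is to prove $\mathbf{IP}_r = \C_{\HH_{r+1}}$ by establishing the two inclusions separately, working through the ``dictionary'' between the local combinatorial formulation of $r$-IP and the $\cleq$-ordering provided by Observation \ref{Obs_witnessing} and Definition \ref{Def_C_K}. The key translation device is the observation, recorded just above, that the generic $(r+1)$-partite $(r+1)$-hypergraph $\Fin(\L_r)$ has one-sorted version $\widetilde{\Fin(\L_r)} = \HH_{r+1}^*$, and that $\C_{\HH_{r+1}^*} = \C_{\HH_{r+1}}$ since every $T \in \TT$ eliminates imaginaries. So the real content is to match up $r$-IP with the condition ``there is an un-collapsed/faithful picture of the generic $(r+1)$-partite hypergraph in a model of $T$,'' and then invoke Theorem \ref{Thm_one_sort} to pass between the many-sorted class $\Fin(\L_r)$ and its one-sorted avatar $\HH_{r+1}^*$.

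For the inclusion $\mathbf{IP}_r \subseteq \C_{\HH_{r+1}}$, suppose $T$ has $r$-IP witnessed by $\phi(x_0,\dots,x_{r-1};y)$ and parameters $a_{t,i}$ ($t<r$, $i<\omega$) such that every finite $X \subseteq \omega^r$ is ``cut out'' by some $b_X$. First I would pass to a saturated model $\M \models T$ and, by compactness plus saturation, arrange the index set to be an arbitrary countable set and the family of $b$'s to be indexed by \emph{all} finite subsets of $\omega^r$ (and in fact, by shrinking, I can take the $b$'s indexed by a copy of the vertex set of the generic $(r+1)$-partite hypergraph so that the incidence pattern is exactly right). Concretely: the generic structure of $\Fin(\L_r)$ has sorts $\sortS_0, \sortS_1,\dots,\sortS_r$ where $\sortS_0$ indexes ``concepts'' (subsets of $\sortS_1 \times \cdots \times \sortS_r$) realizing all finite configurations; using $r$-IP, map $\sortS_t$ ($1 \le t \le r$) into the tuples $(a_{t-1,i})_i$ and $\sortS_0$ into suitable $b_X$'s, with $R$ interpreted by $\phi$. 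Because the $a$'s and $b$'s can be chosen to realize arbitrary finite incidence patterns, $\Age_\pphi$ of this picture is all of $\Fin(\L_r)$; after resolving (Proposition \ref{Prop_resolved}, Corollary \ref{Lemma_up_res}) one gets $T_{\Fin(\L_r)} \cleq T$, hence by Theorem \ref{Thm_one_sort}, $T \in \C_{\widetilde{\Fin(\L_r)}} = \C_{\HH_{r+1}^*} = \C_{\HH_{r+1}}$. I must double-check the algebraic-triviality/non-algebraicity requirements in the definition of $\FF_\pphi(T)$ hold, but this is automatic since $\Fin(\L_r)$ is algebraically trivial and the picture can be taken injective into generic-enough parameters.

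For the converse $\C_{\HH_{r+1}} \subseteq \mathbf{IP}_r$, suppose $T \in \C_{\HH_{r+1}} = \C_{\HH_{r+1}^*}$. By Theorem \ref{Thm_one_sort} (applied to $\KK = \Fin(\L_r)$, whose one-sorted version is $\HH_{r+1}^*$), there are a saturated $\M \models T$, definable sets $X_0,\dots,X_r$, injections $f_i \colon \sortS_i(\B) \to X_i$ where $\B$ is the generic $(r+1)$-partite hypergraph, and a formula $\phi_R(x_0,\dots,x_r)$ of $\L_T$ such that $R$ on $\B$ is pulled back from $\phi_R$ on $\M$. Now reading $\phi(x_1,\dots,x_r;x_0) := \phi_R(x_0,x_1,\dots,x_r)$ as a formula with object variables $x_1,\dots,x_r$ and parameter variable $x_0$: inside $\B$, for each finite $X \subseteq (\sortS_1(\B) \times \cdots \times \sortS_r(\B))$ there is a concept $c \in \sortS_0(\B)$ with $R^\B(c, \bar y) \iff \bar y \in X$, so transporting via the $f_i$'s gives elements of $\M$ exhibiting the $r$-IP pattern for $\phi$ over the index set $\sortS_1(\B),\dots,\sortS_r(\B)$; passing to a countable sub-indexing and using $\aleph_1$-saturation of $\M$ yields exactly the definition of $r$-IP. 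Hence $T \in \mathbf{IP}_r$.

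The main obstacle I anticipate is bookkeeping around the sortedness and the precise form of the definition of $r$-IP versus the generic $(r+1)$-partite hypergraph: one must be careful that a \emph{single} formula $\phi_R$ does all the work (Theorem \ref{Thm_one_sort} gives one formula per relation symbol, and $\Fin(\L_r)$ has exactly one, so this is fine), that the ``shattering'' in the definition of $r$-IP — every finite $X \subseteq \omega^r$ is realized — corresponds exactly to $\sortS_0$ of $\Fin(\L_r)$ being universal for finite concept classes (which it is, since $\Fin(\L_r)$ is the class of \emph{all} finite $\L_r$-structures), and that the partite structure (the $a_{t,i}$ for fixed $t$ forming one ``side'') matches the separation into sorts $\sortS_1,\dots,\sortS_r$. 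Once the translation is set up cleanly the argument is essentially formal; indeed since this proposition is attributed to \cite{chernikov-et-al}, the expected writeup is short, amounting to citing that paper's characterization of $r$-IP in terms of the generic ordered $(r+1)$-partite hypergraph together with Theorem \ref{Thm_one_sort} and the observation $\C_{\HH_{r+1}^*} = \C_{\HH_{r+1}}$.
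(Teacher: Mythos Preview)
Your proposal is correct and follows essentially the same route as the paper: use the $r$-IP witnesses to build a picture of the generic model of $\JJ_r=\Fin(\L_r)$, then invoke Theorem~\ref{Thm_one_sort} together with $\widetilde{\JJ_r}=\HH_{r+1}^*$ and $\C_{\HH_{r+1}^*}=\C_{\HH_{r+1}}$. The only cosmetic difference is that for the inclusion $\C_{\HH_{r+1}}\subseteq\mathbf{IP}_r$ the paper works directly from a witness $F,\phi$ for $T\in\C_{\HH_{r+1}}$ (the generic $(r{+}1)$-hypergraph already shatters, so $\phi$ immediately has $r$-IP) rather than detouring back through Theorem~\ref{Thm_one_sort} as you do; both arguments are fine.
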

\begin{proof}
Let $T\in\TT$. If $T\in\C_{\HH_{r+1}}$ is witnessed by $F:A\to M$ (where $\M\models T$) and $\phi(x_0,...,x_{r-1},x_r)\in \L_T$, then it is easy to see that $\phi$ witnesses the fact that $T$ has the $r$-independence property. Conversely, if $T$ has $r$-IP, then there are  a formula $\phi(x_0,...,x_{r-1};y)\in \L_T$, $(a_{0,i},...,a_{r-1,i})_{i<\omega}$ and $(b_X)_{X\subset_\fin \omega^r}$ in some $\M\models T$ witnessing this. Then, if $\B$ is the generic model of $\JJ_r$, we have injections $u_i:\sortS_i(B)\to M$ ($i\leq r$) such that for all $b_0\in\sortS_0(\B),...,b_r\in\sortS_r(\B)$, $\B\models R(b_0,...,b_r)\iff \M\models\phi(u_0(b_0),...,u_r(b_r))$. By Theorem \ref{Thm_one_sort}, it follows that $T\in\C_{\tilde\JJ_r} = \C_{\HH^*_{r+1}} = \C_{\HH_{r+1}}$.
\end{proof}

Combining this with Corollary \ref{Cor_hyp_desc_chain}, we see that the $r$-independence property forms a strictly decreasing chain of irreducible dividing-lines.

%%%%%--------------------------------------------------------------------------------------------------------------%%%%%
\begin{cor}
Every $\mathbf{IP}_r$ ($2\leq r<\omega$) is an irreducible dividing-line, and 
$$\mathbf{IP}=\mathbf{IP}_1\supsetneq\mathbf{IP}_2\supsetneq\cdots\supsetneq\mathbf{IP}_r\supsetneq\cdots.$$
\end{cor}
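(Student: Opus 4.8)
The plan is to deduce the corollary directly from three ingredients already established: the identification $\mathbf{IP}_r = \C_{\HH_{r+1}}$ (Proposition \ref{Prop_Chernikov}), the indecomposability of each $\HH_s$ for $2 \le s < \omega$ (the proposition preceding Lemma \ref{Lemma_contain_hypergraphs}), and the strict chain of Corollary \ref{Cor_hyp_desc_chain}. So no new construction is needed; the work is bookkeeping, chiefly keeping track of the arity shift between ``$r$-IP'' and ``$(r{+}1)$-hypergraphs.''

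First I would fix $1 \le r < \omega$ and apply Proposition \ref{Prop_Chernikov} to get $\mathbf{IP}_r = \C_{\HH_{r+1}}$; note this is stated for $r \ge 2$ there but also covers $r=1$ since $\mathbf{IP} = \mathbf{IP}_1$ by definition and $\HH_2$ is the random-graph class. Since $r+1 \ge 2$, the proposition asserting that every $\HH_s$ ($2 \le s < \omega$) is indecomposable applies at $s = r+1$, so $\HH_{r+1}$ is an indecomposable algebraically trivial \fraisse class. By the implication $2\Rightarrow 1$ of Theorem \ref{Thm_prime_equals_fraisse}, $\C_{\HH_{r+1}}$ is irreducible (a complete prime filter class); hence $\mathbf{IP}_r$ is irreducible for every $1 \le r < \omega$, and in particular $\mathbf{IP} = \mathbf{IP}_1$ is irreducible.

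For the strict inclusions, take $1 \le r_1 < r_2 < \omega$, so that $2 \le r_1+1 < r_2+1 < \omega$. Then Theorem \ref{Thm_hypergraph_containment} (equivalently Corollary \ref{Cor_hyp_desc_chain}) gives $\C_{\HH_{r_2+1}} \subsetneq \C_{\HH_{r_1+1}}$, which via Proposition \ref{Prop_Chernikov} is exactly $\mathbf{IP}_{r_2} \subsetneq \mathbf{IP}_{r_1}$. Specializing to $r_2 = r_1 + 1$ for each $r_1 \ge 1$ produces the displayed strictly decreasing chain $\mathbf{IP}_1 \supsetneq \mathbf{IP}_2 \supsetneq \cdots$, and prepending the definitional equality $\mathbf{IP} = \mathbf{IP}_1$ completes the statement. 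There is essentially no obstacle here: the one point demanding a moment's attention is that the hypergraph results must be invoked at arity $r+1$ rather than $r$, and one should check that the arity hypothesis $2 \le r+1$ is satisfied even at $r = 1$ (it is), after which everything is a direct citation.
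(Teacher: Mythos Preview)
Your proposal is correct and matches the paper's intended argument: the corollary is stated without proof precisely because it is meant to follow immediately from Proposition \ref{Prop_Chernikov}, the indecomposability of each $\HH_s$, Theorem \ref{Thm_prime_equals_fraisse}, and Corollary \ref{Cor_hyp_desc_chain}, exactly as you have spelled out. Your care with the arity shift $r \mapsto r+1$ and the case $r=1$ is appropriate and fills in the only bookkeeping the paper leaves implicit.
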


The fact that these are strict was already well-known (e.g., \cite{chernikov-et-al}).  However, that each is an irreducible dividing-line is an interesting fact, providing evidence that our definition of irreducible is the ``right'' one.  Indeed, irreducibility really should encompass all known positive local dividing-lines.

\newpage
%%%%%%%%%%%%%%%%%%%%%%%%%%%%%%%%%%%%%%%%%%%%%%%%%%%%%%%%
%%%%%%%%%%%%%%%%%%%%%%%%%%%%%%%%%%%%%%%%%%%%%%%%%%%%%%%%
%%%%%%%%%%%%%%%%%%%%%%%%%%%%%%%%%%%%%%%%%%%%%%%%%%%%%%%%
%%%%%%%%%%%%%%%%%%%%%%%%%%%%%%%%%%%%%%%%%%%%%%%%%%%%%%%%
\section{Open questions}\label{Sec_Conjectures}

%%%%%--------------------------------------------------------------------------------------------------------------%%%%%
%\begin{conj}[?!?!?!?! -- outrageous]
%Let $\KK$ be an algebriacally trivial indecomposable \fraisse class with $|S_1(T_\KK)|=1$. If $\KK$ is unstable, then $\KK^<$ is a Ramsey class.
%\end{conj}

When looking at Section \ref{Sec_LO_Indisc}, one notices that, if $\KK$ is an algebraically trivial indecomposable \fraisse class and $T_\KK$ is unstable, then $\C_\KK$ is characterized by a collapse of indiscernibles when $\KK^<$ is a Ramsey class.  So a natural question arises:

\begin{ques}
 Let $\KK$ be an algebraically trivial indecomposable \fraisse class such that $T_\KK$ is unstable and $|S_1(T_\KK)| = 1$.  When is $\KK^<$ a Ramsey class?  Is there a model-theoretic characterization of this?
\end{ques}

Also in that section, one notices a difference between our general result for collapse-of-indiscernibles (Theorem \ref{Thm_collapse_char_2}) and the specific results found in the literature (e.g., \cite{chernikov-et-al, guingona-hill-scow, scow-2011}).

\begin{ques}
 Suppose $\KK$ is an algebraically trivial indecomposable \fraisse class such that $\KK^<$ is a Ramsey class.  Can one find a specific reduct $T_0$ of $T_\KK$ such that a theory $T$ lies outside of $\C_\KK$ if and only if every $\KK^<$-indiscernible in $T$ collapses to $T_0$?
\end{ques}

%%%%%--------------------------------------------------------------------------------------------------------------%%%%%
%\begin{conj}[?!?!?!?!]
%Let $\KK$ be an algebriacally trivial \fraisse class with $|S_1(T_\KK)|=1$. If $T_\KK$ is stable and $\KK$ is indecomposable, then $\C_\KK=\TT$.

%(Obviously, if $\KK_=$ is the \fraisse class of finite pure sets (in the empty signature); then $\C_{\KK_=}=\TT$.)
%\end{conj}

We would like to better understand the quasi-ordering $\cleq$ and the irreducible dividing-lines it generates.  For example, deciding which classes $\KK$ are equivalent vis-\`{a}-vis the class $\C_\KK$ seems to be an interesting project.  Which are equivalent to the trivial dividing-line?

\begin{ques}
 Suppose $\KK$ is an indecomposable algebraically trivial \fraisse class such that $|S_1(T_\KK)|=1$ and $T_\KK$ is stable.  Then, do we have $\C_\KK = \TT$?  If not, can we characterize which $\KK$ yield the trivial dividing line?
 
 (Obviously, if $\KK_=$ is the \fraisse class of finite pure sets (in the empty signature); then $\C_{\KK_=}=\TT$.)
\end{ques}

%%%%%--------------------------------------------------------------------------------------------------------------%%%%%
%\begin{conj}[?!?!?!?!]
%The set of irreducible dividing-lines is countable.
%\end{conj}

Another question revolves around the number of irreducible dividing lines.  By Corollary \ref{Cor_InfiniteClasses}, we know there are between $\aleph_0$ and $2^{\aleph_0}$ such, but can we get a better estimate?

\begin{ques}
 Is the set of irreducible dividing-lines countable?
\end{ques}

%%%%%--------------------------------------------------------------------------------------------------------------%%%%%
%\begin{conj}[?!?!?!?!]
%Let $\C\subset\TT$ be a prime filter class. The following are equivalent:
%\begin{itemize}
%\item $\C$ is complete.
%\item $\C$ is countably-complete in the sense for every descending $\cleq$-chain $(T_n)_{n<\omega}$ of members of $\C$, there is some $T\in\C$ such that $T\cleq T_n$ for all $n<\omega$.
%\end{itemize}

%\end{conj}

During the first attempt at categorizing irreducibility for classes of theories, we replaced ``completeness'' with ``countable completeness'' in Definition \ref{Def_irred_div-line}.  Although the proof of Lemma \ref{Lemma_have_minimal} seems to require at least ``$(2^{\aleph_0})^+$-completeness,'' is this actually necessary?

\begin{ques}
 Let $\C\subset\TT$ be a prime filter class.  Is $\C$ complete if and only if $\C$ is countably-complete (i.e., every descending $\cleq$-chain $(T_n)_{n<\omega}$ of members of $\C$, there is some $T\in\C$ such that $T\cleq T_n$ for all $n<\omega$)?
\end{ques}

Notice that $\cleq$ relates any sort of one theory to any sort of another (which is why, in Proposition \ref{Prop_MOandLO}, we find that $\C_{\MO_k} = \C_{\mathbf{LO}}$ for all $k > 0$).  What would happen if one restricted the sorts to compare?  For example, could one recover a generalized collapse-of-indiscernible result on sorts (or partial types) akin to the one for op-dimension in \cite{guingona-hill-scow}?  This may be related to examining the witness number from Observation \ref{Obs_witnessing}.  We hope to explore this (and the other questions in this section) in future papers.

\newpage

\bibliographystyle{plain}
\bibliography{myref}

\bigskip

\noindent{\em Author's addresses:} 
\begin{itemize}
\item[] C. Donnay Hill, Department of Mathematics and Computer Science,
Wesleyan University,
655 Exley Science Tower,
265 Church Street,
Middletown, CT 06459

\item[] V. Guingona, Department of Mathematics,
Towson University,
8000 York Road,
Towson, MD 21252
\end{itemize}

\end{document}